\documentclass[11pt]{amsart}
\usepackage[all]{xy}
\usepackage{enumitem}
\usepackage{mathtools}
\usepackage{mathrsfs}
\usepackage{amssymb}
\usepackage{amsmath,amsfonts,amsthm}
\usepackage{graphicx}
\usepackage{upgreek}
\usepackage[toc,page]{appendix}
\usepackage{bbm}
\usepackage{esint}
\usepackage{scalerel}
\usepackage{stackengine,wasysym}
\usepackage{verbatim}
\usepackage{hyperref}
\usepackage[dvipsnames]{xcolor}
\hypersetup{colorlinks=true, linktocpage=true,citecolor=Emerald, linkcolor=RedViolet, urlcolor=RedViolet}

\usepackage{geometry}
\numberwithin{equation}{section}
\newtheorem{teo}{Theorem}[section]
\newtheorem{lema}[teo]{Lemma}

\newtheorem{prop}[teo]{Proposition}
\newtheorem{coro}[teo]{Corollary}

\newtheorem{obs}[teo]{Remark}

\title{Finite index constant mean curvature hypersurfaces in low dimensions}

\author{Ivan Miranda}
\address{IMPA -- Instituto de Matemática Pura e Aplicada, Rio de Janeiro, RJ, Brasil, 22460-320.}
\email{ivan.miranda@impa.br}

\begin{document}
	
	\begin{abstract}
    		    We prove that every complete finite index immersed CMC hypersurface is either minimal or compact, provided that the ambient six-dimensional manifold is a Riemannian product of a closed manifold with non-negative sectional curvature and a Euclidean factor. This answers affirmatively a question posed by do Carmo, for this class of ambient spaces, and extends known lower dimensional results. As a consequence, we complete the classification of two-sided, complete weakly stable CMC hypersurfaces immersed in the space forms of positive curvature in dimension six. More generally, we study the class of Riemannian manifolds with bounded curvature and obtain several partial results. In particular, we show that a complete, finite index CMC hypersurface immersed in the hyperbolic six-space with mean curvature vector of length greater than seven is necessarily compact. 
	\end{abstract}

	\maketitle
	
	\setcounter{tocdepth}{1}
	\tableofcontents
	
	\section{Introduction}

The isoperimetric problem concerns the search for hypersurfaces that have the least
    area among all of those that enclose the same volume. A minimizer for this problem is called an isoperimetric hypersurface, and the study of these objects is a classical topic in Geometry (\cite{Isoperimetric}, \cite{ros2005isoperimetric}). Connecting this problem with the Calculus of Variations, one deduces infinitesimal information for a hypersurface that minimizes area up to second order among competitors that enclose the same volume. The first variation shows that a minimizer has constant mean curvature (CMC), and the second variation can be encoded as the non-negativity of a quadratic form restricted to a functional space.
    
    In general, a non-compact hypersurface does not bound a finite volume, but one can still make sense of variations that preserve the enclosed volume if the hypersurface is two-sided, by means of defining a notion of algebraic volume between the hypersurface and its compactly supported perturbations. A two-sided CMC hypersurface $M$ that minimizes area up to second order among variations with compact support that preserve the enclosed volume is called \textit{weakly stable}, and this condition is satisfied if and only if the hypersurface satisfies the weak stability inequality
        \begin{equation}
                \int_M | \nabla \phi|^2 - (Ric(\nu)+|A|^2)\phi^2 \ge 0 \label{desig de estabilidade}
        \end{equation}

            \noindent for every $\phi \in C^\infty_0(M)$ with zero mean value, \textit{i.e.} $\int_M \phi =0$.

    Related to the notion of stability is the Morse index of a constant mean curvature hypersurface. For every precompact domain $\Omega$ of $M$, the quadratic form \eqref{desig de estabilidade}, restricted to functions $\phi \in C^\infty_0(\Omega)$ with zero mean value, has finite index which we call the \textit{weak index} of $\Omega$. The weak index of $M$, denoted by Ind$_0(M)$, is the supremum over precompact domains $\Omega \subset M$ of the weak index of $\Omega$. We say that $M$ has finite index when Ind$_0(M)$ is finite.

    Our main result concerns finite index CMC hypersurfaces immersed in ambient spaces with non-negative sectional curvature.

	{
		\renewcommand{\theteo}{A}
		\begin{teo} \label{quarto teorema principal k > 0}
        Let $N$ be a closed Riemannian manifold with non-negative sectional curvature, and dimension $k \in \{0,\dots,6\}$. Consider the Riemannian product $X := N^k \times \mathbb{R}^{6-k}$ between $N$ and a Euclidean factor. Every complete CMC hypersurface with finite index immersed in $X$ is either minimal or compact.
		\end{teo}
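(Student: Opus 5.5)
Suppose $M^5 \subset X$ is a complete immersed CMC hypersurface with mean curvature $H\neq 0$ and finite weak index. The plan is to show that $M$ must be compact. The route is the now-standard combination of finite index, the $\mu$-bubble/warped $\mu$-bubble technique of Chodosh--Li--Stryker type, and a diameter bound. It has three steps.

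\textbf{Step 1: reduction to a stable end.} Finite index implies that outside a compact set $K\subset M$ the hypersurface is strongly stable for compactly supported variations. Indeed, the index of the unconstrained Jacobi operator exceeds $\mathrm{Ind}_0$ by at most one, so it is also finite, and a standard argument produces $K$ with $M\setminus K$ stable. So if $M$ is non-compact, there is an unbounded end $E$ on which
\[
\int_E |\nabla\phi|^2 \ge \int_E (\mathrm{Ric}(\nu)+|A|^2)\phi^2 .
\]
In $X=N\times\mathbb{R}^{6-k}$ we have $\mathrm{Ric}\ge 0$, and $|A|^2\ge 5H^2$ by Cauchy--Schwarz (with $H$ normalized as the mean of the principal curvatures). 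The stability operator therefore has potential bounded below by the positive constant $5H^2$. The bounded geometry of $X$ (which is a product of a closed manifold with Euclidean space) also gives a uniform bound on the sectional curvature of $X$.

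\textbf{Step 2: spectral Ricci / $\mu$-bubble diameter bound.} Using the Gauss equation, I rewrite the stability inequality as a spectral lower bound. The aim is to show that $-\gamma\Delta + \mathrm{Ric}_M \ge \lambda>0$ holds in a spectral sense, or more precisely that a weighted scalar-type curvature
\[
\tfrac12\big(R_X - 2\mathrm{Ric}_X(\nu)+ 30H^2\big) + \tfrac12(|A|^2 - 5H^2)\cdots
\]
stays positive, for a suitable $\gamma<4/(n-1)$ with $n=5$. The point is that the positivity of $H$ plays the role that positive scalar curvature plays in the minimal-hypersurface case. Non-negative sectional curvature of $X$ is used to control the ambient terms $R_X-\mathrm{Ric}_X(\nu)$ from below. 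I then run a warped $\mu$-bubble argument on $E$, using the first eigenfunction of the stability operator as the warping function. For dimension $n=5$, the admissible range of $\gamma$ in the spectral Bonnet--Myers theorem (Antonelli--Xu type) is exactly what makes the argument close. This restriction is the reason the ambient dimension is $6$. The output is that every point of $E$ lies within a uniformly bounded distance $D(H)$ of $\partial E$, which is impossible for an unbounded end.

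\textbf{Step 3: conclusion and main obstacle.} If $E$ had a point far from $\partial K$, then a $\mu$-bubble separating it from $\partial K$ would contradict the diameter bound. Hence every end is bounded, so $M$ is compact. The main obstacle is Step 2. Because $H\neq 0$, the term $|A|^2$ no longer controls the mean-curvature contribution in the Gauss equation. One must therefore check that the traceless part $|\mathring A|^2$ together with $H^2$ yields a strictly positive lower bound for the relevant spectral Ricci quantity, with constants compatible with the $n=5$ threshold. I would first attempt the computation with the sharp inequality $\mathrm{Ric}_M(e)\ge -\tfrac{n-1}{n}|\mathring A|^2 + (\text{terms in } H,\ \mathrm{sec}_X)$ and optimize $\gamma$. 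For $k$ large, where $X$ is mostly compact, non-negative sectional curvature is exactly what prevents the ambient curvature terms from destroying positivity.
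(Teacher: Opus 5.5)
There is a genuine gap in Step 2, and it cannot be fixed by optimizing constants. Stability gives $\int_E\gamma|\nabla\phi|^2+\lambda^M_{Ric}\phi^2\ge\int_E\big(\gamma(\mathrm{Ric}_X(\nu)+|A|^2)+\lambda^M_{Ric}\big)\phi^2$, and your route through the Gauss equation only bounds this integrand pointwise. Since $X$ may be flat ($k=0$, or $N$ a flat torus), the ambient terms contribute nothing. With $H=\operatorname{tr}A$ you would then need $\gamma|A|^2+H\langle Ae,e\rangle-|Ae|^2\ge c\,H^2$ for the minimizing Ricci direction $e$. By the sharp Lemma \ref{Linear algebra}, $H\langle Ae,e\rangle-|Ae|^2\ge-\frac{\sqrt{n-1}}{2}|A|^2$, so you need $\gamma>\frac{\sqrt{n-1}}{2}$, and this threshold is optimal. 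Indeed, if the principal curvatures at a point are $(-2c,c,c,c,c)$, then $|A|^2=8c^2$ and $\lambda^M_{Ric}=-8c^2$, so the integrand equals $8c^2(\gamma-1)<0$ for every $\gamma<1$. On the other hand, the spectral Bonnet--Myers/K.~Xu obstruction (Theorem \ref{adaptacao do K. Xu dimensional constraints}) requires $\gamma<\frac{4}{n-1}$. The two requirements are compatible exactly when $n\le 4$, i.e.\ ambient dimension at most five. That is X.~Cheng's range, and the paper recovers it by precisely your argument in Theorem \ref{teorema da Xu Cheng}, where the gain $\frac{5-n}{4}H^2$ degenerates at $n=5$; the paper notes that this approach does not extend. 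So your claim that the admissible range of $\gamma$ is what makes the argument close in ambient dimension six is backwards: six is the first dimension where it fails. The diameter bound of Step 3 is therefore unavailable. Had it been available, Theorem \ref{adaptacao do K. Xu dimensional constraints} alone would have finished the proof.

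The paper instead uses the weighted bi-Ricci curvature, which involves two directions and escapes this obstruction. It shows $\lambda_1(-a\Delta+\Lambda^M_\alpha)\ge\delta$ with $a=\frac{11}{10}$ and $\alpha=\frac{40}{43}$ (Proposition \ref{controle espectral do alfa-bi-Ricci em sec nao negativa}). Mazet-type $\mu$-bubbles (Theorem \ref{mu-bubbles em bi-ricci com peso positivo no sentido espectral}) then produce four-dimensional slices whose spectral Ricci bound lies only in the Bishop--Gromov range. This gives an \emph{area} bound $|\Sigma|\le V$, never a diameter bound. Area bounds on a single end are not contradictory, so the contradiction has to be global, and this is where the ingredients you do not use enter. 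First, the Reduction Lemma replaces $M$ by a complete, non-compact, \emph{globally} strongly stable hypersurface with bounded $|A|$ and the same $H$. It relies on curvature estimates from Mazet's stable Bernstein theorem and on the cocompact isometry group of $N\times\mathbb{R}^{6-k}$, which is where closedness of $N$ is used. Second, passing to the universal cover and applying Cheng--Cheung--Zhou makes this hypersurface simply connected with one end. The $\mu$-bubble exhaustion then has connected boundaries of area at most $V$. Infinite volume (Frensel) forces the Cheeger constant to vanish, contradicting Buser's inequality, since $\lambda_1(M)\ge\frac{H^2}{5}>0$ and the curvature is bounded. Your Step 1, which keeps only stability outside a compact set, cannot feed this argument, because finite index alone gives no control of the ends or of $\pi_1$. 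Consistently with this, your proposal never uses the product structure of $X$. The paper reaches general targets with $\sec_X\ge 0$ only under finite-topology hypotheses (Theorem \ref{teorema com hipotese topologia finita em curvatura nao negativa}).
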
                                  
	}

        In 1989 \cite{doCarmoLectureNotes}, M. P. do Carmo asked whether every complete, non-compact, weakly stable CMC hypersurface immersed in Euclidean space is necessarily minimal. Theorem \ref{quarto teorema principal k > 0} shows that do Carmo's question has a positive answer in a more general class of six-dimensional ambient spaces, even if one relaxes the hypothesis of stability to the one of finite index. This extends lower dimensional results (\textit{cf.} the survey of B. Nelli \cite{SurveyNelli}). 
        
        Our proof of Theorem \ref{quarto teorema principal k > 0} is based on the recent solution of the stable Bernstein problem in $\mathbb{R}^6$ by L. Mazet \cite{Mazet} and provides an alternative route to the positive answer to do Carmo's question in the six-dimensional Euclidean space by J. Chen, H. Hong, and H. Li \cite{CHL}.

        We apply Theorem \ref{quarto teorema principal k > 0} to complete the classification of weakly stable CMC hypersurfaces in the six-dimensional space forms of positive curvature.

	{
		\renewcommand{\theteo}{B}
		\begin{coro} \label{corolario de classificacao na esfera}
			    Every complete, two-sided, weakly stable CMC hypersurface immersed in the round sphere $\mathbb{S}^6$ is a geodesic sphere. 
		\end{coro}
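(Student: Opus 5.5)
The plan is to split into the compact and non-compact cases and reduce to known results in each. Let $M$ be a complete, two-sided, weakly stable CMC hypersurface immersed in $\mathbb{S}^6$. Since $\mathbb{S}^6$ is a closed manifold with positive (in particular non-negative) sectional curvature, Theorem \ref{quarto teorema principal k > 0} applies with $k=6$ and $X=N=\mathbb{S}^6$. Weak stability means $\mathrm{Ind}_0(M)=0$, so $M$ has finite index, and the theorem gives that $M$ is either minimal or compact.

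We first rule out the minimal case. If $M$ is minimal (compact or not), the weak stability inequality \eqref{desig de estabilidade} with $Ric(\nu)=5$ reads
\[
\int_M |\nabla\phi|^2 \ge \int_M (5+|A|^2)\phi^2 \quad \text{for all } \phi\in C^\infty_0(M) \text{ with } \int_M\phi=0 .
\]

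If $M$ is compact, we use the linear functions. For a coordinate function $x_i$ of $\mathbb{R}^7$ restricted to $M\subset\mathbb{S}^6$, minimality gives $\Delta x_i = -5x_i$, so $\int_M x_i = 0$. Hence each $x_i$ is an admissible test function, and
\[
\int_M |\nabla x_i|^2 = 5\int_M x_i^2 .
\]
Summing over $i$ in the stability inequality yields $\int_M |A|^2 \le 0$, so $M$ is totally geodesic, an equator, which is a geodesic sphere of radius $\pi/2$. (Alternatively one could just cite the classical fact that equators are unstable in the strong sense while being weakly stable.)

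If $M$ is minimal and non-compact, we use the positive Ricci curvature. A complete weakly stable minimal hypersurface has finite index, and under positive ambient Ricci curvature a standard argument shows it must be compact. Concretely, finite index implies that $M$ is stable outside a compact set, and the inequality with the constant $5$ then forces $M$ to have bounded diameter via a Bonnet--Myers/Schoen--Yau type argument (Fischer-Colbrie). This contradicts non-compactness. I expect this step to be the main subtlety; the cleaner route may be that the theorem's proof already yields compactness when $Ric>0$.

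We are left with $M$ compact with $H\ne 0$. Here we invoke the Barbosa--do Carmo--Eschenburg theorem: a compact weakly stable CMC hypersurface immersed in the round sphere is a geodesic sphere. Its proof uses test functions built from $1$ and the support function $\langle x,\nu\rangle$ combined with the coordinate functions, chosen so that they have mean zero. This concludes the proof, with the main weight carried by Theorem \ref{quarto teorema principal k > 0}.
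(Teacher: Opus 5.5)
Your outline matches the paper's: Theorem~\ref{quarto teorema principal k > 0} with $k=6$ gives compactness when $H\neq0$, and \cite{Barbosa-Manfredo-Eschenburg} then classifies the compact ones. Your coordinate-function argument in the compact minimal case is also correct. The non-compact minimal case, however, has a genuine gap. This is exactly where the paper needs an extra non-trivial input: Theorem~1.2 of \cite{CatinoStable}, see also Theorem~\ref{corolario da extensao do teorema do Catino}.

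Stability outside a compact set gives $\int_M|\nabla f|^2\ge\int_M(5+|A|^2)f^2$. This is only a positive bottom of the spectrum at infinity, which by itself is compatible with non-compactness. The Schoen--Yau rearrangement turns it into spectral positivity of the \emph{scalar} curvature of $M$. Fischer-Colbrie's diameter bound works because for surfaces the scalar curvature is twice the Gauss curvature. For $\dim M=5$, positive scalar curvature gives no diameter bound at all (e.g.\ $\mathbb{S}^4\times\mathbb{R}$).

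Nor does the proof of Theorem~\ref{quarto teorema principal k > 0} help. Its spectral constant is $\delta=\theta H^2$, which vanishes for $H=0$. The minimal slices $\mathbb{S}^k\times\mathbb{R}^{5-k}\times\{t_0\}$ discussed at the end of Section~\ref{section::Manifolds with uniformly positive curvature conditions} show that no such conclusion holds without extra positivity.

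What is missing is a spectral \emph{Ricci} lower bound for $M$ with a small weight, combined with a sharp spectral Bonnet--Myers theorem. Choose $e_1$ with $Ric_M(e_1)=\lambda^M_{Ric}$ and use the Gauss equation with $H=0$:
\begin{equation*}
\gamma\big(Ric(\nu)+|A|^2\big)+\lambda^M_{Ric}=5\gamma+4+\gamma|A|^2-|Ae_1|^2\ \ge\ 5\gamma+4 \quad\text{whenever } \gamma\ge\tfrac45 .
\end{equation*}
The inequality holds because tracelessness gives $\sum_{i\ge2}a_{ii}^2\ge\frac14a_{11}^2$. Hence
\begin{equation*}
\int_M\gamma|\nabla f|^2+\lambda^M_{Ric}f^2\ge(5\gamma+4)\int_Mf^2
\end{equation*}
for all $f$ supported outside a compact set.

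This contradicts non-compactness only through K.~Xu's theorem (Theorem~\ref{adaptacao do K. Xu dimensional constraints}, a $\mu$-bubble argument), which requires $\gamma<\frac{4}{n-1}=1$. The window $\gamma\in[\frac45,1)$ is non-empty in this dimension. This is precisely Theorem~\ref{corolario da extensao do teorema do Catino} with $\alpha=1/\gamma\in(1,\frac54]$, using $BRic_\alpha^{\mathbb{S}^6}=5+4\alpha>0$. Without this step, your proof does not rule out complete non-compact weakly stable minimal hypersurfaces in $\mathbb{S}^6$.
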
                               
	}
        
    	{
		\renewcommand{\theteo}{C}
		\begin{coro} \label{corolario de classificacao no projetivo}
    			    Every complete, two-sided, weakly stable CMC hypersurface immersed in the round real projective space $\mathbb{R}\mathbb{P}^6$ is either a geodesic sphere, or a quotient of a Clifford hypersurface, or the two-fold covering of a projective subspace $\mathbb{R}\mathbb{P}^5$.
		\end{coro}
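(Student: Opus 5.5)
The plan is to reduce Corollary~C to the classification of \emph{compact} weakly stable CMC hypersurfaces in $\mathbb{R}\mathbb{P}^n$. That classification is due to Viana, building on the methods of Barbosa--do Carmo--Eschenburg for the sphere. Its list is exactly geodesic spheres, quotients of Clifford hypersurfaces, and the two-fold covering of a totally geodesic $\mathbb{R}\mathbb{P}^{n-1}$. So the real content is a compactness statement: a complete, two-sided, weakly stable CMC hypersurface $M^5 \looparrowright \mathbb{R}\mathbb{P}^6$ must be compact. I would prove it along the same lines as Corollary~B, which I would in fact prove first. Alternatively one can pass to $\mathbb{S}^6$ through the Riemannian covering $\mathbb{S}^6\to\mathbb{R}\mathbb{P}^6$, since the relevant analytic properties are local.

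\textbf{Step 1 (finite index and curvature estimates).} Weak stability gives $\mathrm{Ind}_0(M)=0$, and hence strong index at most $1$, so $M$ has finite index. Since $\mathbb{R}\mathbb{P}^6$ has bounded geometry, I would run the blow-up argument underlying Theorem~A. Suppose $\sup_M |A| = \infty$ on some region. Rescale around points of almost maximal curvature, using a point-picking lemma. In the limit the ambient space becomes $\mathbb{R}^6$, the rescaled mean curvature tends to $0$, and one obtains a complete, non-flat, two-sided minimal hypersurface in $\mathbb{R}^6$ of finite index that is stable outside a compact set (in fact stable, after choosing the blow-up region away from the index). By Mazet's solution of the stable Bernstein problem in $\mathbb{R}^6$, together with its finite-index extension used in the proof of Theorem~A, the limit is a hyperplane. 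This contradicts the normalisation $|A|=1$ at the origin. Hence $|A|$ is bounded on $M$, outside a compact set, which suffices.

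\textbf{Step 2 (diameter bound).} On $\mathbb{R}\mathbb{P}^6$ we have $Ric(\nu)=5$, so the stability potential satisfies $Ric(\nu)+|A|^2\ge 5$. Suppose $M$ were non-compact. Completeness then gives two disjoint geodesic balls $B_1,B_2$ of a fixed radius $R$, with $R$ larger than a Bonnet--Myers-type threshold $\sim \pi/\sqrt{5}$. Bounded $|A|$ and bounded ambient geometry give uniform volume and Sobolev control on these balls. On each $B_i$ I would build a test function $\phi_i$ with negative energy in \eqref{desig de estabilidade}: for instance a radial cut-off adapted to a Jacobi-type ODE comparison, or the first Dirichlet eigenfunction, using that $\lambda_1(B_i)$ is small compared to $5$ for large $R$. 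Then choose $\phi=a\phi_1+b\phi_2$ with $\int_M\phi=0$. Since the supports are disjoint, $Q(\phi)=a^2Q(\phi_1)+b^2Q(\phi_2)<0$, contradicting weak stability. This forces $M$ to be compact, and Viana's classification finishes the proof.

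\textbf{Main obstacle.} I expect Step~2 to be the delicate point: producing a negative-energy test function on a ball of fixed radius using only the lower bound $5$ on the potential. The first thing I would try is the classical Fischer-Colbrie/Schoen--Yau conformal-change argument, which bounds the intrinsic diameter of stable hypersurfaces with positive potential in low dimensions; this applies on the strongly stable complement of a compact set. If that only controls distances away from the compact set, I would combine it with the finite-index structure from Step~1 to conclude that $M$ has no ends.
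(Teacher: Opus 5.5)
\textbf{The gap is in Step~2.} Your reduction to Viana's classification of compact weakly stable CMC hypersurfaces is the same as the paper's. Step~1 is essentially the curvature estimate of Remark~\ref{remark sobre estimativa de curvatura}. You only need Mazet's theorem for stable hypersurfaces, applied on the strongly stable region $M\setminus B_R(p)$; no finite-index version is involved.

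Step~2 fails as written. Strong stability on $M\setminus B_R(p)$ together with $Ric(\nu)+|A|^2\ge 5$ only says that $\int_M|\nabla\phi|^2\ge 5\int_M\phi^2$ for $\phi$ supported there. That is, the bottom of the spectrum is at least $5$ away from a compact set. This is perfectly compatible with non-compactness: hyperbolic-type manifolds have Dirichlet eigenvalues of arbitrarily large balls bounded below by a positive constant. Nothing you have forces $\lambda_1^D(B_R)<5$ for large $R$. Bounded $|A|$ only gives $Ric_M\ge -K$, with $K$ depending on $\sup|A|$ and $H$. Cheng's comparison then gives $\lambda_1^D(B_R)\le \frac{(n-1)^2}{4}K+o(1)=4K+o(1)$, which need not be below $5$.

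The ``Bonnet--Myers threshold $\pi/\sqrt5$'' has no basis either, since $5$ bounds the stability potential, not $Ric_M$. Note also that the argument as written never uses $n=5$ or the Gauss equation, which is a warning sign. The only way to get compactness is to convert the stability inequality, via the Gauss equation, into intrinsic spectral curvature information on $M$, and this is exactly where the dimension enters. Your fallback does not close the gap. The Schoen--Yau/Fischer-Colbrie conformal argument, and its bi-Ricci extension by Shen--Ye, yields diameter bounds only for $n\le 4$; $n=5$ is precisely where these methods stop working.

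\textbf{How the paper obtains compactness.} It treats the two cases separately.
\begin{itemize}
\item Non-minimal case: it applies Theorem~\ref{quarto teorema principal k > 0} with $X=\mathbb{R}\mathbb{P}^6$ ($k=6$, no Euclidean factor). That proof uses spectral $\alpha$-bi-Ricci control, $\mu$-bubbles and Buser's inequality.
\item Minimal case: it uses Catino--Mastrolia--Roncoroni, or Theorem~\ref{corolario da extensao do teorema do Catino}, since $BRic_\alpha=5+4\alpha>0$.
\end{itemize}

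\textbf{A shorter repair built on your Step~1.} With $|A|\le C$ outside a compact set, combine the Gauss equation $Ric_M(z,z)=4+H\langle Az,z\rangle-|Az|^2$ with X.~Cheng's inequality $H\langle Az,z\rangle-|Az|^2+|A|^2\ge\frac{5-n}{4}H^2=0$. This gives
$\gamma(Ric(\nu)+|A|^2)+\lambda^M_{Ric}\ge 5\gamma+4-(1-\gamma)C^2$,
which is positive for $\gamma<1$ close to $1$. Hence $-\gamma\Delta+\lambda^M_{Ric}$ is uniformly positive outside a compact set. If $M$ were non-compact, this would contradict Theorem~\ref{adaptacao do K. Xu dimensional constraints}, which allows $\gamma<\frac{4}{n-1}=1$. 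This treats the minimal and non-minimal cases at once, but it relies on the positivity of the ambient curvature, unlike Theorem~\ref{quarto teorema principal k > 0}.
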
                               
	}

 Compact weakly stable CMC hypersurfaces immersed in $\mathbb{S}^6$ and $\mathbb{R}\mathbb{P}^6$ were classified by J. L. Barbosa, M. P. do Carmo, and J. Eschenburg \cite{Barbosa-Manfredo-Eschenburg}, and C. Viana \cite{CelsoRPn}. We use Theorem \ref{quarto teorema principal k > 0} to prove compactness in the non-minimal case, and Theorem 1.2 by G. Catino, P. Mastrolia and A. Roncoroni \cite{CatinoStable} in the minimal case (see also Theorem \ref{corolario da extensao do teorema do Catino}).  \\

The ambient Riemannian manifolds $X$ considered in Theorem \ref{quarto teorema principal k > 0} are special in that their isometry group acts cocompactly on $X$, \textit{i.e.} there exists a compact subset $K \subset X$ that intersects all orbits. Our next result concerns the compactness of finite index CMC hypersurfaces immersed in ambient spaces with this geometric property. 

	{
		\renewcommand{\theteo}{D}  
		\begin{teo}  \label{quarto teorema principal k>-1}
            \,Let $X$ be a six-dimensional Riemannian manifold with sectional curvature bounded from below by $-1$. Suppose that the isometry group of $X$ acts cocompactly on $X$. Every complete finite index CMC hypersurface immersed in $X$ with mean curvature $|H| > 7$ is compact.
		\end{teo}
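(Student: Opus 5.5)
The plan is to argue by contradiction. Suppose $M^5 \looparrowright X$ is complete and non-compact, has finite index, and has $|H|>7$. The strategy has three steps.

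\textbf{Step 1: a point far away with a stable neighbourhood.} Finite index gives a compact set $K\subset M$ such that $M\setminus K$ is strongly stable: the stability inequality \eqref{desig de estabilidade} holds for every $\phi\in C^\infty_0(M\setminus K)$, with no mean-zero condition. This is standard, since finite weak index implies finite strong index (the two differ by at most one). Because $M$ is complete and non-compact, there are points $p_i\in M$ with $d_M(p_i,K)\to\infty$. Hence for every $R>0$ we can find a point $p$ such that the intrinsic ball $B_R(p)$ is strongly stable.

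\textbf{Step 2: curvature estimates and a limit.} Next we want a uniform bound on $|A|$ near points far from $K$. I would obtain it by a blow-up argument in the style of Mazet's work and of Chen--Hong--Li. Suppose $\sup|A|$ on $B_{R}(p_i)$ blows up. Rescale, using point-picking. The ambient rescales to flat $\mathbb{R}^6$, because the curvature is bounded: cocompactness together with $\sec\ge -1$ bounds it, and cocompactness also gives a uniform injectivity radius. The rescaled $H$ tends to $0$. So the limit is a complete, two-sided, stable minimal immersion in $\mathbb{R}^6$ with $|A|(0)=1$. This contradicts Mazet's stable Bernstein theorem in $\mathbb{R}^6$.

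Having a bound $|A|\le C$ far from $K$, we translate by isometries. Cocompactness lets us move the $p_i$ into a fixed compact set of $X$. By standard compactness for immersions with bounded second fundamental form, we pass to a limit. This produces a complete CMC hypersurface $M_\infty\subset X_\infty=X$ that is strongly stable on all of itself and has $|H|>7$. Strong stability survives the limit because the balls $B_{R_i}(p_i)$ exhaust, with $R_i\to\infty$.

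\textbf{Step 3: ruling out the limit.} We now need a non-existence result: there is no complete, strongly stable CMC hypersurface with $|H|>7$ in a $6$-manifold with $\sec\ge-1$. This is the main obstacle, and it is presumably where the constant $7$ comes from. Several routes are possible.

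\begin{itemize}
\item One option is a diameter bound of Bonnet--Myers type via conformal or warped $\mu$-bubbles, as in Chodosh--Li--Minter--Stryker. Stability gives $-\Delta-(Ric(\nu)+|A|^2)\ge0$. With $\sec\ge -1$ we have $Ric(\nu)\ge -5$, and by Cauchy--Schwarz $|A|^2\ge H^2/5$ (taking $H$ as the trace). This yields a positive lower bound on $Ric(\nu)+|A|^2$ once $H^2>25$. Combined with the intrinsic Gauss equation $\mathrm{Ric}_M\ge -4+(\text{terms in }H,A)$, a spectral Bonnet--Myers argument bounds the intrinsic diameter, so $M_\infty$ is compact.
\item I would first try the test functions $\phi=|A|^{\alpha}\eta$ or $\phi=\eta$ directly. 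The threshold $|H|>7$ should be exactly what is needed to make the required spectral Ricci lower bound positive in dimension $5$, where the spectral constant is $\tfrac{n-1}{4}$-type.
\end{itemize}

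Compactness of $M_\infty$ then yields the contradiction. The smooth convergence on compact sets means $M_\infty$ is a compact closed component. Hence the pieces of $M$ near $p_i$ close up, which contradicts the assumption that $p_i$ lies deep in the non-compact end of the connected manifold $M$. Alternatively, one can apply the diameter bound directly to $B_R(p_i)$ for $R$ large, which shows the end is bounded and so, by completeness, $M$ is compact.

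The delicate points in Step 3 are two. First, the exact constant $7$ must come out of optimising the Gauss-equation and stability inequalities. Second, the negative ambient curvature must be handled; this is what forces the threshold to be much larger than $0$.
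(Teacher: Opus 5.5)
\textbf{Steps 1 and 2.} These are, in substance, the paper's Reduction Lemma. You get curvature estimates by blowing up against Mazet's stable Bernstein theorem (the rescaled mean curvature tends to $0$). Then translation by isometries and a compactness argument give a complete, non-compact, strongly stable CMC hypersurface $M\subset X$ with the same $|H|>7$.

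\textbf{Step 3 is the gap.} All of the difficulty lies here, and the route you propose cannot work for a $5$-dimensional hypersurface. Spectral Bonnet--Myers (Antonelli--Xu) on a $5$-manifold needs $\lambda_1(-\gamma\Delta+\lambda_{Ric})>0$ with $\gamma<\frac{4}{n-1}=1$. The only way to extract this from stability is a pointwise lower bound for $\gamma\,(Ric_X(\nu)+|A|^2)+\lambda^M_{Ric}$.

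Take $X=\mathbb{H}^6$, which is allowed by the theorem, and a point with principal curvatures $(-H,\frac{H}{2},\frac{H}{2},\frac{H}{2},\frac{H}{2})$. Let $e_1$ be the first principal direction. By the Gauss equation,
\[
\gamma(Ric_X(\nu)+|A|^2)+Ric_M(e_1)=-5\gamma-4+2(\gamma-1)H^2<0 .
\]
This gets \emph{worse} as $|H|$ grows. Even at the excluded endpoint $\gamma=1$, the extrinsic part $|A|^2+H\mu_1-\mu_1^2$ is only bounded below by $\frac14(\mu_1+H)^2$, so there is no gain in $H$. This is exactly the degeneration at $n=5$ of X.~Cheng's inequality $H\langle Az,z\rangle-|Az|^2+|A|^2\ge\frac{5-n}{4}H^2$, which the paper uses only in ambient dimensions $4$ and $5$.

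Your bound $Ric(\nu)+|A|^2\ge-5+\frac{H^2}{5}>0$ gives only $\lambda_1(M)>0$, which says nothing about diameter. So pointwise manipulations with test functions like $\eta$ or $|A|^{\alpha}\eta$ will not give the spectral Ricci bound you need. Nor does the constant $7$ come from a spectral Ricci threshold.

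\textbf{What the paper does instead.} It works with the weighted bi-Ricci curvature of $M$. Take $a=\frac{11}{10}$, $\alpha=\frac{40}{43}$ and write $H=5+\varepsilon$. In $a(Ric(\nu)+|A|^2)+BRic^M_\alpha(e_1,e_2)$, the constant coming from $\sec_X\ge-1$ cancels exactly against the $H^2$ term at $\varepsilon=0$. What remains is:
\begin{itemize}
\item a positive definite quadratic form in $(\varepsilon,\Phi_{11},\Phi_{22})$;
\item the linear term $\frac{1057}{215}\varepsilon$;
\item linear terms in $\Phi_{11},\Phi_{22}$.
\end{itemize}
The condition $\varepsilon>2$ is precisely what lets the $\varepsilon$-term pay for absorbing the $\Phi$-linear terms via Young's inequality. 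This yields $\lambda_1(-a\Delta+\Lambda_\alpha)\ge\delta>0$.

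Even this gives no diameter bound for $M$. The $\mu$-bubbles are $4$-dimensional, and spectral Bishop--Gromov only bounds their volume. So the paper also controls topology:
\begin{itemize}
\item it passes to the universal cover;
\item it uses the Fu--Li one-end theorem, which needs $\varepsilon\ge 10\sqrt{5/11}-5$.
\end{itemize}
This produces an exhaustion by domains $\Omega_j$, each with a single boundary component of uniformly bounded area. Together with infinite volume (Frensel), this forces the Cheeger constant of $M$ to vanish. That contradicts Buser's inequality, since $M$ has bounded curvature and $\lambda_1(M)>0$. Without a substitute for this chain, your Step 3 does not close the argument.
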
                               
	}

    As a corollary, we contribute to the classification of weakly stable CMC hypersurfaces in the six-dimensional hyperbolic space. 
    
     	{
		\renewcommand{\theteo}{E}  
		\begin{coro} \label{corolario de classificacao no espaco hiperbolico}
           Every complete weakly stable CMC hypersurface immersed in the six-dimensional hyperbolic space $\mathbb{H}^6$ with mean curvature $|H| > 7$ is a geodesic sphere.
		\end{coro}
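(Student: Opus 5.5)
Corollary \ref{corolario de classificacao no espaco hiperbolico} follows from Theorem \ref{quarto teorema principal k>-1} together with the classical classification of compact weakly stable CMC hypersurfaces in space forms. The first step is to check that $X=\mathbb{H}^6$ satisfies the hypotheses of Theorem \ref{quarto teorema principal k>-1}. Its sectional curvature is identically $-1$, so it is bounded below by $-1$. Its isometry group acts transitively, hence cocompactly: any single point $K=\{p\}$ meets every orbit.

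The second step is to pass from weak stability to finite index. A weakly stable $M$ has $\mathrm{Ind}_0(M)=0$, since on every precompact domain the quadratic form \eqref{desig de estabilidade} is non-negative on zero-mean test functions. So $M$ is a complete finite index CMC hypersurface in $\mathbb{H}^6$ with $|H|>7$, and Theorem \ref{quarto teorema principal k>-1} shows that $M$ is compact.

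It remains to classify compact weakly stable CMC hypersurfaces immersed in $\mathbb{H}^6$. By Barbosa--do Carmo--Eschenburg \cite{Barbosa-Manfredo-Eschenburg}, such hypersurfaces in hyperbolic space are geodesic spheres. Note that no compact minimal hypersurface exists in $\mathbb{H}^6$, and anyway $|H|>7\neq 0$, so this result applies directly.

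Two-sidedness needs a word. Weak stability is defined for two-sided hypersurfaces, so $M$ is two-sided by assumption. In any case, a hypersurface with $H\neq0$ is automatically two-sided, since the mean curvature vector $\vec H$ gives a global unit normal.

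The main obstacle is concentrated entirely in Theorem \ref{quarto teorema principal k>-1}. The rest is a routine check of hypotheses plus a citation. The only subtlety is to make sure that the cited compact classification covers immersed hypersurfaces and not just embedded ones. It does: Barbosa--do Carmo--Eschenburg treat immersions, using the test function $\phi = n\langle\cdot\rangle$-type (Minkowski formula) argument, which works for immersions and yields total umbilicity.
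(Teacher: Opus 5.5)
Your proposal is correct and follows the paper's route. You verify that $\mathbb{H}^6$ meets the hypotheses of Theorem \ref{quarto teorema principal k>-1} (curvature $\equiv -1$, and a transitive, hence cocompact, isometry group), use weak stability to get $\mathrm{Ind}_0(M)=0$ and hence compactness, and then invoke the Barbosa--do Carmo--Eschenburg classification of compact weakly stable CMC hypersurfaces in hyperbolic space; your remarks on automatic two-sidedness for $H\neq 0$ and on immersions are fine, though the parenthetical description of their test function is garbled and unnecessary.
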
                               
	}

        The proof of Corollary \ref{corolario de classificacao no espaco hiperbolico} shows the non-existence of non-compact hypersurfaces satisfying the hypotheses of the theorem, and extends lower dimensional results (see Appendix \ref{section::Lower dimensions}). The case of compact hypersurfaces was treated by J. L. Barbosa, M. P. do Carmo, and J. Eschenburg \cite{Barbosa-Manfredo-Eschenburg} in every dimension. \\
        
        We say that a CMC hypersurface $M$ is \textit{strongly stable} when \eqref{desig de estabilidade} is satisfied for every $\phi \in C^\infty_0(M)$. In comparison with the weak stability condition, we have dropped the zero mean value condition on the test functions. (For motivations, see for instance the introduction of the work of J. L. Barbosa and M. P. do Carmo \cite{Barbosa-Manfredo} and the work of A. Ros and H. Rosenberg \cite{Ros-Rosenberg}.) Analogous to the definition of \textit{weak index} is the definition of \textit{strong index} for a CMC hypersurface, denoted by Ind$(M)$. Clearly, Ind$_0(M) \le$ Ind$(M) \le $ Ind$_0(M)+1$. 
        
        The horospheres of the hyperbolic space $\mathbb{H}^{n+1}$ are complete strongly stable CMC hypersurfaces with mean curvature $|H| = n$. Thus Corollary \ref{corolario de classificacao no espaco hiperbolico} gives a partial answer to the following question posed by O. Chodosh in his recent survey for the ICM (\textit{cf.} \cite{Chodosh-Survey}, p. 19): If a complete strongly stable CMC hypersurface immersed in the hyperbolic space $\mathbb{H}^{n+1}$ has mean curvature $|H| \ge n$, must it be a horosphere?  This question has a positive answer when $n+1 = 3$, by work of A. da Silveira \cite{AlexandreDaSilveira}.

    It is well known that the classification of strongly stable minimal hypersurfaces in Euclidean space is directly related to curvature estimates for compact strongly stable CMC hypersurfaces $M$, with nonempty boundary, immersed in an ambient space $X$ with well behaved geometry. These estimates concern upper bounds for the second fundamental form of the immersion at an interior point by its distance to the boundary of $M$ and a constant $C$ that depends on the geometry of the ambient space $X$ and the mean curvature of $M$, $C = C(X,H)$. In the seminal work of H. Rosenberg, R. Souam, and E. Toubiana \cite{Rosenberg-Souam-Toubiana}, the authors use the classification of strongly stable CMC hypersurfaces in Euclidean three-space and results from the work of E. Hebey and M. Herzlich \cite{Hebey-Herzlich} to prove that the constant $C$ may be chosen to depend only on bounds on the sectional curvature of $X$. We have extended this result to higher dimensions.

    	{
		\renewcommand{\theteo}{F}
            \begin{teo} \label{estimativa de curvatura mais geral possivel na intro}
                Let $X$ be a complete Riemannian manifold of dimension $3 \le n+1 \le 6$. Suppose that $X$ has bounded sectional curvature $|sec_X| \le K$, for some positive real number $K$. Then there exists a constant $C=C(K) > 0$, that depends only on $K$, with the following property. Any compact, connected, two-sided and strongly stable CMC hypersurface $M$ with nonempty boundary immersed in $X$ satisfies that
                \begin{equation*}
                    |A(p)|\min \{ d_M(p,\partial M), 1 \} \le C, \quad \quad \forall\, p \in M.
                \end{equation*}
    \end{teo}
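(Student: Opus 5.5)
We argue by contradiction with a blow-up, following Rosenberg--Souam--Toubiana. If the estimate fails, then for fixed $K$ there are ambient manifolds $X_j$ with $|sec_{X_j}|\le K$ and compact, connected, two-sided, strongly stable CMC hypersurfaces $M_j$ with mean curvatures $H_j$ and points $p_j\in M_j$ such that $|A_j(p_j)|\min\{d_{M_j}(p_j,\partial M_j),1\}\to\infty$. We then run the standard point-picking argument on $f_j(q)=|A_j(q)|\min\{d_{M_j}(q,\partial M_j),1\}$ on the compact $M_j$. After replacing $p_j$ by a maximum point of $f_j$, and replacing $M_j$ by the intrinsic ball of radius $r_j=\tfrac12\min\{d(p_j,\partial M_j),1\}$, we get $\lambda_j:=|A_j(p_j)|\to\infty$, $\lambda_j r_j\to\infty$, and $|A_j|\le 2\lambda_j$ on that ball. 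Stability passes to subdomains, so this replacement is harmless.

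Next we rescale the ambient metric by $\lambda_j^2$. The rescaled manifolds $\tilde X_j=(X_j,\lambda_j^2 g_j)$ have $|sec|\le K\lambda_j^{-2}\to0$. The rescaled hypersurfaces satisfy $|\tilde A_j|\le 2$ and $|\tilde A_j(p_j)|=1$, have intrinsic radius $\lambda_j r_j\to\infty$, have constant mean curvature $\tilde H_j=H_j/\lambda_j$, and remain strongly stable, since the stability form scales correctly. Stability controls $H$: the test-function argument in the stability inequality with $|A|^2\ge nH^2$ and $Ric\ge -nK$ gives, on a geodesic ball of radius $R$, the bound $nH^2\lesssim R^{-2}+nK$. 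Applied in the rescaled picture, this forces $\tilde H_j\to0$. We still need a limit, but we have no lower bound on the injectivity radius of $X_j$. To get around this, we work in the tangent space: we pull back $\tilde g_j$ by $\exp_{p_j}$ on a ball of radius $\pi/(2\sqrt{K}\lambda_j^{-1})\to\infty$, where the exponential map is a local diffeomorphism by the curvature bound, and we lift $M_j$ locally there. Hebey--Herzlich harmonic coordinates then give $C^{1,\alpha}$ convergence of the pulled-back metrics to the Euclidean metric on compact sets. Because the second fundamental forms are uniformly bounded, each $M_j$ is locally a graph of uniform size with elliptic CMC/minimal-type equations. Schauder theory, with $C^{1,\alpha}$ metrics being enough for the $C^{2,\alpha}$ graph estimates, then yields subsequential convergence to a complete immersed minimal hypersurface $\Sigma\subset\mathbb{R}^{n+1}$ with $|A_\Sigma(0)|=1$ and $|A_\Sigma|\le2$.

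Finally, $\Sigma$ is strongly stable, because the stability inequality passes to the limit: $Ric$ and $\tilde H$ terms go to zero and $|\tilde A_j|^2\to|A_\Sigma|^2$ locally uniformly. It is also two-sided, as a limit of two-sided pieces with converging normals. By the stable Bernstein theorem in dimensions $n+1\le 6$ (Fischer-Colbrie--Schoen and do Carmo--Peng for $n+1=3$, Chodosh--Li for $4$, Chodosh--Li--Minter--Stryker for $5$, and Mazet for $6$), $\Sigma$ is a hyperplane. This contradicts $|A_\Sigma(0)|=1$.

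The main obstacle is the convergence step. With only two-sided bounds on the sectional curvature and no injectivity radius control, we need the Hebey--Herzlich $C^{1,\alpha}$ harmonic radius estimates on the pulled-back metrics. We then have to check that the regularity is enough to pass the CMC equation and the stability inequality to the limit, where the $Ric$ term requires only $L^\infty$ control. We must also make sure the limit is a complete, unbounded immersion rather than something degenerate. If needed, we would handle this by passing to the universal cover of the tangent ball and using the intrinsic completeness that $\lambda_j r_j\to\infty$ provides.
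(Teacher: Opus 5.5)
Your blow-up scheme (point-picking, rescaling, lifting through $\exp$ to avoid injectivity-radius issues, Hebey--Herzlich harmonic coordinates, graphical Schauder estimates, passing stability to the limit) matches the paper's. The gap is the step where you claim that stability forces $\tilde H_j\to 0$, which is what lets you use only the \emph{minimal} stable Bernstein theorems.

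\textbf{Why the step fails.} Testing the stability inequality with a cutoff $\phi$ that equals $1$ on $B_{R/2}$, vanishes outside $B_R$, and has $|\nabla\phi|\le 2/R$ gives only
\[
\frac{H^2}{n}-nK\;\le\;\frac{\int|\nabla\phi|^2}{\int\phi^2}\;\le\;\frac{4}{R^2}\,\frac{|B_R|}{|B_{R/2}|}\qquad(H=\mathrm{tr}\,A).
\]
This is $\lesssim R^{-2}$ only under volume doubling, or equivalently $\lambda_1(B_R)\lesssim R^{-2}$, and you have no such control.

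\begin{itemize}
\item In the rescaled picture you only know $|\tilde A_j|\le 2$. The Gauss equation then gives $Ric_{\tilde M_j}\ge -c(n)$.
\item Cheng's comparison then yields $\lambda_1(B_R)\le c'(n)+O(R^{-2})$. This only says $|\tilde H_j|$ is bounded, which you already had from $|\tilde H_j|\le\sqrt n\,|\tilde A_j|$.
\item No cutoff argument can do better. A complete strongly stable CMC hypersurface in $\mathbb{R}^{n+1}$ with $H\neq 0$ satisfies $\lambda_1\ge H^2/n>0$ by the same inequality; the paper uses exactly this fact. If such an argument worked, it would settle the strongly stable case of do Carmo's question in every dimension.
\end{itemize}

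\textbf{Consequence and fix.} Nothing in the statement bounds $|H|$. When $|H_j|$ is comparable to $\lambda_j=|A_j(p_j)|$, your limit is a complete, two-sided, strongly stable CMC hypersurface $\Sigma\subset\mathbb{R}^{n+1}$ with $H_\Sigma\neq0$, $|A_\Sigma|\le 2$ and $|A_\Sigma(0)|=1$. The minimal Bernstein theorems you cite do not exclude it.

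To close the argument, pass to a subsequence with $\tilde H_j\to H_\infty\in[0,2\sqrt n]$. When $H_\infty\neq 0$, invoke the classification of complete strongly stable non-minimal CMC hypersurfaces in Euclidean space:
\begin{itemize}
\item da Silveira \cite{AlexandreDaSilveira} for $n+1=3$;
\item Cheng \cite{XuCheng} and Elbert--Nelli--Rosenberg \cite{Elbert-Nelli-Rosenberg} for $n+1=4,5$;
\item Chen--Hong--Li \cite{CHL} for $n+1=6$.
\end{itemize}
This is what the paper does. Without these results, your argument proves only the weaker statement in Remark~\ref{remark sobre estimativa de curvatura}. There one assumes $|H|\le H_+$ and gets $C=C(K,H_+)$, and $\tilde H_j=H_j/\lambda_j\to 0$ is then automatic.
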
                          
	}

    Theorem \ref{estimativa de curvatura mais geral possivel na intro} was proved for minimal hypersurfaces immersed in four-dimensional manifolds in the work of O. Chodosh, C. Li, and D. Stryker \cite{Chodosh-Li-Stryker-4mfds}, which settled a conjecture of R. Schoen.

\subsection{Main ideas} \label{subsection::Some ideas behind the proofs of the main theorems}

         The proof of Theorem \ref{quarto teorema principal k > 0} is by contradiction. Suppose that $M'$ is a complete, non-compact, non-minimal finite index CMC hypersurface immersed in $X$. As we go towards an end of $M'$ we construct a sequence of strongly stable intrinsic metric balls of $M'$, $B(p_k,r_k)$, with increasing radii $r_k \to +\infty$. We denote by $\psi$ the immersion of $M'$ into $X$ and study the sequence of immersions $\psi_k:B(p_k,r_k) \to X$. The symmetries of the ambient space are used to translate, without renaming, each $\psi_k$ so that they intersect a fixed compact subset of $X$. The idea is to pass to a subsequence and construct a limit immersion $\psi_\infty:M \to X$ (\textit{cf.} the \textit{Reduction Lemma}, Theorem \ref{argumento de redução em homogeneas}). To this end, we use the recent solution to the stable Bernstein problem in $\mathbb{R}^6$ by L. Mazet \cite{Mazet} to derive curvature estimates for the immersions $\psi_k$. The produced immersion is a non-compact, complete CMC immersion with the same mean curvature as $M'$, but it is strongly stable and has bounded second fundamental form.
                     
                 Next, we study the geometry and topology of $M$. For instance, it is known that it must have infinite volume. Moreover, the reduction to the strongly stable case allows for topological simplifications: we may pass to the universal cover and assume it is simply connected, and we can guarantee it has only one end.
                 
                 The theory of $\mu$-bubbles can be used to further restrict the geometry of $M$ in the following way. After some computations with the stability inequality of $M$, we prove that $M$ has uniformly positive $\alpha$-bi-Ricci curvature in a weak sense, a notion introduced in the work of L. Mazet \cite{Mazet}. This curvature condition, together with the simplified topology of $M$, allows us to construct an exhaustion $\Omega_j$ of $M$ by smooth precompact domains so that each $\Omega_j$ has exactly one boundary component $\Sigma_j$, and each $\Sigma_j$ has area bounded from above by a constant that depends only on $M$, and not on $j$ (\textit{cf.} Theorem \ref{Aplicacao à geometria Riemanniana}). 
                 
                 This geometric property of $M$ and the fact that it has infinite volume clearly guarantee that its Cheeger constant is zero. But this contradicts P. Buser's inequality (\cite{Buser}, Theorem 7.1), because the strong stability inequality of $M$ readily implies that $\lambda_1(M)>0$. 

                     The proof of Theorem \ref{quarto teorema principal k>-1} follows a similar strategy. The main difference is that, under the negative lower bound on the ambient sectional curvature, both the estimates on the number of ends of a non-compact strongly stable CMC hypersurface (\textit{cf}. Theorem \ref{unicidade de fim quando curvature é limitada inferiormente em X6}) and the estimates on the area of $\mu$-bubbles under positive $\alpha$-bi-Ricci weak lower bounds (\textit{cf}. Proposition \ref{controle do bi-Ricci com peso da CMC estavel em ambiente de curvatura negativa}) require the mean curvature to satisfy an inequality of the form $|H|>5+\varepsilon$, for some explicit $\varepsilon>0$. While the choice $\varepsilon =2$ works,  it is unclear if one could achieve the desired lower bound $|H| >5$ using the current methods (see more about that in Remark \ref{remark sobre os fins das cmc no hiperbolico} and Remark \ref{OBS sobre curvature critica otima em H6}).

\subsection{Related works}

                    Our work was strongly influenced by several previous studies, which we now highlight. H. Rosenberg, R. Souam, and E. Toubiana \cite{Rosenberg-Souam-Toubiana} investigated curvature estimates for CMC surfaces. O. Chodosh, C. Li, and D. Stryker \cite{Chodosh-Li-Stryker-4mfds} studied stable minimal hypersurfaces in positively curved four-manifolds. L. Mazet \cite{Mazet} classified stable minimal hypersurfaces in Euclidean six-space. Finally, J. Chen, H. Hong, and H. Li \cite{CHL} provided a positive answer to do Carmo's question in the six-dimensional Euclidean space.
                    
                 The work of L. Mazet \cite{Mazet} was particularly important to our work because the main theorem in \cite{Mazet} is used in the proof of the \textit{Reduction Lemma} and in the proof of Theorem \ref{estimativa de curvatura mais geral possivel na intro}. Moreover, his work was influential to our study of $\mu$-bubbles in positive weighted bi-Ricci curvature (in Section \ref{section:: mu-bubbles and weighted bi-Ricci curvature}), and in the way we estimate the curvature of CMC hypersurfaces in spectral sense (in Proposition \ref{controle espectral do alfa-bi-Ricci em sec nao negativa} and Proposition \ref{controle do bi-Ricci com peso da CMC estavel em ambiente de curvatura negativa}) by reducing the problem to the study of the positive definiteness of some quadratic forms.

                     To prove Theorem \ref{estimativa de curvatura mais geral possivel na intro} we follow the strategy developed in the work of H. Rosenberg, R. Souam, and E. Toubiana \cite{Rosenberg-Souam-Toubiana}, taking advantage of the recent advances in the classification of strongly stable CMC hypersurfaces in low dimensional Euclidean spaces (see \cite{Chodosh-Li-PrimeiraProva-R4}, \cite{CatinoStable},  \cite{Chodosh-Li-R4-mu-bubbles}, \cite{Chodosh-Li-Minter-Stryker}, \cite{Mazet}, \cite{CHL} and Theorem \ref{quarto teorema principal k > 0}). In fact, we have combined their ideas with those presented in the proof of Lemma 2.4 by O. Chodosh, C. Li, and D. Stryker \cite{Chodosh-Li-Stryker-4mfds} and the Main Theorem by E. Hebey and M. Herzlich  \cite{Hebey-Herzlich} (see Theorem \ref{teorema de convergencia intrinseca}).

                  In \cite{Mazet}, the author shows that if $M$ is a two-sided, complete, strongly stable minimal hypersurface immersed in Euclidean six-space, then after a conformal change of metric on (an open subset of) $M$ one arrives at a Riemannian manifold $N$ with uniformly positive weighted bi-Ricci curvature in \textit{spectral sense} (so to verify the hypotheses of Theorem \ref{mu-bubbles em bi-ricci com peso positivo no sentido espectral}). An interesting observation of the work of J. Chen, H. Hong, and H. Li \cite{CHL} is that this conformal change of metric is unnecessary to reach the same conclusion if the strongly stable CMC hypersurface is non-minimal. This observation is important to our work, because the conformal factor used in \cite{Mazet} depends on the Euclidean geometry, and it is not clear how to adapt it to more general ambient spaces. 

                 The strategy used in \cite{CHL} for their positive answer to do Carmo's question in six-dimensional Euclidean space also proceeds by contradiction, ultimately leading to a contradictory finite-volume conclusion. However, they follow a different route from ours, in particular because they study finite index CMC hypersurfaces directly. Their proof makes interesting use of the work of S. Brendle \cite{BrendleIsop-curvatura-naonegativa} to show that a complete, two-sided, finite index CMC hypersurface immersed in six-dimensional Euclidean space must have finitely many ends. To this end, they establish a Sobolev inequality on such hypersurfaces, again relying on \cite{BrendleIsop-curvatura-naonegativa}. This inequality is used there to bound the volume of a domain by that of a compact region outside it, and then a volume comparison argument for hypersurfaces is presented to yield the desired finite-volume contradiction.
                 
                 Three main difficulties arise when attempting to generalize the work of J. Chen, H. Hong, and H. Li \cite{CHL} to more general ambient spaces. First, we lack results to control the number of ends of a finite index CMC hypersurface. Second, one must estimate the curvature of CMC hypersurfaces in a spectral sense in more general ambient spaces and deduce from it volume bounds for $\mu$-bubbles. Third, the work of S. Brendle \cite{BrendleIsop-curvatura-naonegativa} is not applicable.
                 
                 To overcome these difficulties, we have followed a different strategy. First, we have used L. Mazet's \cite{Mazet} resolution of the stable Bernstein problem in the Euclidean six-space in order to reduce the study of finite index CMC hypersurfaces to the study of strongly stable ones, for which there are tools to control the number of ends. In this process, we also prove curvature bounds for strongly stable CMC hypersurfaces. Second, we have verified the desired curvature estimates in spectral sense (see Propositions \ref{controle espectral do alfa-bi-Ricci em sec nao negativa} and \ref{controle do bi-Ricci com peso da CMC estavel em ambiente de curvatura negativa}) and studied the volume estimates for $\mu$-bubbles in generality (see Theorem \ref{mu-bubbles em bi-ricci com peso positivo no sentido espectral}). Third, we have used a theorem of P. Buser \cite{Buser} (\textit{cf.} \cite{Buser}, Theorem 7.1) to show that the Cheeger constant of the CMC hypersurfaces we study is positive. This gives the desired tool to control the volume of a region by the volume of its boundary, and deduce from the uniform upper bound for the volume of the $\mu$-bubbles an upper bound for the volume of the CMC hypersurface, so to derive a contradiction. 

\subsection{Structure of the paper}
    
                 As we develop the tools to prove Theorems \ref{quarto teorema principal k > 0}, \ref{quarto teorema principal k>-1} and \ref{estimativa de curvatura mais geral possivel na intro} we prove several other results concerning the structure and existence of complete, non-compact, finite index CMC hypersurfaces immersed in six dimensional manifolds under various extra assumptions.

                The main goal of Section \ref{section::compactness arguments} is to prove Theorem \ref{estimativa de curvatura mais geral possivel na intro} and the \textit{Reduction Lemma} alluded to in Section \ref{subsection::Some ideas behind the proofs of the main theorems}. The section is divided into four subsections. In Section \ref{subsection:: A convergence result for non-compact Riemannian manifolds}, we recall a classical result concerning the intrinsic convergence of Riemannian manifolds, with minor adaptations for our purposes. Section \ref{subsection:: Curvature estimates for CMC hypersurfaces} contains the proof of Theorem \ref{estimativa de curvatura mais geral possivel na intro}, along with two direct corollaries to the study of finite index CMC hypersurfaces. In Section \ref{subsection:: A compactness result for non-compact CMC hypersurfaces}, we establish a convergence result for non-compact CMC hypersurfaces, which plays a key role in the proof of the \textit{Reduction Lemma}. Finally, Section \ref{subsection:: Applications} presents two applications: the proof of the \textit{Reduction Lemma} and a result on finite index CMC hypersurfaces properly immersed in asymptotically flat Riemannian manifolds.
                
                The main goal of Section \ref{section::The isoperimetric inequality for finite index CMC hypersurfaces} is to prove Proposition \ref{Garante isoperimetrica}, which will be used in Section \ref{subsubsection:: additional results} to prove Theorem \ref{teorema sobre as de crescimento euclideano de volume} and thus answer a question of do Carmo within a certain class of six-dimensional Riemannian manifolds with non-negative sectional curvature and Euclidean volume growth. Proposition \ref{Garante isoperimetrica} concerns the validity of the isoperimetric inequality on finite index CMC hypersurfaces immersed in these ambient manifolds, and also applies in higher dimensions.
                
                In Section \ref{section::Ends and submanifolds theory} we address the problem of estimating the number of ends of stable CMC hypersurfaces. The main goal is to prove Theorem \ref{Vanishing para 1-forma harmonica}, which generalizes Theorem 0.1 in the work of X. Cheng, L.-F. Cheung, and D. Zhou \cite{Detang-XuCheng-Cheung}. As sketched in Section \ref{subsection::Some ideas behind the proofs of the main theorems}, their result plays a key role in the proof of Theorem \ref{quarto teorema principal k > 0}.  
                
                In Section \ref{section:: mu-bubbles and weighted bi-Ricci curvature} we study $\mu$-bubbles in manifolds with uniformly positive \textit{spectral control} on the $\alpha$-bi-Ricci curvature. The main results of this section are Theorems \ref{mu-bubbles em bi-ricci com peso positivo no sentido espectral} and \ref{Aplicacao à geometria Riemanniana}, which play a key role in the proofs of Theorems \ref{quarto teorema principal k > 0} and \ref{quarto teorema principal k>-1}. 
                
                The main purpose of Section \ref{section:MainResults} is to present the proofs of Theorems \ref{quarto teorema principal k > 0} and \ref{quarto teorema principal k>-1}. This section is divided into two subsections: Section \ref{subsection:: When the ambient manifold has non-negative sectional curvature} deals with ambient manifolds with non-negative sectional curvature, whereas Section \ref{subsection:: When the ambient manifold has a lower bound on the sectional curvature} concerns ambient manifolds whose sectional curvature is bounded from below by a negative constant. Each of these subsections is further divided into two parts: the proofs of Theorems \ref{quarto teorema principal k > 0} and \ref{quarto teorema principal k>-1} are presented in Sections \ref{subsubsection:: proof of theorem A} and \ref{subsubsection:: proof of theorem B}, respectively, while the remaining parts are devoted to additional results on the compactness of finite index CMC hypersurfaces in different classes of six-dimensional ambient spaces. These additional results are Theorems \ref{teorema com hipotese topologia finita em curvatura nao negativa}, \ref{CMC de indice finito em ambiente com curvatura limitada por baixo}, and \ref{teorema sobre as de crescimento euclideano de volume}. The first two concern hypersurfaces with finite topology in a broad class of ambient manifolds.
                    
                The objective of Section \ref{section::Manifolds with uniformly positive curvature conditions} is to further study the case where the ambient manifold satisfies an uniformly positive curvature condition. In this context, we study the compactness of minimal hypersurfaces with finite index and prove Theorem \ref{corolario da extensao do teorema do Catino}. This slightly generalizes Theorem 1.2 by G. Catino, P. Mastrolia and A. Roncoroni \cite{CatinoStable} with different methods. The proof is based in a sharp result of K. Xu, adapted in Theorem \ref{adaptacao do K. Xu dimensional constraints}.
    
                    In the Appendix \ref{section::Lower dimensions}, we show that Theorem \ref{adaptacao do K. Xu dimensional constraints} recovers results from the work of X. Cheng \cite{XuCheng} and Q. Deng \cite{Qintao} on the compactness of complete finite index CMC hypersurfaces immersed in Riemannian manifolds $X^{n+1}$, where $n+1 \in \{4,5\}$. However, this approach does not appear to yield further extensions. We take the opportunity to present results that belong to the current state of the art concerning the study of the compactness of stable and finite index CMC hypersurfaces immersed in hyperbolic spaces of low dimensions. In particular, we give an alternative proof for the result of H. Hong \cite{HongH4} on finite index CMC hypersurfaces in $\mathbb{H}^4$, showing that our $\mu$-bubble approach also works in lower dimensions.

	\subsection*{Acknowledgments} 
    	This work is part of the author's Ph.D. thesis at IMPA. I am grateful to my advisor, Lucas Ambrozio, for his constant encouragement and advice, and to Mario Micallef, Luis Florit, Rafael Montezuma, Ben Sharp, and Detang Zhou for their kind interest in this work. I thank Gilles Carron for a valuable email exchange. I also thank my colleagues Luciano L. Junior and Mateus Spezia for helpful conversations. This study was financed in part by the Coordenação de Aperfeiçoamento de Pessoal de Nível Superior - Brasil (CAPES) – Finance Code 001.

    \section{Convergence results and curvature estimates for CMC hypersurfaces} \label{section::compactness arguments}

    In order to study curvature estimates for CMC hypersurfaces in this section, we first fix the notation and conventions. A constant mean curvature (CMC) hypersurface $M$ immersed in a Riemannian manifold $X$ is a submanifold for which the mean curvature vector has constant length. Recall that $M$ is \textit{strongly stable} if and only if  \eqref{desig de estabilidade} holds for every $\phi \in C^{\infty}_0(M)$. In \eqref{desig de estabilidade}, $Ric(\nu)$ denotes the Ricci curvature of the ambient space computed at the unit normal vector to $M$ denoted by $\nu$, and $A$ denotes the second fundamental form of $M$. We use the convention that $A = \langle \nabla_\text{--} \nu, \text{--}\rangle$ and define $H$ as the trace of $A$, so that the round sphere $\mathbb{S}^n(1) \subset \mathbb{R}^{n+1}$ has constant mean curvature $H=n$ with the choice of unit vector field $\nu$ pointing outwards of the unit ball. Finally, Proposition 2.3 in \cite{Barbosa-Manfredo-Eschenburg} provides the proof of equivalent definitions for the notion of weak stability.

\subsection{A convergence result for non-compact Riemannian manifolds} \label{subsection:: A convergence result for non-compact Riemannian manifolds}

    The following theorem is the main result of this subsection, and concerns the intrinsic convergence of Riemannian manifolds, in the pointed Cheeger-Gromov sense. The proof of the theorem is similar to the proof of the Main Theorem in \cite{Hebey-Herzlich}, and we will indicate the necessary modifications.

    \begin{teo}[\textit{cf.} \cite{Hebey-Herzlich}, Main Theorem] \label{teorema de convergencia intrinseca}
        Let $\big(M_m,g_m\big)_{m \ge 1}$ be a sequence of compact, connected, smooth $n$-dimensional Riemannian manifolds with boundary, and let $(p_m)_{m \ge 1}$ be a sequence of points $p_m \in M_m$.
        
        Suppose that $n \ge 2$ and the following properties are satisfied.

        \begin{enumerate}
            \item $d_{g_m}(p_m,\partial M_m) \to +\infty$.
            \item $\exists \, i_0>0$ such that $ \forall \,R>0$, there exists $m_R \in \mathbb{N}$ for which: if $m \ge m_R$, then 
            \begin{itemize}
                \item $d_{g_m}(p_m, \partial M_m)>R+2i_0$
                \item $\forall \,q \in B(p_m,R+i_0) \subset M_m$ the injectivity radius of $(M_m,g_m)$ at $q$ is bounded from below by $i_0$.
            \end{itemize} 
            \item There exist $k \in \mathbb{N} \cup \{0,+\infty\}$ and positive real numbers $(K_j)_{0 \le j \le k}$ such that $$|\nabla^j Ric_{M_m}| \le K_j$$ for every $0 \le j \le k$ and every $m\ge 1$. 
        \end{enumerate}
        
        Then, for any $\beta \in (0,1)$, there exist a connected non-compact $C^{k+2,\beta}$ $(C^\infty$ if $k = +\infty)$ $n$-dimensional manifold $M_\infty$ without boundary, a point $p_\infty \in M_\infty$ and a $C^{k+1,\beta}_{loc}$ $($possibly not $C^{k+2}$, but $C^\infty$ if $k=+\infty)$ complete Riemannian metric $g_\infty$ in $M_\infty$, such that the following holds, possibly after passing to a subsequence:
        
        Given a precompact open subset $\Omega$ of $M_\infty$, and given $R>0$ such that $\Omega$ is a precompact subset of $B(p_\infty,R)$, there exist $L \in \mathbb{N}$ and a sequence of $C^{k+2,\beta}$ $( C^\infty$ if $k=+\infty)$ embeddings $(\phi_m)_{m \ge L}$ from $B(p_\infty,R)$ onto a neighborhood of $p_m \in M_m$, such that: 
        
        \begin{itemize}
            \item $\phi_m^{-1}(p_m)$ converges to $p_\infty$,
            \item the sequence of $C^{k+1,\beta}$ $( C^\infty$ if $k=+\infty)$ Riemannian metrics $\phi_m^* g_m$ on $\Omega$ converges in the $C^{k+1,\beta}$ $( C^\infty$ if $k=+\infty)$ topology to $g_\infty$.
        \end{itemize}  
    \end{teo}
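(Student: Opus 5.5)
The plan follows the Hebey--Herzlich scheme: build uniform harmonic coordinates on balls of a fixed size, then glue them into a limit by a Gromov-type diagonal argument. The only real change is that the $M_m$ have boundary and the geometric control holds only on $B(p_m,R+i_0)$ for $m\ge m_R$, not globally.

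\emph{Step 1: harmonic radius estimate.} I will prove the local version of Anderson's harmonic radius estimate as used in \cite{Hebey-Herzlich}. Fix $Q>1$ and $\beta\in(0,1)$. There is $r_0=r_0(n,Q,\beta,i_0,K_0,\dots,K_k)>0$ such that for every $R$, every $m\ge m_R$ and every $q\in B(p_m,R)$, there are harmonic coordinates on $B(q,r_0)$ with
\[Q^{-1}\delta\le g_{ij}\le Q\delta, \qquad \|g_{ij}\|_{C^{k+1,\beta}}\le C.\]
The argument is by contradiction and blow-up. If the $C^{1,\beta}$ harmonic radius at some $q_m$ tends to zero, rescale so that it equals $1$. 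The rescaled metrics have $|Ric|\to0$, injectivity radius $\to\infty$, and distance to the boundary $\to\infty$, where the last point uses $d(p_m,\partial M_m)>R+2i_0$. A subsequence converges in $C^{1,\beta'}$ to a complete Ricci-flat manifold with infinite injectivity radius; by Bishop--Gromov rigidity this is $\mathbb{R}^n$. That contradicts the lower semicontinuity of the harmonic radius.

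All balls in this argument lie inside $B(p_m,R+i_0)$, which is exactly where hypothesis (2) gives control, and they stay away from $\partial M_m$. Once $C^{1,\beta}$ control is in hand, the higher estimates follow from elliptic regularity of $\Delta g_{ij}=-2Ric_{ij}+Q(g,\partial g)$ together with the bounds $|\nabla^jRic|\le K_j$. I expect this localization to be the main obstacle: one must make sure that every point visited by the blow-up and the covering has its injectivity-radius and boundary-distance bounds available. For this I will take a margin $i_0$ and work in $B(p_m,R)$ while using the control on $B(p_m,R+i_0)$.

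\emph{Step 2: construction of the limit.} For each integer $R$, I will take a maximal $r_0/4$-separated set in $B(p_m,R)$. Volume comparison, via the Ricci lower bound and the injectivity radius bound, bounds its cardinality uniformly in $m$. By Arzelà--Ascoli and a diagonal subsequence, the following converge:
\begin{itemize}
\item the centres, after reordering;
\item the transition maps between overlapping harmonic charts, in $C^{k+2,\beta'}$;
\item the metric coefficients, in $C^{k+1,\beta'}$.
\end{itemize}
Gluing the limit charts, exactly as in Hebey--Herzlich, produces a $C^{k+2,\beta}$ manifold $M_\infty$ with a $C^{k+1,\beta}$ metric $g_\infty$. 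The maps $\phi_m$ are then built by the centre-of-mass patching of the chart maps. Since $\beta<1$ is arbitrary, I will interpolate between $\beta$ and some $\beta'>\beta$ to obtain convergence in the $C^{k+1,\beta}$ topology.

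\emph{Step 3: completeness and non-compactness.} The limit is complete because, for every $R$, the ball $B(p_\infty,R)$ is the limit of balls $B(p_m,R)$ lying at distance more than $2i_0$ from $\partial M_m$. Each such ball is therefore covered by finitely many charts of uniform size, so its closure is compact. Non-compactness follows because $d(p_m,\partial M_m)\to\infty$ and $M_m$ is connected. Together these give points at every distance $R$ from $p_m$, hence points at distance $R$ from $p_\infty$ for every $R$.
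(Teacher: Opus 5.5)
This is essentially the paper's route. You get uniform harmonic coordinates on $B(p_m,R)$ using the margin $i_0$; the paper cites Theorem 6 of Hebey--Herzlich at this point, where you sketch Anderson's blow-up. Then come the Hebey--Herzlich gluing and elliptic bootstrapping of $\frac12\Delta g_{ij}+P(g,\partial g)=-Ric_{ij}$ for higher regularity. For $k=+\infty$ the paper also builds the limit independently of the regularity level and passes to a diagonal subsequence, which you should make explicit.

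Two small corrections to your blow-up sketch:
\begin{itemize}
\item The Ricci-flat limit with infinite injectivity radius is $\mathbb{R}^n$ by the Cheeger--Gromoll splitting theorem, since every geodesic is a line. Bishop--Gromov rigidity alone does not give this.
\item The blow-up points must be chosen by the usual point-picking (minimizing $r_H(x)/d(x,\partial B(p_m,R+i_0))$), so that the rescaled harmonic radius stays bounded below on large balls.
\end{itemize}
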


       Recall that a \textit{harmonic coordinate} $\psi: \Omega \subset (M^n,g) \to \mathbb{R}^n$ is a chart on which every coordinate function $\psi^\theta$ satisfies $\Delta_g \psi^\theta =0$. The following lemma is an important ingredient in the proof of Theorem \ref{teorema de convergencia intrinseca}.

    \begin{lema}[\textit{cf.} \cite{Hebey-Herzlich}, Proposition 12] \label{proposição que constroi o limite}
     Let $\big(M_m,g_m\big)_{m \ge 1}$ be a sequence of compact, connected, smooth $n$-dimensional Riemannian manifolds with boundary, and let $(p_m)_{m \ge 1}$ be a sequence of points $p_m \in M_m$.

        Suppose that $n \ge 2$ and the following conditions are satisfied.

        \begin{enumerate}
            \item $d_{g_m}(p_m,\partial M_m) \to +\infty$.
  \item There exists $\lambda \in \mathbb{R}$ such that $Ric_{(M_m,g_m)} \ge \lambda$, $\forall\, m\in\mathbb{N}$.
            \item There exist real numbers $r>0$, $Q>1$ and $\alpha \in (0,1)$, and an integer $k\ge1$, with the following properties: given $R>r$, there exists $L \in \mathbb{N}$ such that 
            \begin{enumerate}[label=(\alph*)]
                \item If $m \ge L$, then $d(p_m,\partial M_m) > 2R$.
                \item For every sequence of points $(y_m)_{m \ge L}$, satisfying $y_m \in B(p_m,R) \subset M_m$, there are harmonic charts $\psi_m : \Omega_m \to B_0(r)$, where $\Omega_m$ is some open neighborhood of $y_m$ in $M_m \backslash \partial M_m$ and $B_0(r)$ is the Euclidean ball of $\mathbb{R}^n$ with center $0$ and radius $r$. Moreover,
                \begin{itemize}
                    \item these harmonic chart satisfy $Q^{-1} \cdot\delta_{ij} \le ( (\psi^{-1}_m)^*g_m)_{ij} \le Q \cdot \delta_{ij}$ as quadratic forms,
                    \item a subsequence of $((\psi^{-1}_m)^*g_m)$ converges in $C^{k,\alpha}(B_0(r))$.
                \end{itemize}
            \end{enumerate}

        \end{enumerate}

        Then, for any $\beta \in (0,\alpha)$, there exist a connected non-compact $C^{k+2,\beta}$ $n$-dimensional manifold $M_\infty$ without boundary, a point $p_\infty \in M_\infty$ and a $C^{k+1,\beta}_{loc}$ $($possibly not $C^{k+2})$ complete Riemannian metric $g_\infty$ in $M_\infty$, such that the following holds, possibly after passing to a subsequence:
        
        Given a precompact open subset $\Omega$ of $M_\infty$, and given $R>0$ such that $\Omega$ is a precompact subset of $B(p_\infty,R)$, there exist $L \in \mathbb{N}$ and a sequence of $C^{k+2,\beta}$ embeddings $(\phi_m)_{m \ge L}$ from $B(p_\infty,R)$ onto a neighborhood of $p_m \in M_m$, such that: 
        
        \begin{itemize}
            \item $\phi_m^{-1}(p_m)$ converges to $p_\infty$,
            \item the sequence of $C^{k+1,\beta}$ Riemannian metrics $\phi_m^* g_m$ on $\Omega$ converges in the $C^{k+1,\beta}$ topology to $g_\infty$.
        \end{itemize}  
    \end{lema}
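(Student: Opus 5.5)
This is a pointed Cheeger--Gromov compactness statement built from uniformly controlled harmonic charts. I would follow the construction of Proposition 12 in \cite{Hebey-Herzlich}, changing only what the boundary forces: every argument is run inside the balls $B(p_m,R)$, which for $m \ge L(R)$ stay a distance $R$ away from $\partial M_m$ by hypothesis (3)(a). The plan has three steps: build the limit on each fixed ball, pass to a diagonal subsequence as $R\to\infty$, and check completeness and the convergence of the pulled-back metrics.

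\textbf{Step 1: the limit on a fixed ball.} Fix $R>r$ and $\rho>0$ small compared with $r/\sqrt Q$. For $m\ge L(2R)$, pick a maximal $\rho$-separated set $\{x_m^1,\dots,x_m^{N_m}\}$ in $B(p_m,R)$. Since $d(p_m,\partial M_m)>4R$, geodesic balls of radius up to $2R$ around points of $B(p_m,R)$ lie in the interior. There, Bishop--Gromov with $Ric\ge\lambda$ holds, so the number $N_m$ is bounded by some $N(R,\rho,\lambda,n)$ independent of $m$. After passing to a subsequence, $N_m\equiv N$. For each point, hypothesis (3)(b) gives harmonic charts $\psi_m^i:\Omega_m^i\to B_0(r)$ centred at $x_m^i$. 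The condition $Q^{-1}\delta\le g\le Q\delta$ guarantees that $\psi_m^i$ contains a metric ball of radius $r/\sqrt Q$, so the charts cover $B(p_m,R)$, and the overlaps are controlled. The transition maps $\psi_m^j\circ(\psi_m^i)^{-1}$ are harmonic maps between $C^{k,\alpha}$-convergent metrics. Elliptic Schauder estimates therefore bound them in $C^{k+1,\alpha}$; in fact, because harmonic coordinates solve $\Delta_g u=0$ with $C^{k,\alpha}$ coefficients, we gain one more derivative and get $C^{k+2,\alpha}$ bounds. By Arzel\`a--Ascoli and a further subsequence, all metrics and transition maps converge in $C^{k+1,\beta}$, respectively $C^{k+2,\beta}$, for any $\beta<\alpha$, and so does the combinatorial overlap pattern. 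Gluing the limit charts along the limit transition maps produces a $C^{k+2,\beta}$ manifold $M_\infty^R$ with a $C^{k+1,\beta}$ metric $g_\infty$. The maps $\phi_m$ are then obtained by patching the $(\psi_m^i)^{-1}\circ\psi_\infty^i$ with a partition of unity and a centre-of-mass construction; this is Hebey--Herzlich's argument, using that the transitions converge.

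\textbf{Step 2: diagonal argument.} I would repeat Step 1 for $R=R_1<R_2<\cdots\to\infty$, passing at each stage to a further subsequence and choosing the nets so that they are nested. A diagonal subsequence then gives compatible isometric inclusions $M_\infty^{R_j}\subset M_\infty^{R_{j+1}}$, and $M_\infty$ is defined as their union, with $p_\infty$ the limit of the charts containing $p_m$.

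\textbf{Step 3: completeness and non-compactness.} Completeness holds because $B(p_\infty,R)$ is the limit of $B(p_m,R)$, which is relatively compact; the uniform $Q$-bounds make the distances comparable. Non-compactness follows from $d(p_m,\partial M_m)\to\infty$: points at distance close to $R$ from $p_m$ exist for every $R$ (join $p_m$ to the boundary by a path), and they survive in the limit. The convergence $\phi_m^*g_m\to g_\infty$ on precompact $\Omega\Subset B(p_\infty,R)$ then holds by construction.

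I expect the main obstacle to be Step 1, specifically making the approximate charts into genuine embeddings $\phi_m$ of $B(p_\infty,R)$ with $\phi_m^*g_m\to g_\infty$ in $C^{k+1,\beta}$. The regularity gain of the transition maps must be tracked carefully, and the boundary of $M_m$ must provably never enter the construction. On the second point, (3)(a) together with the choice $L(2R)$ handles it, since every chart used lies in $B(p_m,2R)$.
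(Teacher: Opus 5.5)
Your outline matches the paper's proof, which simply defers to Proposition 12 of Hebey--Herzlich and Anderson's sketch. The ingredients are the same: harmonic-chart nets, Bishop--Gromov counting, gluing along limits of the transition maps, and working inside $B(p_m,2R)$ so that (3)(a) keeps $\partial M_m$ out of the construction.

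\textbf{The gap.} In Step 1 you assert that the metrics converge in $C^{k+1,\beta}$. Hypothesis (3)(b) only says the chart metrics $(\psi_m^{-1})^*g_m$ converge, hence are bounded, in $C^{k,\alpha}$. Arzel\`a--Ascoli then gives $C^{k,\beta}$ convergence and nothing more.

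The Schauder gain you invoke applies to the transition maps, whose components solve $g^{ab}\partial_a\partial_b u=0$ with $C^{k,\alpha}$ coefficients. It does not apply to the metric coefficients. In harmonic coordinates these satisfy $\tfrac12\Delta g_{ij}+P(g,\partial g)=-Ric_{ij}$, so a derivative gain for $g$ would need control of $Ric$ beyond $Ric\ge\lambda$.

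This cannot be fixed under the hypotheses as written. For $n=2$, the Euclidean coordinates are harmonic for $e^{2f}(dx^2+dy^2)$. Take $k\ge 2$ and a periodic $f\in C^{k,\alpha'}\setminus C^{k+1}$ with $\alpha'>\alpha$, mollified to smooth $f_m$ on discs of radius $m$. With translated Euclidean coordinates as charts, this satisfies (1)--(3). Yet any limit as in the conclusion is isometric to $(\mathbb{R}^2,e^{2f}(dx^2+dy^2))$, whose metric in harmonic coordinates is no better than $e^{2f}\notin C^{k+1}$. So the claimed $C^{k+1,\beta}$ regularity and convergence of $g_\infty$ are out of reach.

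\textbf{The fix.} The statement is off by one index. What the paper actually feeds into the lemma, in the proof of Theorem~\ref{teorema de convergencia intrinseca}, are charts on which $g$ is $C^{k+1,\alpha}$ $Q$-controlled, followed by Arzel\`a--Ascoli in $C^{k+1,\alpha'}$. Read (3)(b) with $C^{k+1,\alpha}$ in place of $C^{k,\alpha}$ and your Step 1 works as written. The metrics converge in $C^{k+1,\beta}$, and the transition maps are bounded in $C^{k+3,\alpha}$. The patched $\phi_m$ then satisfy $\phi_m^*g_m\to g_\infty$ in $C^{k+1,\beta}$, and Steps 2 and 3 go through unchanged. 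Alternatively, keep $C^{k,\alpha}$ and weaken the conclusion to $C^{k,\beta}$ regularity and convergence of the metrics.

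\textbf{A smaller point.} You use charts centred at $y_m$ to get $B(y_m,r/\sqrt Q)\subset\Omega_m$. However, (3)(b) only says $\Omega_m$ is some neighbourhood of $y_m$. The normalisation $\psi_m(y_m)=0$ must be added to the hypothesis; it does hold in the paper's application.
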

    
    \begin{proof}
        The proof follows from an adaptation of the proof of Proposition 12 in \cite{Hebey-Herzlich}. We remark that the main steps of the proof of Proposition 12 in \cite{Hebey-Herzlich} are sketched in the proof of Lemma 2.1 by M. T. Anderson \cite{M.Anderson-compacidade}.
    \end{proof}

    The existence of harmonic coordinates with prescribed control on the radii of the Euclidean ball contained in its image, and on the metric coefficients on these coordinates, is the content of Theorem 6 in \cite{Hebey-Herzlich}. This result will be used to verify the hypotheses of Lemma \ref{proposição que constroi o limite} in the proof of Theorem \ref{teorema de convergencia intrinseca}. For the ease of readability we use the following definition: given a real number $Q>1$ and a coordinate system $(x^1,\dots,x^n)$ of a Riemannian manifold, its Riemannian metric tensor $g$ is $C^{k+1,\alpha}$ \textit{$Q$-controlled} when the components $g_{ij}$ of $g$ satisfy 
    \begin{equation}
        Q^{-1} \cdot \delta_{ij} \le g_{ij} \le Q \cdot \delta_{ij} \quad\text{ as quadratic forms,} \label{Q desigualdade das formas quadraticas}
    \end{equation}    
    \begin{equation} \label{controle sobre a metrica}
        \sum_{1 \le |\beta| \le k+1} \sup_{y \in U} |\partial^\beta g_{ij}(y)| + \sum_{|\beta|=k+1} \sup_{y \neq z} \frac{|\partial^\beta g_{ij}(y) - \partial^\beta g_{ij}(z)|}{d_g(y,z)^\alpha} \le Q.
    \end{equation}

    \begin{proof}[Proof of Theorem \ref{teorema de convergencia intrinseca}]  

        First, we prove the theorem when $k < +\infty$. Fix $ \bar \alpha \in (\beta,1)$.  
        
        We apply Theorem 6 in \cite{Hebey-Herzlich} to find real numbers $Q>1$ and $\tilde r>0$, which depend only on the constants $n$, $\alpha$, $i_0$, $K_j$, $0\le j \le k$, and satisfy the following property. Given $R>r$, where $r:=\tilde r/ \sqrt{Q}$, we may find $L>0$ such that: if $m \ge L$, then  
        \begin{itemize}
            \item $d_{g_m}(p_m,\partial M_m) > 2R$;
            \item $d_{g_m}(p_m, \partial M_m)>R+2i_0$;
            \item For every point $q \in B(p_m,R+i_0) \subset M_m$ the injectivity radius of $(M_m,g_m)$ at $q$ is bounded from below by $i_0$;
            \item Moreover, given any sequence $(y_m)_{m \ge L} \in B(x_m,R) \subset M_m$, there are harmonic charts $\tilde \psi_m : B(y_m,\tilde r) \to U_m \subset \mathbb{R}^n$ on which $g$ is $C^{k+1,\alpha}$ $Q$-controlled.
        \end{itemize}
         Note that $U_m$ contains the ball $B(0,r)$. 
        Then we restrict $\tilde \psi_m$ to define $\psi_m: \Omega_m \to B(0,r)$ as a diffeomorphism. By Arzelà-Ascoli's compactness theorem, we know that for any $\alpha' \in (\beta,\bar \alpha)$, a subsequence of the metrics converges $C^{k+1,\alpha'}$ in any of these charts. Therefore, we can apply Lemma \ref{proposição que constroi o limite} to conclude the proof of Theorem \ref{teorema de convergencia intrinseca} in this case.

        Now, suppose that $k=+\infty$. Fix some integer $k_* > 0$. Arguing as above, we find a harmonic chart $\psi_m: \Omega_m \to B(0,r) \subset \mathbb{R}^n$ of $M_m$ around $y_m$, on which we have $C^{k_*+1,\alpha}$ control over $g$. Since $M_m$ is smooth, and $\psi_m$ is harmonic, we find that $\psi_m$ is a smooth chart. 
        
        Now, by Lemma 11.2.6 in \cite{petersen}, we have
        \begin{equation} \label{EDP para a metrica}
            \frac{1}{2} \Delta g_{ij} + P(g,\partial g)= - Ric_{ij}
        \end{equation}
        \noindent in this chart. Here $P$ is a universal rational expression where the numerator is polynomial in the matrix $g$ and quadratic in $\partial g$ (that is, in the first derivatives of $g$ in this chart), while the denominator depends only on $\sqrt{\det g_{ij}}$. An explicit expression for $P$ is computed in the proof of Lemma 11.2.6 in \cite{petersen}. 
        
        Since $|\nabla^{j} Ric| \le K_j$ for every $0 \le j \le k_*+1$, we conclude that $\Delta g_{ij} \in C^{k_*,\alpha}_{loc}$ in the chart $\psi_m$, uniformly in $m$. A bootstrap argument, using the control on the Ricci curvature and the PDE \eqref{EDP para a metrica}, then shows that for every integer $l\ge 0$, we have $C^{l,\alpha}_{loc}$ control for $g$ in the chart $\psi_m$, uniformly in $m$.

        Given an integer $l \ge 0$, as in the first part of the proof of this theorem, we apply Lemma \ref{proposição que constroi o limite} to obtain a connected, complete, $C^{l+2,\beta}$ pointed manifold $(M_\infty,p_\infty)$ and a $C^{l+1,\beta}_{loc}$ (possibly not $C^{k+2}$) Riemannian metric $g_\infty$ in $M_\infty$, such that the following holds. Given an open and precompact $\Omega \subset \subset M_\infty$, and given $R>0$ such that $\Omega \subset \subset B(p_\infty,R)$, there exists $m_R \in \mathbb{N}$ such that for every $m \ge m_R$, there is a $C^{l+2,\beta}$ embedding $\phi_m : B(p_\infty,R) \to M_m$ onto a neighborhood of $p_m \in M_m$, such that $\phi_m^{-1}(p_m)$ converges to $p_\infty$, and moreover the sequence of $C^{l+1,\beta}$ Riemannian metrics $\phi_m^* g_m$ on $\Omega$ converges in the $C^{l+1,\beta}$ topology to $g_\infty$. 

        Now, we note that, because of the uniform $C^{l,\alpha}_{loc}$ control for $g$ in the chart $\psi_m$, chosen using the fixed $k_*$ as before, the construction of $(M_\infty,g_\infty)$ in the proof of Lemma \ref{proposição que constroi o limite} can be made so that it does not depend on $l$. Moreover, the embeddings defined in the proof of Lemma \ref{proposição que constroi o limite} can be taken smooth, and passing to a subsequence, using a diagonal argument, we can guarantee the desired smooth convergence.
    \end{proof}

\subsection{Curvature estimates for CMC hypersurfaces} \label{subsection:: Curvature estimates for CMC hypersurfaces}

    In this subsection, we prove curvature estimates for stable CMC hypersurfaces immersed in low dimensional manifolds, and derive consequences to the study of the compactness of finite index CMC hypersurfaces. 
    
    The main result of this subsection is Theorem \ref{estimativa de curvatura mais geral possivel na intro}, which we state again as follows:

\begin{teo} \label{estimativa de curvatura mais geral possivel}
                Let $X$ be a complete Riemannian manifold of dimension $3 \le n+1 \le 6$. Suppose that $X$ has bounded sectional curvature $|sec_X| \le K$, for some positive real number $K$. Then there exists a constant $C=C(K) > 0$, that depends only on $K$, with the following property. Any compact, connected, two-sided and strongly stable CMC hypersurface $M$ with nonempty boundary immersed in $X$ satisfies that
                \begin{equation*}
                    |A(p)|\min \{ d_M(p,\partial M), 1 \} \le C, \quad \quad \forall\, p \in M.
                \end{equation*}
    \end{teo}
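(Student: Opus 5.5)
We argue by contradiction and blow up, following the scheme of Rosenberg--Souam--Toubiana. Suppose no such $C$ exists. Then there are:
\begin{itemize}
\item complete ambients $X_m$ with $|sec_{X_m}|\le K$;
\item compact, connected, two-sided, strongly stable CMC hypersurfaces $M_m$ with mean curvature $H_m$, immersed by $\psi_m$;
\item points $q_m\in M_m$ with $|A_m(q_m)|\min\{d(q_m,\partial M_m),1\}\to\infty$.
\end{itemize}

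\textbf{Step 1: uniform bound on $H$.} Pick a point where $|A|$ is largest relative to the boundary. A first observation is that $|H_m|$ is bounded in terms of $K$. Strongly stable CMC hypersurfaces have an a priori bound on $|H|$ once $Ric_X\ge -nK$ and the hypersurface contains a large intrinsic ball. If the balls are small, the blow-up below already handles it, since $H$ rescales to zero or to a finite limit.

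\textbf{Step 2: point picking and rescaling.} Set $f_m(p)=|A_m(p)|\,d(p,\partial M_m)$ restricted to $B(q_m,1)$, or simply use $\min\{d,1\}$. Choose $p_m$ maximizing $f_m$ and let $\lambda_m=|A_m(p_m)|\to\infty$. The standard halving argument gives $|A_m|\le 2\lambda_m$ on $B(p_m,r_m/2)$, where $\lambda_m r_m\to\infty$.

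Rescale both $X_m$ and $M_m$ by $\lambda_m$. The rescaled ambients $(X_m,\lambda_m^2 g)$ have $|sec|\le K/\lambda_m^2\to 0$. On intrinsic balls of radius tending to infinity, the rescaled hypersurfaces satisfy
\[
|\tilde A_m|\le 2,\qquad |\tilde A_m(p_m)|=1,\qquad \tilde H_m=H_m/\lambda_m\to 0 .
\]

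\textbf{Step 3: convergence of the ambients.} To take limits without an injectivity radius bound on $X_m$, we work in the pulled-back setting. Compose the exponential map of $X_m$ at $\psi_m(p_m)$ with the rescaling; this is a local diffeomorphism on a Euclidean ball of radius of order $\lambda_m/\sqrt K\to\infty$. Lift the immersion through it, possible since $M_m$ is simply connected locally; alternatively, pass to the universal cover of intrinsic balls. The pulled-back ambient metrics have curvature tending to zero, and they have a uniform injectivity radius lower bound at the centre, by Hebey--Herzlich/Cheeger. Theorem \ref{teorema de convergencia intrinseca} together with harmonic coordinates then gives $C^{1,\beta}$ convergence to flat $\mathbb{R}^{n+1}$. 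Only $C^0$ sectional curvature control is available, hence only $C^{1,\beta}$ ambient convergence; this is the main technical obstacle. The convergence is nevertheless enough for the CMC equation to pass to the limit in $C^{2,\beta}$ after writing the hypersurfaces as uniform graphs, which the bound $|\tilde A_m|\le2$ permits.

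\textbf{Step 4: the limit and the contradiction.} The intrinsic metrics on the hypersurfaces have bounded Ricci curvature via Gauss. They also have injectivity radius bounded below, via local graph representations over balls of uniform size. Hence Theorem \ref{teorema de convergencia intrinseca} applies, or directly the graph compactness does. We obtain a complete, two-sided, immersed minimal hypersurface of $\mathbb{R}^{n+1}$ with $|A|\le 2$ and $|A(p_\infty)|=1$. It is strongly stable: the stability inequality is scale invariant, and the term $Ric(\nu)$ tends to $0$.

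For $3\le n+1\le 6$, the stable Bernstein theorem (Mazet in dimension $6$, and the earlier works for $n+1=3,4,5$) says the limit is a hyperplane. Thus $A\equiv 0$, contradicting $|A(p_\infty)|=1$.

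The delicate points are twofold: justifying smooth enough convergence of the ambient metrics under only $|sec|\le K$, and the bound on $H$. If Step 1 fails, we instead rescale and get a limit with $\tilde H_m\to\tilde H_\infty$ possibly nonzero. In that case we would invoke the classification of complete strongly stable CMC hypersurfaces in $\mathbb{R}^{n+1}$, $n+1\le6$, which must be minimal (Chen--Hong--Li, or Theorem \ref{quarto teorema principal k > 0} with $k=0$), and hence planar.
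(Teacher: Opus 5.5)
Your architecture is the paper's: contradiction, point-picking for $|A|\min\{d,1\}$, pulling back through $\exp_q$ to get a uniform $C^{1,\alpha}$ harmonic chart, rescaling by $\lambda_m$, uniform graphs with Schauder estimates, the Gauss equation with Lemma~\ref{controle na segunda forma controla raio de injetividade v2} and Theorem~\ref{teorema de convergencia intrinseca} for intrinsic convergence, and a strongly stable limit in $\mathbb{R}^{n+1}$ with $|A(p_\infty)|=1$. However, Step~1 is a genuine gap. No independent a priori bound on $|H|$ of the kind you invoke is available. The statement that a strongly stable CMC hypersurface containing a large intrinsic ball has $|H|\le c(K)$ is Corollary~\ref{existence da curvatura media critica}, which the paper \emph{derives} from the theorem you are proving. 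Applied to rescalings of a flat ambient space, it already implies that there are no complete strongly stable non-minimal CMC hypersurfaces in $\mathbb{R}^{n+1}$; for $n+1=6$ this is the Chen--Hong--Li theorem. Strong stability alone only gives $\lambda_1\ge H^2/n-nK$ on domains of $M$. Turning that into a bound on $|H|$ in terms of ball radii needs geometric control of $M$ that you do not have at that stage. The remark that small balls are ``handled by the blow-up'' misses the dangerous case, namely $|H_m|$ growing at the same rate as $\lambda_m$ (note $\lambda_m\ge|H_m|/\sqrt n$ always); then the limit is not minimal. Using only the minimal stable Bernstein theorem, your argument proves the weaker Remark~\ref{remark sobre estimativa de curvatura}, with $C=C(K,H_+)$ under the extra hypothesis $|H|\le H_+$.

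The repair is your last paragraph, which should be the argument rather than a fallback; it is exactly what the paper does. Since $|H_m|\le\sqrt n\,|A_m|$, the rescaled curvatures satisfy $|\tilde H_m|\le\sqrt n$, so a subsequence converges. The limit is then a complete, two-sided, strongly stable CMC hypersurface of $\mathbb{R}^{n+1}$ with $|A(p_\infty)|=1$. You then need the classification of strongly stable CMC hypersurfaces in every dimension, not only $n+1=6$:
\begin{itemize}
\item $n+1=3$: \cite{AlexandreDaSilveira};
\item $n+1=4,5$: \cite{XuCheng}, \cite{Elbert-Nelli-Rosenberg} for the non-minimal case, and \cite{Chodosh-Li-R4-mu-bubbles}, \cite{Chodosh-Li-Minter-Stryker} for the minimal case;
\item $n+1=6$: \cite{Mazet}, \cite{CHL}.
\end{itemize}

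Separately, your justification for lifting through $\exp_q$ does not work. On $B(0,\pi/\sqrt K)$, $\exp_q$ is a local diffeomorphism but not a covering, so simple connectivity gives no lift. The universal cover of an intrinsic ball also need not have bounded radius, so lifts cannot be controlled by path length. The paper instead takes a component $\tilde M$ of $(\psi\times\exp_q)^{-1}(\Delta)$ and shows directly that geodesics from $\tilde p$ exist in $\tilde M$ up to length $d_M(p,\partial M)$. Because $\tilde M\to M$ is only a local isometry, strong stability of $\tilde M$ must be obtained by lifting a positive Jacobi function, following Fischer-Colbrie--Schoen \cite{fischer-colbrie--schoen}; your proposal omits this step.
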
   

    The proof will require the following lemma. 

    \begin{lema} \label{controle na segunda forma controla raio de injetividade v2}
         Let $V = B(0,R)$ be a Euclidean ball of radius $R>0$ in $\mathbb{R}^{n+1}$, and suppose that $g$ is a smooth Riemannian metric on $V$ with the following properties:
        \begin{enumerate}
            \item There exists $K>0$ such that $|sec_{(V,g)}| \le K$;
            \item There exists $Q>1$ such that $g$ is $C^{1,\alpha}$ $Q$-controlled in Euclidean coordinates.
        \end{enumerate}
        
        Let $M$ be a compact smooth hypersurface with nonempty boundary immersed in $V$ such that $|A|_g(p) \le C$, $\forall \, p \in M$, for a constant $C>0$.
        
        Then, given a real number $\delta>0$, there exists a constant $i_0 = i_0(C,Q,K,n,R,\delta)$ with the following property: if $\Omega \subset M$ is an open subset, and $\Omega(2\delta) := \{p \in M: d_M(p,\Omega) < 2\delta\}$ is precompact in $M \backslash \partial M$, then the injectivity radius of $M$ at each point $q \in \Omega$ is bounded from below by $i_0$.  
    \end{lema}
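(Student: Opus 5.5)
Our plan is to use Klingenberg's lemma: the injectivity radius of $M$ at $q$ is at least the minimum of $\pi/\sqrt{K_M}$ (where $K_M$ bounds the intrinsic sectional curvature of $M$) and half the length of the shortest geodesic loop based at $q$. Since $\partial M\neq\emptyset$, we apply it locally. The argument is carried out inside the ball $B_M(q,2\delta)$, which is precompact in $M\setminus\partial M$ by hypothesis, and every radius we use is at most $\delta$.

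First we bound the intrinsic curvature. By the Gauss equation, $|sec_M|\le K+2|A|^2\le K+2C^2=:K_M$. Hence for $r_0:=\min\{\delta,\pi/(2\sqrt{K_M})\}$ the exponential map $\exp_q$ is defined on the tangent ball of radius $r_0$, because geodesics from $q$ of length $<2\delta$ stay away from $\partial M$. By Rauch comparison it is moreover a local diffeomorphism there. It therefore suffices to show that there is no geodesic loop at $q$ of length $<2i_0$ for a suitable $i_0\le r_0$.

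To exclude short loops we use the extrinsic control. Condition (2) says $Q^{-1}\delta\le g\le Q\delta$ with $C^{1,\alpha}$ bounds, so Christoffel symbols are bounded by a constant depending only on $Q$. A unit-speed $M$-geodesic $\gamma$ has ambient acceleration $\nabla^g_{\dot\gamma}\dot\gamma=A(\dot\gamma,\dot\gamma)\nu$, which has norm $\le C$. In Euclidean coordinates this reads $\ddot\gamma=-\Gamma(\dot\gamma,\dot\gamma)+A(\dot\gamma,\dot\gamma)\nu$, so $|\ddot\gamma|_{\mathrm{eucl}}\le C_1(C,Q)$ while $|\dot\gamma|_{\mathrm{eucl}}\in[Q^{-1/2},Q^{1/2}]$. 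Taylor expansion then gives, for $t\le t_0(C,Q)$,
\[
|\gamma(t)-\gamma(0)|_{\mathrm{eucl}}\ \ge\ Q^{-1/2}t-\tfrac12 C_1t^2\ \ge\ \tfrac12 Q^{-1/2}t>0 .
\]
So a geodesic loop in $M$ of length $\ell\le t_0$ would have to return to its starting point in $\mathbb{R}^{n+1}$, which is impossible.

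One subtlety is that $M$ is only immersed. A return of $\gamma$ to the same image point is not yet a loop in $M$, but a genuine loop in $M$ does return in $V$, so the argument above already rules out genuine loops. We set $i_0=\min\{r_0,t_0/2\}$, which depends only on $(C,Q,K,n,\delta)$ (and $R$ only through the domain). The main obstacle is making Klingenberg's argument rigorous when $M$ is incomplete. We plan to handle it by running the standard proof on $B_M(q,2\delta)$: there the closed ball of radius $i_0$ is compact and geodesics of length $\le 2i_0$ from points within $i_0$ of $q$ stay inside the ball. Then failure of injectivity of $\exp_q$ on the $i_0$-ball, together with the absence of conjugate points, produces a geodesic loop of length $<2i_0\le t_0$, contradicting the estimate above.
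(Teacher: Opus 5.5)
Your proposal is correct and follows essentially the same route as the paper: the Gauss equation and Rauch comparison rule out conjugate points, and cut-locus (Klingenberg) theory reduces the problem to geodesic loops based at $q$. The $C^{1,\alpha}$ control then turns the bound on $A$ into a Euclidean curvature bound that prevents short loops from closing up. The only differences are cosmetic. You exclude short loops by a second-order Taylor estimate on the Euclidean displacement, whereas the paper shows that a closed Euclidean curve has total curvature at least $\pi/2$, because its unit tangent cannot stay in the open hemisphere centered at its initial direction. You are also more explicit than the paper about running Klingenberg's argument locally, away from $\partial M$.
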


    \begin{proof}
        By Gauss equation, we can bound the sectional curvature of $M$. Using Rauch's theorem, we bound from below, for each $q \in \Omega$, the time for first conjugate point along a unit speed geodesic that starts at $q$. By cut-locus theory, it is enough to bound from below the length of geodesic loops $c : [0,a] \to M$ that are based at some point $q \in \Omega$. 
        
        Every geodesic loop in $M$ is a loop with bounded total curvature in $\mathbb{R}^n$ with respect to the Riemannian metric $g$. Due to the $C^{1,\alpha}$ control on the metric, we can bound the total curvature of these loops with respect to the canonical Euclidean metric by a constant that depends only on $C$, $Q$, $K$, $n$, $R$ and $l_g(c)$. Finally, we will prove that $ \int_c |c''|_{can} ds_{can} \ge \frac{\pi}{2}$, and this will be enough to bound from below $l_g(c)$ by a constant depending only on $C$, $Q$, $K$, $n$ and $R$. 

        We change the notation. Suppose that $\gamma$ is a loop in $M$, parametrized with unit speed with respect to the canonical metric. Let $ \gamma':[0,a] \to \mathbb{S}^{n}$ denote the velocity map of $ \gamma$. 

        Note that there cannot exist $v \in \mathbb{R}^{n+1}$ such that $\langle \gamma',v \rangle > 0$ on $[0,a]$, because
        \begin{equation*}
             \int_0^a \langle  \gamma',v \rangle \,dt = \langle \gamma,v \rangle \big|_0^a =0.
        \end{equation*}

        It follows that $\int_\gamma |\gamma''|ds_{can} \ge \frac{\pi}{2}$. Otherwise, $\gamma'$ would be contained in the hemisphere of $\mathbb{S}^{n}$ determined by $\gamma'(0)$.
    \end{proof}

    \begin{proof}[Proof of Theorem \ref{estimativa de curvatura mais geral possivel}]
       The argument is by contradiction. Suppose that there exists a sequence of complete Riemannian manifolds $(X_i,g_i)$, all with the same dimension $3 \le n+1 \le 6$, all with bounded sectional curvature $|sec|\le K$, and suppose that there exists a sequence of compact, connected, smooth Riemannian $n$-manifolds $M_i$ with nonempty boundary, and isometric immersions $\psi_i:M_i\to X_i$ with constant mean curvature, such that each $M_i$ is two-sided and strongly stable and satisfies
        \begin{equation*}
            |A_i(p_i)| \cdot \min \{ d_{M_i}(p_i,\partial M_i), 1 \} > i
        \end{equation*}
    
        \noindent for some $p_i \in M_i$. Here $i$ denotes a positive integer. 
       
       We will construct a complete, two-sided, non-compact, strongly stable CMC hypersurface immersed in $\mathbb{R}^{n+1}$ which is not an affine hyperplane. When $n+1=6$, this will contradict the main theorems in \cite{Mazet} and \cite{CHL}. When $n+1 \in \{4,5\}$, this will contradict the main theorems in \cite{XuCheng}, \cite{Elbert-Nelli-Rosenberg}, \cite{Chodosh-Li-R4-mu-bubbles}, and \cite{Chodosh-Li-Minter-Stryker}. When $n+1 = 3$, this will contradict the main theorem in \cite{AlexandreDaSilveira} (as in \cite{Rosenberg-Souam-Toubiana}).

        Note that $|A_i(p_i)| \to +\infty$ and $p_i \in M_i \backslash \partial M_i$. We can assume, without loss of generality, by modifying $M_i$ if necessary, that $d_{M_i}(p_i,\partial M_i) \to 0$, $p_i \in M_i \backslash \partial M_i$ maximizes the product $|A_i(q)| \cdot d_{M_i}(q,\partial M_i)$ for $q \in M_i$ and $|A_i(p_i)|\cdot d_{M_i}(p_i,\partial M_i) \to + \infty$ as $i \to \infty$. 

        To simplify the notation, fix an integer $i \ge 0$ and let $(M,p,h) := (M_i,p_i,\psi_i^{*}g_i)$, $\psi := \psi_i$, and $(X,q,g) := (X_i,\psi_i(p_i),g_i)$. By Lemma 2.2 in \cite{Rosenberg-Souam-Toubiana}, the injectivity radius of $\overline{B}(\vec{0}, \frac{\pi}{4\sqrt{K}})$ in $(B(\vec{0},\frac{\pi}{\sqrt{K}}),\exp_q^*g)$ is at least $\frac{\pi}{4\sqrt{K}}$. Note that $(B(\vec{0},\frac{\pi}{\sqrt{K}}),\exp_q^*g)$ has bounded sectional curvature $|sec| \le K$. Fix $\alpha \in (0,1)$. Using Theorem 6 in \cite{Hebey-Herzlich}, we find that there exists real numbers $Q>1$ and $r>0$, such that $Q$ and $r$ depend only on $n$, $\alpha$ and $K$, so that there exists a harmonic chart $(U,\varphi,\mathcal{B}(\vec{0},r))$, where $U$ denotes an open subset of $\mathbb{R}^n$ containing the origin and $\mathcal{B}(\vec{0},r)$ is the geodesic ball in $(B(\vec{0},\frac{\pi}{\sqrt{K}}),\exp_q^*g)$, centered at $\vec{0}_q$ and of radius $r$, with $\varphi(0) = \vec{0}_q$, and such that the metric tensor $\varphi^*\exp_q^*g$ is $C^{1,\alpha}$ $Q$-controlled. We can assume $r< \min \{1,\frac{\pi}{4\sqrt{K}} \}$. Observe that $\exp_q(\mathcal{B}(\vec{0},r)) = B(q,r)$ is the metric ball around $q$ in $X$, of radius $r$.

        Note that $\exp_q:B(\vec{0}_q,r) \subset T_qX \to X$ is a local diffeomorphism and we can assume that $\psi(M)$ is contained in $B(q,r)$. Since $\exp_q :B(\vec{0}_q,r) \to X$ is a local diffeomorphism, both $\psi \times \exp_q: M\times B(\vec{0}_q,r) \to X \times X$ and its restriction to $\partial M \times B(\vec{0}_q,r)$ are transversal to the diagonal $\Delta_{X\times X}$. We define $N:= (\psi \times \exp_q)^{-1}(\Delta)$ and note that $N$ is a smooth manifold with nonempty boundary embedded in $M \times B(\vec{0}_q,r)$, possibly disconnected and non-compact, closed as a subspace of $M \times B(\vec{0}_q,r)$, and we can describe its boundary as $\partial N = N \cap \big(\partial M \times B(\vec{0}_q,r) \big)$. 
        
        Let $\tilde M$ be a connected component of $N$ which has a point $\tilde p$ in its interior such that $\tilde \pi(\tilde p) = p$, where $\tilde \pi : \tilde M \to M$ is the projection in the first coordinate. The map $\tilde \pi$ is a local diffeomorphism. We define $\tilde \psi: \tilde M \to B(\vec{0}_q,r)$ as the projection in the second coordinate, which is an immersion. Note that $\psi \circ \pi = \exp_q \circ \,\tilde \psi$. 
        
        As before, we consider $B(\vec{0}_q,r)$ as a Riemannian manifold with the pullback metric $\exp_q^*g$. We also consider $\tilde \psi$ as an isometric immersion, and $\tilde \pi$ as a local isometry. Is is straightforward to check that $\tilde \psi$ is two-sided and CMC. Lifting to $\tilde M$ a positive function in the kernel of the Jacobi operator of $M$, we check that $\tilde M$ is strongly stable. In this step, we are using standard results from D. Fischer-Colbrie and R. Schoen, as in \cite{Rosenberg-Souam-Toubiana}.

        Note that if $\tilde A$ denotes the second fundamental form of $\tilde \psi$, we have $|\tilde A|(\tilde p) = |A|(p)$.

        Let $\tilde R$ be the supremum of the real numbers $R>0$ such that $B_{\tilde M}(\tilde p,R) \subset \subset \tilde M \backslash \partial \tilde M$. The number $\tilde R$ is well defined and positive, because of the definition of $N$ (and $\tilde M$), and because $\tilde p$ is an interior point of $\tilde M$. 
        
        We claim that $\tilde R \ge d_M(p,\partial M)$. It is enough to prove that every geodesic $\tilde \gamma_v$ of $\tilde M$ that starts at $\tilde p$ with velocity $v$, $|v| = 1$, is defined in the interval $\big[0,d_M(p,\partial M) \big)$. If there exists $v \in T_{\tilde p}\tilde M$, such that $\tilde \gamma_v$ is defined for a maximal time $T<d_M(p,\partial M) < r$, we write $\tilde \gamma_v(t) = (p(t),w(t)) \in M\times \mathcal{B}(\vec{0}_q,T)$ and note that $p(t)$ is a geodesic of $M$ with length $T$ that starts at $p$, and therefore does not touch $\partial M$. That is, $\tilde \gamma_v(t) \in (M \backslash \partial M) \times \overline{\mathcal{B}(\vec{0}_q,T)}$. Using these observations, we can find a sequence $t_j \uparrow T$ such that $\tilde \gamma_v(t_j)$ converges to some point $(u,w) \in (M \backslash \partial M) \times \mathcal{B}(\vec{0}_q,r)$, and since $N$ is a closed subspace of this product space, $(u,w) \in N$. This would guarantee $(u,w) \in \tilde M$, by definition of $\tilde M$ as a connected component of $N$. But then we would be able to extend $\gamma_v$ to $[0,T]$, a contradiction. 

        From now on, we use again the subindex $i$. Note that, by modifying $\tilde M_i$, if necessary, we can assume that $\tilde M_i$ is smooth, connected, compact, with nonempty boundary, and 
        \begin{equation*}
            d_{\tilde M_i}(\tilde p_i, \partial \tilde M_i) \cdot |\tilde A_i|(\tilde p_i) \to +\infty
        \end{equation*}
        
        \noindent when $i \to \infty$. We can also assume that $\tilde p_i$ maximizes the product $d_{\tilde M_i}(\tilde x, \partial \tilde M_i) \cdot |\tilde A_i|(\tilde x)$ among the points $\tilde x \in \tilde M_i$. 
        
        We denote $(Y_i,h_i) := (\mathcal{B}(\vec{0}_{q_i},r),\exp_{q_i}^*g_i)$ and recall that we have a diffeomorphism $\varphi_i: U_i \subset \mathbb{R}^n \to Y_i$, which we view as a chart on which the metric $h_i$ is $C^{1,\alpha}$ $Q$-controlled. Note that the ball of radius $r/\sqrt{Q}$ centered at the origin is contained in $U_i$. 

        Now we proceed with a blow-up argument. 

        Given a point $\tilde x \in B_{\tilde M_i}(\tilde p_i,\rho) \subset \subset \tilde M_i$, we have $d_{\tilde M_i}(\tilde x, \partial \tilde M_i) \ge d_{\tilde M_i}(\tilde p_i, \partial \tilde M_i) - \rho$ and 
        \begin{equation} \label{desigualdade invariante por escala distancia vezes segunda forma}
            d_{\tilde M_i}(\tilde x, \partial \tilde M_i) \cdot |\tilde A_i|(\tilde x)  \le d_{\tilde M_i}(\tilde p_i, \partial \tilde M_i) \cdot |\tilde A_i|(\tilde p_i) 
        \end{equation}

        \noindent hence 
       \begin{equation*}
            |\tilde A_i|(\tilde x)  \le \frac{d_{\tilde M_i}(\tilde p_i, \partial \tilde M_i)}{d_{\tilde M_i}(\tilde p_i, \partial \tilde M_i)-\rho} \cdot |\tilde A_i|(\tilde p_i) \le 2 \cdot  |\tilde A_i|(\tilde p_i)
        \end{equation*}

        \noindent whenever $\rho \le \frac{1}{2}d_{\tilde M_i}(\tilde p_i, \partial \tilde M_i)$.

        Let $\lambda_i := |\tilde A_i|(\tilde p_i)$. Consider the diffeomorphism $D_{\lambda_i}:\mathbb{R}^n \to \mathbb{R}^n$ given by dilation by $\lambda_i$. We denote by the same name the diffeomorphism $D_{\lambda_i}:U_i\to D_{\lambda_i}(U_i)$. Note that $V_i:=D_{\lambda_i}(U_i)$ is an open subset of $\mathbb{R}^n$ containing the ball of radius $\lambda_i \cdot r/\sqrt{Q}$ centered at the origin. Since $\lambda_i \to +\infty$, we assume without loss of generality that the sequence of open sets $V_i$ is nested. 
        
        We are interested in the sequence of immersions 
        \begin{equation*}
            F_i :=D_{\lambda_i}\circ \varphi_i^{-1} \circ \tilde \psi_i : \tilde M_i \to V_i \subset \mathbb{R}^n.
        \end{equation*}

        Note that $F_i(\tilde p_i) = 0$ for every $i \ge 1$. We endow $V_i$ with the Riemannian metric $\langle \cdot,\cdot\rangle_i:=\frac{1}{\lambda_i^2}\big((D_{\lambda_i})^{-1} )^*\varphi^* h_i\big)$ and note that the $C^{1,\alpha}$ control over $\varphi^* h_i$ is enough to guarantee that, possibly after linear changes of coordinates, $\langle \cdot,\cdot\rangle_i$ converges $C^{1,\alpha}_{loc}$ in Euclidean coordinates to the standard Euclidean inner product $can$ (see \cite{Rosenberg-Souam-Toubiana}). 

        From now on, we endow $\tilde M_i$ with the pullback Riemannian metric $F_i^* \langle \cdot,\cdot\rangle_i$. Then $F_i$ is an isommetric immersion of a two-sided, strongly stable CMC hypersurface, $|A_{F_i}|(\tilde p_i) = 1$ and for every $\tilde x \in B_{\tilde M_i}(\tilde p_i,\rho) \subset \subset \tilde M_i$ with $\rho \le \frac{1}{2}d_{\tilde M_i}(\tilde p_i, \partial \tilde M_i)$, $|A_{F_i}|(\tilde x) \le 2$. Moreover, $d_{\tilde M_i}(\tilde p_i, \partial \tilde M_i) \to + \infty$. The last two observations follows from the fact that the inequality \eqref{desigualdade invariante por escala distancia vezes segunda forma} is scale invariant. 

        Using Proposition 4.1 of Appendix 4 in \cite{Rosenberg-Souam-Toubiana} (with obvious modifications to adapt it to higher dimensions) and the $C^{1,\alpha}_{loc}$ convergence of $F_i^*\langle \cdot,\cdot\rangle_i$ to the Euclidean metric we derive: given $R>0$, we can find $i_R \in \mathbb{N}$ such that for every $i > i_R$ we have $B(\tilde p_i, 2R;F_i^* \langle \cdot,\cdot\rangle_i)  \subset \subset \tilde M_i \backslash \partial \tilde M_i$, $B(\tilde p_i, 2R;F_i^*can)  \subset \subset \tilde M_i \backslash \partial \tilde M_i$ and $|A_{(F_i,F_i^*can)}|(\tilde x) \le 5$ for every $\tilde x \in B(\tilde p_i, 2R)$. 

        Now, assume $R>1$. Let $i>i_R$ and $\tilde x \in B(\tilde p_i, R;F_i^*can)$. Let $v \in T_{\tilde x} \tilde M_i$ be a unit vector and $\gamma_v$ the geodesic of $(\tilde M_i,F_i^*can)$ that starts at $\tilde x$ with velocity $v$. Note that $\gamma_v$ is defined on $[-R,R]$ because $B(\tilde x, R;F_i^*can) \subset B(p,2R;F_i^*can) \subset \subset \tilde M_i \backslash \partial \tilde M_i$. Consider $f:[-R,R] \to \mathbb{R}$ given by $f(t) = \langle \gamma_v(t) -\gamma_v(0),v \rangle$, where we have used $\langle\cdot,\cdot \rangle$ for $can$. Computing two derivatives of $f$, and using the Fundamental Theorem of Calculus, we prove $f(\frac{1}{5}) \ge \frac{1}{10}$. This shows that $F_i( \partial B(\tilde x, \frac{1}{5}); F_i^*can) \subset \mathbb{R}^n \backslash B^{\mathbb{R}^n}(F_i(\tilde x), \frac{1}{10})$. Therefore, we can use Lemma 4.1.1 in \cite{Perez-Ros} (adapted to higher dimensions) to write $F_i$ locally as a graph of uniform (and universal) Euclidean size over the tangent plane of each point. 
        
        Using that $F_i : (\tilde M_i, F_i^{*}\langle \cdot,\cdot\rangle_i) \to (V_i,\langle \cdot,\cdot\rangle_i)$ is an isometric immersion of a CMC hypersurface, we see that the functions defining these local graphs satisfy elliptic PDEs: we have the equation 
          \begin{equation*}
            H = \frac{1}{|\nabla F|_g} \Delta F - \frac{1}{|\nabla F|_g^3}HessF(\nabla F, \nabla F)
        \end{equation*}
        \noindent for points of $F^{-1}(0) = graph(u)$, where $F(x_1,\dots,x_n,y) = u(x_1,\dots,x_n)-y$, and $N = \frac{\nabla F}{|\nabla F|_g}$ is the unit normal to $graph(u)$. Therefore, $u$ satisfies a PDE of the form
        \begin{equation*}
            \sum_{1 \le i,j \le n} a^{ij}(g,\partial u) \partial_{ij}u = \Phi(g,\partial g,\partial u) + H f(\partial u)
        \end{equation*}

        \noindent where $\Phi$, $f$, $a^{ij}$ are smooth functions of their entries, and $a = (a^{ij})$ take values in the set of positive definite matrices. The coefficients of this elliptic PDE in non-divergence form are $C^{0,\alpha}$ controlled, because of the $C^{1,\alpha}$ control over the metric coefficients, and the bounds on the second fundamental form of $F_i$. Using Schauder estimates (Corollary 11.2.3 in \cite{petersen}), we derive $C^{2,\alpha}$ control for the function that defines the local graph of this CMC immersion. 

        Using the Gauss equation, we derive uniform local bound for the sectional curvature of $(\tilde M_i,  F_i^{*}\langle \cdot,\cdot\rangle_i)$. Using Lemma \ref{controle na segunda forma controla raio de injetividade v2} we see that the injectivity radius of $(\tilde M_i,  F_i^{*}\langle \cdot,\cdot\rangle_i)$ is locally uniformly bounded. 

        Therefore, we are in position to use Theorem \ref{teorema de convergencia intrinseca} to obtain a $C^{1,\alpha}$ intrinsic limit for the sequence $(\tilde M_i,\tilde p_i, F_i^{*}\langle \cdot,\cdot\rangle_i)$ to a complete, connected, non-compact, pointed $C^{2,\alpha}$ Riemannian manifold $(\tilde M_\infty, \tilde p_\infty, \tilde g_\infty)$, with $\tilde g_{\infty}$ of class $C^{1,\alpha}$, possibly not $C^2$. 

        Pick an exhaustion $\Omega_j$ of $(\tilde M_\infty, \tilde p_\infty, \tilde g_\infty)$ by precompact open metric balls centered at $\tilde p_\infty$. From now on, to simplify notation, we use $(M_m,p_m,g_m) :=(\tilde M_m,\tilde p_m, F_m^*\langle \cdot,\cdot \rangle_m)$, for $m \in \mathbb{N}$, and $(M_\infty, p_\infty, g_\infty):=(\tilde M_\infty, \tilde p_\infty, \tilde g_\infty)$. 

        For each $j$, there is $m_j \in \mathbb{N}$ such that for every $m \ge m_j$, there is a $C^{2,\beta}$ embedding $\phi_{m,j} : \Omega_j \to M_m$ onto a neighborhood of $ p_m \in  M_m$, such that $\phi_m^{-1}(p_m)$ converges to $p_\infty$, and moreover the sequence of $C^{1,\beta}$ Riemannian metrics $\phi_m^* g_m$ on $\Omega$ converges in the $C^{1,\beta}$ topology to $g_\infty$. 

        Fix $0<\beta'<\beta$. Fix $j$, and consider $\psi_{m,j} := F_m \circ \phi_{m,j} : \Omega_j \to V_m \subset \mathbb{R}^n$, a sequence of immersions from $\Omega_j$ to $(\mathbb{R}^n,can)$. We claim that we can pass to a subsequence in $m$, to a limit immersion $\psi_j : \Omega_j \to \mathbb{R}^n$ in the $C^{2,\beta'}_{loc}$ topology. To see this, it is enough to bound the $C^{2,\beta}$ norm of each coordinate of $\psi_{m,j}$ uniformly in $m$. But this follows from the fact that $\psi_{m,j}$ is locally a graph of a $C^{2,\beta}$ function, and $\psi_{m,j}(p_\infty) \to 0$ as $m \to \infty$. 

        Using a diagonal argument, passing to a subsequence, we produce an immersion 
        \begin{equation*}
            \psi: (M_\infty,p_\infty,g_\infty) \to (\mathbb{R}^n,can)
        \end{equation*} 
        
        \noindent such that on each $\Omega_j$, $\psi$ is the limit of $\psi_{j,m}$ in the $C^{2,\beta'}$ topology. It is straightforward to check that $\psi$ is a two-sided CMC isometric immersion with $\psi(p_\infty) = 0$ and $|A_\psi|(p_\infty) = 1$. Using the $C^{1,\beta}$ convergence of the metrics, and the $C^{2,\beta'}$ convergence of the immersions, it is straightforward to prove that $\psi$ is strongly stable. However, as we anticipated, the existence of such $\psi$ is a contradiction. 
    \end{proof}

    \begin{obs} \label{remark sobre estimativa de curvatura}        
        The classification of strongly stable CMC hypersurfaces immersed in Euclidean spaces of dimension $n+1 \in \{3,\dots,6\}$ allows the constant $C(K)$ in the statement of Theorem \ref{estimativa de curvatura mais geral possivel} to depend only on the bound $K>0$ for the sectional curvature of the ambient space. If one uses only the classification of complete, two-sided, strongly stable minimal hypersurfaces, instead of the CMC case, then the same proof applies provided the statement of Theorem \ref{estimativa de curvatura mais geral possivel} is modified as follows. First, we additionally assume that the CMC hypersurfaces have mean curvature bounded by a constant $H_+ >0$, that is, $|H| \le H_+$. Second, the constant $C$ depends on both $K$ and $H_+$, i.e., $C = C(K, H_+)$. The main idea is that, under the condition $|H| \le H_+$, the blow-up argument produces a stable hypersurface that is not only CMC, but in fact minimal. 
    \end{obs}

Next, we describe two applications of Theorem \ref{estimativa de curvatura mais geral possivel} to study finite index CMC hypersurfaces, of independent interest. First, we approach general ambient manifolds with bounded curvature. Then we discuss flat space forms. 

As a first corollary of Theorem \ref{estimativa de curvatura mais geral possivel}, we show that if the ambient manifold $X$ has bounded curvature and low dimension, then every complete finite index CMC hypersurface $M$ immersed in $X$ with sufficiently large mean curvature is necessarily compact. 

\begin{coro} \label{existence da curvatura media critica}
    Let $X$ be a complete Riemannian manifold of dimension $3 \le n+1 \le 6$ and bounded sectional curvature $|sec_X| \le K$, for some $K>0$. Let $C=C(K) > 0$ be as in the statement of Theorem \ref{estimativa de curvatura mais geral possivel}. If $M$ is a compact, connected, two-sided and strongly stable CMC hypersurface with nonempty boundary immersed in $X$ and mean curvature $|H| > \sqrt{n} \cdot C$, then
     \begin{equation*}
            d_M(p,\partial M) \le C \cdot \frac{\sqrt{n}}{|H|}\,, \quad \quad \forall \, p \in M.
        \end{equation*}
\end{coro}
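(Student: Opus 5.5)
The plan is to combine the curvature estimate of Theorem \ref{estimativa de curvatura mais geral possivel} with the elementary pointwise inequality relating $|A|$ and $H$. Since $H$ is the trace of $A$ and $A$ is a symmetric endomorphism of an $n$-dimensional space, Cauchy--Schwarz applied to the eigenvalues gives
\begin{equation*}
|H| = \Big|\sum_{i=1}^n \kappa_i\Big| \le \sqrt{n}\,\Big(\sum_{i=1}^n \kappa_i^2\Big)^{1/2} = \sqrt{n}\,|A|.
\end{equation*}
Hence $|A(p)| \ge |H|/\sqrt{n}$ at every point $p\in M$. The key feature is that this lower bound is a positive constant independent of $p$.

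Next, fix $p \in M$ and apply Theorem \ref{estimativa de curvatura mais geral possivel} to $M$, which satisfies all of its hypotheses. This yields
\begin{equation*}
\frac{|H|}{\sqrt{n}}\,\min\{d_M(p,\partial M),1\} \le |A(p)|\min\{d_M(p,\partial M),1\}\le C.
\end{equation*}
It follows that $\min\{d_M(p,\partial M),1\} \le C\sqrt{n}/|H|$.

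To remove the minimum, observe that the hypothesis $|H| > \sqrt{n}\,C$ gives $C\sqrt{n}/|H| < 1$. Therefore the minimum must be strictly less than $1$, so it equals $d_M(p,\partial M)$. We conclude that $d_M(p,\partial M) \le C\sqrt{n}/|H|$, as claimed.

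There is no real obstacle in this argument. The only point requiring care is the case analysis on the minimum, and this is exactly where the threshold $|H|>\sqrt{n}\,C$ is used: it excludes the alternative $d_M(p,\partial M)\ge 1$, in which case the minimum would equal $1$ and the estimate would force $|H|\le \sqrt{n}\,C$.
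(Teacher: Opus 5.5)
Your proposal is correct and follows the paper's proof: both use $|A|^2 \ge H^2/n$ together with Theorem \ref{estimativa de curvatura mais geral possivel} to get $\frac{|H|}{\sqrt{n}}\min\{d_M(p,\partial M),1\}\le C$. Both then use $|H|>\sqrt{n}\,C$ to force the minimum to equal $d_M(p,\partial M)$.
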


\begin{proof}
    Since $|A|^2 \ge \frac{1}{n}\cdot H^2$, Theorem \ref{estimativa de curvatura mais geral possivel} readily implies that $\frac{|H|}{\sqrt{n}} \cdot \min \{ d_M(p,\partial M),1\} \le C $ for any $p \in M$. Thus, every point $p \in M$ must lie at a distance smaller than $1$ from the boundary of $M$ when $|H| > \sqrt{n} \cdot C$, and the precise estimate above follows.
\end{proof}

We conclude this subsection with an application to flat space forms. 

        \begin{coro} \label{corolario das flat}
             Let $X^{n+1}$ be a flat Riemannian manifold of dimension $3 \le n+1 \le 6$. Every complete, non-compact, non-minimal finite index CMC hypersurface immersed in $X$ is compact.
        \end{coro}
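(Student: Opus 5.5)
The statement is odd as written: "non-compact ... is compact". So the claim is that such a hypersurface cannot exist. Every complete, non-minimal, finite index CMC hypersurface in a flat $X$ of dimension $3\le n+1\le 6$ is compact. The plan is to derive a contradiction from the assumption that $M$ is non-compact, by combining the curvature estimate of Theorem \ref{estimativa de curvatura mais geral possivel} with a scaling argument. Flat space has $|sec_X|\le K$ for every $K>0$, so the estimate holds with any $K$.

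First, we reduce to strong stability on large balls. Since $\mathrm{Ind}(M)\le \mathrm{Ind}_0(M)+1<\infty$, a standard argument (Fischer-Colbrie) gives a compact set $K_0\subset M$ such that $M\setminus K_0$ is strongly stable. If $M$ is non-compact and complete, then for every $r>0$ there is a point $p$ with $d_M(p,K_0)>r+1$. The closed intrinsic ball $\overline{B(p,r)}$ is then compact, and after slightly smoothing its boundary it is a compact, two-sided, strongly stable CMC hypersurface with nonempty boundary, with $d_M(p,\partial)\ge r$. Two-sidedness follows because $H\neq 0$, so the mean curvature vector provides a global normal.

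Next, we use scaling to make the mean curvature large. Let $H\neq 0$ be the mean curvature of $M$. Fix $K=1$ and the resulting constant $C=C(1)$. For $\lambda>0$, consider the rescaled flat manifold $(X,\lambda^2 g_X)$, which is still flat, so $|sec|\le 1$. In this metric the ball above has mean curvature $|H|/\lambda$. This is the wrong direction, so instead we take $\lambda$ large in the other sense: rescale by $\lambda^2 g_X$ with $\lambda<1$ small. Then the mean curvature becomes $|H|/\lambda>\sqrt n\, C$, and the intrinsic distance to the boundary becomes $\lambda r$. Choose $r$ so that $\lambda r> C\sqrt n\,\lambda/|H|$, i.e. $r>C\sqrt n/|H|$; this only requires $r$ large and is always possible by the first step. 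Corollary \ref{existence da curvatura media critica} now gives $\lambda r\le d(p,\partial)\le C\sqrt n\,\lambda/|H|$, a contradiction.

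Equivalently, without Corollary \ref{existence da curvatura media critica}, one can apply Theorem \ref{estimativa de curvatura mais geral possivel} directly with $K\to 0$, using scale invariance of flatness. This gives $|A(p)|\,d_M(p,\partial)\le C$ at points far from $K_0$, and since $|A|\ge |H|/\sqrt n>0$ this contradicts $d_M(p,\partial)\to\infty$. I expect the only delicate step to be the first one. It requires producing a genuine compact strongly stable piece with smooth boundary containing a large intrinsic ball, and checking that finite weak index yields strong stability outside a compact set. Both are standard: finite strong index, together with the existence of a positive Jacobi field on the complement of a compact set, gives stability on every ball avoiding that set.
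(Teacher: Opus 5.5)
Your argument is correct and follows essentially the paper's own proof. The paper applies Theorem \ref{estimativa de curvatura mais geral possivel} with $K=1$, via Corollary \ref{existence da curvatura media critica}, to large strongly stable pieces far out on $M$, obtaining a universal bound $|H|\le C$ for complete non-compact finite index CMC hypersurfaces in flat manifolds, and then contradicts this bound by rescaling the flat metric. Your argument does the same thing, only rescaling before invoking the corollary rather than after; your scale-invariant variant $|A(p)|\,d_M(p,\partial M)\le C(1)$ is also valid, and your observation that $H\neq 0$ forces two-sidedness fills in a point the paper leaves implicit.
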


\begin{proof}
       Using Theorem \ref{estimativa de curvatura mais geral possivel} with $K=1$ we find, as in the proof of Corollary \ref{existence da curvatura media critica}, that there exists a universal constant $C$ such that every complete, non-compact, finite index CMC hypersurface immersed in a flat Riemannian manifold has mean curvature $|H|\le C$. The theorem now follows from the fact that rescaling the metric of $X$ we still obtain a flat Riemannian metric, but the mean curvature of non-minimal CMC hypersurfaces scales non-trivially.    
\end{proof}

The above result was known in the range $3 \le n+1 \le 5$; the three-dimensional case follows from the work of R. Lopez and A. Ros \cite{LopezRos}, and in dimensions four and five it follows from the work of X. Cheng \cite{XuCheng}.

\begin{obs} \label{existence da curvatura media critica remark}
Let $H_*$ be the infimum of the set of numbers $\theta \in [0,+\infty]$ with the following property: every complete, finite index CMC hypersurface immersed in $X$ with mean curvature $|H|> \theta$ is compact. It follows from the Corollary \ref{existence da curvatura media critica} that, for Riemannian manifolds of dimension $3 \le n+1 \le 6$, there exists an upper bound for $H_*$ that depends only on a bound $|sec| \le K$, for some $K>0$, and on the dimension. Also, Corollary \ref{corolario das flat} shows that, if $X$ is flat, then $H_* =0$.
\end{obs}

\subsection{A compactness result for non-compact CMC hypersurfaces} \label{subsection:: A compactness result for non-compact CMC hypersurfaces}
    In this subsection, we prove a compactness result that asserts that if the ambient manifold has a well behaved geometry, then out of a sequence of  immersed strongly stable CMC hypersurfaces with uniformly bounded second fundamental form, we may take a subsequence that converges in an appropriate sense to an immersed CMC hypersurface which is also strongly stable. 

    \begin{prop}  \label{Compactness theorem}    
        Let $(X^{n+1},h)$ be a complete Riemannian manifold. Suppose that $X$ has positive injectivity radius, and there are real constants $C(j)>0$, for every integer $j \ge 0$, so that $|\nabla ^j Rm_X|_h \le C(j)$ for all $j \ge 0$, where $Rm_X$ denotes the Riemann tensor of X.
        
        Let $\psi_i : (M_i^n,g_i) \to X$ be a sequence of isometric immersions given by smooth compact, connected, two-sided, constant mean curvature hypersurfaces with nonempty boundary. Suppose that
    \begin{enumerate}
        \item there exists $p_i \in M_i$ such that $\psi_i(p_i)$ converges to a point $x \in X$,
        \item $d_{M_i}(p_i, \partial M_i) \to \infty$,
        \item there exists a real number $L > 0$ such that $|A_i|_{g_i}(p) < L$, $\forall \, p \in M_i$, for every $i \in \mathbb{N}$,
    \end{enumerate}

    \noindent where $A_i$ denotes the second fundamental form of $\psi_i$.

     Then, passing to a subsequence, there exists an isometric immersion $\psi : (M^n,g) \to X^{n+1}$ of a complete, connected, non-compact, smooth Riemannian manifold $(M,g)$ without boundary such that $\psi_i$ converges to $\psi$ in the following sense:
     
     For every compact domain with smooth boundary $\Omega \subset\subset M$, there exist an integer $m>1$ and a sequence of smooth embeddings $F_i : (\Omega,g) \to (M_i,g_i)$, $i \ge m$, such that $F_i^* g_i$ converges in the $C^{1,\alpha}$ topology to $g$ on $\Omega$. Moreover, locally, the composition $\psi_i \circ F_i$ converges to $\psi$ graphically.
     
     In addition, $\psi$ is a two-sided CMC hypersurface immersed in $X$ and the mean curvatures of $M_i$ converge to the mean curvature of $M$. Finally, if every $\psi_i$ is strongly stable, then $\psi$ is strongly stable.       
    \end{prop}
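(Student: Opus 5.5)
The plan is to obtain the limit $(M,g)$ intrinsically from Theorem \ref{teorema de convergencia intrinseca}, and then build the immersion $\psi$ as a limit of the maps $\psi_i\circ F_i$ through uniform local graph estimates in $X$. This is essentially the second half of the proof of Theorem \ref{estimativa de curvatura mais geral possivel}, with $\mathbb{R}^{n+1}$ replaced by the fixed manifold $X$.

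\textbf{Uniform local graphs.} Since $X$ has positive injectivity radius $i_X$ and bounded $|\nabla^j Rm_X|$, Theorem 6 in \cite{Hebey-Herzlich} gives harmonic charts of $X$ on balls of a uniform radius $r$, in which $h$ is $C^{k,\alpha}$ $Q$-controlled for every $k$. Because $|A_i|<L$, I argue as in the proof of Theorem \ref{estimativa de curvatura mais geral possivel}: Proposition 4.1 of \cite{Rosenberg-Souam-Toubiana} and Lemma 4.1.1 of \cite{Perez-Ros} show that, around every point of $M_i$ at distance at least $1$ from $\partial M_i$, $\psi_i$ is locally a graph over its tangent plane. Here ``graph'' is taken in these charts, and both the size of the domain and the $C^1$ norm of the graphing function are uniform. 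Each graphing function $u$ solves the CMC equation
\[
\sum a^{ij}(h,\partial u)\,\partial_{ij}u=\Phi(h,\partial h,\partial u)+H_i\, f(\partial u).
\]
Since $|H_i|\le\sqrt n\,L$, Schauder estimates and bootstrapping give uniform $C^{k,\alpha}$ bounds on $u$ for every $k$.

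\textbf{Intrinsic limit.} The Gauss equation, together with derivatives of the Codazzi and Gauss equations applied to these graph estimates, bounds $|\nabla^j Ric_{M_i}|$ uniformly on $B(p_i,R)$ once $i\ge i_R$. Lemma \ref{controle na segunda forma controla raio de injetividade v2}, applied in the harmonic charts of $X$, gives a uniform lower bound on the injectivity radius of $M_i$ on $B(p_i,R)$. These are exactly the hypotheses of Theorem \ref{teorema de convergencia intrinseca}, after replacing $M_i$ by a compact subdomain if necessary. Hence a subsequence converges in the pointed sense to a complete, non-compact, connected manifold $(M,g,p_\infty)$, with embeddings $F_i$ and $F_i^*g_i\to g$ in $C^{1,\alpha}$. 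These bounds fail near $\partial M_i$; this causes no trouble, since $d_{M_i}(p_i,\partial M_i)\to\infty$.

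\textbf{Extrinsic limit.} Fix an exhaustion $\Omega_j$ of $M$. The maps $\psi_i\circ F_i:\Omega_j\to X$ are locally graphs with uniform $C^{k,\alpha}$ bounds, and $\psi_i(p_i)\to x$. Arzelà--Ascoli in the charts, combined with a diagonal argument over $j$, produces a limit $\psi:M\to X$ with $C^{2,\beta}_{loc}$ graphical convergence. Passing to a further subsequence, $H_i\to H$ and the unit normals converge, so $\psi$ is a two-sided CMC immersion with mean curvature $H$. The limit metric satisfies $\psi^*h=g$. Smoothness of $\psi$, and hence of $(M,g)$, follows from elliptic regularity for the limit graph equation.

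\textbf{Stability.} Suppose each $\psi_i$ is strongly stable, and take $\phi\in C^\infty_0(M)$ with support in some $\Omega_j$. Set $\phi_i=\phi\circ F_i^{-1}$. Then
\[
\int_{M_i}|\nabla\phi_i|^2-(Ric(\nu_i)+|A_i|^2)\phi_i^2\ \ge\ 0,
\]
and the left side converges to the stability quadratic form of $\psi$ evaluated at $\phi$, by the $C^1$ convergence of the metrics and the $C^2$ convergence of the immersions. So $\psi$ is strongly stable.

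\textbf{Main obstacle.} I expect the hardest step to be the uniform local graph representation inside a curved $X$: obtaining a graph radius independent of $i$, and promoting the $C^{1,\alpha}$ convergence of metrics given by Theorem \ref{teorema de convergencia intrinseca} to genuine smooth convergence. My first approach is to copy the Euclidean argument inside the harmonic charts of $X$, where the metric is uniformly close to Euclidean after a fixed rescaling, and then control every higher derivative of $h$ using the bounds on $|\nabla^j Rm_X|$.
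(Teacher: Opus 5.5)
Your proposal is correct and follows essentially the same route as the paper. Both arguments take uniform local graph representations in harmonic charts of $X$ and bootstrap them with Schauder estimates, then use the Gauss equation and the injectivity-radius lemma to invoke the intrinsic convergence theorem. Both then apply Arzel\`a--Ascoli to $\psi_i\circ F_i$ and pass the CMC equation and the stability inequality to the limit; the paper phrases the Arzel\`a--Ascoli step by embedding a ball $B(x,R)\subset X$ into Euclidean space via glued harmonic charts. Your remark that the higher-derivative bounds on $Ric_{M_i}$ hold only away from $\partial M_i$, so that one should first restrict to compact subdomains, is a detail the paper leaves implicit.
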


    \begin{proof}
    Note that each $\psi_i$ is locally, that is, in a harmonic coordinate chart of $X$, a graph about a disk of uniform radius on its tangent plane of a function $u$, such that $u$ satisfies a PDE of the form
        \begin{equation*}
            \sum_{1 \le l,j \le n} a^{lj}(h,\partial u) \partial_{lj}u = \Phi(h,\partial h,\partial u) + H_i \cdot f(\partial u)
        \end{equation*}

        \noindent where $\Phi$, $f$ and $a_{lj}$ are smooth functions of their parameters, and $H_i$ is the mean curvature of $M_i$. Moreover, $(a_{lj})$ is positive definite. We have $C^{k,\alpha}$ control for $h$ in harmonic coordinates (\textit{cf.} Theorem 6 in \cite{Hebey-Herzlich}), for every $k \ge 1$, and the bound on the second fundamental form of $\psi_i$ guarantees uniform $C^{1,\alpha}$ control for $u$ and $H_i$. Schauder estimates (Corollary 11.2.3 in \cite{petersen}) then show that we have uniform $C^{k,\alpha}$ estimates for $u$, for every $k \ge 1$. 
        
       Using the Gauss equation and Lemma \ref{controle na segunda forma controla raio de injetividade v2}, we check that the family $(M_i,g_i)$ satisfies the hypotheses of Theorem \ref{teorema de convergencia intrinseca}, and conclude that there exists a connected, complete, smooth pointed manifold $(M,p)$ and a smooth Riemannian metric $g$ on $M$, such that the following holds. Given an open, connected and precompact $\Omega \subset  M_\infty$ containing $p$, there exists $m_\Omega \in \mathbb{N}$ such that for every $m \ge m_\Omega$, there is a smooth embedding $\phi_m : \Omega \to M_m$ onto a neighborhood of $p_m \in M_m$, such that $\phi_m^{-1}(p_m)$ converges to $p$, and moreover the sequence of Riemannian metrics $\phi_m^* g_m$ on $\Omega$ converges smoothly to $g$. 

        Fix such $\Omega$, and the corresponding embedding $F_i : (\Omega,g) \to (M_i,g_i)$. Consider 
        \begin{equation*}
            \xi_i^\Omega := \psi_i \circ F_i : (\Omega,g) \to X.
        \end{equation*}

        Take $R>0$ such that every $\xi^\Omega_i$ has image contained in $B(x,R) \subset X$. We use a construction similar to the one presented in the proof of Lemma \ref{proposição que constroi o limite} to embed $B(x,R)$ in some Euclidean space $(\mathbb{R}^{m},can)$, for some $m \in \mathbb{N}$, gluing harmonic charts. Then we see $\xi_i^\Omega$ as a map from $(\Omega,g)$ to Euclidean space. 
        
        We claim that, passing to a subsequence, $\xi_i^\Omega$ converges to a limit isometric immersion 
        \begin{equation*}
            \psi^\Omega: (\Omega,g) \to X
        \end{equation*}

        \noindent where the convergence is meant via smooth convergence of the coordinates of $\xi^\Omega_i$. To prove the claim, it is enough to bound uniformly the $C^{k,\alpha}$ norm of each coordinate function of $\xi^\Omega_i$, but this follows from the fact that $\xi^\Omega_i$ is locally a graph of a function with uniform bound on its $C^{k,\alpha}$ norm, for every $k \ge 1$, and also because $\xi^\Omega_i(p)$ converges to $x$. 
        
        Now, if $\Omega \subset \Omega'$, then $\psi^{\Omega'}$ extends $\psi^\Omega$, and hence we define in this way an isometric immersion 
        \begin{equation*}
            \psi: (M,g) \to X.
        \end{equation*}
    
        The convergence obtained guarantees that $\psi$ is a two-sided CMC hypersurface immersed in $X$ and the mean curvatures $H_i$ of $\psi_i$ converge to the mean curvature of $\psi$. Finally, if each $\psi_i$ is strongly stable, it is straightforward to prove that $\psi$ is strongly stable.
        
        \end{proof}

    \subsection{Applications} \label{subsection:: Applications}
    
    In this subsection we derive two further applications of the results and methods of this section to study finite index CMC hypersurfaces. Our first result is a key ingredient in the proof of Theorem \ref{quarto teorema principal k > 0}.

    \begin{teo}[\textit{Reduction Lemma}] \label{argumento de redução em homogeneas}
         Let $(X^{n+1},g)$ be a Riemannian manifold of dimension $ 3\le n+1 \le 6$. Suppose that the action on $X$ by its isometry group is cocompact. If there exists a non-compact, complete, finite index CMC hypersurface with mean curvature $H$ immersed in $X$, then there exists also a non-compact, complete, strongly stable CMC hypersurface with the same mean curvature $H$ immersed in $X$.
    \end{teo}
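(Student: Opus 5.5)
The plan is to produce strongly stable pieces going off to infinity, use the curvature estimate of Theorem \ref{estimativa de curvatura mais geral possivel} to bound their second fundamental forms, translate them back into a fixed compact set by isometries, and pass to a limit with Proposition \ref{Compactness theorem}.

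\emph{Step 1: geometry of $X$.} Since the isometry group acts cocompactly, there is a compact $K_0\subset X$ meeting every orbit. Bounds on $|\nabla^j Rm_X|$ over a neighborhood of $K_0$ therefore hold on all of $X$, and likewise a positive lower bound on the injectivity radius. Completeness of $X$ also follows from homogeneity up to the compact set $K_0$. So $X$ satisfies the hypotheses of Proposition \ref{Compactness theorem}, and in particular $|sec_X|\le K$ for some $K$.

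\emph{Step 2: large strongly stable balls.} Let $M'$ be the non-compact complete finite index CMC hypersurface. Finite weak index gives finite strong index, since $\mathrm{Ind}(M')\le \mathrm{Ind}_0(M')+1$. By the standard Fischer-Colbrie type argument, there is a compact set $C\subset M'$ such that $M'\setminus C$ is strongly stable. Otherwise one could find infinitely many disjoint precompact unstable domains, producing arbitrarily large index.

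Since $M'$ is complete and non-compact, we can choose points $p_k\in M'$ and radii $r_k\to\infty$ with $B(p_k,r_k)\cap C=\emptyset$, for instance $d(p_k,C)=2r_k$. Let $M_k$ be a smooth compact domain with $B(p_k,r_k)\subset M_k\subset M'\setminus C$. Each $M_k$ is strongly stable, two-sided and CMC with mean curvature $H$. Two-sidedness of the pieces needs a word: if $M'$ is not two-sided I would first pass to its two-sided double cover, which still has finite index; alternatively the balls can be taken simply connected after lifting.

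\emph{Step 3: curvature bound and limit.} Theorem \ref{estimativa de curvatura mais geral possivel} gives $|A|(q)\min\{d(q,\partial M_k),1\}\le C(K)$. Restricting to $B(p_k,r_k-1)$ yields $|A|\le C(K)$ uniformly there. Next compose $\psi$ with isometries $\sigma_k$ of $X$ so that $\sigma_k\psi(p_k)\in K_0$; by compactness, a subsequence converges to some $x\in K_0$. Apply Proposition \ref{Compactness theorem} to the immersions $\sigma_k\circ\psi$ restricted to compact domains approximating $B(p_k,r_k-1)$. These have distance from $p_k$ to the boundary tending to $\infty$ and uniformly bounded $|A|$. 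The result is a complete, connected, non-compact, strongly stable, two-sided CMC immersion with mean curvature $\lim H_k=H$.

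\emph{Main obstacle.} The delicate point is the bookkeeping in Step 2 and the two-sidedness of the pieces to which Theorem \ref{estimativa de curvatura mais geral possivel} applies. Also, that theorem requires a compact, connected domain with nonempty boundary, so I will use smoothed sub-level sets of the distance function to $p_k$. The convergence itself is then a direct application of Proposition \ref{Compactness theorem}.
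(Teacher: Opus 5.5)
Your proposal is correct and is essentially the paper's proof. Both arguments use Fischer-Colbrie to get strong stability outside a compact set, then the curvature estimate of Theorem~\ref{estimativa de curvatura mais geral possivel} (the paper uses it once to bound $|A|$ on all of $M$, you apply it piece by piece), then isometries moving $\psi(p_k)$ into the compact set meeting all orbits, and finally Proposition~\ref{Compactness theorem}. Your check that cocompactness gives completeness and bounded geometry of $X$ makes explicit what the paper calls ``clear''.

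Drop the one-sided detour, though. Two-sidedness is built into the paper's notion of (weak) index and is automatic when $H\neq 0$. For a one-sided hypersurface, neither the double cover nor a lifted ball automatically inherits finite index or stability, because lifting brings in the untwisted Jacobi operator. For example, $\mathbb{R}\mathbb{P}^2\subset\mathbb{R}\mathbb{P}^3$ is stable, but its double cover, a great $\mathbb{S}^2$, is not.
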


    \begin{proof}
        Let $\psi: (M,h) \to (X,g)$ be an isometric immersion of a non-compact, complete, finite index CMC hypersurface with mean curvature $H$. Using the solution to the stable Bernstein problem in low dimensions  and the curvature estimates it implies (see Remark \ref{remark sobre estimativa de curvatura}), we conclude that $M$ has bounded second fundamental form. Fix $p \in M$. Since $M$ has finite index, we can find a real number $R>0$ such that $M \backslash B_R(p)$ is strongly stable (\textit{cf.} Proposition $1$ in \cite{Fischer-Colbrie}). 
        
        Let $p_j$ be a divergent sequence in $M$. Let $r_j \to \infty$ be such that $U_j := B_{r_j}(p_j)$ is a precompact open subset of $M \backslash B_R(p)$. Out of $U_j$, we construct $V_j$ open and precompact subset of $M \backslash B_R(p)$ with smooth boundary, such that $d(p_j, \partial V_j) \to +\infty$. We denote by $\bar V_j$ the closure of $V_j$ in $M$. 
        
        Let $K$ be a compact subset of $X$ such that every orbit of an element of $X$ by the action of its isometry group intersects $K$. For each $j$ we take an isometry $F_j$ of $(X,g)$ that maps $p_j$ to $K$. We are interested in the sequence of isometric immersions $\phi_j:=F_j \circ \psi: (\bar V_j,h) \to (X,g)$. Passing to a subsequence, we can assume that $\phi_j(p_j)$ converges to a point $x \in X$. 
        
        It is clear that the sequence of immersions $\phi_j$ satisfy the hypotheses of Proposition \ref{Compactness theorem} and thus a subsequence of it converges to an isometric immersion $\phi: (N,g_N) \to X$ of a complete, connected, non-compact, strongly stable CMC hypersurface with mean curvature $H$.  
    \end{proof}
    
    To finish this section, we study finite index CMC hypersurfaces in \textit{asymptotically flat manifolds}, using the tools developed in the proof of Theorem \ref{estimativa de curvatura mais geral possivel}.
    
    A complete Riemannian manifold $(X^n,g)$ is $C^2$-asymptotically flat (with rate $\xi> 0$) if there exist a compact set $K \subset X$ and a diffeomorphism $\varphi^{-1}: X \backslash K \to \{x \in \mathbb{R}^n : |x|> 1\}$ such that
    \begin{equation*}
        g_{ij} = \delta_{ij} + \tau_{ij} \quad \text{and} \quad \partial_\beta \tau_{ij} = O( |x|^{-\xi-|\beta|})
    \end{equation*}
    for all multi-indices $\beta$ of length $|\beta| \le 2$. The map $\varphi$ is referred to as the chart at infinity of $X$. 

    Our next result concerns properly immersed CMC hypersurfaces. An immersion $\psi: M \to X$ is \textit{proper} when the pre-image of every compact subset of $X$ by $\psi$ is a compact subset of $M$.

    \begin{teo} \label{teorema assintoticamente planas}
        Let $(X^{n+1},g)$ be a $C^2$-asymptotically flat Riemannian manifold, with dimension $3 \le n+1 \le 6$. Then every complete, non-minimal, finite index CMC hypersurface properly immersed in $X$ is compact. 
    \end{teo}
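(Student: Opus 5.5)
We argue by contradiction. Let $\psi:(M,h)\to (X,g)$ be a complete, non-compact, non-minimal finite index CMC hypersurface, properly immersed, with mean curvature $H\neq 0$. The plan is to run a blow-down / translation argument along the end of $X$: since $\psi$ is proper and $M$ is non-compact, $\psi(M)$ must leave every compact set of $X$. Hence $M$ meets $X\backslash K$ in an unbounded region, where $g$ is asymptotically Euclidean.

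\textbf{Step 1 (bounded second fundamental form).} An asymptotically flat $X$ has bounded sectional curvature, so Theorem \ref{estimativa de curvatura mais geral possivel} (applied on balls far from the finitely many unstable regions) gives a bound on $|A|$ outside a compact set of $M$. By Proposition 1 in \cite{Fischer-Colbrie}, $M\backslash B_R(p)$ is strongly stable for some $R$, and on a compact set $|A|$ is trivially bounded. Hence $|A|\le L$ on all of $M$.

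\textbf{Step 2 (translation to flat space).} Pick points $p_j\in M$ with $|\varphi^{-1}(\psi(p_j))|=:\rho_j\to\infty$; properness guarantees these exist and diverge in $M$. Choose radii $r_j\to\infty$ with $r_j\ll \rho_j$ (say $r_j=\rho_j/4$), so that $\psi(B_{r_j}(p_j))$ stays in the chart at infinity within Euclidean distance $\sim r_j$ of $\varphi^{-1}(\psi(p_j))$. This uses that $g$ is uniformly equivalent to the Euclidean metric there, so intrinsic distance is controlled by extrinsic distance from above. Enlarging $R$ if needed, these balls are strongly stable. Translate in the chart by $-\varphi^{-1}(\psi(p_j))$, \emph{without rescaling}. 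The translated metrics $g_{ij}(x+x_j)$ converge in $C^{1,\alpha}_{loc}$ (indeed $C^2$, by the decay $\partial_\beta\tau=O(|x|^{-\xi-|\beta|})$) to the Euclidean metric on compact sets. We then run the argument of the proof of Theorem \ref{estimativa de curvatura mais geral possivel}, or Proposition \ref{Compactness theorem} adapted to varying ambient metrics converging in $C^{1,\alpha}$: the uniform bound $|A|\le L$ gives local graphs with uniform Schauder estimates, and Lemma \ref{controle na segunda forma controla raio de injetividade v2} together with Theorem \ref{teorema de convergencia intrinseca} gives an intrinsic limit. The limit is a complete, non-compact, two-sided, strongly stable CMC immersion in $(\mathbb{R}^{n+1},can)$. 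Its mean curvature is still $H\neq0$, since no rescaling occurred and $H$ is preserved in the limit.

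\textbf{Step 3 (contradiction).} By the classification of complete strongly stable CMC hypersurfaces in $\mathbb{R}^{n+1}$ for $3\le n+1\le 6$ (\cite{AlexandreDaSilveira}, \cite{XuCheng}, \cite{Elbert-Nelli-Rosenberg}, \cite{CHL}, \cite{Mazet}), such a hypersurface must be a hyperplane, hence minimal. This contradicts $H\neq 0$.

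\textbf{Main obstacle.} The delicate point is Step 2. We must ensure the limit is genuinely non-compact, i.e.\ that $d_M(p_j,\partial B_{r_j}(p_j))\to\infty$ while the image stays inside the asymptotic region. We must also ensure that Proposition \ref{Compactness theorem}, stated for a fixed ambient space, extends to a sequence of ambient metrics converging in $C^{1,\alpha}$ rather than smoothly. For the first issue, I would use that intrinsic balls of radius $r$ have extrinsic image within $g$-distance $r$, which is comparable to Euclidean distance. For the second, I would use the $C^{1,\alpha}$ version of the limiting argument, exactly as carried out in the proof of Theorem \ref{estimativa de curvatura mais geral possivel}, where only $C^{1,\alpha}$ control of the ambient metric was needed.
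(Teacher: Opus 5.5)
Your proposal is correct and follows essentially the same route as the paper. In both, $|A|$ is bounded via Theorem~\ref{estimativa de curvatura mais geral possivel} on the strongly stable complement of a ball, properness pushes $\psi(p_j)$ into the chart at infinity, and one translates there without rescaling so the ambient metrics converge to the Euclidean one. The limit is then extracted as at the end of the proof of Theorem~\ref{estimativa de curvatura mais geral possivel}, giving a contradiction with the non-existence of complete strongly stable CMC hypersurfaces with $H\neq 0$ in $\mathbb{R}^{n+1}$.

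Two cosmetic fixes. First, the balls $B_M(p_j,r_j)$ avoid $B_M(p,R)$ because their images leave every compact set of $X$ for large $j$, so there is no need to enlarge $R$. Second, $r_j$ should be $c\,\rho_j$, with $c$ depending on the uniform equivalence constant between $g$ and the Euclidean metric in the chart, rather than exactly $\rho_j/4$.
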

    
    \begin{proof}
        On the contrary, suppose that there exists $\psi: M^n \to X$ an isometric immersion of a complete, non-compact, finite index CMC hypersurface, so that $\psi$ is proper and has mean curvature $H>0$. 
        
        Note that $X$ has bounded sectional curvature, because it is asymptotically flat. Let $L>0$ be such that $|sec_X| \le L$ and let $C = C(L)$ be as in Theorem \ref{estimativa de curvatura mais geral possivel}.
        
        Fix $p \in M$. Since $M$ has finite index, we can find $R>0$, so that $M \backslash B_M(p,R)$ is strongly stable. Hence we deduce from Theorem \ref{estimativa de curvatura mais geral possivel} that for every $q \in M \backslash B_M(p,R+2)$, $|A|(q) \le C$. This shows that $M$ has bounded second fundamental form.
        
        Let $p_j$ be a divergent sequence in $M$, and note that $\psi(p_j)$ diverges in $X$ because $\psi$ is proper. Let $r_j \to +\infty$ be such that $B_j:=B_M(p_j,r_j)$ is contained in $M \backslash B_M(p,R)$, and thus is strongly stable. Note that $B_j \subset B_X(\psi(p_j),r_j)$. 
        
        Consider $\varphi$ as the chart of infinity of $X$, as in the definition of Riemannian asymptotically flat manifolds. We define $v_j := \varphi^{-1}(\psi(p_j))$, and remark that $|v_j| \to +\infty$. Note that we can assume $$ B_{\mathbb{R}^{n+1}}(v_j,\frac{1}{2}r_j) \subset \varphi^{-1}(B_X(\psi(p_j),r_j) \subset B_{\mathbb{R}^{n+1}}(v_j,2r_j).$$ 

        For each $j$, we consider the diffeomorphism $T_j: B_{\mathbb{R}^{n+1}}(0,2r_j) \to B_{\mathbb{R}^{n+1}}(v_j,2r_j)$ given by translation. Passing to a subsequence, we can assume that $U_j := T_j^{-1}(\varphi^{-1}(B_X(\psi(p_j),r_j))$ are open nested subsets of $\mathbb{R}^{n+1}$ whose union is the whole Euclidean space. Note that the asymptotically flat condition on $X$ guarantees that $h_{lm} := (T_j^{*}g)_{lm} = \delta_{lm}+ \tilde \tau_{lm}$, where 
        \begin{equation*}
            \partial_\beta \tilde \tau_{lm}= O(|x+v_j|^{-\xi-|\beta|})
        \end{equation*}
        for every multi-indices $\beta$ of length $|\beta| \le 2$. Thus $h_{lm}$ converges on $C^2_{loc}(\mathbb{R}^{n+1})$ to $\delta_{lm}$.

            Each $F_j := T_j^{-1}\circ \varphi^{-1} \circ \psi:(B_j,p_j, \psi^* g) \to (\mathbb{R}^{n+1},0,h)$ defines a pointed isometric immersion. Out of $B_j$ we construct $\Omega_j$ as an open and precompact subset of $B_j$ with smooth boundary containing $p_j$, such that $d_{\psi^*g}(p_j,\partial \Omega_j) \to + \infty$. We restrict $F_j$ to the closure of $\Omega_j$ and note that $F_j: (\Omega_j,\psi^*g) \to (\mathbb{R}^{n+1},h)$ has bounded second fundamental form, say $|A_{(F_j,h)}|_h \le C$ with a constant $C$ that does not depend on $j$. 

            Now, we follow the same arguments of the final part of the proof of Theorem \ref{estimativa de curvatura mais geral possivel}. To be precise, in the same way that we have proved in Theorem \ref{estimativa de curvatura mais geral possivel} that the sequence $F_i:(\tilde M_i, F_i^* \langle \cdot, \cdot \rangle_i) \to (\mathbb{R}^{n+1}, \langle \cdot, \cdot \rangle_i)$ constructed there subconverges to a complete, connected, non-compact, strongly stable CMC hypersurface immersed in $(\mathbb{R}^{n+1},can)$, we argue here that the sequence $F_j: (\Omega_j,p_j,\psi^*g) \to (\mathbb{R}^{n+1},0,h)$ subconverges to a complete, connected, non-compact, strongly stable CMC immersion $\bar \psi: (M_\infty^n,p_\infty,g_\infty) \to (\mathbb{R}^{n+1},0,can)$ with mean curvature $H_\infty = \lim H_{F_j} = H>0$.
        
        The existence of $\bar \psi$ contradicts the main theorem in \cite{AlexandreDaSilveira} and \cite{LopezRos} when $n+1=3$, \cite{XuCheng} and \cite{Elbert-Nelli-Rosenberg} when $n+1 = 4 \text{ or }5$, and \cite{CHL} when $n+1 = 6$.
    \end{proof}

        \section{The isoperimetric inequality for finite index CMC hypersurfaces} \label{section::The isoperimetric inequality for finite index CMC hypersurfaces}

The main goal of this section is to prove Proposition \ref{Garante isoperimetrica}, which concerns the validity of the isoperimetric inequality for finite index CMC hypersurfaces. We first establish some relations between the isoperimetric inequality and different Sobolev inequalities on Riemannian manifolds, which are assumed to be smooth, complete, and without boundary.

Let $(M^n,g)$ be a complete Riemannian manifold. We will say that $M$ \textit{satisfies the isoperimetric inequality} when there exists $C>0$ such that, for every compact smooth domain (that is, a codimension zero submanifold with boundary) $\Omega \subset M$, it holds that
\begin{equation*}
    |\Omega|_g \le C \cdot |\partial \Omega|_g^{\frac{n}{n-1}},
\end{equation*}

\noindent where $| \cdot |_g$ stands for the volume of the corresponding object. 

Now, suppose that $(M^n,g)$ has infinite volume. Let $q \in [1,n)$ be a real number. Following E. Hebey \cite{HebeyBook}, we will say that \textit{the Euclidean-type Sobolev inequality of order $q$ is valid} if there exists a real number $C_q >0$ such that for any $u \in C^\infty_0(M)$,
\begin{equation}
    \big ( \int_M |u|^p dV_g \big)^{q/p} \le C_q \int_M | \nabla u | ^q dV_g \label{sobolev q}
\end{equation}

\noindent where $1/p = 1/q - 1/n$. In short, we say that $(I^{eucl.}_{q,gen})$ is valid when \eqref{sobolev q} holds for every $u \in C^\infty_0(M)$. 

As it is well known, such an inequality holds in the Euclidean space $(\mathbb{R}^n,g_{can})$. The following result is also well known.

		\begin{prop}[\textit{cf.} \cite{schoen-yau-lectures}, Section 3.1]
		        A complete Riemannian manifold satisfies the isoperimetric inequality if and only if $(I^{eucl.}_{1,gen})$ is valid. 
		\end{prop}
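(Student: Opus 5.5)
We prove the two implications separately. In both directions the tool is the coarea formula applied to the superlevel sets of a test function.

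\emph{Isoperimetric inequality implies $(I^{eucl.}_{1,gen})$.} Here $p = n/(n-1)$. Fix $u \in C^\infty_0(M)$. Since $|\nabla |u|| \le |\nabla u|$ almost everywhere, and $|u|$ can be approximated by smooth compactly supported functions, we may assume $u \ge 0$. By Sard's theorem, for almost every $t>0$ the set $\Omega_t := \{u \ge t\}$ is a compact smooth domain with boundary $\{u=t\}$. The coarea formula and the isoperimetric inequality give
\begin{equation*}
\int_M |\nabla u| = \int_0^\infty |\{u=t\}|\,dt \ge C^{-\frac{n-1}{n}} \int_0^\infty |\Omega_t|^{\frac{n-1}{n}}\,dt .
\end{equation*}
We then need $\|u\|_{L^{p}} \le \int_0^\infty |\Omega_t|^{1/p}\,dt$. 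This is the layer-cake representation $u=\int_0^\infty \chi_{\Omega_t}\,dt$ combined with Minkowski's integral inequality, since $\|\chi_{\Omega_t}\|_{L^p} = |\Omega_t|^{1/p}$. Putting the two inequalities together yields \eqref{sobolev q} with $q=1$ and $C_1 = C^{(n-1)/n}$.

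\emph{$(I^{eucl.}_{1,gen})$ implies the isoperimetric inequality.} Given a compact smooth domain $\Omega$, we construct cutoff functions $u_\varepsilon \in C^\infty_0(M)$. Each $u_\varepsilon$ equals $1$ on $\Omega$, decays from $1$ to $0$ along the outward normal within the collar $\{x \notin \Omega : d(x,\Omega) < \varepsilon\}$, and vanishes outside it. Such a collar exists because $\partial\Omega$ is compact and smooth, and we first take a Lipschitz function and then mollify it. As $\varepsilon \to 0$ we have $\int_M |\nabla u_\varepsilon| \to |\partial\Omega|$, by the tubular neighborhood structure and the fact that the volume of the collar is $\varepsilon|\partial\Omega| + o(\varepsilon)$. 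Also $\|u_\varepsilon\|_{L^{p}} \ge |\Omega|^{(n-1)/n}$. Applying $(I^{eucl.}_{1,gen})$ and letting $\varepsilon \to 0$ gives $|\Omega|^{(n-1)/n} \le C_1 |\partial\Omega|$, which is the isoperimetric inequality with $C = C_1^{n/(n-1)}$.

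The main technical point is the first direction: justifying that almost every superlevel set is an admissible smooth domain, and passing from $u$ to $|u|$ (or working with $u^+$ and $u^-$ separately). Sard's theorem handles the first, and the second is a standard approximation. The Minkowski/layer-cake step is the key inequality in that direction. The completeness of $M$ is not used in any essential way beyond the test functions being compactly supported.
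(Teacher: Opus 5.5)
Your proof is correct. It is the standard Federer--Fleming argument: the coarea formula with Sard and a layer-cake/Minkowski bound for one direction, and smoothed indicator functions of $\Omega$ for the other. The paper gives no proof of its own and relies on this same argument through its citation of Schoen--Yau, Section 3.1.
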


On the other hand, there are examples of complete Riemannian manifolds for which $(I^{eucl.}_{2,gen})$ is valid, but the isoperimetric inequality is not satisfied (\textit{cf.} \cite{HebeyBook}, Theorem 8.4). It is also interesting to note that if $(I^{eucl.}_{q,gen})$ holds for some $q \in [1,n)$, then $(I^{eucl.}_{s,gen})$ holds for every $s \in [q,n)$ (\textit{cf.} \cite{HebeyBook}, Lemma 8.1). In this sense, the inequality $(I^{eucl.}_{1,gen})$ is the most restrictive among the inequalities $(I^{eucl.}_{q,gen})$.  

We follow \cite{HebeyBook} to introduce Green's functions and relate them to Sobolev inequalities, and refer also to Section 17 of the book \cite{Livro-Peter-Li} by P. Li for some properties of Green's functions.

For a complete, non-compact, Riemannian manifold $(M,g)$ and a point $x \in M$, consider $\Omega \subset \subset M$ such that $x \in \Omega$ and let $G$ be the solution of
\begin{equation*}
    \begin{split}
        \begin{cases}
                    -\Delta_g G &= \delta_x \,\,\,\ \text{in } \Omega, \\
        G&=0 \,\,\,\,\,\,\ \text{on } \partial \Omega.
        \end{cases}
    \end{split}
\end{equation*}

Set $G_x^\Omega(y) := G(y)$ when $y \in \Omega$ and extend it as zero outside $\Omega$. One has $G_x^\Omega \le G_x^{\Omega'}$ if $\Omega \subset \Omega'$. 

		\begin{prop}
                Set $G_x(y) := sup_{ \{ \Omega : x \in \Omega \}} G_x^\Omega(y)$ for $y \in M$. Then,
            
                \begin{enumerate}
                    \item either $G_x(y) = +\infty$, $\forall y \in M$, or
                    \item $G_x(y) < \infty$, $\forall y \in M \backslash \{x\}$.
                \end{enumerate}
            
                This alternative does not depend on $x$ and, in the second case, $G_x$ is called the positive minimal Green's function of pole $x$.
		\end{prop}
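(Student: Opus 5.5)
The plan is to compare the Dirichlet Green's functions on nested domains and combine the maximum principle with the Harnack inequality. Monotonicity $G_x^\Omega\le G_x^{\Omega'}$ for $\Omega\subset\Omega'$ follows from the maximum principle: $G_x^{\Omega'}-G_x^\Omega$ is harmonic in $\Omega$ (the singularities at $x$ cancel) and nonnegative on $\partial\Omega$. Hence $G_x$ is a monotone limit along any exhaustion $\Omega_j\uparrow M$ by smooth precompact domains containing $x$. Every precompact $\Omega$ lies in some $\Omega_j$, so the supremum over all $\Omega$ equals $\lim_j G_x^{\Omega_j}$.

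For the dichotomy in $y$, fix $x$ and work on $M\setminus\{x\}$, which is connected because $n\ge 2$. On any compact connected set $K\subset M\setminus\{x\}$, the functions $G_x^{\Omega_j}$ are, for $j$ large, positive harmonic functions on a fixed neighborhood of $K$ avoiding $x$. The Harnack inequality then gives a constant $C_K$, independent of $j$, with $\sup_K G_x^{\Omega_j}\le C_K\inf_K G_x^{\Omega_j}$. Consequently, if $G_x(y_0)<\infty$ for some $y_0\neq x$, the limit is finite on every such $K$ containing $y_0$, hence everywhere on $M\setminus\{x\}$. Otherwise it is $+\infty$ on all of $M\setminus\{x\}$, and also at $x$, since each $G_x^{\Omega}(x)=+\infty$. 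In the finite case, Harnack plus local elliptic estimates give local uniform convergence to a positive harmonic function on $M\setminus\{x\}$, and $G_x-G_x^{\Omega_1}$ is harmonic near $x$. So the limit is a genuine Green's function, and it is minimal by construction.

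For independence of $x$, take $x\neq x'$ and a small closed ball $\overline B$ around $x$ not containing $x'$. By the symmetry $G^{\Omega}_x(y)=G^{\Omega}_y(x)$ of Dirichlet Green's functions, finiteness of $G_{x'}$ at the point $x$ gives finiteness of $G_x(x')$, and then finiteness of $G_x$ off $x$ by the previous step. Assume $G_x$ is finite. Since $G_x^{\Omega_j}(x')=G_{x'}^{\Omega_j}(x)$, the sequence $G_{x'}^{\Omega_j}(x)$ is bounded. Applying the first dichotomy with pole $x'$, $G_{x'}$ is finite at $x\neq x'$, hence finite off $x'$. The argument is symmetric in $x$ and $x'$, so the alternative does not depend on the pole.

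I expect the main obstacle to be the uniformity of the Harnack constant. The domains $\Omega_j$ vary, so one must make sure that Harnack is applied only on a fixed compact neighborhood of $K$ contained in $\Omega_j\setminus\{x\}$ for all large $j$, where the harmonic functions are positive by the strong maximum principle. One must also check that the symmetry of the Dirichlet Green's function holds on each smooth $\Omega_j$; this is a standard Green's identity argument.
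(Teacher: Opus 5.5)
Your proposal is correct: monotonicity via the maximum principle, a Harnack inequality on compact connected subsets of $M\setminus\{x\}$ that is uniform in $j$, and the symmetry $G_x^{\Omega}(y)=G_y^{\Omega}(x)$ to transfer finiteness from one pole to another together form the standard proof of this dichotomy. The paper does not prove the proposition itself but quotes it as background from Hebey's book and Section 17 of P.~Li's book, where it rests on essentially this argument; note that the closed ball $\overline B$ introduced in your last paragraph is never used and can be dropped.
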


In the first case above, we say that $M$ is \textit{parabolic}; in the second case, the manifold is said to be \textit{non-parabolic}. We return to the study of Sobolev inequalities. The following two theorems are due to G. Carron (see \cite{HebeyBook}, Section $8.1$), and relate the growth of Green's function with the validity of Sobolev inequalities.

		\begin{teo} \label{proposicao que relaciona funcao de green com sobolev com p=2}
    Let $(M^n,g)$ be a smooth, complete Riemannian $n$-manifold of infinite volume, $n \ge 3$. The following two propositions are equivalent:

    \begin{enumerate}
        \item The Euclidean-type generic Sobolev inequality $(I^{eucl.}_{2,gen})$ is valid.
        \item $(M,g)$ is non-parabolic and there exists $K>0$ such that, for any $x \in M$ and any $t>0$,
        \begin{equation}
            Vol_g( \{y \in M : G_x(y) > t \}) \le Kt^{-n/(n-2)} ,  \label{controle de Green da Faber-Krahn}
        \end{equation}

        \noindent where $G_x$ is the positive minimal Green's function of pole $x$.
    \end{enumerate}
		\end{teo}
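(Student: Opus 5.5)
Both implications go through the truncations of the minimal Green's function, combined with the Dirichlet energy identity $\int_M |\nabla G_x^\Omega|^2 = G_x^\Omega(x)$ in its truncated form. Throughout, write $2^* = 2n/(n-2)$.

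\emph{(1) $\Rightarrow$ (2).} Fix $x$, a precompact $\Omega \ni x$ and $t>0$, and test $(I^{eucl.}_{2,gen})$ with $u = \min\{G_x^\Omega, t\}$. This function is Lipschitz with compact support, so the inequality applies to it by approximation. Green's formula (integrate $-\Delta G = \delta_x$ against $u$) gives
\[ \int_M |\nabla u|^2 = \int_{\{G_x^\Omega < t\}} |\nabla G_x^\Omega|^2 = t . \]
Indeed, $\int \nabla G \cdot \nabla u = u(x) = t$, because $G(x) = +\infty > t$ and $\nabla u = \nabla G$ on $\{G<t\}$. Since $u = t$ on $\{G_x^\Omega \ge t\}$, the Sobolev inequality yields
\[ t^2 \, Vol(\{G_x^\Omega \ge t\})^{2/2^*} \le C_2 \, t . \]
Hence $Vol(\{G_x^\Omega \ge t\}) \le (C_2/t)^{n/(n-2)}$, using $2^*/2 = n/(n-2)$. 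This bound is uniform in $\Omega$. Monotone convergence of $G_x^\Omega \uparrow G_x$ then passes the bound to $G_x$. It also rules out parabolicity: if $G_x \equiv +\infty$, the level sets $\{G_x^\Omega > t\}$ would exhaust $M$ as $\Omega$ grows, and $M$ has infinite volume. So (2) holds with $K = C_2^{n/(n-2)}$.

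\emph{(2) $\Rightarrow$ (1).} This is the harder direction and the main obstacle. I will follow Carron's argument: convert the Green's function level set bound into a Faber--Krahn inequality
\[ \lambda_1(D) \ge c \, Vol(D)^{-2/n} \quad \text{for all bounded domains } D, \]
and then invoke the known equivalence between such a Faber--Krahn inequality and $(I^{eucl.}_{2,gen})$, for instance via the Grigor'yan/Carron truncation argument applied to $u$ sliced at dyadic levels $2^k$. To obtain Faber--Krahn, take the first Dirichlet eigenfunction $\phi \ge 0$ of $D$, normalized so that $\sup \phi = 1$. By the Green representation,
\[ \phi(x) = \lambda_1 \int_D G_x^D \phi \le \lambda_1 \int_D G_x . \]
The layer-cake formula combined with the distributional bound (2) gives
\[ \int_D G_x \le \int_0^\infty \min\{Vol(D), K t^{-n/(n-2)}\} \, dt \lesssim K^{(n-2)/n} \, Vol(D)^{2/n} . \]
Choosing $x$ with $\phi(x) = 1$ then gives $\lambda_1(D) \ge c \, Vol(D)^{-2/n}$.

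The delicate step is upgrading the Faber--Krahn inequality to the full Sobolev inequality with sharp exponent, as opposed to a weak-type (Lorentz) version. Here I plan to use the truncation trick: apply the weak estimate to the truncations $\min\{(u-2^k)_+, 2^k\}$, whose gradients have disjoint supports, and sum over $k$. This recovers the strong $L^{2^*}$ norm, as in Bakry--Coulhon--Ledoux--Saloff-Coste.
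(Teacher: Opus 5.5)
Your proof is correct: testing $(I^{eucl.}_{2,gen})$ with the truncated Green function $\min\{G_x^\Omega,t\}$ gives (1)$\Rightarrow$(2) with $K=C_2^{n/(n-2)}$, and for (2)$\Rightarrow$(1) the eigenfunction and layer-cake estimate gives $\lambda_1(D)\ge \tfrac{2}{n}K^{-(n-2)/n}\,Vol(D)^{-2/n}$, which the dyadic truncations $\min\{(u-2^k)_+,2^k\}$ upgrade to the strong Sobolev inequality because the powers of $2$ in the resulting recursion cancel exactly. The paper does not prove this statement and only attributes it to Carron via Hebey's book (Section 8.1); your route through the Faber--Krahn inequality is the standard argument behind that citation.
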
                                              

		\begin{teo} \label{Carron Thm}
    If $(M^n,g)$ is a non-parabolic, complete Riemannian manifold whose Ricci curvature is bounded from below, and if there exists $K>0$ such that for any $x \in M$ and any $t>0$ the positive minimal Green's function $G_x$ of pole $x$ satisfies
    \begin{equation}
        Vol_g( \{ y \in M : G_x(y) > t\} ) \le K t^{-n/(n-1)}  \label{controle restritivo de Green}
    \end{equation}

    \noindent then the Euclidean-type generic Sobolev inequality $(I^{eucl.}_{1,gen})$ is valid. 
		\end{teo}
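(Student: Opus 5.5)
The plan is to establish the isoperimetric inequality through a Green representation formula. By the Proposition quoted from \cite{schoen-yau-lectures}, this is equivalent to $(I^{eucl.}_{1,gen})$. Since $M$ is non-parabolic, $G_x$ exists, and for $u\in C^\infty_0(M)$ we have
\begin{equation*}
u(x)=\int_M \langle \nabla_y G_x(y),\nabla u(y)\rangle\, dV_g(y),
\end{equation*}
justified by exhausting $M$ with the Dirichlet Green functions $G_x^\Omega$ and passing to the limit. Hence $|u|\le T(|\nabla u|)$, where $T$ is the positive operator with kernel $k(x,y)=|\nabla_y G_x(y)|$.

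The first step is a uniform weak-type bound on this kernel: we want
\begin{equation*}
Vol_g\{y:|\nabla_y G_x(y)|>s\}\le K' s^{-n/(n-1)}
\end{equation*}
for all $x$ and $s$, together with the same bound in the other variable, using the symmetry $G_x(y)=G_y(x)$. Given such a bound, Minkowski's inequality for the weak $L^{n/(n-1)}$ quasinorm, applied to $\int k(\cdot,y)f(y)\,dV_g(y)$ with $f=|\nabla u|\in L^1$, gives $\|u\|_{L^{n/(n-1),\infty}}\le C\|\nabla u\|_{L^1}$. Maz'ya's truncation argument then upgrades this weak estimate to the strong inequality $\|u\|_{n/(n-1)}\le C\|\nabla u\|_1$. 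Concretely, apply the weak estimate to $\min\{(|u|-2^k)_+,2^k\}$ and sum over $k\in\mathbb{Z}$.

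To obtain the kernel bound, use the Ricci lower bound through the Cheng--Yau gradient estimate. $G_x$ is positive and harmonic on $B(y,\rho/2)$ with $\rho=\min\{d(x,y),1\}$, so
\begin{equation*}
|\nabla G_x|(y)\le C\,\rho^{-1}G_x(y),
\end{equation*}
with $C$ depending only on $n$ and the Ricci lower bound. Away from the pole ($\rho=1$) this gives $\{|\nabla G_x|>s\}\subset\{G_x>s/C\}$, and the hypothesis \eqref{controle restritivo de Green} yields the desired bound directly. Near the pole, decompose $B(x,1)$ into dyadic annuli $A_j=\{2^{-j-1}\le d(x,\cdot)<2^{-j}\}$. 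On $A_j$ we get $\{|\nabla G_x|>s\}\cap A_j\subset\{G_x>c\,s2^{-j}\}$, whose volume is at most
\begin{equation*}
\min\{K(s2^{-j})^{-n/(n-1)},\,Vol\, B(x,2^{-j})\}.
\end{equation*}

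\textbf{Main obstacle.} The small-scale analysis near the pole is where I expect the difficulty. Summing the naive dyadic bound, with $Vol\,B(x,r)\le Cr^n$ from Bishop--Gromov, only gives decay of order $s^{-1}$ for large $s$, which is too weak. I would first try to exploit the hypothesis more sharply at small scales. It should force non-collapsing, $Vol\,B(x,r)\ge c r^n$ for $r\le1$: a small ball would make $G_x$ too large on a set that is too big. With non-collapsing in hand, the Ricci bound gives a two-sided local comparison $G_x\approx d(x,\cdot)^{2-n}$ and $|\nabla G_x|\lesssim d^{1-n}$ near the pole. This yields $Vol\{|\nabla G_x|>s\}\cap B(x,1)\lesssim s^{-n/(n-1)}$ directly, in line with the Euclidean model.

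\textbf{Fallback.} If that fails, I would decouple scales. Away from the pole, use the Green representation as above. At scale $\le1$, invoke Buser's local isoperimetric inequality (\cite{Buser}), valid on balls of radius $1$ under the Ricci lower bound plus non-collapsing. The two would then be patched with a partition of unity subordinate to a uniformly locally finite covering by unit balls.
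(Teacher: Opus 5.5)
The paper does not prove this statement; it quotes it from Carron via Hebey's book (Section 8.1), so your proposal has to stand on its own. Its architecture is sound: the representation formula, a uniform weak-$L^{n/(n-1)}$ bound on $x\mapsto|\nabla_yG(x,y)|$, Minkowski's inequality in $L^{n/(n-1),\infty}$ (normable since $n/(n-1)>1$), and Maz'ya truncation do give $(I^{eucl.}_{1,gen})$. Minkowski needs the bound in $x$ for fixed $y$. You get it because the majorant $C\rho(x,y)^{-1}G(x,y)$ is symmetric, not because the kernel is.

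The gap is at the step you flagged, and it is not closed. Non-collapsing is only asserted. The claim that, given non-collapsing, the Ricci bound yields $G_x\lesssim d(x,\cdot)^{2-n}$ on $B(x,1)$ is also false as stated. Local geometry controls only the Dirichlet Green function of $B(x,1)$. The remainder is harmonic with boundary values $G_x|_{\partial B(x,1)}$, and Ricci and volume bounds alone do not bound these uniformly in $x$: they blow up, for instance, as $x$ moves out along an $\mathbb{R}^2\times S^1$ end of a non-parabolic manifold. Without such a bound your weak-type constant near the pole is not uniform. The lower comparison $G_x\gtrsim d^{2-n}$ is not needed at all.

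Both points follow from the hypothesis via the layer-cake bound, valid for every measurable $E\subset M$:
\begin{equation*}
\int_E G_x\,dV_g\le\int_0^\infty\min\bigl\{|E|,\,Kt^{-n/(n-1)}\bigr\}\,dt=C(n,K)\,|E|^{1/n}.
\end{equation*}

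\emph{Non-collapsing.} Test $-\Delta G_x=\delta_x$ against $\phi=\min\{1,(2-d(x,\cdot))_+\}$ to get $1\le\int_A|\nabla G_x|$, where $A=B(x,2)\setminus B(x,1)$. Cheng--Yau on $B(y,1)$ for $y\in A$ gives $|\nabla G_x|\le CG_x$ on $A$. Hence $1\le C\,|B(x,2)|^{1/n}$, and Bishop--Gromov then gives $|B(x,r)|\ge cr^n$ for $r\le2$, uniformly in $x$.

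\emph{Boundary values.} If $d(x,z)\ge1$, Harnack gives $G_x\ge G_x(z)/C$ on $B(z,1/2)$, a set of volume $\ge c$. The hypothesis then forces $G_x(z)\le C'$.

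\emph{Near-pole bound.} Compare $G_x$ on $B(x,1)$ with the Dirichlet Green function plus $C'$, and use local Li--Yau bounds, which need the non-collapsing. This gives $G_x(y)\le C\,d(x,y)^{2-n}$, hence $|\nabla G_x|(y)\le C\,d(x,y)^{1-n}$ for $d(x,y)\le1$. So $\{|\nabla G_x|>s\}\cap B(x,1)$ lies in $B(x,(C/s)^{1/(n-1)})$, whose volume is at most $Cs^{-n/(n-1)}$ by Bishop--Gromov. With this, your main line is complete.

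Discard the fallback. Buser-type local isoperimetric inequalities with the Euclidean exponent need the same non-collapsing. Patching with a partition of unity also produces terms $\int|u|\,|\nabla\chi_i|$ that cannot be absorbed into a scale-invariant inequality.

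The truncation step can also be bypassed. For a smooth compact domain $\Omega$ and $x$ in its interior, $1=-\int_{\partial\Omega}\partial_\nu G_x\,d\sigma\le\int_{\partial\Omega}|\nabla_yG(x,y)|\,d\sigma(y)$. Integrating over $x\in\Omega$ and using $\int_\Omega|\nabla_yG(x,y)|\,dV_g(x)\le C|\Omega|^{1/n}$ (layer cake with your kernel bound) gives the isoperimetric inequality directly.
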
      
        
        It is interesting to note that Theorem \ref{Carron Thm} is not sharp. Indeed, the condition above is not satisfied by the positive Green's function $G_x$ of the Euclidean space $\mathbb{R}^n$. Nevertheless, we will see that under adequate geometric constraints on the ambient manifold, we can verify these sufficient conditions for a non-minimal finite index CMC hypersurface. Pointing in that direction, our first proposition relates the growth of Green's functions with the condition $\lambda_1(M)>0$. Recall that for a complete Riemannian manifold $M$ with infinite volume
    \begin{equation} \label{definicao do lambda1}
        \lambda_1(M) = \inf_{0 \neq f \in C^\infty_0(M)} \frac{\int_M |\nabla f|^2}{\int_M f^2}.
    \end{equation}

		\begin{prop}  \label{controle de Green do lambda0}
            Let $(M,g)$ be a complete Riemannian manifold of infinite volume. If $\lambda_1(M) > 0$, then $M$ is non-parabolic and for any $x \in M$ and any $t>0$, the positive minimal Green's function $G_x$ of pole $x$ satisfies
            \begin{equation*}
                Vol_g( \{ y \in M : G_x(y) > t\} ) \le \lambda_1(M) \cdot  t^{-1}.
            \end{equation*}
		\end{prop}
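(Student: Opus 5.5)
The plan is to establish both conclusions of the statement, non-parabolicity and a weak-$L^1$ bound of the form $Vol_g(\{G_x>t\})\le \text{const}\cdot t^{-1}$, using $\min(G_x^\Omega,t)$ as a test function in the Rayleigh quotient \eqref{definicao do lambda1}. One caveat comes first: this argument produces the constant $\lambda_1(M)^{-1}$ in front of $t^{-1}$. That is the constant dimensional analysis predicts. It agrees with the factor $\lambda_1(M)$ written in the statement only when $\lambda_1(M)\ge 1$. The literal bound $\lambda_1(M)\, t^{-1}$ for $\lambda_1(M)<1$ does not follow from this argument, and I do not see how to obtain it.

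\textbf{Step 1: a bound on each exhausting domain.} Fix $x$ and a smooth precompact domain $\Omega\ni x$, and let $G=G_x^\Omega$. For $t>0$ set $u_t=\min(G,t)$. This function is Lipschitz, vanishes on $\partial\Omega$, and is constant equal to $t$ near $x$, so after approximation it is an admissible test function in \eqref{definicao do lambda1}.

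\emph{Dirichlet energy.} Integrate $-\Delta G=\delta_x$ against $u_t$ and use that $u_t\equiv t$ on a neighbourhood of the pole. This gives
\begin{equation*}
\int_M|\nabla u_t|^2=\int_{\{G<t\}}|\nabla G|^2 = t .
\end{equation*}
Equivalently, the flux of $\nabla G$ through every level set $\{G=s\}$ equals $1$, so the coarea formula yields $\int_0^t 1\,ds=t$.

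\emph{$L^2$ norm.} Since $u_t=t$ on $\{G>t\}$, we have $\int_M u_t^2\ge t^2\,Vol_g(\Omega\cap\{G>t\})$.

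\emph{Rayleigh quotient.} The definition of $\lambda_1(M)$ then gives $\lambda_1(M)\,t^2\,Vol_g(\{G_x^\Omega>t\})\le t$, that is,
\begin{equation*}
Vol_g(\{G_x^\Omega>t\})\le \frac{1}{\lambda_1(M)\,t}.
\end{equation*}

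\textbf{Step 2: non-parabolicity.} The bound in Step 1 is uniform in $\Omega$, and this uniformity excludes the parabolic alternative. Suppose $G_x\equiv+\infty$. Fix any ball $B$ of positive volume. By monotonicity, and by Harnack applied on compact sets away from $x$, $G_x^\Omega$ would then exceed any prescribed $t$ on all of $B$ once $\Omega$ is large enough. That forces $Vol(B)\le (\lambda_1 t)^{-1}$ for every $t>0$, a contradiction. Hence $M$ is non-parabolic and $G_x=\sup_\Omega G_x^\Omega$ is finite off $x$.

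\textbf{Step 3: passage to the limit.} Since $G_x^\Omega$ increases to $G_x$, the sets $\{G_x^\Omega>t\}$ increase to $\{G_x>t\}$. Monotone convergence of measures transfers the estimate of Step 1 to $G_x$, for every $x\in M$ and every $t>0$.

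\textbf{Main obstacle.} The delicate step is justifying the energy identity in Step 1 near the singularity of $G$. I would handle it by using $u_t$ itself, which is constant near $x$, and applying Green's formula on $\{\varepsilon<G<t\}$ for regular values, then letting $\varepsilon\to0$.

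\textbf{Consequence of the caveat.} The qualitative content of the statement holds in full: non-parabolicity, and a bound $Vol_g(\{G_x>t\})\le K t^{-1}$ with $K$ independent of $x$ and $t$. The explicit constant this proof supplies is $K=\lambda_1(M)^{-1}$, not the factor $\lambda_1(M)$ written in the statement.
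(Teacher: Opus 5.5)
Your proposal is correct and is essentially the paper's proof: you use the same truncated test function $\min(G_x^\Omega,t)$, the same identity $\int_M|\nabla \min(G_x^\Omega,t)|^2=t$ coming from the unit flux of $G_x^\Omega$, and the same monotone-convergence passage to $G_x$ (monotone convergence alone already excludes parabolicity, since $G_x\equiv+\infty$ would force $\{G_x>t\}=M$ and hence $Vol_g(M)\le(\lambda_1(M)t)^{-1}<\infty$, so your Harnack step is harmless but unnecessary). Your caveat about the constant is also right: the paper's proof writes $\int_M\Phi_t^2\le\lambda_1(M)\int_M|\nabla\Phi_t|^2$, which inverts the Rayleigh quotient, and the scaling $g\mapsto c^2g$ (under which $\lambda_1$ scales by $c^{-2}$ while the best constant in the weak-$L^1$ bound scales by $c^{2}$) shows that the literal factor $\lambda_1(M)$ cannot hold in general, whereas the correct factor $\lambda_1(M)^{-1}$ is all that the later application in Proposition \ref{Garante isoperimetrica} needs.
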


 \begin{proof}
     Let $\Phi_t(y) := \min(G_x^\Omega(y),t)$. Then, $|\nabla \Phi_t| = |\nabla G_x^\Omega| \mathbbm{1}_{ \{G_x < t\}}$. Using $\Phi_t$ as test function for $\lambda_1(M)$ we get 
     \begin{equation*}
         Vol_g \big ( \{y \in M : G_x^\Omega(y) > t \}  \big) t^2 \le  \int_M \Phi_t^2 \le \lambda_1(M) \int_M |\nabla \Phi_t|^2.
     \end{equation*}

     Now, 
     \begin{equation*}
         \int_M |\nabla \Phi_t|^2 = \int_{ \{ G_x^\Omega < t \} } |\nabla G_x^\Omega|^2. 
     \end{equation*}

     For almost every positive $t \in \mathbb{R}$, $\{ G_x^\Omega  < t \} $ has smooth boundary $\{ G_x^\Omega  = t \}$. For these numbers $t$, we integrate by parts to get
     \begin{equation*}
         \int_M |\nabla \Phi_t|^2 = -\int_{ \{ G_x^\Omega < t \} }  G_x^\Omega \Delta G_x^\Omega + t \int_{ \{ G_x^\Omega = t\}} \frac{\partial G_x ^\Omega}{\partial \nu} = t
     \end{equation*}

    \noindent because $\Delta G_x^\Omega = 0$ on ${ \{ G_x^\Omega < t \} }$, and we have used the definition of Green's function. Therefore,  
    \begin{equation*}
        Vol_g( \{y \in M : G_x^\Omega(y) > t \} \le \lambda_1(M) \cdot t^{-1}
    \end{equation*}

    \noindent for almost every $t>0$. Since the left-hand side is monotone in $t >0$ and the right-hand side is continuous, one can prove that the inequality must hold for every $t>0$.

    We note that $G_x^\Omega$ converges pointwise to $G_x$, and the convergence is monotone for increasing $\Omega$. Therefore, $M$ is non-parabolic, and we use the Monotone Convergence Theorem to show that
    \begin{equation*}
        Vol_g( \{y \in M : G_x(y) > t \} \le \lambda_1(M) \cdot t^{-1}
    \end{equation*}

    \noindent as desired.
     \end{proof}

        When $(N^n,g)$ is a non-compact Riemannian manifold with non-negative sectional curvature, it follows from the Bishop-Gromov volume comparison theorem that the limit
         \begin{equation} \label{crescimento euclideano de volume}
                \lim_{R \to \infty} \frac{V(B_R(p))}{R^n}
         \end{equation}
        \noindent exists for every $p \in N$, and assumes a value in the interval $\Big[0,vol(\mathbb{S}^n,can)\Big]$. We say that $N$ has \textit{Euclidean volume growth} when this limit is positive for every $p \in N$. The Sobolev inequalities for these manifolds and its submanifolds have been studied in \cite{BrendleIsop-curvatura-naonegativa} and \cite{CHL}. We will study manifolds with a further geometric property: bounded geometry. A Riemannian manifold has \textit{bounded geometry} when its injectivity radius is positive and its sectional curvature is bounded. (Beware that different authors use different definitions for Riemannian manifolds of bounded geometry.)

       Due to the work of J. H. Michael, and L. M. Simon \cite{Michael-Simon-Sobolev}, and S. Brendle \cite{BrendleIsop-curvatura-naonegativa}, every minimal hypersurface immersed in a Riemannian manifold with non-negative sectional curvature and Euclidean volume growth satisfies the isoperimetric inequality. One loses this property when the mean curvature of the hypersurfaces are constant but not zero, as the case of the right circular cylinder in Euclidean three space shows. On the other hand, we have

		\begin{prop}  \label{Garante isoperimetrica}      
                Let $X^{n+1}$ be a Riemannian manifold with non-negative sectional curvature and bounded geometry. Suppose that $X$ has Euclidean volume growth and dimension $n+1 \ge 6$. If there exists a complete, non-compact, non-minimal CMC hypersurface $M$ immersed in $X$ with finite index and bounded second fundamental form, then $M$ satisfies the isoperimetric inequality.  
		\end{prop}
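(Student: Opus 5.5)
The plan is to verify the hypotheses of Theorem \ref{Carron Thm} for $M$ with its induced metric. Since $X$ has bounded curvature and $M$ has bounded second fundamental form, the Gauss equation gives a lower bound on the Ricci curvature of $M$. Any complete non-compact CMC hypersurface in this setting has infinite volume. We must then show three things: that $M$ is non-parabolic, and that for some $K>0$ the positive minimal Green's function satisfies $\mathrm{Vol}(\{G_x>t\})\le K t^{-n/(n-1)}$ for all $x\in M$ and $t>0$.

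The key idea is to interpolate between two decay estimates, one effective for small $t$ and one for large $t$. First we get a large-scale spectral gap. Since $M$ has finite index, outside a compact set $B_R(p)$ it is strongly stable. We have $Ric_X(\nu)\ge 0$ and $|A|^2\ge H^2/n>0$, so the stability inequality gives $\int |\nabla\phi|^2\ge \frac{H^2}{n}\int\phi^2$ for $\phi$ supported in $M\setminus B_R(p)$. A standard cutoff argument, using that the bottom of the spectrum is governed at infinity, should then show $\lambda_1(M)>0$. This relies on finiteness of index: only finitely many negative directions exist, and one may need to argue via the essential spectrum, or directly bound $\mathrm{Vol}$ of superlevel sets using test functions localized away from $B_R(p)$ plus a bounded correction near it. Proposition \ref{controle de Green do lambda0} then gives non-parabolicity and $\mathrm{Vol}(\{G_x>t\})\le C t^{-1}$, which beats $t^{-n/(n-1)}$ for small $t$.

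Second, we need a local Euclidean-type bound for large $t$, i.e. near the pole. Here we use the ambient Euclidean volume growth, non-negative curvature and bounded geometry. By Brendle's Michael--Simon type Sobolev inequality on $X$, valid in non-negative sectional curvature with Euclidean volume growth, we have for $u\ge0$ compactly supported on $M$:
$$\Big(\int_M u^{\frac n{n-1}}\Big)^{\frac{n-1}n}\le C\int_M\big(|\nabla u|+|H|u\big).$$
Applying this for $q=2$ via the usual trick gives an $L^2$ Sobolev inequality with an $|H|^2u^2$ lower-order term. That term is absorbed using $\lambda_1(M)>0$, since $\int H^2u^2\le \frac{H^2}{\lambda_1}\int|\nabla u|^2$. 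So $(I^{eucl.}_{2,gen})$ holds, and Theorem \ref{proposicao que relaciona funcao de green com sobolev com p=2} yields $\mathrm{Vol}(\{G_x>t\})\le K t^{-n/(n-2)}$. For $t\ge1$ this decays faster than $t^{-n/(n-1)}$, while for $t\le 1$ the bound $t^{-1}\le t^{-n/(n-1)}$ holds. Combining the two regimes gives \eqref{controle restritivo de Green} uniformly in $x$, and Theorem \ref{Carron Thm} yields $(I^{eucl.}_{1,gen})$, hence the isoperimetric inequality.

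The main obstacle is the first step, $\lambda_1(M)>0$ from finite index rather than strong stability. I would try to show that $\lambda_1$ of the exterior region is at least $H^2/n$, and that the compact part cannot lower $\lambda_1$ to zero. The danger is that $\lambda_1(M)$ is an infimum over all of $M$ and a compact piece could carry arbitrarily small Rayleigh quotient. However, test functions with small quotient and infinite-volume support can be cut off and localized at infinity, while compactly supported bumps have quotient bounded below by Dirichlet eigenvalue bounds on a fixed compact set, using bounded geometry. Making this dichotomy rigorous via a partition $\phi=\eta\phi+(1-\eta)\phi$ with $|\nabla\eta|$ bounded is where the care goes. I expect the dimension hypothesis $n+1\ge 6$ to enter only through the availability of Brendle's inequality in the stated form, i.e. codimension considerations.
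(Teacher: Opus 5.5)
Your proposal is correct and follows the paper's proof essentially step by step. The common steps are: infinite volume via Theorem \ref{Katia Frensel volume infinito}; $\lambda_1(M)>0$ from strong stability outside a compact set plus infinite volume, which is the paper's Theorem \ref{CMC com H>Hlambda tem lambda1>0}, also only sketched there; Proposition \ref{controle de Green do lambda0} for $t\le 1$; the $L^2$ Sobolev inequality with Theorem \ref{proposicao que relaciona funcao de green com sobolev com p=2} for $t\ge 1$; and finally the Gauss equation and Theorem \ref{Carron Thm}. The only difference is that the paper cites Proposition 2.3 of \cite{CHL} for $(I^{eucl.}_{2,gen})$, whereas you derive it from Brendle's Michael--Simon inequality by absorbing the $H^2u^2$ term with $\lambda_1(M)>0$, which is a sound self-contained substitute. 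Contrary to your guess, the hypothesis $n+1\ge 6$ is not used anywhere beyond $n\ge 3$; it only reflects that the statement is vacuous in dimensions four and five.
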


         We remark that the minimal case could be included in the Proposition \ref{Garante isoperimetrica}, as this case is part of the work of \cite{BrendleIsop-curvatura-naonegativa}. We did not include the case $n+1 =4$ or $5$ in the theorem, because in this case the existence hypothesis never holds (\cite{XuCheng}, \cite{Elbert-Nelli-Rosenberg}). The bound on the second fundamental form of $M$ can be dropped from the assumptions when $n+1=6$ due to a blow-up argument, using the fact that $X$ has bounded curvature and the solution to the stable Bernstein problem due to \cite{Mazet} and the curvature estimate it implies (see Remark \ref{remark sobre estimativa de curvatura}). 
        
        In the proof of the proposition, we will need the following two results. The first one will guarantee that the CMC hypersurface under consideration has infinite volume.

	\begin{teo}[\textit{cf.} \cite{K.Frensel--long}, Corollary 8] \label{Katia Frensel volume infinito}
               In a complete Riemannian manifold with bounded geometry, every complete non-compact CMC immersion has infinite volume.
           \end{teo}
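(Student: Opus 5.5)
The plan is to combine a uniform \emph{lower} bound on the area of small extrinsic pieces, coming from a monotonicity formula, with a disjointness argument that produces infinitely many such pieces. Let $\psi:M^n\to X^{n+1}$ be the complete non-compact immersion with constant mean curvature $H$. Let $|sec_X|\le K$ and $inj(X)\ge i_0>0$.

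\textbf{Step 1 (monotonicity at a fixed small scale).} Fix $r_0\in\big(0,\min\{i_0,\tfrac{\pi}{4\sqrt K}\}\big)$, so that every ambient ball $B_X(x,r_0)$ is a normal ball. On it the distance $\rho=d_X(x,\cdot)$ is smooth away from $x$, and Hessian comparison gives
\begin{equation*}
\mathrm{Hess}\,\tfrac12\rho^2\ge (1-c_K\rho^2)\,g .
\end{equation*}
For $p\in M$ and $r\le r_0$, let $\Sigma_p(r)$ be the connected component of $\psi^{-1}(B_X(\psi(p),r))$ containing $p$. If $\Sigma_p(r)$ has infinite area, then $M$ has infinite volume and we are done. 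Otherwise I will apply the divergence theorem on $\Sigma_p(r)\cap\{\rho<s\}$ to the tangential part of $\rho\nabla\rho$. Its divergence is at least $n(1-c_K\rho^2)-|H|\rho$, and the boundary term at $\{\rho=s\}$ has the favourable sign. This yields the usual almost-monotonicity
\begin{equation*}
\frac{d}{ds}\Big(e^{\Lambda s}\,s^{-n}\,|\Sigma_p(r)\cap\{\rho<s\}|\Big)\ge 0, \qquad \Lambda=\Lambda(n,K,H,r_0).
\end{equation*}
Letting $s\to 0$, where the density is at least $\omega_n$ because $p$ is a point of the immersion, I get $|\Sigma_p(r)|\ge c\,r^n$ with $c=c(n,K,H,r_0)>0$ independent of $p$.

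\textbf{Step 2 (infinitely many disjoint pieces).} Set $r=r_0$ and work with the pieces $\Sigma_p(r/4)$. I will choose points $p_1,p_2,\dots$ inductively. Assuming the pieces $\Sigma_{p_i}(r)$, $i\le j$, are compact (otherwise Step 1 already finishes), their union is compact. Since $M$ is non-compact, I can pick $p_{j+1}$ outside this union.

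I claim $\Sigma_{p_{j+1}}(r/4)\cap\Sigma_{p_i}(r/4)=\emptyset$ for all $i\le j$. Suppose $q$ lies in both. Then $d_X(\psi(p_{j+1}),\psi(p_i))<r/2$, so both pieces map into $B_X(\psi(p_i),3r/4)\subset B_X(\psi(p_i),r)$. Their union is connected (they share $q$) and contains $p_i$, hence it is contained in $\Sigma_{p_i}(r)$. This forces $p_{j+1}\in\Sigma_{p_i}(r)$, contradicting the choice of $p_{j+1}$.

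Summing the bound of Step 1 over these pairwise disjoint sets gives $|M|\ge\sum_j c\,(r/4)^n=+\infty$.

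\textbf{Main obstacle.} I expect the delicate point to be Step 1 for an \emph{immersed}, possibly non-proper piece: one must justify the divergence theorem on $\Sigma_p(r)\cap\{\rho<s\}$ when this set is merely of finite area rather than compact. My plan is to use that completeness of $M$ makes the closure of each sublevel set intrinsically well behaved. Failing that, I would integrate against a cutoff $\varphi(\rho)$, which avoids boundary terms altogether, and run the same ODE argument on $\int_{\Sigma_p(r)}\varphi(\rho)$. Keeping the constant $\Lambda$ uniform uses only $|H|$ and the bound $|sec_X|\le K$, which is exactly where bounded geometry enters.
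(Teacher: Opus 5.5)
Your Step 2 has a genuine gap. The induction needs every $\Sigma_{p_i}(r)$ to be precompact in $M$, so that you can pick $p_{j+1}\notin\bigcup_{i\le j}\Sigma_{p_i}(r)$. You justify this with ``otherwise Step 1 already finishes''. But Step 1 only finishes when a component has \emph{infinite area}. A component that is not precompact but has finite area is excluded by nothing you prove, and for a non-proper immersion this is exactly the difficulty.

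The extreme case shows the problem is real. Suppose $\psi(M)$ lies in a ball of radius $r/2$, as for Nadirashvili-type complete bounded minimal immersions, which the statement allows. Then $\Sigma_p(r)=M$ for every $p$, the induction cannot take a single step, and $|M|=\infty$ is precisely what remains to be shown. A smaller point concerns Step 1: the cutoff $\varphi(\rho)$ alone does not have compact support in $M$ when $\psi$ is not proper. You must also multiply by intrinsic cutoffs $\eta_R$ with $|\nabla\eta_R|\le 1/R$, which exist by completeness, and use the finite area to make the error term vanish as $R\to\infty$ (Gaffney's argument). That fix is routine.

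The missing ingredient is a localisation \emph{in $M$}, not only in $X$. One way is to argue by contradiction, assuming $|M|<\infty$.
Then the same Gaffney-type argument justifies your monotonicity for the full preimage $\psi^{-1}(B_X(\psi(q),t))$ for every $q\in M$. In particular it gives $t^{-n}|\psi^{-1}(B_X(\psi(q),t))|\le C|M|$ for $t\le s$, with $C$ depending on $s$.
Now rerun the computation with the field $\eta\,\varphi(\rho/t)\,X^T$, where $\eta$ vanishes on $B_M(p,R/2)$, equals $1$ outside $B_M(p,R)$, and satisfies $|\nabla\eta|\le 2/R$. The only new term is bounded by $\frac{2t}{R}|\psi^{-1}(B_X(\psi(q),t))|$. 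Integrating the resulting differential inequality from $0$ to $s$ gives, for any $q$ with $d_M(p,q)>R$,
\begin{equation*}
|M\setminus B_M(p,R/2)| \ge c\,s^n - \frac{C\,s\,|M|}{R}.
\end{equation*}
This contradicts $|M\setminus B_M(p,R/2)|\to 0$ as $R\to\infty$, and no selection of disjoint components is needed.

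Alternatively, follow the intrinsic route behind the result the paper cites (Frensel). A Hoffman--Spruck type Sobolev inequality gives a uniform lower bound $|B_M(q,s)|\ge c\,s^n$ for small \emph{intrinsic} balls. Disjoint intrinsic balls centred along a ray then give infinite volume at once. Your extrinsic pieces $\Sigma_q(s)\supset B_M(q,s)$ are larger than intrinsic balls, which is exactly why their disjointness does not come for free.
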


    The second result needed concerns the validity of the condition $\lambda_1(M)>0$ (see \eqref{definicao do lambda1}) for finite index CMC hypersurfaces. 

        \begin{teo} \label{CMC com H>Hlambda tem lambda1>0}    
            Let $X^{n+1}$ be a Riemannian manifold with bounded geometry with dimension $n+1\ge 4$. Let $M$ be a finite index CMC hypersurface immersed in $X$ with mean curvature $H$. If    
            \begin{equation*}
                \frac{1}{n} \cdot H^2+\inf_M Ric_X >0
            \end{equation*}
        
            \noindent then $$\lambda_1(M)>0.$$
        \end{teo}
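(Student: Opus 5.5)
The plan is to reduce to the strongly stable case outside a compact set, combine the stability inequality with a Simons-type refined Kato or Hardy-type inequality, and then patch in the compact part. The basic input is the stability inequality applied on $M\setminus B_R(p)$, which is strongly stable for some $R$ by finite index (Proposition 1 in \cite{Fischer-Colbrie}). Write $A=\mathring{A}+\frac{H}{n}g$, so that
\begin{equation*}
|A|^2=|\mathring A|^2+\tfrac{1}{n}H^2 .
\end{equation*}
Stability alone then gives
\begin{equation*}
\int |\nabla\phi|^2 \ge \int \big(Ric(\nu)+\tfrac1n H^2+|\mathring A|^2\big)\phi^2 \ge \delta\int\phi^2 + \int|\mathring A|^2\phi^2 ,
\end{equation*}
where $\delta:=\frac1n H^2+\inf_M Ric_X>0$, for all $\phi\in C^\infty_0(M\setminus B_R(p))$. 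The $|\mathring A|^2$ term only helps, and we simply discard it. Hence the bottom of the spectrum of $M$ outside a compact set is at least $\delta$: $\lambda_1(M\setminus \overline{B_R(p)})\ge\delta>0$. Note that $\inf Ric_X$ is finite because $X$ has bounded geometry.

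The remaining task is to pass from $\lambda_1$ of the complement of a compact set to $\lambda_1(M)$ itself. The essential spectrum of $-\Delta$ on $M$ depends only on the geometry outside compact sets, so $\inf\sigma_{ess}(-\Delta_M)\ge\delta>0$. Therefore, if $\lambda_1(M)=0$, then $0$ must be an eigenvalue of $-\Delta$ in $L^2(M)$. That means there is a nonzero $L^2$ harmonic function $u$ with $\int|\nabla u|^2=0$ (by the variational characterization, using an exhaustion by cutoffs with bounded gradient), so $u$ is constant. A nonzero constant lies in $L^2$ only if $\mathrm{Vol}(M)<\infty$. Theorem \ref{Katia Frensel volume infinito} gives infinite volume in the non-compact case, so $u\equiv0$, a contradiction. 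For a compact $M$ the statement must be read with $\lambda_1$ as defined in \eqref{definicao do lambda1}, presumably in the non-compact, infinite-volume setting; I would state that the theorem is intended for non-compact complete $M$.

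To make the essential spectrum step concrete without citing spectral theory, I would argue directly with a minimizing sequence. Suppose $f_k\in C_0^\infty(M)$, $\|f_k\|_2=1$, with $\int|\nabla f_k|^2\to0$. Fix a cutoff $\eta$ equal to $1$ on $B_R$, supported in $B_{R+1}$, with $|\nabla\eta|\le 2$. Apply the displayed bound to $(1-\eta)f_k$ to get $\int (1-\eta)^2f_k^2\to0$. Hence the mass of $f_k$ concentrates in $B_{R+1}$. On that compact set, Rellich's theorem together with $\nabla f_k\to0$ shows that $f_k$ converges in $L^2(B_{R+2})$ to a constant $c$ with $c^2\,\mathrm{Vol}(B_{R+1})\approx1$. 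The nonvanishing of $c$ then has to be contradicted using the fact that the tail of $f_k$ outside $B_{R+1}$ is small in $L^2$.

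The main obstacle is making that contradiction rigorous; I would handle it on annuli. Because $\nabla f_k\to0$ in $L^2$ while $f_k\approx c$ on $B_{R+1}$, a Poincaré inequality on the annulus $B_{R+2}\setminus B_{R+1}$ forces $f_k\approx c$ there as well. Bounded geometry together with the second fundamental form bounds from Theorem \ref{estimativa de curvatura mais geral possivel} controls the local geometry enough to have such Poincaré inequalities. Iterating outward over successive annuli conflicts with $\int(1-\eta)^2 f_k^2\to0$ only after finitely many steps when $\mathrm{Vol}$ grows, which is exactly where infinite volume enters. If the annular argument proves messy, I would instead quote the standard fact (e.g.\ \cite{Livro-Peter-Li}) that $\lambda_1(M)>0$ whenever $\lambda_1$ outside a compact set is positive and $M$ has infinite volume.
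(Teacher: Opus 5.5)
Your proposal is correct and follows the paper's route: strong stability outside a compact set gives $\int_M|\nabla\varphi|^2\ge\delta\int_M\varphi^2$ for $\varphi\in C^\infty_0(M\setminus K)$, and infinite volume (Theorem~\ref{Katia Frensel volume infinito}) then upgrades this to $\lambda_1(M)>0$. The paper only cites this last step (\cite{Carron-L2-Harmonic-forms}, Lemma~3.4 of \cite{HongH4}), whereas your decomposition-principle argument actually proves it: $\inf\sigma_{\mathrm{ess}}(-\Delta_M)\ge\delta$, so $\lambda_1(M)=0$ would produce an $L^2$ harmonic function, which must be constant and hence zero.

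Only your optional direct sketch is off. The claim $\int_M(1-\eta)^2f_k^2\to0$ does not follow from the outer bound, because of the term $\int_M|\nabla\eta|^2f_k^2$. The annular iteration, which would also use Theorem~\ref{estimativa de curvatura mais geral possivel} (available only for $n+1\le 6$), is unneeded. Instead, take a weak $W^{1,2}$-limit $f$ of $(f_k)$; it has $\nabla f=0$, so it vanishes by infinite volume. Rellich then gives $\int_{B_{R+1}}f_k^2\to0$, and the outer bound forces $\|f_k\|_{L^2}\to0$, contradicting $\|f_k\|_{L^2}=1$.
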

    
        \begin{proof}
             We use Theorem \ref{Katia Frensel volume infinito} to guarantee that $M$ has infinite volume. If $M$ is strongly stable, then the result follows directly from a manipulation of the strong stability inequality. If $M$ has finite index, then it is strongly stable out of a compact subset $K$ of $M$. Thus, there exists $\delta>0$, such that 
             \begin{equation*}
                 \int_M |\nabla \varphi|^2 \ge \delta \int_M \varphi^2
             \end{equation*}
             \noindent for every $\varphi \in C^\infty_0(M \backslash K)$. With this and the fact that $M$ has infinite volume, one can prove that $\lambda_1(M)>0$. (This result is claimed in page 19 of the lecture notes \cite{Carron-L2-Harmonic-forms}. A particular case can be found in the proof of Lemma 3.4 in \cite{HongH4}, and the computations can be generalized.)
        \end{proof}

\begin{proof}[Proof of Proposition \ref{Garante isoperimetrica}]
     By Theorem \ref{Katia Frensel volume infinito}, $M$ has infinite volume. Since $M$ has constant mean curvature $H>0$ and finite index, we can use Theorem \ref{CMC com H>Hlambda tem lambda1>0} to conclude that $\lambda_1(M)>0$. Fix $x \in M$ and let $G_x$ denote the positive minimal Green's function $G_x$ of pole $x$. It follows from Proposition \ref{controle de Green do lambda0} that $Vol_g( \{ G_x> \frac{1}{R} \}) \le \lambda_1(M) \cdot R$ for every $R>0$. Now, for $R \ge 1$ we have $R \le R^{\frac{n}{n-1}}$ and $Vol_g( \{ G_x > \frac{1}{R} \}) \le \lambda_1(M) \cdot R \le \lambda_1(M) \cdot R^{\frac{n}{n-1}}$.

    On the other hand, by Proposition 2.3 in \cite{CHL}, the Euclidean-type generic Sobolev inequality $(I^{eucl.}_{2,gen})$ is valid on $M$. Hence, it follows from inequality \eqref{controle de Green da Faber-Krahn} in Theorem \ref{proposicao que relaciona funcao de green com sobolev com p=2} that, for some constant $\tilde C>0$, 
    \begin{equation*}
          Vol_g( \{ G> \frac{1}{R} \}) \le \tilde C R^{\frac{n}{n-2}} \le \tilde C R^{\frac{n}{n-1}}
     \end{equation*}
    \noindent for every $R \le 1$.

     Therefore, for every $R>0$ we have $Vol_g( \{ G_x> \frac{1}{R} \}) \le C R^{\frac{n}{n-1}}$. By Gauss equation, $M$ has Ricci curvature bounded from below. Thus we can apply Theorem \ref{Carron Thm} to conclude that $M$ satisfies the isoperimetric inequality. 
\end{proof}

    \section{Estimates for the number of ends of stable CMC hypersurfaces} \label{section::Ends and submanifolds theory}

    This section is devoted to the study of estimates for the number of ends of complete, two-sided stable CMC hypersurfaces. This topic of study is directly related to the theory of harmonic functions on complete manifolds, and this connection was investigated in the work of P. Li and L.-F. Tam \cite{Li-e-Tam-Fins-e-Harmonicas}. The strategy we follow here is related to the seminal work of R. Schoen and S.-T. Yau \cite{schoen-yau-harmonicas}.
    
    The main goal of this section is to prove two theorems regarding the number of ends of stable CMC hypersurfaces immersed in Riemannian manifolds with non-negative $\alpha$-bi-Ricci curvature. Given a real number $\alpha >0$ the $\alpha$-bi-Ricci curvature of a Riemannian manifold $N^{n+1}$ of dimension $n+1 \ge 3$, here denoted by $BRic_\alpha^N$, is defined as $$BRic_\alpha^N(u,v) = Ric^N(u) + \alpha \cdot (Ric^N(v) - sec_N(u \land v))$$ for orthonormal vectors $u$ and $v$. When $\alpha = 1$, this curvature quantity is the bi-Ricci curvature $(BRic^N)$ introduced by Shen and Ye \cite{shen-ye}. Some authors refer to the $\alpha$-bi-Ricci curvature as a weighted bi-Ricci curvature with weight $\alpha$.

The uniformly positive $\alpha$-bi-Ricci curvature condition interpolates, in some sense, between the sectional curvature and the scalar curvature uniformly positive conditions. The reader is referred to the recent work of S. Brendle, S. Hirsch and F. Johne \cite{brendle-intermediate}, and the introduction of the work of O. Chodosh, C. Li and D. Stryker \cite{Chodosh-Li-Stryker-4mfds} for motivations about intermediate curvature conditions.

We highlight that a Riemannian manifold of non-negative (resp. uniformly positive) sectional curvature has non-negative (resp. uniformly positive) $\alpha$-bi-Ricci curvature for any $\alpha>0$. Moreover, a Riemannian manifold with non-negative (resp. uniformly positive) bi-Ricci curvature has non-negative (resp. uniformly positive) scalar curvature. 
    
    Concretely, $BRic_\alpha(u,v) = (1-\alpha)Ric(u) + \alpha \cdot BRic(u,v)$. We highlight the following consequence of this observation. Let $N^{n+1}$ be a Riemannian manifold of dimension $n+1 \ge 3$. If $N$ has non-negative Ricci curvature and non-negative bi-Ricci curvature, then $N$ has non-negative $\alpha$-bi-Ricci curvature for any $\alpha \in (0,1)$. Similarly, a uniformly positive $\alpha$-bi-Ricci curvature condition appears when we combine a non-negative curvature condition with a uniformly positive curvature condition on a Riemannian manifold, and this is related to the recent works (\cite{Chodosh-Li-Stryker-4mfds},\cite{CatinoStable}).

    We refer the reader to \cite{Li-e-Tam-Fins-e-Harmonicas} for the definition of a non-parabolic end of a complete Riemannian manifold, a notion needed to state the next theorem:

\begin{teo} \label{Vanishing para 1-forma harmonica}
    Let $X^{n+1}$ be a complete Riemannian manifold of dimension $3 \le n+1 \le 7$ and bounded geometry. Suppose that there exists a weight
    \begin{equation*}
        \alpha \in \Big(\frac{n-1}{n},\frac{2}{\sqrt{n-1}}\Big]
    \end{equation*}

    \noindent such that $X$ has non-negative $\alpha$-bi-Ricci curvature. Let $M$ be a complete, non-compact, two-sided CMC hypersurface immersed in $X$. Then the following holds.
    
    \begin{enumerate}
        \item If $M$ is strongly stable, then every harmonic $1$-form on $M$ with bounded $L^2$ energy vanishes identically.
        \item If $M$ is weakly stable, then every bounded harmonic function $f \in C^\infty(M)$ with finite Dirichlet energy is constant.
    \end{enumerate}  
    
    In any case, $M$ has at most one non-parabolic end.  
\end{teo}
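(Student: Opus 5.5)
We follow the Schoen--Yau / Cheng--Cheung--Zhou scheme: a Bochner formula for harmonic $1$-forms, a refined Kato inequality, and the stability inequality with test function $\phi=|\omega|\eta$.

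\textbf{Part (1).} Let $\omega$ be an $L^2$ harmonic $1$-form on $M$ and set $u=|\omega|$. The Bochner formula and the refined Kato inequality for harmonic $1$-forms, $|\nabla\omega|^2\ge \frac{n}{n-1}|\nabla u|^2$, give
\begin{equation*}
u\Delta u \ge \frac{1}{n-1}|\nabla u|^2 + Ric_M(\omega^\sharp,\omega^\sharp).
\end{equation*}
Let $e_1=\omega^\sharp/u$ and complete it to an orthonormal frame $e_1,\dots,e_n$ of $TM$. By the Gauss equation,
\begin{equation*}
Ric_M(e_1)=\sum_{j\ge2}sec_X(e_1\wedge e_j)+H a_{11}-\sum_j a_{1j}^2 .
\end{equation*}
The main step is to insert $\phi=u\eta$ into the strong stability inequality, weighted by a parameter, and add the result to the Bochner inequality. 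The ambient terms should assemble as
\begin{equation*}
\sum_{j\ge 2}sec(e_1,e_j)+\alpha^{-1}\,Ric_X(\nu)
=\alpha^{-1}\big(Ric_X(\nu)+\alpha(Ric_X(e_1)-sec(e_1,\nu))\big)=\alpha^{-1}BRic^X_\alpha(\nu,e_1)\ge0 .
\end{equation*}
So we multiply the stability inequality by $1/\alpha$. Concretely, combining
\begin{equation*}
\int (Ric_X(\nu)+|A|^2)u^2\eta^2\le\int|\nabla(u\eta)|^2
\end{equation*}
with $\int \eta^2 u\Delta u=-\int \eta^2|\nabla u|^2-2\int u\eta\langle\nabla u,\nabla\eta\rangle$, we get an inequality of the form
\begin{equation*}
\Big(\tfrac{n}{n-1}-\tfrac1\alpha\Big)\int\eta^2|\nabla u|^2+\int Q(A)\,u^2\eta^2 \le C\int u^2|\nabla\eta|^2 + (\text{cross terms}),
\end{equation*}
where $Q(A)=\alpha^{-1}|A|^2+Ha_{11}-\sum_j a_{1j}^2$. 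The lower bound $\alpha>\frac{n-1}{n}$ is exactly what makes the gradient coefficient $\frac{n}{n-1}-\frac{1}{\alpha}$ positive.

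\textbf{The algebraic obstacle.} The step I expect to be hardest is showing that $Q(A)\ge0$ for every symmetric $A$ with trace $H$, and this is where $\alpha\le\frac{2}{\sqrt{n-1}}$ and $n\le6$ should enter. I will reduce it by diagonalizing in the block orthogonal to $e_1$. Writing $a_{11}=a$ and letting the remaining diagonal entries have sum $H-a$, Cauchy--Schwarz gives the bound $\sum_{j\ge2}a_{jj}^2\ge (H-a)^2/(n-1)$. Hence
\begin{equation*}
Q\ge(\alpha^{-1}-1)a^2+(2\alpha^{-1}-1)\sum_{j\ge2}a_{1j}^2+\frac{(H-a)^2}{(n-1)\alpha}+Ha .
\end{equation*}
Viewed as a quadratic form in $(a,H)$, this should be positive semidefinite under $\alpha^2(n-1)\le4$ after a discriminant check. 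If that check fails, I would keep a small multiple of $|\nabla u|^2$ in reserve to absorb the defect. With $Q\ge0$ in hand, the cross terms are absorbed by Young's inequality. Taking $\eta$ a cutoff on $B_{2R}$ and using $\omega\in L^2$, letting $R\to\infty$ gives $\nabla u\equiv0$, so $u$ is constant. Since the volume is infinite (Theorem \ref{Katia Frensel volume infinito}) and $u\in L^2$, this forces $u\equiv0$.

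\textbf{Part (2).} Apply the same computation to $\omega=df$, where $f$ is bounded harmonic with finite Dirichlet energy. Now weak stability only allows test functions of zero mean. Following Cheng--Cheung--Zhou, we first get the inequality on the complement of a compact set, where weak stability implies strong stability on a suitable region. Alternatively, we correct the test function $u\eta$ by a fixed compactly supported function so that it has zero mean; boundedness of $f$ and finite energy make the correction term vanish as $R\to\infty$. Either route yields $\nabla|df|=0$, so $|df|$ is constant, and infinite volume together with finite energy gives $df=0$.

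\textbf{Ends.} By Li--Tam, two non-parabolic ends produce a non-constant bounded harmonic function with finite Dirichlet energy. This contradicts Part (2), and Part (1) covers the strongly stable case, so $M$ has at most one non-parabolic end.
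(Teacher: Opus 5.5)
Part (1) of your proposal is correct, but Part (2) has a genuine gap. In Part (1) you differ from the paper only in the pointwise algebra. The paper writes $Ha_{11}-\sum_j a_{1j}^2=\langle (HA-A^2)e_1,e_1\rangle$ and uses the sharp operator-norm bound of Lemma \ref{Linear algebra} to get $Q(A)\ge(\alpha^{-1}-\frac{\sqrt{n-1}}{2})|A|^2\ge 0$. Your Cauchy--Schwarz reduction closes on its own, with no reserve of $|\nabla u|^2$ needed. With $\varepsilon=1/\alpha$, the form $(\varepsilon-1+\frac{\varepsilon}{n-1})a^2+(1-\frac{2\varepsilon}{n-1})aH+\frac{\varepsilon}{n-1}H^2$ has determinant $\frac{\varepsilon^2}{n-1}-\frac14$. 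This is non-negative exactly when $\alpha^2(n-1)\le 4$, and the coefficient $2\alpha^{-1}-1$ of the mixed entries $a_{1j}$ is positive. So your step is more elementary than the paper's and reaches the same sharp threshold.

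The gap in Part (2) is where the zero-mean restriction actually matters, and neither of your two routes handles it.

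\textbf{First route.} Strong stability on $M\setminus K$ only allows cutoffs vanishing on $K$, and their gradients near $K$ are not small. For a compact neighbourhood $K'\supset K$ you get at best $\int_{M\setminus K'}|\nabla u|^2\le C\int_{K'\setminus K}u^2$, which does not force $\nabla u\equiv0$.

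\textbf{Second route.} Write $I(\phi)=\int_M|\nabla\phi|^2-(Ric(\nu)+|A|^2)\phi^2$, with $I(\cdot,\cdot)$ its polarization. Testing with $u\eta-c_R\chi$ for a fixed $\chi$ with $\int\chi=1$ forces $c_R=\int u\eta\to\int_M u\in(0,+\infty]$. For large $R$, weak stability then only yields $I(u\eta)\ge 2c_R I(u,\chi)-c_R^2 I(\chi)$. This error term neither tends to zero nor has a sign, and boundedness of $f$ and finite energy play no role in it.

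\textbf{Missing idea.} The paper, following the proof of Theorem 3.1 in \cite{Detang-XuCheng-Cheung}, builds the zero-mean condition into the cutoff itself. For $a>1$ and $R_0>a$ one takes $R>R_0$ and a compactly supported Lipschitz $\eta$ with $\eta\equiv1$ on $B_p(a)$, $|\nabla\eta|\le1/R$ and $\int_M\eta|\nabla f|=0$; such an $\eta$ is necessarily negative somewhere. Your final estimate $\int\eta^2|\nabla u|^2\le\delta\int u^2|\nabla\eta|^2$ involves only $|\nabla\eta|$, not the size of $\eta$. So testing weak stability with $\eta|\nabla f|$ gives $\int_{B_p(a)}|\nabla|\nabla f||^2\le\frac{\delta}{R^2}\int_M|\nabla f|^2$, and $|\nabla f|$ is constant.

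A correction term can also be made to work, but only under different choices. The function $\chi$ must depend on $R$ and have support disjoint from that of $u\eta$, so that $I(u\eta,\chi)=0$. It must also be spread over a region of very large volume, so that $(\int u\eta)^2 I(\chi)/(\int\chi)^2$ becomes arbitrarily small. That uses the infinite volume of $M$, some control on its volume growth, and the bound $Ric(\nu)+|A|^2\ge -C$; it does not use the boundedness of $f$.
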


The particular case of Theorem \ref{Vanishing para 1-forma harmonica} of Riemannian manifolds $X$ with non-negative sectional curvature and dimension $n+1 \le 6$ was proved in Theorem 0.1 by X. Cheng, L.-F. Cheung, and D. Zhou \cite{Detang-XuCheng-Cheung} (see also \cite{Fu-Li}). The proof of Theorem \ref{Vanishing para 1-forma harmonica} will rely on the following lemma from linear algebra, which can be deduced from the proof of Proposition 2.2 in the work of J. Chen, H. Hong and H. Li \cite{CHL}.

\begin{lema}[\textit{cf.} \cite{CHL}, Proposition 2.2] \label{Linear algebra}
    For any symmetric $n \times n$ real matrix $T$, 
    \begin{equation*}
        || tr(T) \cdot  T - T \circ T ||   \le \frac{\sqrt{n-1}}{2} |T|^2
    \end{equation*}

    \noindent where $||\cdot||$ denotes the operator norm, and $|T|$ denotes the Frobenius norm of $T$: $|T|^2 = tr(T \circ T^t)$. Moreover, the inequality is sharp. 
\end{lema}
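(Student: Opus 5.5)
Since $T$ is symmetric, the plan is to diagonalize it orthogonally, $T=\mathrm{diag}(\lambda_1,\dots,\lambda_n)$. Both sides are invariant under orthogonal conjugation. The matrix $tr(T)\,T - T\circ T$ is then diagonal with entries $\lambda_i(S-\lambda_i)$, where $S=\sum_j \lambda_j$, so its operator norm is $\max_i |\lambda_i (S-\lambda_i)|$. The problem therefore reduces to the scalar inequality
\begin{equation*}
|\lambda_1|\,\Big|\sum_{j\ge 2}\lambda_j\Big| \le \frac{\sqrt{n-1}}{2}\sum_{j=1}^n \lambda_j^2
\end{equation*}
for fixed index $i=1$, by symmetry.

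For this inequality I would first apply Cauchy--Schwarz to the remaining eigenvalues: $\big|\sum_{j\ge2}\lambda_j\big| \le \sqrt{n-1}\,\big(\sum_{j\ge 2}\lambda_j^2\big)^{1/2}$. Writing $a=|\lambda_1|$ and $b=(\sum_{j\ge2}\lambda_j^2)^{1/2}$, the left side is then at most $\sqrt{n-1}\,ab$. By AM--GM, $ab\le \frac12(a^2+b^2)=\frac12|T|^2$, which gives the claimed bound.

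For sharpness, equality requires equality in both steps. Cauchy--Schwarz is tight when $\lambda_2=\dots=\lambda_n=\mu$, and AM--GM is tight when $a=b$, i.e. $|\lambda_1|=\sqrt{n-1}\,|\mu|$. So I would take $T=\mathrm{diag}(\sqrt{n-1},1,\dots,1)$. Then $|T|^2=2(n-1)$, and the first diagonal entry of $tr(T)T-T\circ T$ is $\sqrt{n-1}\,(n-1)$, which equals $\frac{\sqrt{n-1}}{2}\cdot 2(n-1)$. This attains the bound, provided this entry is indeed the maximal one in absolute value, which is consistent with the upper bound already proved.

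The argument is elementary. The only point needing care is the reduction to diagonal form: one must check that the operator norm of a diagonal matrix is the maximal absolute diagonal entry and that conjugation preserves both norms. I expect no real obstacle here.
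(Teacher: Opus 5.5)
Your proof is correct and complete. After orthogonal diagonalization the operator norm is $\max_i|\lambda_i(S-\lambda_i)|$, and Cauchy--Schwarz on the remaining $n-1$ eigenvalues followed by AM--GM gives exactly $\frac{\sqrt{n-1}}{2}|T|^2$. For sharpness, the first diagonal entry of $\mathrm{diag}(\sqrt{n-1},1,\dots,1)$ already equals the bound, so equality holds outright; you do not need to check that this entry is the largest, because the operator norm dominates every diagonal entry in absolute value. The paper gives no argument of its own, only a pointer to the proof of Proposition 2.2 in \cite{CHL}, so your self-contained eigenvalue computation supplies exactly what the statement needs.
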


\begin{proof}[Proof of Theorem \ref{Vanishing para 1-forma harmonica}]

We now prove the first item of the theorem. Under the hypotheses of the proposition, suppose that $\omega$ is a harmonic $1$-form on $M$ with bounded $L^2$ energy. Let $\alpha$ be as in the hypotheses, and let $Z$ be dual to $\omega$, \textit{i.e.} $\omega = \langle Z, \text{--}\rangle$. We begin with Bochner's identity 
    \begin{equation}
        |Z|  \Delta |Z| \ge Ric(Z,Z) + |\nabla Z|^2 - |\nabla |Z||^2 . \label{Bochner}
    \end{equation}

    Now, we recall Kato's improved inequality, using that $\omega$ is harmonic,
    \begin{equation}
        |\nabla Z|^2 \ge  \frac{n}{n-1} |\nabla |Z||^2. \label{Kato}
    \end{equation}

    Therefore, combining \eqref{Bochner} with \eqref{Kato} we have
    \begin{equation}
        |Z|  \Delta |Z| \ge  Ric_M(Z,Z) + \frac{1}{n-1} |\nabla |Z||^2. \label{Bochner+Kato}
    \end{equation}
    
   Integrating \eqref{Bochner+Kato} multiplied by a test function $\phi^2$, and integrating by parts the left-hand side, we obtain
    \begin{equation}
        - \int_M 2\phi|Z|\langle \nabla \phi, \nabla |Z|  \rangle   \ge \int_M  \frac{n}{n-1} |\nabla |Z||^2\phi^2 + Ric_M(Z,Z)\phi^2.  \label{conseq of Bochner}
    \end{equation}
    
    Using $\psi = |Z|\phi$ as a test function in the strong stability inequality, for some $\phi \in C^{\infty}_0$, we obtain
    \begin{equation}
        \int_M  2 \phi|Z| \langle \nabla \phi,\nabla|Z| \rangle + |Z|^2 |\nabla \phi|^2 \ge \int_
        M (|A|^2+Ric_X(\nu,\nu))|Z|^2 \phi^2 - |\nabla|Z||^2 \phi^2. \label{conseq de estabilidade}
    \end{equation}

    We may rewrite \eqref{conseq of Bochner} using the Gauss equation:
    \begin{equation*}
        Ric_M(Z,Z) =  \big (Ric_N(Z,Z) - |Z|^2 sec_X(Z \land \nu) \big ) +  \big (H \langle AZ,Z \rangle - |AZ|^2  \big ).     
    \end{equation*}

    Summing \eqref{conseq of Bochner} with $\varepsilon>0$ times \eqref{conseq de estabilidade}, where $\varepsilon : = \frac{1}{\alpha} \in [\frac{\sqrt{n-1}}{2},\frac{n}{n-1})$, we have
    \begin{equation}
         \int_M \Theta +  \varepsilon |Z|^2|\nabla \phi|^2   \ge \int_M  C(n, \varepsilon) |\nabla |Z||^2\phi^2 + \zeta\cdot \phi^2\label{desiguadade tipo Schoen-Yau}
    \end{equation}

    \noindent where $C(n,\varepsilon) = \frac{n}{n-1}-\varepsilon$, $\Theta = 2( \varepsilon-1)\phi|Z|\langle \nabla \phi, \nabla |Z|  \rangle $ and
    \begin{equation*}
    \begin{split}
        \zeta &= \big (Ric_X(Z,Z) - |Z|^2 sec_X(Z \land \nu) + \varepsilon Ric_X(\nu,\nu)|Z|^2\big )  \\
             &\quad\quad+ \big (H \langle AZ,Z \rangle - |AZ|^2 + \varepsilon |A|^2|Z|^2 \big ).
    \end{split}
    \end{equation*}

     Using the hypothesis $BRic_\alpha^X \ge0$ and Lemma \ref{Linear algebra} we have
    \begin{equation*}
        \zeta \ge 
  (\varepsilon -\frac{\sqrt{n-1}}{2}) |A|^2|Z|^2.
    \end{equation*}

    Note that $C(n,\varepsilon) = \frac{n}{n-1}-\varepsilon > 0$ and $\varepsilon -\frac{\sqrt{n-1}}{2} \ge 0$. We use Cauchy-Schwarz and Young's inequality on $\Theta$ to get, from \eqref{desiguadade tipo Schoen-Yau}, that there exists $\delta >0$ such that
    \begin{equation*}
        \delta \int_M  |Z|^2|\nabla \phi|^2   \ge \int_M   |\nabla |Z||^2\phi^2.
    \end{equation*}      
    
    Choosing $\phi$ such that $\phi \equiv 1$ on $B_R(M)$ and $|\nabla \phi| \le 1/R$ we conclude that $|Z|$ is constant letting $R \to \infty$. Since $X$ has bounded geometry, $M$ has infinite volume by Theorem \ref{Katia Frensel volume infinito}. But $\omega$ is in $L^2$, hence $\omega =0$.  This completes the proof of the first item.

        The proof of the second item follows closely the proof of the first item, except that the admissible test functions $\psi$ for the stability operator are required to satisfy $\int_M \psi =0$. 

    We will use a strategy developed in the work of X. Cheng, L.-F. Cheung, and D. Zhou \cite{Detang-XuCheng-Cheung}. Suppose that $f$ is not constant and pick $p \in M$ such that $|\nabla f|_p >0$. Then, as in the proof of Theorem 3.1 in \cite{Detang-XuCheng-Cheung}, for every $a>1$ and every $R_0 >a$, there exists $R>R_0$ and a Lipschitz function $\eta = \eta_{a,R}$ with compact support in $M$ such that: $\eta \equiv 1$ on $B_p(a)$, $|\nabla \eta| \le 1/R$ and $\int_M |\nabla f|\eta =0$.

    We are now ready to follow the strategy of the proof of the first item of this theorem. The only modification in the proof is that we use the constructed test function $\psi =\eta |\nabla f|$ in the weak stability inequality, instead of using the test function $\psi = |Z|\phi$ in the strong stability inequality there. Therefore, we get from the Bochner's identity, Gauss formula, Lemma \ref{Linear algebra} and the weak stability inequality, the following inequality:
    \begin{equation*}
        \delta \int_M  | \nabla f |^2|\nabla \eta|^2   \ge \int_M   |\nabla | \nabla f||^2\eta^2.
    \end{equation*}   

    \noindent for some $\delta > 0$. Using the properties of $\eta$ we get
    \begin{equation*}
        \frac{\delta}{R^2} \int_M |\nabla f|^2 \ge \int_{B_p(a)} |\nabla |\nabla f||^2.
    \end{equation*}

    Since $R>a$ may be chosen arbitrarily large, we conclude that $|\nabla f|$ is constant. But $\int_M |\nabla f|^2 < +\infty$ and $M$ has infinite volume by Theorem \ref{Katia Frensel volume infinito}. Therefore, $|\nabla f| \equiv 0$, and this contradiction proves that $f$ is constant.

    Finally, we apply Theorem 2.1 in \cite{Li-e-Tam-Fins-e-Harmonicas} to bound the number of non-parabolic ends of $M$. 
\end{proof}

    \begin{obs}
            Under the notation of Theorem \ref{Vanishing para 1-forma harmonica}, we remark that Proposition 2.2 in \cite{Detang-XuCheng-Cheung} shows that if $\inf_M Ric_X > -\frac{1}{n} \cdot H^2$, then each end of $M$ is necessarily non-parabolic.
    \end{obs} 

     We end this section with a result that will be needed in Section \ref{section:MainResults} to prove Theorem \ref{quarto teorema principal k>-1}.

		\begin{teo}[\textit{cf.} \cite{Fu-Li}, Theorem 1.1] \label{unicidade de fim quando curvature é limitada inferiormente em X6}
    Let $X$ be a complete six-dimensional Riemannian manifold with bounded geometry. Suppose that $X$ has sectional curvature bounded from below, $sec_X \ge -1$. Let $M$ be a complete, non-compact weakly stable CMC hypersurface with $|H|=5+\varepsilon$ immersed in $X$. If $\varepsilon \ge 10\sqrt{\frac{5}{11}}-5$, then $M$ has only one end.
		\end{teo}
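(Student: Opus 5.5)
The plan mirrors the proof of the second item of Theorem \ref{Vanishing para 1-forma harmonica}, combined with the Li--Tam correspondence between ends and harmonic functions. Since $\inf Ric_X \ge -5$ and $n=5$, we have $\frac{1}{5}H^2 + \inf Ric_X \ge \frac{(5+\varepsilon)^2}{5} - 5 > 0$ once $\varepsilon>0$. By the Remark following Theorem \ref{Vanishing para 1-forma harmonica} (Proposition 2.2 in \cite{Detang-XuCheng-Cheung}), every end of $M$ is therefore non-parabolic. It thus suffices to show that $M$ has at most one non-parabolic end. By Theorem 2.1 of \cite{Li-e-Tam-Fins-e-Harmonicas}, this reduces to showing that every bounded harmonic function $f$ on $M$ with finite Dirichlet energy is constant.

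For such an $f$, write $Z=\nabla f$. The Bochner formula together with the improved Kato inequality gives
\[
|Z|\Delta|Z| \ge Ric_M(Z,Z) + \tfrac{1}{4}|\nabla|Z||^2 .
\]
The Gauss equation then gives $Ric_M(Z,Z) = Ric_X(Z,Z) - sec_X(Z\wedge\nu)|Z|^2 + H\langle AZ,Z\rangle - |AZ|^2$. As in \cite{Detang-XuCheng-Cheung}, one constructs test functions $\eta$ with $\int |\nabla f|\eta = 0$, $\eta\equiv 1$ on $B_p(a)$ and $|\nabla\eta|\le 1/R$, and uses $\psi=\eta|\nabla f|$ in the weak stability inequality. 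Adding $\varepsilon'$ times the stability inequality to the Bochner inequality, the curvature terms to control are
\[
\zeta = \big(Ric_X(Z,Z) - |Z|^2 sec_X(Z\wedge\nu) + \varepsilon' Ric_X(\nu,\nu)|Z|^2\big) + \big(H\langle AZ,Z\rangle - |AZ|^2 + \varepsilon'|A|^2|Z|^2\big).
\]
With $sec\ge -1$ in dimension six, the ambient bracket is at least $-(4+5\varepsilon')|Z|^2$.

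The algebraic part is where the hypothesis on $H$ enters, and it is the main obstacle. The ambient part is negative, so Lemma \ref{Linear algebra} alone (which only gives $\ge(\varepsilon'-\sqrt{n-1}/2)|A|^2|Z|^2$) is not enough. We need a sharper lower bound for $H\langle AZ,Z\rangle - |AZ|^2 + \varepsilon'|A|^2|Z|^2$ in terms of $H^2$. To get it, decompose $A = \frac{H}{5}I + \mathring{A}$. The terms are then quadratic in $\mathring A$ with an $H^2$ contribution of the form $c(\varepsilon')H^2|Z|^2$. Minimizing over traceless $\mathring A$, and then optimizing over $\varepsilon' < \frac{n}{n-1} = \frac54$, should yield $\zeta \ge \delta|Z|^2$, with $\delta\ge0$, exactly when $H^2$ exceeds a threshold. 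I expect the optimization to produce the constant $10\sqrt{5/11}$, i.e. the condition $|H|\ge 10\sqrt{5/11}$. Rather than rederiving it, one can quote the computation of \cite{Fu-Li}, which handles exactly this quadratic form.

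Granting $\zeta\ge 0$, the cross term is absorbed by Young's inequality using $\frac54-\varepsilon'>0$. This gives
\[
\frac{\delta}{R^2}\int_M|\nabla f|^2 \ge \int_{B_p(a)} |\nabla|\nabla f||^2 .
\]
Letting $R\to\infty$ shows that $|\nabla f|$ is constant. Since $M$ has infinite volume (Theorem \ref{Katia Frensel volume infinito}, using bounded geometry) and $|\nabla f|\in L^2$, we get $|\nabla f|=0$, so $f$ is constant. Hence $M$ has at most one non-parabolic end, and since every end is non-parabolic, $M$ has only one end.
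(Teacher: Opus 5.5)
Your proposal is correct and follows the paper's route: every end is non-parabolic by Proposition 2.2 of \cite{Detang-XuCheng-Cheung}, and there is at most one non-parabolic end by the Li--Tam reduction to bounded finite-energy harmonic functions, which are then killed by Bochner--Kato plus weak stability with the Cheng--Cheung--Zhou test functions; the paper simply cites \cite{Fu-Li}, Theorem 1.1, for this last step, whereas you unpack it. One small correction to your expectation: your bounds give $\zeta \ge \big(\frac{H^2(t^2-25)}{25(t-4)} - (t+4)\big)|Z|^2$ with $t = 5\varepsilon' \in (4,\frac{25}{4})$, so the fixed choice $\varepsilon' = 6/5$ yields exactly the stated threshold $H^2 \ge 500/11$, while optimizing with $\varepsilon' \uparrow 5/4$ only requires $H^2 > 41$; either way the hypothesis of the statement suffices.
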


        \begin{proof}
            By Proposition $2.2$ in \cite{Detang-XuCheng-Cheung}, every end of $M$ is non-parabolic. Moreover, Theorem $1.1$ in the work of H. P. Fu and Z. Q. Li \cite{Fu-Li} guarantees that $M$ has only one non-parabolic end.
        \end{proof}

        The number $10\sqrt{\frac{5}{11}}-5$ is approximately $1.742$, rounded to three decimal places.

        \begin{obs} \label{remark sobre os fins das cmc no hiperbolico}
            Theorem \ref{unicidade de fim quando curvature é limitada inferiormente em X6} will be used to prove Corollary \ref{corolario de classificacao no espaco hiperbolico}. We note that when $X$ is the hyperbolic space $\mathbb{H}^6$, we believe that the lower bound on $\varepsilon$ imposed in Theorem \ref{unicidade de fim quando curvature é limitada inferiormente em X6} is not optimal, but we did not try to optimize this lower bound here. Nevertheless, it is unclear to us if the available tools can show that every complete strongly stable CMC hypersurface immersed in the hyperbolic six-space with mean curvature $|H|>5$ has a finite number of ends.
        \end{obs}

        \section{ \texorpdfstring{$\mu$}{mu}-bubbles in positive weighted bi-Ricci curvature} \label{section:: mu-bubbles and weighted bi-Ricci curvature}

        This section is devoted to the study of volume estimates for $\mu$-bubbles embedded in manifolds with uniformly positive $\alpha$-bi-Ricci curvature in \textit{spectral sense}. This curvature condition is naturally connected with the study of stable CMC hypersurfaces, as recently noted by L. Mazet \cite{Mazet} in the minimal case. In the first part of the section, we prove these volume estimates. We end the section with an application which restricts the geometry of Riemannian manifolds with uniformly positive $\alpha$-bi-Ricci curvature. 

        The precise meaning for positive $\alpha$-bi-Ricci curvature in \textit{spectral sense} will become clear in the statement of the main result of this section: Theorem \ref{mu-bubbles em bi-ricci com peso positivo no sentido espectral}. This curvature condition regards the spectrum of an elliptic operator, and is immediately verified when the Riemannian manifold admits a positive uniform lower bound for its $\alpha$-bi-Ricci curvature; thus the spectral condition is weaker. We will see in Propositions \ref{controle espectral do alfa-bi-Ricci em sec nao negativa} and \ref{controle do bi-Ricci com peso da CMC estavel em ambiente de curvatura negativa} that this weaker curvature condition is verified by some strongly stable CMC hypersurfaces immersed in certain six-dimensional manifolds.  \\

    The $\mu$-bubbles were introduced by M. Gromov in the theory of positive scalar curvature (\textit{cf.} \cite{Four-Lectures-Gromov}, Section 5). Originally, they were conceived as stable critical points of a functional that prescribes the mean curvature of hypersurfaces by a function $\mu$. This tool and modifications of it have found ingenious applications in the theory of stable CMC hypersurfaces since the seminal work of O. Chodosh and C. Li \cite{Chodosh-Li-R4-mu-bubbles}. In the next theorem we collect results from the literature and use them to define the $\mu$-bubbles considered here. 

\begin{teo} \label{definicao de mu-bubble}
    Let $(N,g)$ be a complete Riemannian manifold of dimension $3 \le n \le 7$. Let $Y$ be a precompact open set of $N$ with smooth boundary. Suppose that $\partial Y = \partial_+ Y \sqcup \partial_-Y$ with $\partial_+Y$ and $\partial_-Y$ nonempty and disjoint. 
    
    Given
    
    \begin{enumerate}
        \item a smooth function $h:Y \to \mathbb{R}$ such that $h \to \pm \infty$ on $\partial_\pm Y$,
        \item a real number $a>0$,
        \item  and a positive smooth function $w \in C^\infty(N)$,
    \end{enumerate}  
    
    \noindent there exists a relatively open set $\Omega_*$ in the closure of $Y$ containing an open neighborhood of $\partial_+Y$, where $\Omega_*$ has smooth boundary, $\partial \Omega_* = \partial_+Y \sqcup \hat \Sigma$, with $\hat \Sigma$ closed, nonempty, and contained in $Y$. 

    Moreover, with respect to the unit normal vector field $\eta$ that points outwards of $\Omega_*$, $\hat \Sigma$ has mean curvature 
    \begin{equation} \label{curvatura media de sigma}
        H = h - a \cdot d( \ln w)(\eta).
    \end{equation} 
    \noindent and $\hat \Sigma$ satisfies the following stability inequality: for every $\varphi \in C^\infty(\hat \Sigma)$,
    \begin{equation} \label{desigualdade de estabilidade crua}
        \begin{split}
            0 \le  \int_{\hat \Sigma} w^a &\Big( 
     |\nabla \varphi|^2  - \varphi^2(|A_\Sigma|^2+\overline{Ric}(\eta,\eta)) - a w^{-2} dw(\eta)^2 \varphi^2 \\
            & \quad + aw^{-1} (\bar \Delta w - \Delta w - H dw(\eta))\varphi^2  - dh(\eta) \varphi^2 \Big)
        \end{split}
    \end{equation}

    \noindent where $\overline \nabla$ denotes the connection of the ambient space $N$, $\nabla$ denotes the connection of $\Sigma$, $\overline{Ric}$ denotes the Ricci curvature of $N$, and $\bar \Delta$ (resp. $\Delta$) denotes the Laplacian on $N$ (resp. $\Sigma$).
\end{teo}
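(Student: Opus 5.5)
The plan is to obtain $\Omega_*$ by minimizing the weighted brane functional
\begin{equation*}
    \mathcal{A}(\Omega) := \int_{\partial^* \Omega} w^a \, d\mathcal{H}^{n-1} - \int_Y (\chi_\Omega - \chi_{\Omega_0}) \, h \, w^a \, d\mathcal{H}^n
\end{equation*}
over Caccioppoli sets $\Omega$ in $\overline{Y}$ with $\Omega \triangle \Omega_0 \subset\subset Y$. Here $\Omega_0$ is a fixed smooth reference region, for instance a thin collar of $\partial_+ Y$. This is the setting of Chodosh--Li \cite{Chodosh-Li-R4-mu-bubbles} and Gromov \cite{Four-Lectures-Gromov}, now with a weight $w^a$; the weighted version also appears in Mazet \cite{Mazet}, and the proof should essentially be quoted from these works.

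\textbf{Existence.} First I will show that a minimizer exists. Since $h \to +\infty$ near $\partial_+Y$ and $h \to -\infty$ near $\partial_-Y$, one can build barriers. Near $\partial_\pm Y$ the level sets of the distance function have weighted mean curvature dominated by $\mp h$. A cut-and-paste comparison then shows that a minimizing sequence may be assumed to contain a fixed neighborhood of $\partial_+Y$ and to avoid a fixed neighborhood of $\partial_-Y$. Because $w$ is positive and smooth on $\overline{Y}$, which is compact, the weighted perimeter is comparable to the usual perimeter. BV compactness together with lower semicontinuity then yields a minimizer $\Omega_*$. Its reduced boundary inside $Y$ is $\hat\Sigma$, and $\hat\Sigma$ is nonempty because $\partial_+Y$ and $\partial_-Y$ are both nonempty.

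\textbf{Regularity.} Next I will prove that $\hat\Sigma$ is smooth. The weighted functional is an elliptic integrand plus a bounded volume term, so $\Omega_*$ is an almost-minimizer of perimeter. The standard regularity theory applies because $n \le 7$, the boundary has dimension at most $6$, and the singular set is therefore empty. This dimension restriction is exactly where the hypothesis $3 \le n \le 7$ is used.

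\textbf{First variation.} For a normal variation $\varphi\eta$ with $\varphi \in C^\infty(\hat\Sigma)$, the first variation is
\begin{equation*}
    \int_{\hat\Sigma} \big( w^a H + a w^{a-1} dw(\eta) - h w^a \big)\varphi = 0,
\end{equation*}
which gives \eqref{curvatura media de sigma}.

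\textbf{Second variation.} Finally I will differentiate once more and use the first-variation equation on $\hat\Sigma$. The relevant identities are the variation of $H$, namely $-\Delta\varphi - (|A|^2 + \overline{Ric}(\eta,\eta))\varphi$, together with the variations of $w^a$, of $dw(\eta)$ and of $h$. The Hessian term is rewritten using
\begin{equation*}
    \bar\Delta w = \Delta w + \overline{\nabla}^2 w(\eta,\eta) + H\, dw(\eta).
\end{equation*}
After integrating by parts, with the cross term $\int w^{a}\, a w^{-1}\langle\nabla w, \nabla\varphi\rangle\varphi$ absorbed, one arrives at \eqref{desigualdade de estabilidade crua}. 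The $-a w^{-2} dw(\eta)^2$ term comes from differentiating $w^{-1} dw(\eta)$.

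The main obstacle is bookkeeping in this second-variation computation. In particular, the cross term involving $\nabla \varphi$ must cancel exactly rather than merely be estimated, since the stated inequality contains none. The existence step is routine given the barrier construction.
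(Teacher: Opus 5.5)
Your proposal is correct and follows the same route as the paper, whose proof simply cites existence and regularity of minimizers of this weighted brane functional (\cite{generalized-soap-bubbles}, Proposition 12, with weight $w^{a}$ as in \cite{Mazet}, Section 4.1). Your second-variation bookkeeping also checks out: integrating $-w^{a}\varphi\Delta\varphi$ by parts cancels the $\langle\nabla w,\nabla\varphi\rangle\varphi$ cross term exactly, and with $\overline{\nabla}^2 w(\eta,\eta)=\bar\Delta w-\Delta w-H\,dw(\eta)$ this reproduces the stated inequality.

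Two small corrections. First, the barrier condition should say that $|h|$ near $\partial_\pm Y$ eventually exceeds the bounded weighted mean curvature $H+a\,d(\ln w)(\eta)$ of the distance level sets; ``dominated by $\mp h$'' is impossible, since $\mp h\to-\infty$ there. Second, nonemptiness of $\hat\Sigma$ needs some component of $Y$ whose boundary meets both $\partial_+Y$ and $\partial_-Y$ (e.g.\ $Y$ connected); otherwise the minimizer can be a union of components of $\overline{Y}$ and $\hat\Sigma=\emptyset$.
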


    Using the notation of the above theorem, $\hat \Sigma$ will be called a \textit{$\mu$-bubble} for the parameters $(Y,h,w,a)$.

\begin{proof}
    The existence of $\mu$-bubbles is proved in \cite{generalized-soap-bubbles}, Proposition 12 (see also references therein). The $\mu$-bubbles considered here are minimizers for the functional presented in \cite{Mazet}, Section 4.1. (See also Theorem 4.1 in \cite{Chodosh-Li-Minter-Stryker}.)
\end{proof}

Our first proposition defines some constants that will control parameters in our main theorems of this section. We omit the computations.

    \begin{prop} \label{proposicao que define as constantes do metodo do mazet}
    Let $3 \le k \le 6$ and $\alpha \in (\frac{k-2}{k-1}, \frac{2}{\sqrt{k-1}})$. Consider the real valued function $$m(\alpha,k) :=\min \{\frac{4-(k-1)\alpha^2}{k-\alpha(k-1)}, 4- \frac{k-2}{k-1}\frac{4}{\alpha} \}.$$ There exists $\alpha_*(k) \in (\frac{k-2}{k-1}, \frac{2}{\sqrt{k-1}})$ such that 

    \begin{equation*}
    \begin{cases}
        m(\alpha,k) = 4- \frac{k-2}{k-1}\frac{4}{\alpha} & \text{for }\quad \alpha \in (\frac{k-2}{k-1}, \alpha_*(k)], \\
        m(\alpha,k) = \frac{4-(k-1)\alpha^2}{k-\alpha(k-1)} & \text{for } \quad \alpha \in [\alpha_*(k), \frac{2}{\sqrt{k-1}}).
    \end{cases}  
    \end{equation*}

    The exact values of $\alpha_*(k)$ for each $3 \le k \le 6$ are 
    
    \begin{equation*}
        \begin{cases}
            \alpha_*(3) = 1 \,, \\
            \alpha_*(4) = \frac{2}{3}\left(2- \frac{1}{(\sqrt{2}-1)^{1/3}} + (\sqrt{2}-1)^{1/3} \right), \\
            \alpha_*(5) = \frac{1}{6}\left( 8- \frac{20}{(9\sqrt{201}-91)^{1/3}} + (9\sqrt{201}-91)^{1/3}\right), \\
            \alpha_*(6) = \frac{2}{3} \left( 2 + \frac{(18\sqrt{11}-43)^{1/3}}{5^{2/3}} - \frac{7}{5^{1/3}(18\sqrt{11}-43)^{1/3} }\right).
        \end{cases}
    \end{equation*}
    Moreover, for each $3 \le k \le 6$, the function $\alpha \mapsto m(\alpha,k)$ increases in the interval $(\frac{k-2}{k-1},\alpha_*(k)]$, and decreases in the interval $[\alpha_*(k), \frac{2}{\sqrt{k-1}})$. 
\end{prop}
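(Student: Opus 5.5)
The plan is to treat the two expressions in the minimum as separate functions of $\alpha$ on $I_k:=(\frac{k-2}{k-1},\frac{2}{\sqrt{k-1}})$:
\begin{equation*}
f_1(\alpha):=\frac{4-(k-1)\alpha^2}{k-\alpha(k-1)}, \qquad f_2(\alpha):=4-\frac{k-2}{k-1}\frac{4}{\alpha}.
\end{equation*}
We then show that they cross exactly once in $I_k$, with $f_2<f_1$ to the left of the crossing and $f_1<f_2$ to the right. The crossing point is $\alpha_*(k)$, and monotonicity of $m$ on each side then reduces to monotonicity of $f_2$ and of $f_1$ respectively.

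First, the denominator of $f_1$ is positive on $I_k$. This requires $\frac{2}{\sqrt{k-1}}\le\frac{k}{k-1}$, i.e. $4(k-1)\le k^2$, which holds. Hence $f_1$ is smooth on $I_k$, and $f_1>0$ there because $\alpha^2<\frac{4}{k-1}$.

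Next, $f_2$ is strictly increasing and vanishes at the left endpoint, where $f_1>0$. At the right endpoint $f_1\to 0$ while $f_2>0$, since $\alpha>\frac{k-2}{k-1}$. By the intermediate value theorem $f_1-f_2$ has a zero in $I_k$.

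For the derivative of $f_1$, a direct computation gives
\begin{equation*}
f_1'(\alpha)=\frac{(k-1)\big((k-1)\alpha^2-2k\alpha+4\big)}{(k-\alpha(k-1))^2}=\frac{(k-1)^2(\alpha-\frac{2}{k-1})(\alpha-2)}{(k-\alpha(k-1))^2}.
\end{equation*}
So $f_1$ is strictly decreasing on $(\frac{2}{k-1},2)$. For $k\ge4$ we have $\frac{k-2}{k-1}\ge\frac{2}{k-1}$, so $f_1$ is decreasing on all of $I_k$. Then $f_1-f_2$ is strictly decreasing, the crossing is unique, and the sign pattern and monotonicity claims follow at once.

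For $k=3$, $f_1$ increases on $(\frac12,1)$, so there I will argue directly. Clearing denominators, $f_1=f_2$ reads $\alpha(4-2\alpha^2)=(4\alpha-2)(3-2\alpha)$, i.e. $2\alpha^3-8\alpha^2+10\alpha-6=0$. This factors as $2(\alpha-1)(\alpha^2-3\alpha+3)=0$, and the quadratic factor has no real roots. So $\alpha_*(3)=1$ is the unique crossing. On $[1,\sqrt2)$, $f_1$ is decreasing since $\frac{2}{k-1}=1$, and on $(\frac12,1]$ the minimum is $f_2$, which is increasing.

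To identify $\alpha_*(k)$ explicitly for $k=4,5,6$, I clear denominators in $f_1=f_2$, which yields
\begin{equation*}
(k-1)\alpha\big(4-(k-1)\alpha^2\big)=4\big((k-1)\alpha-(k-2)\big)\big(k-(k-1)\alpha\big),
\end{equation*}
a cubic in $\alpha$. I then solve it by Cardano's formula after the depressing substitution $\alpha=\beta+\frac{4}{3}$ (the shift from the quadratic coefficient $-4(k-1)^2$ divided by $3(k-1)^2$). This matches the common "$\frac{2}{3}\cdot 2$" and "$\frac{8}{6}$" terms in the stated formulas. Uniqueness in $I_k$ was already established, so it only remains to check that the real Cardano root lies in $I_k$.

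The main obstacle is purely computational: verifying the explicit radical expressions, in particular getting the nested cube roots into the stated normalized forms. I would confirm each one by substituting back numerically, e.g. $\alpha_*(6)\approx 0.85\in(0.8,0.894)$.
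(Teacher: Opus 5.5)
Your proof is correct and is the direct computation the paper omits; the paper offers no argument beyond ``We omit the computations.'' The split of $m$ into $f_1,f_2$, the factorization of $f_1'$, the endpoint/IVT argument with monotonicity for $k\ge4$, the separate treatment of $k=3$, and Cardano after $\alpha=\beta+\frac43$ do reproduce the stated values of $\alpha_*(k)$.

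There are two small fixes. First, for $k=3$, clearing denominators gives $2\alpha^3-8\alpha^2+12\alpha-6=0$, not the $10\alpha$ version you wrote; your factorization $2(\alpha-1)(\alpha^2-3\alpha+3)$ expands to the correct cubic. Second, you can replace the numerical check. The depressed cubic has $p=\frac83-\frac4{k-1}>0$, hence a unique real root, which must be the one in $I_k$. The identities $(\sqrt2-1)(\sqrt2+1)=1$, $(9\sqrt{201}-91)(9\sqrt{201}+91)=20^3$ and $(18\sqrt{11}-43)(18\sqrt{11}+43)=5\cdot 7^3$ then turn the second Cardano cube root into the reciprocal terms in the statement.
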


The approximate values for $\alpha_*$ are $\alpha_*(4)  \sim 0.936$,  $\alpha_*(5)  \sim 0.883$, $\alpha_*(6)  \sim 0.848$. 

We will use the positive real valued function $m=m(\alpha,k)$ described in Proposition \ref{proposicao que define as constantes do metodo do mazet} to state the main theorem of this section. The theorem can be informally described as follows: if a complete Riemannian manifold has uniformly positive 
$\alpha$-bi-Ricci curvature in the spectral sense, then, given a compact domain whose boundary can be written as the union of two disjoint pieces that are sufficiently far apart, there exists a hypersurface separating these pieces whose volume is bounded above by a prescribed constant. The precise statement is given below.

In what follows, we use the following notation, for a Riemannian manifold $N$, 
\begin{equation} \label{def de lambda-Ricci}
    \lambda_{Ric}^N(p) := \inf_{|v|=1,v \in T_p N} Ric^N(v),
\end{equation}
\begin{equation} \label{def de Lambda-bi-Ricci}
    \Lambda_\alpha^N(p) := \inf \Big\{ BRic_\alpha^N (u,v): u,v \text{ are orthonormal in } T_p N \Big\}
\end{equation}

\noindent for any $p \in N$. We omit the superscript when it is clear from the context. 

\begin{teo} \label{mu-bubbles em bi-ricci com peso positivo no sentido espectral}
    Let $(N^{k+1},g)$ be a complete Riemannian manifold of dimension $4 \le k+1 \le 7$. Let $\alpha \in \Big(\frac{k-2}{k-1}, \frac{2}{\sqrt{k-1}}\Big)$, $0<a<m(\alpha,k)$, and $\delta>0$.

    Suppose that $(N,g)$ satisfies 
    \begin{equation*}
        \lambda_1( - a \Delta_g + \Lambda_\alpha) \ge \delta.
    \end{equation*}

    There exist real numbers $L>0$, $V>0$ and $D>0$, that depend only on $(\alpha,a,k,\delta)$, with the following property.
    
    If $X$ is a smooth compact domain of $N$, such that $\partial X = \partial_+X \cup \partial_-X$, where $\partial_-X$ and $\partial_+X$ are nonempty and disjoint, and $X$ satisfies the inequality $d_N(\partial_+X,\partial_-X) \ge L$, then there exists a connected, relatively open subset $\Omega_*$ of $X$ with smooth compact boundary, such that 
    \begin{itemize}
        \item $\Omega_* \supset \partial_-X$ and $\partial \Omega_* = \hat \Sigma \cup \partial_-X$, with $\hat \Sigma$ and $\partial_-X$ disjoint and nonempty.
        \item The hypersurface $\hat \Sigma$ is contained in the interior of $X$.
        \item Every point of $\Omega_*$ is at distance at most $L$ of $\partial_-X$ in $N$.
        \item Every connected component $\Sigma$ of $\hat \Sigma$ satisfies the volume bound $|\Sigma| \le V$. 
        
        If $N$ has dimension four, then $\Sigma$ also satisfies the diameter bound $\text{diam}(\Sigma) \le D$.
    \end{itemize}

\end{teo}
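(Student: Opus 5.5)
The plan is to follow Mazet's warped $\mu$-bubble scheme, with the warping function supplied by the spectral hypothesis. First, since $\lambda_1(-a\Delta_g+\Lambda_\alpha)\ge\delta$ on the complete manifold $N$, the Fischer-Colbrie--Schoen argument gives a smooth positive $w$ on $N$ with $-a\Delta w+\Lambda_\alpha w\ge\delta w$, i.e.
\begin{equation*}
a\,w^{-1}\Delta w\le \Lambda_\alpha-\delta .
\end{equation*}
Next, let $\rho$ be a smoothing of $d_N(\cdot,\partial_-X)$ with $|\nabla\rho|\le 2$. I will take $h=f(\rho)$ with $f$ a tangent-type profile, for instance $f(t)=-c\tan\bigl(\tfrac{\pi}{L}(t-\tfrac{L}{2})\bigr)$ (signs arranged so that $h\to+\infty$ near $\partial_-X$ and $h\to-\infty$ where $\rho=L$). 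Put $Y=\{0<\rho<L\}$. The constants $c$ and $L$ are fixed so that
\begin{equation*}
\beta h^2-|dh|\ge \delta/2
\end{equation*}
for a small $\beta$ coming from the algebra below; as usual this forces $L\sim 1/\sqrt{\delta\beta}$. Theorem \ref{definicao de mu-bubble}, applied with parameters $(Y,h,w,a)$, then produces $\Omega_*\supset\partial_-X$ and a smooth $\hat\Sigma$ inside $\{\rho<L\}$. This gives the first three bullets; if $\Omega_*$ is disconnected, replace it by the component containing $\partial_-X$ together with the parts needed for connectedness.

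The core of the argument is to convert the stability inequality \eqref{desigualdade de estabilidade crua} into a spectral bound on each component $\Sigma$. I first rewrite $\bar\Delta w-\Delta w-H\,dw(\eta)=\bar\nabla^2w(\eta,\eta)$, which turns the $w$-terms into ambient quantities. Then I use the Gauss equation on $\Sigma\subset N$ with the pair $(\eta,e)$ for unit $e\in T\Sigma$. Following Mazet, the combination $\overline{Ric}(\eta,\eta)+\alpha\bigl(\overline{Ric}(e,e)-\overline{sec}(\eta\wedge e)\bigr)\ge\Lambda_\alpha$ is used to control $Ric_\Sigma$ from below, with the second fundamental form terms absorbed by Lemma \ref{Linear algebra}. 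The requirement $\alpha<2/\sqrt{k-1}$ is exactly what makes this absorption possible.

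Next I substitute the test function $\varphi=w^{-a/2}\psi$, or more generally $w^{-\gamma}\psi$ with $\gamma$ optimized. Completing squares in $dw(\eta)$, $H$ and $h$ should give, for all $\psi\in C^\infty(\Sigma)$,
\begin{equation*}
\int_\Sigma \gamma_1|\nabla\psi|^2-\alpha\,\lambda_{Ric}^\Sigma\,\psi^2\ \ge\ \int_\Sigma \tfrac{\delta}{2}\psi^2
\end{equation*}
for some $\gamma_1$ satisfying $\gamma_1<\frac{4}{k-2}\cdot\frac{1}{\,\cdot\,}$, i.e.\ strictly below the Bonnet--Myers/Bishop--Gromov spectral threshold $\frac{k-1}{k-2}$ in dimension $k$. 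The restriction $a<m(\alpha,k)$ from Proposition \ref{proposicao que define as constantes do metodo do mazet} is what I expect to make all the discriminants nonnegative. The two branches of $m$ correspond to the two quadratic forms in $(H,dw(\eta)/w)$ and in the gradient term.

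I expect this completing-squares step to be the main obstacle: one has to check simultaneously that the coefficient of $|\nabla\psi|^2$ stays admissible and that a positive multiple of $h^2$ survives to beat $|dh|$. Once the spectral Ricci bound on $\Sigma$ is in hand, I would invoke the spectral Bishop--Gromov theorem of Antonelli--Xu. Applied to $(\Sigma^k,\gamma_1,\alpha\lambda_{Ric}\ge\delta')$ it gives $|\Sigma|\le V(\alpha,a,k,\delta)$. When $k=3$ it also gives the diameter bound $D$ via the Shen--Ye / Chodosh--Li spectral Bonnet--Myers. All constants depend only on $(\alpha,a,k,\delta)$, as required.
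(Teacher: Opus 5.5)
Your overall scheme is the paper's: smooth $\Lambda_\alpha$, get $w>0$ from Fischer-Colbrie--Schoen, use a tangent-profile $h$ and the $\mu$-bubble of Theorem \ref{definicao de mu-bubble}, take the test function $w^{-a/2}\psi$, apply the bi-Ricci bound to the pair $(\eta,e_1)$ with $e_1$ minimizing $Ric_\Sigma$, then spectral Bishop--Gromov. The gap is in the algebraic core, which you only expect to work, and the tool you name does not deliver it. After the substitution and $d(\ln w)(\eta)=(h-H)/a$, you need some $\theta>0$ with
\[
\Theta:=|A|^2+\alpha\Bigl(HA_{11}-\sum_{j}A_{1j}^2\Bigr)+\tfrac1a(h-H)^2+H(h-H)\ \ge\ \theta h^2 .
\]
If you absorb the second fundamental form terms with Lemma \ref{Linear algebra}, you get $\Theta\ge c|A|^2+\tfrac1a(h-H)^2+H(h-H)$ with $c=1-\tfrac{\alpha\sqrt{k-1}}{2}$. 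The only information left is $|A|^2\ge H^2/k$, so the resulting form in $(H,h)$ is positive definite iff $a<4c/k$. This bound is strictly below $\frac{4-(k-1)\alpha^2}{k-\alpha(k-1)}$ for every admissible $\alpha$, and in general below $m(\alpha,k)$. For instance, for $k=4$ and $\alpha=\tfrac{40}{43}$ it gives roughly $a<0.19$, whereas $m\approx1.13$, and the statement must include Mazet's $a=\tfrac{11}{10}$. The loss comes from stacking two worst cases that are never attained together: the extremal matrices of the Lemma are far from umbilic, while $|A|^2\ge H^2/k$ is sharp only for umbilic $A$.

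The missing step is to keep $H$ and the trace-free part $\Phi=A-\tfrac Hk g$ separate. From $\mathrm{tr}\,\Phi=0$, Cauchy--Schwarz and $\alpha\le 2$ you get $|\Phi|^2-\alpha\sum_{j>1}\Phi_{1j}^2\ge\tfrac{k}{k-1}\Phi_{11}^2$. This reduces $\Theta$ to the quadratic form in $(H,\Phi_{11},h)$ with matrix $G(a,\alpha,k)$ from \eqref{matrix G(a,alfa,k)}. Eliminating $h$ leaves a $2\times2$ form in $(H,\Phi_{11})$ whose determinant is positive exactly when $a<\frac{4-(k-1)\alpha^2}{k-\alpha(k-1)}$. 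That is the first branch of $m$, and it genuinely involves $\Phi_{11}$, not only $(H,dw(\eta)/w)$. The other branch is the spectral Bishop--Gromov threshold $\frac{4}{(4-a)\alpha}\le\frac{k-1}{k-2}$. Here $\frac{4}{4-a}$ comes from integrating $-aw^{-1}\Delta_\Sigma w\,\psi^2$ by parts and applying Young's inequality to $aw^{-1}\psi\langle\nabla w,\nabla\psi\rangle$. By contrast, rewriting $\bar\Delta w-\Delta w-H\,dw(\eta)$ as $\bar\nabla^2w(\eta,\eta)$ goes the wrong way, since the equation for $w$ only controls $\bar\Delta w$. Two smaller slips:
\begin{itemize}
\item Your condition on $h$ should read $\beta h^2-|dh|\ge-\delta/2$; as written it fails where $h=0$.
\item $Y$ must be taken inside $X$, with a margin in $L$ as in Lemma \ref{standard mu-bubble prescribing function h}, so that $\{\rho<L\}$ stays away from $\partial_+X$.
\end{itemize}
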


        The proof of Theorem \ref{mu-bubbles em bi-ricci com peso positivo no sentido espectral} follows a strategy introduced by L. Mazet \cite{Mazet}. Several particular cases of this theorem were previously established in the literature: the case $(k,a,\alpha) = (3,1,1)$ was treated by O. Chodosh, C. Li, P. Minter, and D. Stryker \cite{Chodosh-Li-Minter-Stryker}; the case $(k,a,\alpha) = \left(4, \frac{11}{10}, \frac{40}{43}\right)$ by L. Mazet \cite{Mazet}; and the case $(k,a,\alpha) = \left(4,\frac{111}{100},\frac{93}{100}\right)$ by J. Chen, H. Hong, and H. Li \cite{CHL}. Observe that the value of $\alpha$ chosen in \cite{CHL} lies closer to $\alpha_*(4)$ than the one selected in \cite{Mazet}. The choice of these specific parameter values is motivated by the emphasis in those works on stable CMC hypersurfaces in Euclidean space. To treat more general ambient spaces, it is relevant to understand the range of admissible parameters in Theorem \ref{mu-bubbles em bi-ricci com peso positivo no sentido espectral} (see Remark \ref{OBS sobre curvature critica otima em H6}). 
        
    In order to prove Theorem \ref{mu-bubbles em bi-ricci com peso positivo no sentido espectral}, we first establish a lemma and two propositions. The lemma constructs a function $h$ that will serve as a parameter in the construction of $\mu$-bubbles, while the propositions provide curvature estimates for these $\mu$-bubbles. With this preparation, the theorem is then proved by applying the spectral Bishop–Gromov and Bonnet–Myers theorems of G. Antonelli and K. Xu \cite{Antonelli-Xu}.

\begin{lema}[\textit{cf.} \cite{Chodosh-Li-Minter-Stryker}, Theorem 4.1] \label{standard mu-bubble prescribing function h}
    Let $(N,g)$ be a complete Riemannian manifold. Let $X \subset N$ be a smooth compact domain, with boundary $\partial X = \partial_+X \sqcup \partial_-X$, where both $\partial_+X$ and $\partial_-X$ are nonempty hypersurfaces.     Let $\varphi_0:X \to \mathbb{R}$ be a smoothing of the function $d_{\partial_+X}$ that measures the distance in $N$ to $\partial_+X$, such that $\frac{1}{2} d_{\partial_+X} \le \varphi_0 \le 2 d_{\partial_+X}$, and $|\nabla \varphi_0| \le 2$.
    
    Consider a real number $B>0$ and suppose that $d_N(\partial_+X,\partial_-X) \ge \frac{4\pi}{B}+2$.

    Then there exists $\varepsilon \in (0,\frac{1}{2})$ such that
    \begin{equation*}
        Y := \{ \varepsilon < \varphi_0 < \frac{2\pi}{B}+2\varepsilon\}
    \end{equation*}
    is an open subset of $X$ with smooth boundary, $\partial Y = \partial_+ Y \sqcup \partial_- Y$, $\partial_-Y = \{\varphi_0 = \frac{2\pi}{B}+2 \varepsilon\}$, $\partial_+Y = \{\varphi_0 = \varepsilon\}$, and every point of $Y$ is at distance at most $\frac{4\pi}{B}+2$ to $\partial_+X$ in $N$. Moreover, there exists a smooth function $h: Y \to \mathbb{R}$ satisfying
    \begin{equation*}
    \begin{split}
        \begin{cases}
           & h \to +\infty \,\,\,\,\,\, \text{ on } \,\,\, \partial_+Y, \\
           & h \to -\infty \,\,\,\,\,\, \text{ on } \,\,\, \partial_- Y, \\
           &|\nabla h| \le B(1+h^2).
        \end{cases}
    \end{split}        
    \end{equation*}

\end{lema}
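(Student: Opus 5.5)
The plan is to follow the standard construction of Chodosh--Li--Minter--Stryker. First I will choose $\varepsilon \in (0,\frac12)$ so that both $\varepsilon$ and $\frac{2\pi}{B}+2\varepsilon$ are regular values of the smooth function $\varphi_0$. By Sard's theorem almost every value is regular, so such an $\varepsilon$ exists. Then the level sets $\{\varphi_0=\varepsilon\}$ and $\{\varphi_0=\frac{2\pi}{B}+2\varepsilon\}$ are smooth closed hypersurfaces, and $Y$ is an open set with smooth boundary. I must also check that $Y$ stays away from $\partial X$, so that it is genuinely inside $X$.

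\begin{itemize}
\item Near $\partial_+X$ we have $\varphi_0\le 2d_{\partial_+X}$, which is small. Points of $\overline Y$ satisfy $\varphi_0\ge\varepsilon$, hence $d_{\partial_+X}\ge \varepsilon/2>0$.
\item Near $\partial_-X$ we have $\varphi_0\ge \frac12 d_{\partial_+X}\ge \frac12(\frac{4\pi}{B}+2)=\frac{2\pi}{B}+1>\frac{2\pi}{B}+2\varepsilon$. So $\overline Y$ does not reach $\partial_-X$.
\item For every point of $Y$, $d_{\partial_+X}\le 2\varphi_0< 2(\frac{2\pi}{B}+2\varepsilon)<\frac{4\pi}{B}+2$. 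This gives the distance bound.
\end{itemize}

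The boundary splits as $\partial_+Y=\{\varphi_0=\varepsilon\}$ and $\partial_-Y=\{\varphi_0=\frac{2\pi}{B}+2\varepsilon\}$. These are disjoint, and both are nonempty: along a minimizing path from $\partial_+X$ to $\partial_-X$ the continuous function $\varphi_0$ goes from $0$ to a value $>\frac{2\pi}{B}+1$, so by the intermediate value theorem it takes both values.

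Next I will define
\begin{equation*}
h(x) := -\tan\Big(\frac{B}{2}\big(\varphi_0(x)-\varepsilon\big)-\frac{\pi}{2}\Big) \quad \text{on } Y.
\end{equation*}
On $Y$ the argument $\frac{B}{2}(\varphi_0-\varepsilon)-\frac{\pi}{2}$ ranges in $(-\frac{\pi}{2}, \frac{B\varepsilon}{2})$. That exceeds $(-\frac\pi2,\frac\pi2)$ whenever $\frac{B\varepsilon}{2}>\frac{\pi}{2}$, so this normalization is wrong. The fix is to rescale so that the full interval $(\varepsilon,\frac{2\pi}{B}+2\varepsilon)$ maps onto $(-\frac\pi2,\frac\pi2)$. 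I will set
\begin{equation*}
t=\pi\,\frac{\varphi_0-\varepsilon}{\frac{2\pi}{B}+\varepsilon}-\frac{\pi}{2}, \qquad h=-\tan t .
\end{equation*}
Then $h\to+\infty$ as $\varphi_0\to\varepsilon$, that is on $\partial_+Y$, and $h\to-\infty$ on $\partial_-Y$.

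For the gradient bound,
\begin{equation*}
|\nabla h| = (1+h^2)\,|\nabla t| \le (1+h^2)\,\frac{2\pi}{\frac{2\pi}{B}+\varepsilon}\le (1+h^2)\,B ,
\end{equation*}
using $|\nabla\varphi_0|\le 2$ and $\frac{2\pi}{2\pi/B+\varepsilon}< B$.

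Everything here is routine. The only points needing care are the regular-value choice via Sard and the calibration of the tangent so that the constant in the gradient bound is at most $B$.
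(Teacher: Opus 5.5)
Your proposal is correct and is essentially the paper's proof. You choose $\varepsilon$ via Sard so that both $\varepsilon$ and $\frac{2\pi}{B}+2\varepsilon$ are regular values, take $h$ to be minus the tangent of the affine rescaling of $\varphi_0$ that sends $(\varepsilon,\frac{2\pi}{B}+2\varepsilon)$ onto $(-\frac{\pi}{2},\frac{\pi}{2})$, and use $|\nabla\varphi_0|\le 2$ to get $|\nabla h|\le B(1+h^2)$.

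Your $h$ is the negative of the paper's, and yours is the one with the stated boundary behavior $h\to+\infty$ on $\partial_+Y$. Your checks that $\overline{Y}$ avoids $\partial X$ and satisfies the distance bound fill in details the paper omits. One small fix: for nonemptiness of $\partial_\pm Y$, run the intermediate value argument along a path inside $X$, since a minimizing path in $N$ may leave $X$, where $\varphi_0$ is undefined.
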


\begin{proof}

    Let $\varepsilon \in (0,\frac{1}{2})$ be such that $\varepsilon$ and $\frac{2\pi}{B}+2\varepsilon$ are regular values of $\varphi_0$. Define $\varphi:= \frac{\pi}{2}-\pi \cdot \frac{\varphi_0 -\varepsilon}{ \frac{2\pi}{B}+\varepsilon}$ and note that $\Omega = \{ -\pi/2 < \varphi < \pi/2\}$. We define $h := -\tan(\varphi)$, and compute $$\nabla h = -(1+\tan ^2(\varphi))\nabla \varphi = -(1+h^2)\nabla \varphi$$ so that $|\nabla h| \le B(1+ h^2)$.
 
\end{proof}

The matrix
\begin{equation} \label{matrix G(a,alfa,k)}
    G(a,\alpha,k) =\begin{pmatrix}
        (\frac{1}{k} + \frac{\alpha}{k} - \frac{\alpha}{k^2} + \frac{1}{a} - 1) & \frac{\alpha}{2}(1 -\frac{2}{k}) & \frac{1}{2} - \frac{1}{a} \\
        \frac{\alpha}{2}(1 -\frac{2}{k})  & \frac{k}{k-1} - \alpha & 0 \\
        \frac{1}{2} - \frac{1}{a} & 0 & \frac{1}{a}
    \end{pmatrix}
\end{equation}

\noindent will appear as the matrix of a quadratic form in our calculations. The following proposition, which can be deduced by straightforward computations, provides the algebraic information needed for the proof of Theorem \ref{mu-bubbles em bi-ricci com peso positivo no sentido espectral}. 

\begin{prop} \label{proposicao com condicoes sobre os parametros para aplicar bishop-gromov nas mu-bubbles}
    Suppose that $k \ge 3$ is an integer and $(a,\alpha)$ are real numbers satisfying $a \in (0,4)$ and $\alpha \in (0,2]$. Then the inequalities
    \begin{equation} \label{condições necessarias e suficientes para o método do Mazet funcionar}
    \begin{split}
        \begin{cases}
            & \frac{k-2}{k-1} < \alpha < \frac{2}{\sqrt{k-1}}, \\
            & 0<a \le  4- \frac{k-2}{k-1}\frac{4}{\alpha}, \\
            & a < \frac{4-(k-1)\alpha^2}{k-\alpha(k-1)}
        \end{cases}
    \end{split}
\end{equation}

    \noindent are satisfied if, and only if, 
    \begin{equation} \label{desigualdades desajadas para aplicar bishop gromov espectral}
    \begin{split}
        \begin{cases}
            & \frac{4}{4-a} \frac{1}{\alpha} \le \frac{k-1}{k-2}, \\
            & \text{The matrix }G=G(a,\alpha,k) \text{ given by \eqref{matrix G(a,alfa,k)} is positive definite.}
        \end{cases}
    \end{split}
\end{equation}
    
     Moreover, for integers $k \ge 3$ we have $\frac{k-2}{k-1} < \frac{2}{\sqrt{k-1}} \Leftrightarrow k \le 6$.  
\end{prop}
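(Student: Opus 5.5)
The equivalence is a direct algebraic check. I will translate each condition in \eqref{desigualdades desajadas para aplicar bishop gromov espectral} separately, and handle the positive definiteness of $G$ by Sylvester's criterion, using the leading minors taken in the reversed order (bottom-right corner first). Throughout, $a\in(0,4)$ and $\alpha\in(0,2]$.

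\emph{First inequality.} Since $4-a>0$ and $\alpha>0$, the inequality $\frac{4}{4-a}\frac{1}{\alpha}\le\frac{k-1}{k-2}$ is equivalent to $4(k-2)\le \alpha(k-1)(4-a)$, that is,
\begin{equation*}
a\le 4-\frac{k-2}{k-1}\frac{4}{\alpha}.
\end{equation*}
Because $a>0$, this forces the right-hand side to be positive, which is exactly $\alpha>\frac{k-2}{k-1}$.

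\emph{Positive definiteness of $G$.} Write the entries as $g_{11}$, $b=\frac{\alpha(k-2)}{2k}$, $c=\frac12-\frac1a$, $d=\frac{k}{k-1}-\alpha$ and $e=\frac1a$. I apply Sylvester's criterion to the minors in the order $(3,3)$, then $\{2,3\}$, then the full matrix.
\begin{enumerate}
\item The entry $e=\frac{1}{a}$ is always positive.
\item The $\{2,3\}$ minor equals $de$, which is positive if and only if $\alpha<\frac{k}{k-1}$.
\item Expanding the full determinant gives $\det G=d(g_{11}e-c^2)-b^2e$. A short computation gives $g_{11}e-c^2=\frac{1}{a}\bigl(\frac1k+\frac{\alpha(k-1)}{k^2}\bigr)-\frac14$.
\end{enumerate}
Multiply $\det G>0$ by $4ak^2>0$ and substitute $d=\frac{k-\alpha(k-1)}{k-1}$. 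The key identity to verify is
\begin{equation*}
(k-\alpha(k-1))(4k+4\alpha(k-1))-(k-1)\alpha^2(k-2)^2=k^2\bigl(4-(k-1)\alpha^2\bigr).
\end{equation*}
It uses $4(k-1)+(k-2)^2=k^2$. With it, $\det G>0$ becomes $a\,k^2(k-\alpha(k-1))<k^2(4-(k-1)\alpha^2)$. Under $d>0$ this is
\begin{equation*}
a<\frac{4-(k-1)\alpha^2}{k-\alpha(k-1)}.
\end{equation*}
Hence $G$ is positive definite if and only if $\alpha<\frac{k}{k-1}$ and $a<\frac{4-(k-1)\alpha^2}{k-\alpha(k-1)}$.

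\emph{Assembling.} Suppose first that \eqref{condições necessarias e suficientes para o método do Mazet funcionar} holds. Since $2\sqrt{k-1}\le k$, equivalently $(k-2)^2\ge0$, we have $\alpha<\frac{2}{\sqrt{k-1}}\le\frac{k}{k-1}$. All three Sylvester conditions then hold, and the first inequality holds by the rearrangement above.

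Conversely, suppose \eqref{desigualdades desajadas para aplicar bishop gromov espectral} holds. Positive definiteness gives $\alpha<\frac{k}{k-1}$, so the denominator $k-\alpha(k-1)$ is positive. Since $a>0$, the bound on $a$ then forces the numerator to be positive, giving $\alpha<\frac{2}{\sqrt{k-1}}$. The first inequality gives $\alpha>\frac{k-2}{k-1}$ and the middle line of \eqref{condições necessarias e suficientes para o método do Mazet funcionar}.

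\emph{The final claim.} The inequality $\frac{k-2}{k-1}<\frac{2}{\sqrt{k-1}}$ is equivalent to $(k-2)^2<4(k-1)$, that is, $k^2-8k+8<0$, that is, $k<4+2\sqrt2\approx 6.83$. For integers $k\ge3$ this means $k\le6$.

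The only real obstacle is the determinant computation. The identity $4(k-1)+(k-2)^2=k^2$ is what makes the factorization work, and I would double-check it and the expression for $g_{11}e-c^2$ carefully.
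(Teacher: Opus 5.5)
Your proof is correct. The trailing-minor Sylvester test reduces positive definiteness of $G$ to $\alpha<\frac{k}{k-1}$ together with $a(k-\alpha(k-1))<4-(k-1)\alpha^2$, via the identity $4(k-1)+(k-2)^2=k^2$, and both directions of the assembly, which use $2\sqrt{k-1}\le k$, are sound. The paper gives no written proof and only calls the statement a straightforward computation; your argument supplies exactly that computation.
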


    Next, we derive a consequence of the stability inequality for $\mu$-bubbles. 

\begin{prop}  \label{segunda variação para mu-bubbles}

Let $N^n$ be a complete Riemannian manifold. Let $a \in (0,4)$ and $\alpha \in (0,2]$ be such that $G = G(a,\alpha,n-1)$ given by \eqref{matrix G(a,alfa,k)} is positive definite. Suppose that there exist $\bar V \in C^\infty(N)$ and a positive function $w \in C^\infty(N)$ satisfying $\bar V \ge \delta - \bar \Lambda_\alpha$ and
    \begin{equation} \label{EDP da função w}
        -a \bar \Delta w = \bar V w.
    \end{equation}

Let $\hat \Sigma$ be a $\mu$-bubble for the parameters $(Y,h,w,a)$, as in Theorem \ref{definicao de mu-bubble}, and $\Sigma$ a connected component of $\hat \Sigma$. Then there exists a real number $\theta = \theta(a,\alpha,n) > 0$ such that 
        \begin{equation} \label{desigualdade desejada para construir mu-bubbles}
        \begin{split}
            \frac{4}{4-a}\int_\Sigma  |\nabla \psi|^2  \ge &\int_\Sigma \Big (\delta - \alpha \lambda_{Ric}^\Sigma + \theta h^2 +dh(\eta)\Big)\psi^2 
        \end{split}
    \end{equation}

    \noindent for every $\psi \in C^\infty(\Sigma)$, where $\eta$ denotes the unit normal vector field to $\Sigma$, pointing outwards of $\Omega_*$. 
\end{prop}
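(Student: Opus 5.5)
The plan is to start from the stability inequality \eqref{desigualdade de estabilidade crua} of the $\mu$-bubble $\hat\Sigma$ and substitute the test function $\varphi = w^{-a/2}\psi$, with $\psi \in C^\infty(\Sigma)$ supported on the component $\Sigma$. This removes the weight $w^a$. Writing $\nabla(w^{-a/2}\psi) = w^{-a/2}(\nabla\psi - \tfrac a2 \psi \nabla \ln w)$, the gradient term becomes $|\nabla\psi|^2 - a\psi\langle\nabla\psi,\nabla\ln w\rangle + \tfrac{a^2}{4}\psi^2|\nabla \ln w|^2$, where $\nabla$ is the tangential gradient on $\Sigma$.

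The next step handles the term $a w^{-1}\Delta_\Sigma w\,\psi^2$. Integrating it by parts on $\Sigma$ gives $a\int |\nabla\ln w|^2\psi^2 + 2a\int \psi\langle\nabla\psi,\nabla\ln w\rangle$. This leaves a cross term $a\int\psi\langle\nabla\psi,\nabla \ln w\rangle$ together with $(\tfrac{a^2}{4}-a)\int|\nabla\ln w|^2\psi^2$. Young's inequality absorbs the cross term, at the cost of replacing the coefficient $1$ in front of $|\nabla\psi|^2$ by $\tfrac{4}{4-a}$. This is the source of that constant, and it uses $a<4$.

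Then I use \eqref{EDP da função w}, namely $a w^{-1}\bar\Delta w = -\bar V \le \bar\Lambda_\alpha - \delta$. Combined with $-\overline{Ric}(\eta,\eta)$ and the Gauss equation, the curvature terms should assemble as follows. Let $e$ be a unit tangent vector realizing $\lambda^\Sigma_{Ric}$. Then $\bar\Lambda_\alpha \le \overline{Ric}(\eta)+\alpha(\overline{Ric}(e)-\overline{sec}(\eta\wedge e))$, and by Gauss,
\[
\overline{Ric}(e) - \overline{sec}(e\wedge\eta) = Ric_\Sigma(e) - H A(e,e) + |A(e)|^2 .
\]
This produces $\delta - \alpha\lambda_{Ric}^\Sigma$, plus second-fundamental-form terms $-|A|^2 + \alpha(|A(e)|^2 - HA(e,e))$. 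The remaining terms are $-dh(\eta)$, $-aw^{-2}dw(\eta)^2$, and $-aHw^{-1}dw(\eta)$.

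Now substitute $H = h - a\,d\ln w(\eta)$ from \eqref{curvatura media de sigma}. With $x = d\ln w(\eta)$ everything is a quadratic expression in $h$, $x$, $H$ (or $h$) and the eigenvalues of $A$. To reduce to three variables, I decompose $A$ along $e$: write $A(e,e)=\lambda_1$, split the rest into trace and traceless parts, and use $|A|^2 \ge \lambda_1^2 + \sum_{j\ge2}\lambda_j^2$, $\sum_{j \ge 2} \lambda_j^2 \ge \tfrac{(H-\lambda_1)^2}{k-1}$ and $|A(e)|^2 \ge \lambda_1^2$ appropriately, as in Mazet's computation. The resulting quadratic form in suitable variables (something like $(H,\lambda_1 - H/k, h)$ or $(h, \cdot, x)$) should be exactly $G(a,\alpha,n-1)$. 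Positive definiteness gives a lower bound $\theta' (y_1^2+y_2^2+y_3^2) \ge \theta h^2$, since $h$ is a linear combination of the variables. This yields $\theta h^2$ with $\theta$ depending only on the smallest eigenvalue of $G$ and that change of variables.

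I expect the main obstacle to be this bookkeeping: matching the entries of $G$, especially the $\tfrac1a$ and $\tfrac12-\tfrac1a$ terms that come from eliminating $x$ via $h$ and $H$, and handling the sign of $dh(\eta)$ consistently. The sign convention is that the inequality carries $+dh(\eta)\psi^2$ on the right-hand side. I would fix the variable choice by first treating the umbilic-in-complement case and then checking the off-diagonal entries.
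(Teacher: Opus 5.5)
Your outline follows the paper's proof closely up to the Gauss-equation step. That includes the test function $w^{-a/2}\psi$, the integration by parts of the $\Delta_\Sigma w$ term, Young's inequality producing $\frac{4}{4-a}$, the choice of $e$ realizing $\lambda^\Sigma_{Ric}$ with $\bar\Lambda_\alpha\le\overline{BRic}_\alpha(\eta,e)$, and the substitution $d\ln w(\eta)=(h-H)/a$. Your guess $(H,\lambda_1-H/k,h)$ is exactly the set of variables in which the form becomes $G(a,\alpha,k)$.

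The reduction to those variables, however, fails as you wrote it. After the Gauss equation, the second fundamental form enters through $|A|^2-\alpha|A(e)|^2+\alpha H A(e,e)$. So $|A(e)|^2$ carries the coefficient $-\alpha$, and the inequality $|A(e)|^2\ge\lambda_1^2$ goes in the wrong direction for a lower bound. The bound $|A|^2\ge\lambda_1^2+\sum_{j\ge2}\lambda_j^2$ also discards exactly the terms you need. In an orthonormal frame $e_1=e,e_2,\dots,e_k$ one has $|A(e)|^2=A_{11}^2+\sum_{j\ge2}A_{1j}^2$, and the off-diagonal entries $A_{1j}$ are not controlled by any diagonal quantity. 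Once you drop them from $|A|^2$, the remaining expression is unbounded below: take all $A_{ii}=0$, so $H=0$, and it reduces to $\frac{h^2}{a}-\alpha A_{12}^2$, which tends to $-\infty$ as $A_{12}\to\infty$.

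The missing observation is that each $A_{1j}$, $j\ge2$, appears twice in $|A|^2$:
\[
|A|^2-\alpha|A(e)|^2\ \ge\ \sum_{i}A_{ii}^2-\alpha A_{11}^2+(2-\alpha)\sum_{j\ge2}A_{1j}^2\ \ge\ \sum_{i}A_{ii}^2-\alpha A_{11}^2 .
\]
This is precisely where the hypothesis $\alpha\le2$ enters; your proposal never uses it. With this in hand, your inequality $\sum_{j\ge2}A_{jj}^2\ge\frac{(H-A_{11})^2}{k-1}$ gives $\sum_iA_{ii}^2\ge\frac{H^2}{k}+\frac{k}{k-1}\Phi_{11}^2$, where $\Phi=A-\frac{H}{k}g$. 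The resulting quadratic form in $(H,\Phi_{11},h)$ is $G(a,\alpha,k)$ entry by entry. Choosing $\theta>0$ with $G-\theta\,\mathrm{Id}$ positive definite then yields the term $\theta h^2$.

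A minor point: integrating $-a\,w^{-1}\Delta_\Sigma w\,\psi^2$ by parts gives $-a\int|\nabla\ln w|^2\psi^2+2a\int\psi\langle\nabla\psi,\nabla\ln w\rangle$. The signs in your intermediate line are off, although your combined coefficients $a$ and $\frac{a^2}{4}-a$ are correct.
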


\begin{proof}

  In what follows, $\overline \nabla$ denotes the connection of the ambient space $N$, while $\nabla$ denotes the connection of $\Sigma$, and similar conventions are used for curvature quantities. 

 We use $\varphi = w^{-a/2} \psi$ as a test function in \eqref{desigualdade de estabilidade crua} to use the information \eqref{EDP da função w}. Then
\begin{equation*}
    \begin{split}
        0 \le \int_\Sigma &|\nabla \psi|^2 - aw^{-1}\psi(\nabla w, \nabla \psi) + \frac{a^2}{4} \psi^2 w^{- 2} |\nabla w|^2\\
        & +\psi^2 \Big( 
   - (|A_\Sigma|^2+\overline{Ric}(\eta,\eta)) - a w^{-2} dw(\eta)^2  \\
        &+ aw^{-1} (\bar \Delta w - \Delta w - H dw(\eta))  - dh(\eta)  \Big).
    \end{split}
\end{equation*}

Using \eqref{EDP da função w} we have $ -\frac{\bar \Delta w}{w} = \frac{1}{a} \bar V \ge \frac{1}{a}(\delta - \bar \Lambda_\alpha)$ so that 
\begin{equation*}
    \begin{split}
        \int_\Sigma \Big (\delta - \overline  \Lambda_\alpha + |A_\Sigma|^2+\overline{Ric}(\eta,\eta) \Big)\psi^2  \le \int_\Sigma &|\nabla \psi|^2 - aw^{-1}\psi(\nabla w, \nabla \psi) + \frac{a^2}{4} \psi^2 w^{- 2} |\nabla w|^2\\
        & +\psi^2 \Big( 
    - a w^{-2} dw(\eta)^2 - aw^{-1} (  \Delta w + H dw(\eta))  - dh(\eta)  \Big). 
    \end{split}
\end{equation*}

We integrate by parts the term $- aw^{-1} \Delta w \psi^2$ and collect similar terms to obtain
\begin{equation*}
    \begin{split}
        \int_\Sigma \Big (\delta - \overline  \Lambda_\alpha + |A_\Sigma|^2+\overline{Ric}(\eta,\eta) \Big)\psi^2  \le \int_\Sigma &|\nabla \psi|^2 + aw^{-1}\psi(\nabla w, \nabla \psi) + \frac{a^2-4a}{4} \psi^2 w^{- 2} |\nabla w|^2\\
        & +\psi^2 \Big( 
    - a  [(d \ln w)(\eta)]^2 - a H d (\ln w)(\eta)  - dh(\eta)  \Big).
    \end{split}
\end{equation*}

Now we use Young's inequality
\begin{equation*}
    aw^{-1}\psi(\nabla w, \nabla \psi) \le a \Big( \varepsilon \psi^2 w^{- 2} |\nabla w|^2 + \frac{1}{4 \varepsilon} |\nabla \psi|^2 \Big),
\end{equation*}

\noindent and choose $\varepsilon >0$ so that $a \cdot \varepsilon = - a\frac{a-4}{4} $, which is possible because $0 < a < 4$. Then, denoting the second fundamental form of $\Sigma$ in $N$ by $A_\Sigma$, we have
\begin{equation} \label{conseq da segunda variação da weithed warped mu-bubble com H}
    \begin{split}
        \frac{4}{4-a}\int_\Sigma  |\nabla \psi|^2  \ge &\int_\Sigma \Big (\delta - \overline  \Lambda_\alpha + |A_\Sigma|^2+\overline{Ric}(\eta,\eta) \Big)\psi^2 \\
        &\quad \quad+  \Big( 
     a  [(d \ln w)(\eta)]^2 + a H d (\ln w)(\eta) +dh(\eta)  \Big) \psi^2 
    \end{split}
\end{equation}

To reach the desired inequality \eqref{desigualdade desejada para construir mu-bubbles}, we need to estimate the right-hand side of \eqref{conseq da segunda variação da weithed warped mu-bubble com H} from below. We will see that the integrand in the right-hand side is related to the following quantity 
\begin{equation*}
    \Theta := |A_\Sigma|^2 + \alpha H A_{11} -\alpha \sum_{j=1}^k A_{1j}^2 + a  [(d \ln w)(\eta)]^2 + a H d (\ln w)(\eta)
\end{equation*}

\noindent where $k = n-1$ be the dimension of $\Sigma$, and $A=A_\Sigma$. 

For a fixed, but arbitrary, $p \in \Sigma$, we pick a unit vector $e_1 \in T_p M$ such that $Ric(e_1,e_1) = \lambda_{Ric}^\Sigma(p)$. Using this, and the Gauss equation, we compute at $p$
\begin{equation} \label{estimativa de curvatura no que precede a def de Theta}
    \overline{Ric}(\eta,\eta)  - \overline \Lambda_\alpha \ge \overline{Ric}(\eta,\eta) -\overline {BRic}_\alpha(\eta,e_1) = -\alpha \lambda_{Ric}^\Sigma + \alpha \sum_{j=2}^{k} (A_{ii} A_{11} - A_{1j}^2).
\end{equation}

Note that
\begin{equation} \label{manipulacao com curvatura media que precede definicao de H}
    \sum_{j=2}^{k} (A_{ii} A_{11} - A_{1j}^2) = A_{11}(H-A_{11}) -\sum_{j=2}^k A_{1j}^2 = A_{11}H - \sum_{j=1}^k A_{1j}^2.
\end{equation}

Using equations \eqref{estimativa de curvatura no que precede a def de Theta} and \eqref{manipulacao com curvatura media que precede definicao de H}, we conclude that in order to obtain \eqref{desigualdade desejada para construir mu-bubbles} from \eqref{conseq da segunda variação da weithed warped mu-bubble com H} it is enough to estimate $\Theta$ from below by $\theta \cdot h^2$, for some real number $\theta= \theta(a,\alpha,k)>0$.

From \eqref{curvatura media de sigma} we have $d(\ln w) (\eta) = \frac{h-H}{a}$. Using this, and introducing the traceless part of the second fundamental form of $\Sigma$, $\Phi := A - \frac{H}{k}g$, we rewrite $\Theta$ as 
\begin{equation*}
    \Theta = \frac{1}{k}H^2 + |\Phi|^2 + \alpha H (\Phi_{11} + \frac{H}{k}) - \alpha (\Phi_{11}+ \frac{H}{k})^2 -\alpha \sum_{j>1}^k \Phi_{1j}^2 + \frac{1}{a}(H-h)^2 + H(h-H).
\end{equation*}

Note that
\begin{equation*}
    |\Phi|^2 = \sum_{i,j} \Phi_{ij}^2 = \sum_{i=1}^k \Phi_{ii}^2 + 2 \sum_{j>i} \Phi_{ij}^2 \ge \frac{k}{k-1}\Phi_{11}^2 + 2 \sum_{j>i} \Phi_{ij}^2
\end{equation*}

\noindent since $\Phi$ is symmetric and traceless. Using that $\alpha \le 2$ we obtain $|\Phi|^2 - \alpha \sum_{j>1} |\Phi_{1j}|^2 \ge \frac{k}{k-1}\Phi_{11}^2$. 

Therefore,
\begin{equation} \label{Theta como forma quadratica}
    \Theta \ge  \frac{1}{k}H^2 + \frac{k}{k-1}\Phi_{11}^2 + \frac{\alpha}{k}H^2 + \alpha H \Phi_{11} - \alpha (\frac{1}{k}H + \Phi_{11})^2+  \frac{1}{a}(H-h)^2 + H(h-H).
\end{equation}

The task of estimating $\Theta$ from below has been reduced to the search for a lower bound for the right-hand side of \eqref{Theta como forma quadratica}, which is a quadratic form on $(H,\Phi_{11},h)$ associated to the matrix $G=G(a,\alpha,k)$ given by \eqref{matrix G(a,alfa,k)}. Since $G$ is positive definite, and the set of positive definite matrices is open, we can find $\theta = \theta(a,\alpha,k)$ such that $G - \theta \cdot Id$ is positive definite. Using \eqref{Theta como forma quadratica} we conclude that $\Theta \ge \theta \cdot h^2$. From equations \eqref{estimativa de curvatura no que precede a def de Theta}, \eqref{manipulacao com curvatura media que precede definicao de H}, and \eqref{conseq da segunda variação da weithed warped mu-bubble com H} we obtain the desired inequality \eqref{desigualdade desejada para construir mu-bubbles}.

\end{proof}

Next, we present the proof of Theorem \ref{mu-bubbles em bi-ricci com peso positivo no sentido espectral}. 

\begin{proof}[Proof of Theorem \ref{mu-bubbles em bi-ricci com peso positivo no sentido espectral}]
    
    By Proposition \ref{proposicao com condicoes sobre os parametros para aplicar bishop-gromov nas mu-bubbles} we know that $G = G(a,\alpha,k)$ given by \eqref{matrix G(a,alfa,k)} is positive definite. Let $\theta=\theta(a,\alpha,k)>0$ be as in Proposition \ref{segunda variação para mu-bubbles}. Let $\varphi_0$ be a smoothing of function that measures the distance to $\partial_-X$ in $N$, so that $\varphi_0$ satisfies the hypotheses of Lemma \ref{standard mu-bubble prescribing function h}. Let $B:= \min \{\frac{\delta}{4},\theta\}$ and define $L:= \frac{4\pi}{B}+2$. We use Lemma \ref{standard mu-bubble prescribing function h} with $B$ and $\varphi_0$ as above, to construct $Y$ and the function $h: Y \to \mathbb{R}$. We approximate the continuous function $\Lambda_\alpha$ to obtain $V \in C^\infty(N)$ such that $|| V - \Lambda_\alpha||_{C^0(N)} < \frac{\delta}{4}$. Note that
    \begin{equation*}
        \sup_{0 \neq \phi \in C^\infty_0(N)} \frac{\int_N a_\alpha|\nabla\phi|^2 + V\phi^2}{\int_N \phi^2} \ge \frac{3\delta}{4}. 
    \end{equation*}

    Let $\bar V:= \frac{3\delta}{4} - V$. By Theorem 1 in \cite{fischer-colbrie--schoen}, there exists $w \in C^\infty(N)$ such that $-a \Delta w = \bar V w$, and $w>0$. Associated to $Y$, $a$, $w$ and $h$, is the $\mu$-bubble $\hat \Sigma$ as in Theorem \ref{definicao de mu-bubble}, which is the boundary of $\Omega_{**}$: a relatively open set of the closure of $Y$ with smooth compact boundary, such that $\Omega_{**} \supset \partial_+Y$ and $\partial \Omega_{**} = \hat \Sigma \sqcup \partial_+Y$. Recall from the proof of Lemma \ref{standard mu-bubble prescribing function h} that there exists $\varepsilon \in (0,\frac{1}{2})$ such that $Y := \{ \varepsilon < \varphi_0 < \frac{2\pi}{B}+2\varepsilon\}$; recall also that $\frac{1}{2} d_{\partial_+X} \le \varphi_0 \le 2 d_{\partial_+X}$, and $|\nabla \varphi_0| \le 2$. We add $\{\varphi_0 \le \varepsilon\}$ to $\Omega_{**}$ to form $\Omega_*$. 
    
    Note that every point of $\Omega_*$ is at distance at most $L$ of $\partial_+X$ in $N$. 

    Notice that $\bar V \ge \frac{\delta}{2} - \Lambda_\alpha$. By Proposition \ref{segunda variação para mu-bubbles}, each connected component $\Sigma$ of $\hat \Sigma$ satisfies 
    \begin{equation*}
        \begin{split}
            \frac{4}{4-a}\int_\Sigma  |\nabla \psi|^2  \ge &\int_\Sigma \Big (\frac{\delta}{2} - \alpha \lambda_{Ric}^\Sigma + \theta h^2 +dh(\eta)\Big)\psi^2  \\
            \ge &\int_\Sigma \Big (\frac{\delta}{4} - \alpha \lambda_{Ric}^\Sigma \Big)\psi^2 
        \end{split}
    \end{equation*}

    \noindent for every $\psi \in C^\infty(\Sigma)$, where we used $|\nabla h| \le B(1+h^2)$ in the last inequality. By Proposition \ref{proposicao com condicoes sobre os parametros para aplicar bishop-gromov nas mu-bubbles} we have $\frac{4}{4-a}\frac{1}{\alpha} \le \frac{k-1}{k-2}$, where $k:=n-1$ is the dimension of $\hat \Sigma$, so we can apply the spectral Bishop-Gromov theorem \cite{Antonelli-Xu} to bound the volume of each connected component of $\hat \Sigma$ by a constant $V=V(\alpha,a,k,\delta)$. When $n=4$, the diameter estimate for each component of $\hat \Sigma$ follows from the spectral Bonnet-Myers theorem \cite{Antonelli-Xu}.
\end{proof}

\subsection{Applications}
We conclude this section with applications of Theorem \ref{mu-bubbles em bi-ricci com peso positivo no sentido espectral}. These applications restrict the geometry of Riemannian manifolds with uniformly positive $\alpha$-bi-Ricci curvature in spectral sense. Before stating our main application, we recall some notation. The positive real valued function $m=m(\alpha,k)$ described in Proposition \ref{proposicao que define as constantes do metodo do mazet} will be used as a parameter to control the curvature of the Riemannian manifolds under consideration. Recall also the notation \eqref{def de lambda-Ricci} and \eqref{def de Lambda-bi-Ricci}.
    
\begin{teo} \label{Aplicacao à geometria Riemanniana}
     Let $(N^{k+1},g)$ be a complete Riemannian manifold of dimension $4 \le k+1 \le 7$ and infinite volume. Suppose that $N$ is simply connected and has a finite number $m \in \mathbb{N}$ of ends. 
     
     Let $\alpha \in \Big(\frac{k-2}{k-1}, \frac{2}{\sqrt{k-1}}\Big)$ and $0<a < m(\alpha,k)$.
     
     If $(N,g)$ satisfies  
    \begin{equation*}
        \lambda_1( - a \Delta_g + \Lambda_\alpha) > 0,
    \end{equation*}

    \noindent then there exist a constant $V>0$ and an exhaustion of $N$ by smooth compact domains $\Omega_j$ such that each $\Omega_j$ has exactly $m$ boundary components, and each connected component of the boundary of $\Omega_j$ has its volume bounded from above by $V$.
\end{teo}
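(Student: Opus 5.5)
The plan is to build the exhaustion inductively from $\mu$-bubbles produced by Theorem \ref{mu-bubbles em bi-ricci com peso positivo no sentido espectral}, and then repair the topology of each domain using simple connectivity and the finite number of ends. First, since $\lambda_1(-a\Delta_g+\Lambda_\alpha)>0$, fix $\delta>0$ below this bottom of the spectrum. Let $L$, $V$ be the constants from Theorem \ref{mu-bubbles em bi-ricci com peso positivo no sentido espectral} for $(\alpha,a,k,\delta)$. Fix $p\in N$. Choose $R_0$ large enough that $N\setminus B(p,R_0)$ has exactly $m$ unbounded components $E_1,\dots,E_m$, one for each end. Enlarge the compact part by the bounded components so that the complement of a compact smooth domain $K_0$ is $E_1\sqcup\dots\sqcup E_m$.

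Given $R_j\to\infty$ with $R_{j+1}>R_j+2L$, take a smooth compact domain $X_j$ with boundary $\partial_-X_j=\partial B(p,R_j)$ (smoothed) and $\partial_+X_j=\partial B(p,R_j+2L)$ (smoothed), so that $d(\partial_+X_j,\partial_-X_j)\ge L$. Applying Theorem \ref{mu-bubbles em bi-ricci com peso positivo no sentido espectral} gives $\Omega_*$. Setting $D_j:=\overline{B(p,R_j)}\cup\Omega_*$ yields a compact domain with $B(p,R_j)\subset D_j\subset B(p,R_j+L)$. Its boundary is $\hat\Sigma_j$, each component of which has volume at most $V$. So $D_j$ is an exhaustion with boundary components of bounded area, but a priori with too many components.

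The main step is to reduce to exactly $m$ boundary components without losing the area bound. Here is where simple connectivity enters. Since $H_1(N;\mathbb{Z}/2)=0$, every closed embedded connected hypersurface $\Sigma$ separates $N$: a loop crossing $\Sigma$ once would have nonzero intersection number mod $2$. Hence for each component $\Sigma$ of $\partial D_j$, $N\setminus\Sigma$ has exactly two components, one containing $D_j$'s interior near $\Sigma$ and one, $U_\Sigma$, on the other side. If $U_\Sigma$ is bounded, replace $D_j$ by $D_j\cup\overline{U_\Sigma}$. This removes $\Sigma$, and does not create new boundary, since the other boundary components of $D_j$ lying in $U_\Sigma$ are absorbed as well. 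After finitely many such fillings, every remaining boundary component bounds an unbounded region disjoint from the interior of $D_j$, so it contains at least one end.

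I then need distinct components $\Sigma,\Sigma'$ to face distinct ends. Each unbounded complementary component of the modified domain contains at least one end, and different components correspond to disjoint sets of ends. This gives at most $m$ boundary components. To get exactly $m$, I would make sure each unbounded complementary region is connected, which holds by the separation argument. I also need each end to lie in a region bounded by a single component. Using simple connectivity again, each complementary component $U$ of the compact connected domain has connected boundary: if $\partial U$ had two components, a loop going from one to the other through $U$ and back through $D_j$ would meet a single component once, which is impossible. Therefore the number of boundary components equals the number of unbounded complementary components. Once $R_j>R_0$ and $D_j\supset K_0$, this number is exactly $m$, since each $E_i$ lies in one of them and distinct $E_i$ lie in distinct ones.

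I expect the delicate points to be the connectedness of the domains, which can be achieved by taking the component containing $p$ and filling again, and keeping the sequence nested, which follows from choosing $R_{j+1}>R_j+L$ since the fillings stay outside $B(p,R_j)$ only in unbounded directions. Infinite volume is used only to guarantee noncompactness, so that $m\ge1$ and the boundaries are nonempty.
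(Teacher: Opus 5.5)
Your proposal is correct and follows the paper's route. You apply Theorem \ref{mu-bubbles em bi-ricci com peso positivo no sentido espectral} to get an exhaustion whose boundary components have volume at most $V$, fill in the bounded complementary components, and use simple connectivity to conclude there are exactly $m$ boundary components. Your separation and counting details (every closed hypersurface separates because $H_1(N;\mathbb{Z}/2)=0$, each complementary region of a connected domain has connected boundary, and the count stabilizes at $m$ once the domain contains $K_0$) make explicit what the paper asserts in a single line.
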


 Theorem \ref{Aplicacao à geometria Riemanniana} is related to the study of the macroscopic dimension of Riemannian manifolds with uniformly positive curvature conditions. This is directly related to the heuristic presented in the introduction of the work of O. Chodosh, C. Li, P. Minter, and D. Stryker \cite{Chodosh-Li-Minter-Stryker}.

    \begin{proof}[Proof of Theorem \ref{Aplicacao à geometria Riemanniana}]
        Let $\delta:=\lambda_1( - a \Delta_g + \Lambda_\alpha)>0$. Under the hypotheses of the theorem, we can apply Theorem \ref{mu-bubbles em bi-ricci com peso positivo no sentido espectral} to obtain an exhaustion $\Omega_j$ of $N$ by precompact smooth domains such that each connected component $\Sigma$ of $\Sigma_j := \partial \Omega_j$ satisfies $ |\Sigma| \le V$ for some constant $V = V(\alpha,a,k,\delta)$. Adding to each $\Omega_j$ the bounded components of its complement, we may assume that the complement of $\Omega_j$ consists only of unbounded components. Since $N$ is simply connected, the boundary $\Sigma_j$ of $\Omega_j$ has $m$ connected components.        
    \end{proof}

    The geometric restriction presented in the claim of Theorem \ref{Aplicacao à geometria Riemanniana}, together with the infinite volume assumption, clearly shows that one is unable to bound the volume of regions of $N$ by a universal constant times a power of the area of their boundary, as we note in the right circular cylinder in Euclidean three-space. In particular, the \textit{Cheeger constant} of $N$ is zero.

 To finish this section, we state a more technical result that will be useful to us, which can be obtained by means of a modification in the construction used in the proof of Theorem \ref{Aplicacao à geometria Riemanniana}.  
 
\begin{lema} \label{Aplicacao à geometria Riemanniana v2}
     Let $(N^{k+1},g)$ be a complete Riemannian manifold of dimension $4 \le k+1\le 7$ and infinite volume. Suppose that $N$ is orientable, $H^1_c(N; \mathbb{R})$ is finite dimensional, $N$ has a finite number of ends, and that $N$ has Ricci curvature bounded from below. Let $\alpha \in \Big(\frac{k-2}{k-1}, \frac{2}{\sqrt{k-1}}\Big)$ and $0<a < m(\alpha,k)$.
     
     If there exist a point $p \in N$, and real numbers $\delta>0$ and $R>0$ such that $(N,g)$ satisfies 
    \begin{equation} \label{controle fora de uma bola}
        \int_N a |\nabla \phi|^2 + \Lambda_\alpha \phi^2 \ge \delta \int_N \phi^2
    \end{equation}
    \noindent for every $\phi \in C^\infty_0(N \backslash B_R(p))$, then $\lambda_1(N) = 0$ and $N$ does not satisfy the isoperimetric inequality.
\end{lema}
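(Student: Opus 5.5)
The plan is to reproduce the exhaustion of Theorem \ref{Aplicacao à geometria Riemanniana}, but only far out in $N$, where the spectral hypothesis \eqref{controle fora de uma bola} is available. This gives domains of unbounded volume whose boundaries have uniformly bounded area. From that the Cheeger constant of $N$ vanishes, which rules out the isoperimetric inequality. Buser's inequality (\cite{Buser}, Theorem 7.1) then upgrades this to $\lambda_1(N)=0$.

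\emph{Step 1: the weight $w$.} I first need a positive solution of $-a\Delta w=\bar V w$ with $\bar V\ge \frac{\delta}{2}-\Lambda_\alpha$, now only on $N\setminus \overline{B_R(p)}$. Approximate $\Lambda_\alpha$ by a smooth $V$ with $\|V-\Lambda_\alpha\|_{C^0}<\delta/4$. By \eqref{controle fora de uma bola}, the operator $-a\Delta+V-\frac{3\delta}{4}$ is nonnegative on $C^\infty_0(N\setminus \overline{B_R(p)})$. The Fischer-Colbrie--Schoen construction (\cite{fischer-colbrie--schoen}, Theorem 1) produces positive first Dirichlet eigenfunctions on an exhaustion, normalized at a point; Harnack and the exhaustion argument then give a positive $w$ on the open set $N\setminus \overline{B_R(p)}$. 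This suffices, because the proofs of Theorem \ref{definicao de mu-bubble} and Proposition \ref{segunda variação para mu-bubbles} only use $w$ on a neighbourhood of $\overline Y$. I will keep $Y$ at distance at least $1$ from $B_R(p)$.

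\emph{Step 2: the bubbles.} Fix $L=L(\alpha,a,k,\delta)$ as in Theorem \ref{mu-bubbles em bi-ricci com peso positivo no sentido espectral}. For $r_j\to\infty$, let $X_j$ be the smooth region between $\partial_-X_j$, a smoothing of $\partial B_{r_j}(p)$, and $\partial_+X_j$, a smoothing of $\partial B_{r_j+3L}(p)$. Running the proof of Theorem \ref{mu-bubbles em bi-ricci com peso positivo no sentido espectral} verbatim yields $\Omega_*^j$ with $\partial\Omega_*^j=\partial_-X_j\sqcup\hat\Sigma_j$. Each component of $\hat\Sigma_j$ has area at most a uniform $V$, via Proposition \ref{segunda variação para mu-bubbles}, Lemma \ref{standard mu-bubble prescribing function h} and the spectral Bishop--Gromov theorem \cite{Antonelli-Xu}. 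Set $\Omega_j:=B_{r_j}(p)\cup\Omega_*^j$, and add the bounded components of $N\setminus\Omega_j$. Then $\Omega_j\supset B_{r_j}(p)$, so $|\Omega_j|\to\infty$ because $N$ has infinite volume, while each boundary component has area at most $V$.

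\emph{Step 3: counting boundary components (main obstacle).} Without simple connectivity I must bound the number of components of $\partial\Omega_j$ independently of $j$. Every component of $N\setminus\Omega_j$ is now unbounded, so there are at most $m$ of them, where $m$ is the number of ends. Let $U$ be such a component and suppose it meets $\partial\Omega_j$ in $s\ge 2$ components $\Sigma_1,\dots,\Sigma_s$. Since $N$ is orientable, each $\Sigma_i$ is two-sided. A function equal to $1$ on a collar of $\Sigma_i$ inside $U$ and $0$ elsewhere has compactly supported differential and defines a class in $H^1_c(N;\mathbb{R})$. Using a Mayer--Vietoris / Poincaré-duality argument, I expect these classes for $i=1,\dots,s-1$ to be linearly independent; this is the point where connectedness of both $\Omega_j$ and $U$ must be checked carefully. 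Granting this, $\#\pi_0(\partial\Omega_j)\le m+\dim H^1_c(N;\mathbb{R})=:m'$, hence $|\partial\Omega_j|\le m'V$.

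\emph{Step 4: conclusion.} We have $|\partial\Omega_j|/|\Omega_j|\to 0$, so the Cheeger constant $h(N)$ is $0$. Moreover $|\Omega_j|\le C|\partial\Omega_j|^{\frac{k+1}{k}}$ fails for every $C$, so $N$ does not satisfy the isoperimetric inequality. Since $Ric_N$ is bounded from below, Buser's inequality gives $\lambda_1(N)\le C(\sqrt{K}\,h(N)+h(N)^2)=0$. I expect Step 3 to be the delicate point, with the localisation of $w$ in Step 1 a secondary technicality.
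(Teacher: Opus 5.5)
Your overall route is the paper's. The paper also builds $w$ only on $N\setminus B_R(p)$ and places $\mu$-bubbles in shells far from $B_R(p)$, giving an exhaustion whose boundary components have area at most $V(a,\alpha,k,\delta)$. It then discards bounded complementary components, bounds the number of boundary components using finitely many ends and $\dim H^1_c(N;\mathbb{R})<\infty$ (by citing Lemma 6.1 of Chodosh--Li), and concludes from infinite volume and Buser's inequality. Your Steps 1, 2 and 4 are sound.

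The step that fails as written is Step 3. A function equal to $1$ on a collar of $\Sigma_i$ inside $U$ and $0$ elsewhere is compactly supported. Its differential is therefore zero in $H^1_c(N;\mathbb{R})$, so no independence can come from these classes.

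The correct objects are the Poincar\'e-dual forms. On a two-sided tube $T_i\cong\Sigma_i\times(-\epsilon,\epsilon)$, let $\chi_i$ rise from $0$ on the $\Omega_j$-side to $1$ on the $U$-side. Let $\eta_i$ be $d\chi_i$ on $T_i$, extended by zero. This is a closed, compactly supported $1$-form, and $\chi_i$ is only defined on the tube.

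Independence comes from loops. The interiors of $\Omega_j$ and $U$ are connected, so for each $i<s$ there is a loop $\gamma_i$ that leaves $\Omega_j$ through $\Sigma_i$, travels in $U$, and returns through $\Sigma_s$, crossing no other boundary component. Then $\int_{\gamma_l}\eta_i=\pm\delta_{il}$ for $i,l<s$. Integration over closed loops annihilates exact forms, so $[\eta_1],\dots,[\eta_{s-1}]$ are linearly independent in $H^1_c(N;\mathbb{R})$. Loops attached to different unbounded components cross disjoint sets of boundary components, so all these classes are jointly independent. This gives $\#\pi_0(\partial\Omega_j)\le m+\dim H^1_c(N;\mathbb{R})$, which is exactly what the cited lemma provides.

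For the connectedness you flagged, first replace $\Omega_j$ by its component containing the connected ball $B_{r_j}(p)$, then add back the bounded complementary components. This keeps $\Omega_j$ connected, keeps $|\Omega_j|\to\infty$, and preserves the area bound on each boundary component. Also, two-sidedness of $\Sigma_i$ is automatic because it bounds a domain; orientability of $N$ is not what provides it.
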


\begin{proof}
     As in the proof of Theorem \ref{mu-bubbles em bi-ricci com peso positivo no sentido espectral}, the control given by the inequality \eqref{controle fora de uma bola} allows us to construct $w \in C^\infty(N)$ such that: $w>0$ on $N \backslash B_R(p)$, and $-a \Delta w = \bar Vw$ on $N \backslash B_R(p)$, where $\bar V$ satisfies $\bar V \ge \frac{\delta}{2} - \Lambda_\alpha$. Now, let $O_j$ be an exhaustion of $M$ by smooth precompact open sets such that $B_R(p)$ is precompact in $O_1$. The region between $O_{i+1}$ and $O_i$ is contained in $N \backslash B_R(p)$, so we can use Theorem \ref{definicao de mu-bubble} to find a $\mu$-bubble $\hat \Sigma_i$ that separates this region. As in the proof of Theorem \ref{Aplicacao à geometria Riemanniana}, the hypotheses on $(a,\alpha,k)$ allow us to rearrange the stability inequality of $\hat \Sigma_i$ and obtain spectral control over its Ricci curvature, so to apply the spectral Bishop-Gromov theorem \cite{Antonelli-Xu} to bound the volume of each connected component $\Sigma_i$ of $\hat \Sigma_i$ by a constant that depends only on $(a,\alpha,k,\delta)$. We replace $O_i$ by the region that contains $O_i$ and has $\hat \Sigma_i$ as its boundary, to construct an exhaustion $\Omega_j$ of $N$ by precompact smooth domains such that each connected component $\Sigma$ of $\Sigma_j := \partial \Omega_j$ satisfies $ |\Sigma| \le V$ for some constant $V = V(a,\alpha,k,\delta)$. Adding to each $\Omega_j$ the bounded components of its complement, we may assume that the complement of $\Omega_j$ consists only of unbounded components. Since $H^1_c(N; \mathbb{R})$ is finite dimensional and $N$ has a finite number of ends, the number $m_j$ of connected components of the boundary of $\Omega_j$ defines a bounded sequence (\textit{cf.} Lemma 6.1 in \cite{Chodosh-Li-PrimeiraProva-R4}). Finally, we argue by contradiction. If $\lambda_1(N)>0$, we use P. Buser's inequality (\textit{cf}. \cite{Buser}, Theorem 7.1) to prove that the Cheeger constant of $N$ is positive. But this is not compatible with the fact that $N$ has infinite volume and admits the exhaustion previously constructed. For a similar reason, $N$ does not satisfy the isoperimetric inequality.
\end{proof}

	\section{Proof of Theorems \ref{quarto teorema principal k > 0} and \ref{quarto teorema principal k>-1}}\label{section:MainResults}

        In order to apply the results of Section \ref{section:: mu-bubbles and weighted bi-Ricci curvature} to prove theorems \ref{quarto teorema principal k > 0} and \ref{quarto teorema principal k>-1}, it is necessary to estimate the weighted bi-Ricci curvature of an immersed CMC hypersurface in a general Riemannian manifold. In the next proposition, we use the Gauss formula to express it in terms of ambient curvature terms, the mean curvature of the immersion, and its traceless second fundamental form $\Phi := A - \frac{H}n{g}$. We omit the computations, because they are straightforward. 

        \begin{lema} \label{Proposição com a expressão do biRicci da subvariedade CMC}
            Let $M$ be a CMC hypersurface immersed in a Riemannian manifold $(X^{k+1},g)$. Fix $p \in M$. Then, for every orthonormal vectors $e_1,e_2 \in T_pM$

    \begin{align*}
        BRic_\alpha^M(e_1,e_2) &=  \Big( Ric_N(e_1,e_1)-sec_N(e_1\land \nu)  \Big)+ \alpha \Big( Ric_N(e_2,e_2)-sec_N(e_2\land \nu) - sec_N(e_1 \land e_2)\Big) \\
        & \quad\quad +H^2(\frac{1}{k} - \frac{1}{k^2} + \frac{\alpha}{k} - 2\alpha \frac{1}{k^2})  - \Phi_{11}^2 - \alpha \Phi_{22}^2  \\
        &\quad \quad + \Phi_{11}H(1-\frac{2}{k} - \alpha \frac{1}{k}) + \alpha \Phi_{22}H(1 - \frac{3}{k}) \\
        & \quad\quad -\alpha \Phi_{11}\Phi_{22} - \sum_{i=2}^k \Phi_{1i}^2 - \alpha\sum_{i=3}^k\Phi_{2i}^2.
    \end{align*}

     \end{lema}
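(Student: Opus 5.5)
This is a pointwise algebraic identity, so I will expand both sides with the Gauss equation and collect terms. I work in an orthonormal frame $e_1,\dots,e_k$ of $T_pM$ that contains the given $e_1,e_2$, with unit normal $\nu$; here $k$ is the dimension of $M$, so the ambient $N=X$ has dimension $k+1$. By definition, $BRic_\alpha^M(e_1,e_2)=Ric_M(e_1,e_1)+\alpha\big(Ric_M(e_2,e_2)-sec_M(e_1\wedge e_2)\big)$. The Gauss equation gives two ingredients:
\begin{equation*}
Ric_M(e_i,e_i)=Ric_N(e_i,e_i)-sec_N(e_i\wedge\nu)+H A_{ii}-\sum_{j=1}^k A_{ij}^2,
\end{equation*}
and
\begin{equation*}
sec_M(e_1\wedge e_2)=sec_N(e_1\wedge e_2)+A_{11}A_{22}-A_{12}^2 .
\end{equation*}
Substituting these into the definition immediately produces the ambient curvature terms exactly as they appear in the statement. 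What remains is the purely extrinsic part
\begin{equation*}
E:=HA_{11}-\sum_j A_{1j}^2+\alpha\Big(HA_{22}-\sum_j A_{2j}^2-A_{11}A_{22}+A_{12}^2\Big).
\end{equation*}

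Next I substitute $A_{ij}=\Phi_{ij}+\frac{H}{k}\delta_{ij}$ into $E$ and expand term by term. The first two terms give
\begin{equation*}
HA_{11}-A_{11}^2=\frac{H^2}{k}-\frac{H^2}{k^2}+H\Phi_{11}\Big(1-\frac{2}{k}\Big)-\Phi_{11}^2 ,
\end{equation*}
together with the contribution $-\sum_{i\ge2}\Phi_{1i}^2$.

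For the $\alpha$-bracket, the term $A_{12}^2$ cancels against the $j=1$ summand of $\sum_j A_{2j}^2$. This leaves
\begin{equation*}
\alpha\Big(HA_{22}-A_{22}^2-\sum_{i\ge3}\Phi_{2i}^2-A_{11}A_{22}\Big).
\end{equation*}
Expanding, $HA_{22}-A_{22}^2$ contributes $\frac{H^2}{k}-\frac{H^2}{k^2}+H\Phi_{22}\big(1-\frac2k\big)-\Phi_{22}^2$. The product $-A_{11}A_{22}$ contributes $-\frac{H^2}{k^2}-\frac{H}{k}(\Phi_{11}+\Phi_{22})-\Phi_{11}\Phi_{22}$.

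Collecting everything, the $H^2$ coefficient is $\frac1k-\frac1{k^2}+\alpha\big(\frac1k-\frac2{k^2}\big)$. The coefficient of $H\Phi_{11}$ is $1-\frac2k-\frac{\alpha}{k}$, and that of $H\Phi_{22}$ is $\alpha\big(1-\frac3k\big)$. The remaining quadratic terms in $\Phi$ are $-\Phi_{11}^2-\alpha\Phi_{22}^2-\alpha\Phi_{11}\Phi_{22}-\sum_{i\ge2}\Phi_{1i}^2-\alpha\sum_{i\ge3}\Phi_{2i}^2$. These match the stated formula exactly.

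There is no real obstacle here, only bookkeeping. The one point needing care is the cancellation of $\Phi_{12}^2$ inside the $\alpha$-bracket: after it, the $\alpha$-sum runs only over $i\ge3$, while the unweighted sum $\sum_{i\ge2}\Phi_{1i}^2$ keeps its $i=2$ term. Tracelessness of $\Phi$ is not used anywhere, since the identity holds for the decomposition $A=\Phi+\frac Hk g$ as it stands.
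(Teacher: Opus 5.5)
Your proposal is correct and is exactly the Gauss-equation expansion the paper has in mind (the paper omits the computation as straightforward). Your bookkeeping checks out term by term, including the cancellation of $A_{12}^2$ inside the $\alpha$-bracket and the resulting coefficients $\frac1k-\frac1{k^2}+\frac{\alpha}{k}-\frac{2\alpha}{k^2}$, $1-\frac2k-\frac{\alpha}{k}$ and $\alpha\bigl(1-\frac3k\bigr)$.
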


        \subsection{When the ambient manifold has non-negative sectional curvature} \label{subsection:: When the ambient manifold has non-negative sectional curvature}

        \subsubsection{Proof of Theorem \ref{quarto teorema principal k > 0}} \label{subsubsection:: proof of theorem A}

    Let $(N^k,g)$ be a closed Riemannian manifold with non-negative sectional curvature, and dimension $k \in \{0,\dots,6\}$. Consider the Riemannian product $X := N^k \times \mathbb{R}^{6-k}$ between $N$ and a Euclidean factor. Suppose that $M'$ is a complete, non-minimal, CMC hypersurface with finite index immersed in $X$. We will prove that $M'$ is compact.

     We argue by contradiction. By the \textit{Reduction Lemma} (\textit{cf.} Theorem \ref{argumento de redução em homogeneas}) we know that the existence of a complete, non-compact, finite index CMC hypersurface $M'$ with mean curvature $H>0$ immersed in $X$ would imply the existence of a complete, non-compact, strongly stable CMC hypersurface $M$ with the same mean curvature $H>0$ immersed in $X$. 

     We now study the geometry and topology of $M$. For instance, it is known that $M$ must have infinite volume (\textit{cf.} Theorem \ref{Katia Frensel volume infinito}). Moreover, the reduction to the strongly stable case allows for topological simplifications. Using Theorem 1 in \cite{fischer-colbrie--schoen}, we may assume that $M$ is simply connected by passing to its universal cover. Moreover, Theorem 0.1 in \cite{Detang-XuCheng-Cheung} guarantees that $M$ has one end.

    In order to use the theory of $\mu$-bubbles to further restrict the geometry of $M$, we estimate the $\alpha$-bi-Ricci curvature of $M$ in spectral sense in the next proposition.

		\begin{prop} \label{controle espectral do alfa-bi-Ricci em sec nao negativa}

     Let $(X^{6},g)$ be a Riemannian manifold with non-negative sectional curvature. If $M$ is a strongly stable CMC hypersurface with mean curvature $H> 0$ immersed in $X$, then for $a= \frac{11}{10}$ and $\alpha = \frac{40}{43}$, there exists $\delta >0$ such that
    \begin{equation*}
        \lambda_1 ( - a \Delta_M +  \Lambda_{\alpha}^M) \ge \delta. 
    \end{equation*}
		\end{prop}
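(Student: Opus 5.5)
The plan is to show that the strong stability inequality controls $\Lambda_\alpha^M$ pointwise up to a uniformly positive constant, in the following form. Choose a constant $c>0$ with
\[
\int_M \big(|A|^2+Ric_X(\nu,\nu)\big)\phi^2 \le \int_M |\nabla\phi|^2
\]
for all $\phi\in C^\infty_0(M)$. Multiplying by $a=\frac{11}{10}$, we get for every $\phi$
\[
\int_M a|\nabla\phi|^2+\Lambda_\alpha^M\phi^2 \ \ge\ \int_M \big(\Lambda_\alpha^M + a|A|^2 + a\,Ric_X(\nu,\nu)\big)\phi^2 .
\]
It therefore suffices to prove the pointwise bound
\[
\Lambda_\alpha^M + a|A|^2 + a\,Ric_X(\nu,\nu)\ \ge\ \delta>0 .
\]
The constant $\delta$ will be a positive multiple of $H^2$; this is where $H>0$ enters.

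To obtain the pointwise bound, fix $p$ and orthonormal $e_1,e_2\in T_pM$, and use Lemma \ref{Proposição com a expressão do biRicci da subvariedade CMC} with $k=5$ (the hypersurface dimension is $5$ in the formula).

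The ambient terms are handled first. The expression
\[
Ric(e_1)-sec(e_1\wedge\nu)+\alpha\big(Ric(e_2)-sec(e_2\wedge\nu)-sec(e_1\wedge e_2)\big)+a\,Ric(\nu)
\]
is a sum of sectional curvatures of $X$ with nonnegative coefficients. Indeed, $Ric(e_1)-sec(e_1\wedge\nu)$ is a sum of sectional curvatures of $e_1$ against the tangent directions. Similarly, $Ric(e_2)-sec(e_2\wedge\nu)-sec(e_1\wedge e_2)$ only involves $e_2$ against $e_3,\dots,e_5$. Since $sec_X\ge 0$, these ambient terms can simply be dropped.

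What remains is a quadratic form $Q$ in the variables $H$ and $\Phi_{ij}$. Write $|A|^2 = H^2/5+|\Phi|^2$. Then $Q$ consists of:
\begin{itemize}
\item the $H^2$ coefficient from the lemma plus $a/5$;
\item the cross terms $\Phi_{11}H$ and $\Phi_{22}H$ from the lemma;
\item $a|\Phi|^2$ minus the $\Phi$-quadratic terms $\Phi_{11}^2+\alpha\Phi_{22}^2+\alpha\Phi_{11}\Phi_{22}+\sum\Phi_{1i}^2+\alpha\sum_{i\ge3}\Phi_{2i}^2$.
\end{itemize}

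The off-diagonal entries are disposed of using $a>1\ge\alpha$. Since $a|\Phi|^2$ counts each off-diagonal entry twice, the $\Phi_{1i},\Phi_{2i}$ terms are absorbed, and all remaining off-diagonal terms are nonnegative.

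For the diagonal, tracelessness gives $\sum_{i\ge3}\Phi_{ii}^2\ge \frac13(\Phi_{11}+\Phi_{22})^2$. This reduces $Q$ to a quadratic form in $(H,\Phi_{11},\Phi_{22})$ with an explicit $3\times3$ symmetric matrix depending on $a=\frac{11}{10}$ and $\alpha=\frac{40}{43}$.

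The main obstacle is verifying that this $3\times3$ matrix is positive definite, say by Sylvester's criterion on the leading minors with the specific rational values. These are exactly Mazet's parameters, and the analogous computation in \cite{CHL} for Euclidean space succeeds, which suggests the margin is positive. If the crude bound on $\sum_{i\ge3}\Phi_{ii}^2$ is too lossy, I would instead minimize exactly over $\Phi_{33},\Phi_{44},\Phi_{55}$ subject to the trace constraint, which is precisely that bound with equality, so there is no loss.

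Once the matrix is positive definite, there is $\theta>0$ with
\[
Q\ge\theta\big(H^2+\Phi_{11}^2+\Phi_{22}^2\big)\ge\theta H^2 .
\]
Taking $\delta=\theta H^2>0$ then gives $\lambda_1(-a\Delta_M+\Lambda_\alpha^M)\ge\delta$.
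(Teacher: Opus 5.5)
Your proposal is correct and follows the paper's proof essentially step for step. Strong stability reduces the claim to the pointwise bound $a(Ric(\nu)+|A|^2)+\Lambda_\alpha^M\ge\delta$; the ambient terms are dropped using $sec_X\ge 0$; the off-diagonal $\Phi_{ij}$ are absorbed into $a|\Phi|^2$; tracelessness gives $\sum_{i\ge3}\Phi_{ii}^2\ge\frac13(\Phi_{11}+\Phi_{22})^2$; and positive definiteness of the resulting $3\times3$ form in $(H,\Phi_{11},\Phi_{22})$ then yields $\delta=\theta H^2$. Two small remarks: your absorption condition $2a\ge\max\{1,\alpha\}$ is slightly sharper than the paper's $2a\ge 1+\alpha$, and the matrix is indeed positive definite, with leading minors approximately $0.49$, $0.19$ and $0.072$.
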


\begin{proof}[Proof of Proposition \ref{controle espectral do alfa-bi-Ricci em sec nao negativa}]
    Using the strong stability inequality, we have
    \begin{equation*}
        \int_M a|\nabla \psi|^2 + \Lambda_\alpha^M \psi^2 \ge \int_M \big( a (Ric(\nu) +|A|^2)+\Lambda_\alpha^M \big)\psi^2   \label{desig1 da prova do main-theorem com curvatura nao negativa}
    \end{equation*}
    \noindent for every $\psi \in C^{\infty}_0(M)$. Therefore, it is enough to prove
    \begin{equation}
        a (Ric(\nu) +|A|^2)+\Lambda_\alpha^M\ge \delta \label{desig2 da prova do main-theorem com curvatura nao negativa}
    \end{equation}
    \noindent for some $\delta>0$.
    
    In what follows, we do the calculation for $M^k$ in $X^{k+1}$ and choose $k=5$ in the end. 
    
    Notice that    
    \begin{equation}
        |A|^2 = |\Phi|^2 + \frac{H^2}{k} = \sum_{i=1}^k \Phi_{ii}^2 + 2 \sum_{j>i}\Phi_{i,j}^2 + \frac{H^2}{k}.  \label{desig3 da prova do main-theorem com curvatura nao negativa}
    \end{equation}

    Moreover,
    \begin{equation}
        a \cdot 2 \sum_{j>i}\Phi_{i,j}^2 \ge \sum_{i=2}^k \Phi_{1i}^2 + \alpha\sum_{i=3}^k\Phi_{2i}^2 \,,\label{desig4 da prova do main-theorem com curvatura nao negativa}
    \end{equation}

    \noindent because $ a= \frac{11}{10}$ and $\alpha = \frac{40}{43}$ guarantees $2a \ge 1 + \alpha$. 

    We also have     
    \begin{equation} \label{desig5 da prova do main-theorem com curvatura nao negativa}
    \begin{split}
        \sum_{i=1}^k \Phi_{ii}^2 &\ge \Phi_{11}^2 + \Phi_{22}^2 + \frac{1}{k-2}(\Phi_{11}+\Phi_{22})^2 = \frac{k-1}{k-2}\Phi_{11}^2 + \frac{k-1}{k-2}\Phi_{22}^2 + 2\frac{1}{k-2} \Phi_{11}\Phi_{22},
    \end{split}
    \end{equation}

    \noindent where we used the Cauchy-Schwarz inequality and the fact that $\Phi$ is traceless.

    Therefore, we obtain from Lemma \ref{Proposição com a expressão do biRicci da subvariedade CMC} the following estimate, using \eqref{desig3 da prova do main-theorem com curvatura nao negativa}, \eqref{desig4 da prova do main-theorem com curvatura nao negativa}, \eqref{desig5 da prova do main-theorem com curvatura nao negativa}, and the assumption that $X$ has non-negative sectional curvature:    
    \begin{equation*}
    \begin{split}
        a|A|^2+ BRic_\alpha^M(e_1,e_2) &\ge H^2(a \frac{1}{k}+\frac{1}{k} - \frac{1}{k^2} + \frac{\alpha}{k} - \alpha \frac{1}{k^2} - \alpha \frac{1}{k^2}) \\
        & \quad \quad + ( a \frac{k-1}{k-2}-1) \Phi_{11}^2 + (a \frac{k-1}{k-2} - \alpha) \Phi_{22}^2 \\
        & \quad\quad+ \Phi_{11}H(1-\frac{2}{k} - \alpha \frac{1}{k}) + \alpha \Phi_{22}H(1 - \frac{3}{k}) \\
        & \quad\quad+ \Phi_{11}\Phi_{22}(2 a \frac{1}{k-2}-\alpha).
    \end{split}
    \end{equation*}
    
    The right hand side of this inequality is a quadratic form $T$ on $(H,\Phi_{11},\Phi_{22})$. When $k=5$, $a= \frac{11}{10}$ and $\alpha = \frac{40}{43}$, this quadratic form is positive definite. Since the set of positive definite matrices is open, we may find $\theta>0$ such that $T \ge \theta Id$. Hence, at each point $p \in M$, if we pick $e_1,e_2 \in T_pM$ orthonormal such that $\Lambda_{ \alpha}(p) = BRic^M_{ \alpha}(e_1,e_2)$, the above calculations show that
    \begin{equation*}
        a |A|^2 + \Lambda_{\alpha} \ge \theta H^2 \,\,\,\, \text{ at} \,\,\,p.
    \end{equation*}

    Since $p \in M$ is arbitrary and $H>0$, we have proved inequality \eqref{desig2 da prova do main-theorem com curvatura nao negativa} for $\delta = \theta H^2>0$.
\end{proof}

Using Proposition \ref{controle espectral do alfa-bi-Ricci em sec nao negativa} and the topological properties deduced for $M$ we verify the hypotheses of Theorem \ref{Aplicacao à geometria Riemanniana} and conclude that $M$ admits an exhaustion by open precompact smooth sets $\Omega_j$ such that each $\Omega_j$ has exactly one boundary component $\Sigma_j$ and each $\Sigma_j$ has volume bounded from above by a constant $V$ that depends on $M$, but not on $j$.

    This geometric property of $M$ and the fact that it has infinite volume clearly guarantee that its Cheeger constant is zero. But this contradicts P. Buser's inequality (\cite{Buser}, Theorem 7.1), because by the result of Remark \ref{remark sobre estimativa de curvatura} we know that $M$ has bounded second fundamental form, hence bounded curvature, and the strong stability inequality of $M$ readily implies that $\lambda_1(M)>0$.

\qed

    \subsubsection{Additional results}  \label{subsubsection:: additional results}
        In this subsection, we present further partial results obtained using similar techniques. The following theorem applies to more general ambient spaces, under the additional assumption that the hypersurfaces have finite topology.
        
		\begin{teo} \label{teorema com hipotese topologia finita em curvatura nao negativa}
        Let $X$ be an orientable, complete Riemannian manifold of bounded geometry, non-negative sectional curvature and dimension six. Let $M$ be a complete, non-minimal CMC hypersurface immersed in $X$.
        
        \begin{enumerate}
            \item If $M$ is weakly stable and $H^1_c(M;\mathbb{R})$ is finite dimensional, then $M$ is compact.
            \item If $M$ has finite index, $H^1_c(M;\mathbb{R})$ is finite dimensional, and $M$ has a finite number of ends, then $M$ is compact.
        \end{enumerate}
		\end{teo}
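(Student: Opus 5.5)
We argue by contradiction in both items, aiming to reach the contradiction through Lemma \ref{Aplicacao à geometria Riemanniana v2}: we will show that $\lambda_1(M)>0$ while that lemma forces $\lambda_1(M)=0$. Suppose $M$ is complete, non-compact, non-minimal, with $H>0$ after choosing the normal. Since $X$ has bounded geometry, Theorem \ref{Katia Frensel volume infinito} gives that $M$ has infinite volume. Since $Ric_X\ge 0$ and $H>0$, Theorem \ref{CMC com H>Hlambda tem lambda1>0} gives $\lambda_1(M)>0$; in item (1) weak stability implies finite index, so this applies.

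Next we verify the remaining hypotheses of Lemma \ref{Aplicacao à geometria Riemanniana v2} with $k=5$, $a=\frac{11}{10}$ and $\alpha=\frac{40}{43}$. These values lie in the admissible range, as already used in Proposition \ref{controle espectral do alfa-bi-Ricci em sec nao negativa}.
\begin{itemize}
\item \emph{Orientability.} $M$ is two-sided in the orientable $X$, so $M$ is orientable.
\item \emph{Ricci bound.} We need the second fundamental form of $M$ to be bounded. Since $M$ has finite index, it is strongly stable outside a compact set. Theorem \ref{estimativa de curvatura mais geral possivel}, with $K$ the curvature bound of $X$, then bounds $|A|$ away from that compact set, hence everywhere. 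The Gauss equation then gives a lower Ricci bound on $M$.
\item \emph{Ends.} In item (2) finiteness of ends is assumed. In item (1), Theorem \ref{Vanishing para 1-forma harmonica}(2) applies, because $sec_X\ge0$ gives $BRic_\alpha\ge0$ for every $\alpha$. It shows $M$ has at most one non-parabolic end. Proposition 2.2 in \cite{Detang-XuCheng-Cheung}, using $Ric_X\ge0>-H^2/n$, shows every end is non-parabolic. Hence $M$ has exactly one end.
\end{itemize}

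For inequality \eqref{controle fora de uma bola}, pick $R>0$ such that $M\setminus B_R(p)$ is strongly stable (Proposition 1 in \cite{Fischer-Colbrie}). For $\phi\in C^\infty_0(M\setminus B_R(p))$, repeat the pointwise argument of Proposition \ref{controle espectral do alfa-bi-Ricci em sec nao negativa} verbatim. The strong stability inequality applied to $\phi$ together with $Ric_X(\nu)\ge0$ gives $\int a|\nabla\phi|^2+\Lambda^M_\alpha\phi^2\ge\int(a|A|^2+\Lambda^M_\alpha)\phi^2$. The positive definite quadratic form then gives $a|A|^2+\Lambda_\alpha^M\ge\theta H^2$ pointwise. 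So \eqref{controle fora de uma bola} holds with $\delta=\theta H^2>0$.

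Lemma \ref{Aplicacao à geometria Riemanniana v2} now yields $\lambda_1(M)=0$, contradicting $\lambda_1(M)>0$; hence $M$ is compact. The main obstacle is the topological bookkeeping inside Lemma \ref{Aplicacao à geometria Riemanniana v2}. Finite $\dim H^1_c$ plus finitely many ends is what bounds the number of boundary components of the $\mu$-bubble exhaustion, and the exhaustion must be built only outside $B_R(p)$, where the spectral control holds. If Theorem \ref{CMC com H>Hlambda tem lambda1>0} turns out to be too weak for the finite-index case, we would first derive $\lambda_1>0$ from strong stability outside $B_R(p)$ combined with infinite volume, as indicated in its proof.
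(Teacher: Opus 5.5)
Your proposal is correct and follows the paper's proof. The ingredients are the same: infinite volume, $\lambda_1(M)>0$ via Theorem \ref{CMC com H>Hlambda tem lambda1>0}, bounded $|A|$ via Theorem \ref{estimativa de curvatura mais geral possivel}, and the pointwise estimate of Proposition \ref{controle espectral do alfa-bi-Ricci em sec nao negativa} on $M\setminus B_R(p)$, all fed into Lemma \ref{Aplicacao à geometria Riemanniana v2}; for item (1) you get one end from Theorem \ref{Vanishing para 1-forma harmonica} plus Proposition 2.2 of \cite{Detang-XuCheng-Cheung}, where the paper cites Theorem 0.1 of \cite{Detang-XuCheng-Cheung} directly.

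One cosmetic slip: in Lemma \ref{Aplicacao à geometria Riemanniana v2} the manifold is $N^{k+1}$, so for $M^5$ the relevant index is $k=4$, where $(a,\alpha)=(\frac{11}{10},\frac{40}{43})$ is admissible since $m(\frac{40}{43},4)\approx 1.13$. Your $k=5$ is the dimension label from Proposition \ref{controle espectral do alfa-bi-Ricci em sec nao negativa}, and read in the lemma's convention it would make $a$ inadmissible.
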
      

        \begin{proof}
            First, we prove the item $(2)$. On the contrary, suppose that there exists a complete, non-compact, non-minimal CMC hypersurface $M$ immersed in $X$, with a finite number of ends and dim$H^1_c(M;\mathbb{R}) < \infty$, which has finite index. 
            
            From Theorem \ref{estimativa de curvatura mais geral possivel} and the Gauss equation, we derive that $M$ has Ricci curvature bounded from below. Moreover, $M$ is orientable, because $X$ is orientable and $H>0$, and $M$ has infinite volume, because $X$ has bounded geometry (see Theorem \ref{Katia Frensel volume infinito}). 

            Notice that Theorem \ref{CMC com H>Hlambda tem lambda1>0} guarantees that $\lambda_1(M)>0$.
            
            Fix $p \in M$. Since $M$ has finite index, we can use Proposition $1$ in \cite{Fischer-Colbrie} to find $R>0$ such that $M \backslash B_R(p)$ is strongly stable. Using Proposition \ref{controle espectral do alfa-bi-Ricci em sec nao negativa} we verify the hypotheses of Lemma \ref{Aplicacao à geometria Riemanniana v2} to conclude
            that $\lambda_1(M)$ vanishes. But this is a contradiction.

            Finally, we prove the item (1). It is enough to check that under the hypotheses of the theorem, the hypersurface $M$ must have a finite number of ends. But it follows immediately from Theorem 0.1 of \cite{Detang-XuCheng-Cheung} that $M$ has only one end. 
        \end{proof}

    We now turn to ambient manifolds with non-negative sectional curvature and Euclidean volume growth. Sobolev inequalities on such manifolds were studied by S. Brendle \cite{BrendleIsop-curvatura-naonegativa}, and J. Chen, H. Hong, and H. Li \cite{CHL} established results on CMC hypersurfaces in this setting, building on Brendle’s work. We will make use of the results in \cite{CHL} in the arguments that follow.

		\begin{teo} \label{teorema sobre as de crescimento euclideano de volume}
                Let $(X,g)$ be an orientable, six dimensional complete Riemannian manifold of bounded geometry, non-negative sectional curvature and Euclidean volume growth. Let $M$ be a complete, non-minimal CMC hypersurface immersed in $X$. If $M$ has finite index, then $M$ is compact. 
		\end{teo}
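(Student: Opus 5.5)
Argue by contradiction: suppose $M$ is a complete, non-compact, non-minimal CMC hypersurface of finite index in $X$, and replace $\nu$ by $-\nu$ if necessary so that $H>0$. The aim is to reach the situation of Lemma \ref{Aplicacao à geometria Riemanniana v2}, which forces $\lambda_1(M)=0$. This contradicts Theorem \ref{CMC com H>Hlambda tem lambda1>0}: since $X$ has bounded geometry and $Ric_X\ge 0$, we have $\frac{1}{5}H^2+\inf Ric_X>0$, hence $\lambda_1(M)>0$. Compared with Theorem \ref{teorema com hipotese topologia finita em curvatura nao negativa}(2), the new ingredient is to remove the finite-topology hypotheses ($H^1_c(M;\mathbb{R})$ finite dimensional, finitely many ends). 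Euclidean volume growth, through Proposition \ref{Garante isoperimetrica}, replaces them.

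First I would collect the basic properties of $M$.
\begin{enumerate}
\item $M$ has bounded second fundamental form. Indeed $M$ is strongly stable outside a compact set (Proposition 1 in \cite{Fischer-Colbrie}), and the curvature estimate of Theorem \ref{estimativa de curvatura mais geral possivel} applies because $X$ has bounded sectional curvature (see also Remark \ref{remark sobre estimativa de curvatura}). By the Gauss equation, $M$ then has bounded curvature, in particular Ricci curvature bounded from below.
\item $M$ has infinite volume, by Theorem \ref{Katia Frensel volume infinito}.
\item $M$ is orientable, since $X$ is orientable and $H>0$.
\end{enumerate}
These facts let me apply Proposition \ref{Garante isoperimetrica}: $X$ has non-negative sectional curvature, bounded geometry and Euclidean volume growth, and has dimension $6$. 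So $M$ satisfies the isoperimetric inequality $|\Omega|\le C|\partial\Omega|^{5/4}$.

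Next comes the $\mu$-bubble exhaustion. On $M\setminus B_R(p)$, which is strongly stable, Proposition \ref{controle espectral do alfa-bi-Ricci em sec nao negativa} gives $\int a|\nabla\phi|^2+\Lambda_\alpha\phi^2\ge\delta\int\phi^2$ for $\phi\in C^\infty_0(M\setminus B_R(p))$, with $a=\frac{11}{10}$, $\alpha=\frac{40}{43}$ and $\delta=\theta H^2$. The proof of Lemma \ref{Aplicacao à geometria Riemanniana v2}, run on an exhaustion $O_j\supset B_R(p)$, then produces domains $\Omega_j$ whose boundary components each have volume at most $V$. I would not try to bound the number of ends or of boundary components directly. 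Instead I would exploit the isoperimetric inequality together with the fact that these $\mu$-bubbles lie in slabs of bounded width $L$ (Theorem \ref{mu-bubbles em bi-ricci com peso positivo no sentido espectral}).

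The aim of this step is to find regions of unbounded volume with controlled boundary. Consider the shell $\Omega_{j+1}\setminus\Omega_j$ and the connected components of the complement of $\Omega_j$. Each component $\Sigma$ of $\partial\Omega_j$ satisfies $|\Sigma|\le V$. Applying the isoperimetric inequality to the bounded pieces cut off by one component, after the filling step that adds the bounded components of the complement, shows that such a piece has volume at most $C(mV)^{5/4}$ if its boundary has $m$ components. Infinite volume then forces the number $m_j$ of boundary components to tend to infinity.

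The main obstacle is to turn this into a contradiction without the topological hypotheses. What I would try first is to redo the construction so that each unbounded complementary component $E$ of $\Omega_j$ is treated separately. Pick a component $\Sigma\subset\partial E$ and construct a $\mu$-bubble inside $E$ at distance between $L$ and $2L$ from $\Sigma$. The region $U$ between them has volume at most $C(\text{number of its boundary components}\cdot V)^{5/4}$. By Bishop–Gromov type bounds from the bounded geometry, $U$ also has volume at least $c\cdot(\text{number of components})$, since the components are $L$-separated and each carries an intrinsic ball of definite volume. For a large number of components, linear growth beats the $5/4$ power only in the wrong direction. So I would instead combine the isoperimetric inequality with the $L$-width of the slabs: a slab of width $L$ with boundary area $A$ has volume at least $\sim A$, and the isoperimetric inequality on the complementary unbounded region yields the quantitative dichotomy needed to show that $\lambda_1(M)>0$ is incompatible with the uniformly bounded bubble areas. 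I expect the delicate part to be making this counting rigorous: controlling how many boundary components the bubbles can have, possibly via the isoperimetric profile applied to unions of slabs.
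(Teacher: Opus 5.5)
There is a genuine gap, and it sits exactly where you place your ``main obstacle''. Your preliminary steps are fine and agree with the paper: bounded second fundamental form, infinite volume, orientability, strong stability outside $B_R(p)$, the spectral bound of Proposition~\ref{controle espectral do alfa-bi-Ricci em sec nao negativa} there, and the $\mu$-bubble exhaustion $\Omega_j$ whose boundary components have area at most $V$. But, as you yourself deduce, the isoperimetric inequality only gives $|\Omega_j|\le C(m_jV)^{5/4}$. The same holds for $\lambda_1(M)>0$ combined with Buser's inequality. Either way this forces $m_j\to\infty$; it does not contradict it.

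The rest of your sketch (slabs of width $L$, Bishop--Gromov lower bounds, a ``quantitative dichotomy'') is not an argument, and no refinement of pure volume counting can become one. The facts it uses are all compatible with $m_j\to\infty$. Consider a bounded-geometry thickening of an infinite regular tree with thin $\mathbb{S}^4$ cross-sections. It has positive Cheeger constant, so it satisfies the isoperimetric inequality and $\lambda_1>0$. It has infinite volume. It also has exhaustions whose boundaries are exponentially many separating spheres of bounded area, lying in slabs of bounded width. So a bound on the number of boundary components has to come from topology. Euclidean volume growth is therefore not a substitute for the topological hypotheses via Proposition~\ref{Garante isoperimetrica}; it is what produces them.

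This is how the paper proceeds. By Theorem~2.5 in \cite{CHL}, a finite index CMC hypersurface $M$ in such an $X$ has finitely many ends and $\dim H^1(L^2(M))<\infty$. That result rests on Brendle's Sobolev inequality \cite{BrendleIsop-curvatura-naonegativa} for submanifolds of non-negatively curved manifolds with Euclidean volume growth. The Sobolev inequality of Proposition~2.3 in \cite{CHL} then allows Proposition~2.11 in \cite{Carron-L2-Harmonic-forms}, which gives $\dim H^1_c(M;\mathbb{R})\le \dim H^1(L^2(M))<\infty$. With this finite topology, Lemma~6.1 in \cite{Chodosh-Li-PrimeiraProva-R4} bounds $m_j$. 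Lemma~\ref{Aplicacao à geometria Riemanniana v2} then applies verbatim: $M$ fails the isoperimetric inequality, contradicting Proposition~\ref{Garante isoperimetrica}. Your intended target, $\lambda_1(M)=0$ against Theorem~\ref{CMC com H>Hlambda tem lambda1>0}, would serve equally well once $m_j$ is bounded. The step you tried to avoid, controlling the ends and $H^1_c(M;\mathbb{R})$, cannot be bypassed; it must be imported from the ambient geometry through \cite{CHL} and \cite{Carron-L2-Harmonic-forms}.
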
      

        \begin{proof}
            On the contrary, suppose that there exists a complete, non-compact CMC hypersurface $M$ immersed in $X$ with finite index and mean curvature $H>0$. We first restrict the topology of $M$. Theorem 2.5 in \cite{CHL} guarantees that $\text{dim} H^1(L^2(M)) < \infty$ and $M$ has a finite number of ends. By Proposition 2.3 in \cite{CHL} we know that $M$ satisfies a Sobolev inequality which allows us to use Proposition $2.11$ in \cite{Carron-L2-Harmonic-forms} to show that $\text{dim} H^1_c(M;\mathbb{R}) \le \text{dim} H^1(L^2(M)) < \infty$. Moreover, $M$ is clearly orientable. 
            
            We now restrict the geometry of $M$. Since $X$ has bounded geometry, $M$ has infinite volume (\textit{cf.} Theorem \ref{Katia Frensel volume infinito}). Moreover, by the result of Remark \ref{remark sobre estimativa de curvatura} we know that $M$ has bounded second fundamental form, and hence bounded curvature. Fix $p \in M$. Since $M$ has finite index, there exists $R>0$ such that $M \backslash B_R(p)$ is strongly stable (\textit{cf.} Proposition $1$ in \cite{Fischer-Colbrie}). 
            
            Therefore, using these geometric and topological properties of $M$ and Proposition \ref{controle espectral do alfa-bi-Ricci em sec nao negativa}, we verify the hypotheses of Lemma \ref{Aplicacao à geometria Riemanniana v2} to conclude that $M$ does not satisfy the isoperimetric inequality. But this contradicts Proposition \ref{Garante isoperimetrica}.             
        \end{proof}

    \subsection{When the ambient manifold has a lower bound on the sectional curvature} \label{subsection:: When the ambient manifold has a lower bound on the sectional curvature}

    \subsubsection{Proof of Theorem \ref{quarto teorema principal k>-1}} \label{subsubsection:: proof of theorem B}

Let $X$ be a six-dimensional Riemannian manifold with sectional curvature bounded from below by $-1$ such that the isometry group of $X$ acts cocompactly on $X$. Suppose that $M'$ is a complete, finite index CMC hypersurface immersed in $X$ with mean curvature $H > 7$. We will prove that $M'$ is compact.
            
        The proof is by contradiction. By the \textit{Reduction Lemma} (see Theorem \ref{argumento de redução em homogeneas}) we know that the existence of a complete, non-compact, finite index CMC hypersurface $M'$ with mean curvature $H>7$ immersed in $X$ would imply the existence of a complete, non-compact, strongly stable CMC hypersurface $M$ with the same mean curvature $H>7$ immersed in $X$. 
        
        To complete the proof, we argue in a similar way to what we did in the proof of Theorem \ref{quarto teorema principal k > 0}, but there are two main differences which we highlight. First, the control over the number of ends of $M$ now comes from Theorem \ref{unicidade de fim quando curvature é limitada inferiormente em X6}. Second, the spectral estimate on the weighted bi-Ricci curvature of $M$ is made through Proposition \ref{controle do bi-Ricci com peso da CMC estavel em ambiente de curvatura negativa} instead of Proposition \ref{controle espectral do alfa-bi-Ricci em sec nao negativa}.

        \begin{prop}\label{controle do bi-Ricci com peso da CMC estavel em ambiente de curvatura negativa}
       Let $(X^6,g)$ be a Riemannian manifold with sectional curvature bounded from below $sec_X \ge -1$. Let $M$ be a strongly stable CMC hypersurface with mean curvature $H > 7$ immersed in $X$. Then for $\alpha = \frac{40}{43}$ and $a = \frac{11}{10}$ there exists $\delta >0$ such that $M$ satisfies
    \begin{equation*}
        \lambda_1 ( - a \Delta_g +  \Lambda_{\alpha}) \ge \delta.
    \end{equation*}
		\end{prop}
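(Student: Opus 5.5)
The argument will mirror the proof of Proposition \ref{controle espectral do alfa-bi-Ricci em sec nao negativa}, with the ambient curvature terms no longer dropped but bounded below using $sec_X \ge -1$. By strong stability, for every $\psi \in C^\infty_0(M)$,
\begin{equation*}
\int_M a|\nabla \psi|^2 + \Lambda_\alpha^M \psi^2 \ge \int_M \big(a(Ric_X(\nu)+|A|^2) + \Lambda_\alpha^M\big)\psi^2 .
\end{equation*}
It therefore suffices to show the pointwise bound $a(Ric_X(\nu)+|A|^2)+\Lambda_\alpha^M \ge \delta$ for some $\delta>0$ depending only on $H$.

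Fix $p\in M$ and orthonormal $e_1,e_2\in T_pM$ realizing $\Lambda_\alpha^M(p)$. Work with $k=5$, $a=\frac{11}{10}$, $\alpha=\frac{40}{43}$.

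First, bound the ambient terms in Lemma \ref{Proposição com a expressão do biRicci da subvariedade CMC}. With $sec_X\ge -1$ we have $Ric_X(e_1,e_1)-sec_X(e_1\wedge\nu)$ equal to a sum of $k-1=4$ sectional curvatures, so it is at least $-4$. Likewise $Ric_X(e_2,e_2)-sec_X(e_2\wedge\nu)-sec_X(e_1\wedge e_2)$ is a sum of $3$ sectional curvatures, so it is at least $-3$. Also $a\,Ric_X(\nu)\ge -5a$. The total ambient contribution is therefore at least
\begin{equation*}
-(5a+4+3\alpha) = -\Big(\tfrac{11}{2}+4+\tfrac{120}{43}\Big)\approx -12.29 .
\end{equation*}

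Second, handle the second fundamental form terms exactly as before, via \eqref{desig3 da prova do main-theorem com curvatura nao negativa}, \eqref{desig4 da prova do main-theorem com curvatura nao negativa} and \eqref{desig5 da prova do main-theorem com curvatura nao negativa}. This gives
\begin{equation*}
a(Ric_X(\nu)+|A|^2)+\Lambda^M_\alpha \ge T(H,\Phi_{11},\Phi_{22}) - 12.29,
\end{equation*}
where $T$ is the same quadratic form as in the proof of Proposition \ref{controle espectral do alfa-bi-Ricci em sec nao negativa}, already known to be positive definite.

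Third, minimize over $(\Phi_{11},\Phi_{22})$ with $H$ fixed. Write $T=c_{HH}H^2+2H\,b^t x+x^t B x$ with $x=(\Phi_{11},\Phi_{22})$ and $B$ positive definite. Then
\begin{equation*}
\min_x T = \big(c_{HH}-b^tB^{-1}b\big)H^2 =: \kappa H^2 ,
\end{equation*}
and $\kappa>0$ is the Schur complement of the positive definite matrix. It remains to check $\kappa\cdot 49 > 5a+4+3\alpha$, so that $H>7$ gives $\delta=\kappa H^2-(5a+4+3\alpha)>0$.

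The main obstacle is this numerical computation of $\kappa$ for $k=5$, $a=\frac{11}{10}$, $\alpha=\frac{40}{43}$, and I have not carried it out. The entries are:
\begin{itemize}
\item $c_{HH}=\frac{a}{5}+\frac15-\frac1{25}+\frac{\alpha}{5}-\frac{2\alpha}{25}\approx 0.492$;
\item the $\Phi_{11}^2$ coefficient is $\frac{4a}{3}-1$ and the $\Phi_{22}^2$ coefficient is $\frac{4a}{3}-\alpha$;
\item the $\Phi_{11}\Phi_{22}$ cross coefficient is $\frac{2a}{3}-\alpha$;
\item the linear coefficients in $H$ are $1-\frac25-\frac{\alpha}{5}$ for $\Phi_{11}$ and $\frac{2\alpha}{5}$ for $\Phi_{22}$.
\end{itemize}
The proof succeeds only if $\kappa \ge 12.29/49\approx 0.251$. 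The threshold $7$ in the statement suggests the author's computation lands just above this.

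If the crude bound falls short, I would first sharpen the ambient estimates. Instead of bounding $a\,Ric_X(\nu)$ and the bi-Ricci terms separately, I would pair the terms $-sec_X(e_i\wedge\nu)$, which enter with negative sign, against the matching terms in $a\,Ric_X(\nu)$. That reduces the number of independent lower bounds by $-1$ that are needed.
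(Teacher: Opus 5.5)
Your reduction coincides with the paper's. You use strong stability to reduce to a pointwise bound, you apply the same three estimates on $|A|^2$, and your ambient constant $5a+4+3\alpha=\frac{1057}{86}$ is exactly the paper's $ka+(k-1)(1+\alpha)-\alpha$ at $k=5$.

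As submitted, however, you stop at the one step that carries the content of the proposition, so this is not yet a proof. That step does succeed. The data are:
\begin{itemize}
\item $c_{HH}=\frac{1057}{2150}$;
\item cross terms $\frac{89}{215}H\Phi_{11}+\frac{16}{43}H\Phi_{22}$;
\item a $(\Phi_{11},\Phi_{22})$-block with diagonal entries $\frac{7}{15},\frac{346}{645}$ and off-diagonal entry $-\frac{127}{1290}$.
\end{itemize}
The Schur complement is then $\kappa\approx 0.2975$, so $49\kappa\approx 14.58>12.29$. Since $25c_{HH}=5a+4+3\alpha$ identically, your argument gives $\delta=\kappa H^2-25c_{HH}>0$ for every $H>5\sqrt{c_{HH}/\kappa}\approx 6.43$.

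The paper finishes differently. It writes $H=5+\varepsilon$, so the constant term cancels exactly because $25c_{HH}=5a+4+3\alpha$. This leaves the linear terms $\frac{89}{43}\Phi_{11}+\frac{80}{43}\Phi_{22}$, which it absorbs by Young's inequality with $\beta=\frac15$. It then checks that a $3\times 3$ matrix in $(\varepsilon,\Phi_{11},\Phi_{22})$ is positive definite, and uses $\varepsilon>2$ to obtain the explicit $\delta=\frac{5583}{36980}$.

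That route needs a single matrix check and produces a rational $\delta$. Your exact minimization over $(\Phi_{11},\Phi_{22})$ avoids the Young loss, so it yields a better threshold with the same $(a,\alpha)$. Theorem D itself would still be limited by the one-end theorem's requirement $|H|\ge 10\sqrt{5/11}\approx 6.74$.

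You can drop your fallback. Once everything is expanded in sectional curvatures, every coefficient is nonnegative, and in $\mathbb{H}^6$ the bound $-(5a+4+3\alpha)$ is attained. So no pairing of ambient terms can gain anything.
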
                               

\begin{proof}[Proof of Proposition \ref{controle do bi-Ricci com peso da CMC estavel em ambiente de curvatura negativa}]
     Since $M$ is strongly stable, we have
    \begin{equation}
        \int_M a|\nabla \psi|^2 + \Lambda_\alpha \psi^2 \ge \int_M \big( a (Ric(\nu) +|A|^2)+\Lambda_\alpha \big)\psi^2 \label{desig1 na prova do Teo controle do bi-Ricci com peso da CMC estavel em ambiente de curvatura negativa}
    \end{equation}

    \noindent for every $\psi \in C^{\infty}_0(M)$. 

    Therefore, it is enough to prove
    \begin{equation}
        a  (Ric(\nu)+ |A|^2) +\Lambda_\alpha \ge \delta \label{desig2 na prova do Teo controle do bi-Ricci com peso da CMC estavel em ambiente de curvatura negativa}
    \end{equation}

    \noindent for some $\delta>0$. In what follows, we denote by $k$ the dimension of $M$ and substitute $k=5$ at the end.

    Notice that    
    \begin{equation}
        |A|^2 = |\Phi|^2 + \frac{H^2}{k} = \sum_{i=1}^k \Phi_{ii}^2 + 2 \sum_{j>i}\Phi_{i,j}^2 + \frac{H^2}{k}. \label{desig3 na prova do Teo controle do bi-Ricci com peso da CMC estavel em ambiente de curvatura negativa}
    \end{equation}

    Moreover,
    \begin{equation}
        2a \sum_{j>i}\Phi_{ij}^2 \ge \sum_{i=2}^k \Phi_{1i}^2 + \alpha\sum_{i=3}^k\Phi_{2i}^2 \label{desig4 na prova do Teo controle do bi-Ricci com peso da CMC estavel em ambiente de curvatura negativa}
    \end{equation}

    \noindent because $2a \ge 1+ \alpha$ when $a=\frac{11}{10}$ and $\alpha = \frac{40}{43}$.

    We also have     
    \begin{equation}
    \begin{split}
        \sum_{i=1}^k \Phi_{ii}^2 &\ge \Phi_{11}^2 + \Phi_{22}^2 + \frac{1}{k-2}(\Phi_{11}+\Phi_{22})^2 =\frac{k-1}{k-2}\Phi_{11}^2 + \frac{k-1}{k-2}\Phi_{22}^2 + 2\frac{1}{k-2} \Phi_{11}\Phi_{22} \label{desig5 na prova do Teo controle do bi-Ricci com peso da CMC estavel em ambiente de curvatura negativa}
    \end{split}
    \end{equation}

    \noindent where we used the Cauchy-Schwarz inequality and that $\Phi$ is traceless.

    Fix $p \in M$. Let $e_1,e_2$ be arbitrary orthonormal vectors in $T_pM$. Using Lemma \ref{Proposição com a expressão do biRicci da subvariedade CMC}, equations \eqref{desig3 na prova do Teo controle do bi-Ricci com peso da CMC estavel em ambiente de curvatura negativa}, \eqref{desig4 na prova do Teo controle do bi-Ricci com peso da CMC estavel em ambiente de curvatura negativa} and \eqref{desig5 na prova do Teo controle do bi-Ricci com peso da CMC estavel em ambiente de curvatura negativa}, and the fact that $sec_X \ge -1$ we arrive at
    \begin{equation*}
    \begin{split}
          a  (Ric(\nu)+ |A|^2) + BRic_\alpha^M(e_1,e_2) &\ge - k a - (k-1)(1+\alpha) + \alpha  \\
          &\quad \quad+H^2(a \frac{1}{k}+\frac{1}{k} - \frac{1}{k^2} + \frac{\alpha}{k} - \alpha \frac{1}{k^2} - \alpha \frac{1}{k^2})  \\
        & \quad \quad+ ( a \frac{k-1}{k-2}-1) \Phi_{11}^2 + (a \frac{k-1}{k-2} - \alpha) \Phi_{22}^2 \\
        &\quad \quad+ \Phi_{11}H(1-\frac{2}{k} - \alpha \frac{1}{k}) + \alpha \Phi_{22}H(1 - \frac{3}{k}) \\
        &\quad \quad+ \Phi_{11}\Phi_{22}(2 a \frac{1}{k-2}-\alpha) .
    \end{split}
    \end{equation*}

    Now we write $H = k+\varepsilon$ for some $\varepsilon>2$, to find
    \begin{equation*}
    \begin{split}
          a  (Ric(\nu)+ |A|^2) + BRic_\alpha^M(e_1,e_2) &\ge 2\varepsilon\Big(  a + 1 + \alpha + \frac{1}{k}(-1-2\alpha) \Big) \\
        & \quad\quad+ \varepsilon^2 \Big(  \frac{1}{k}(a + 1 + \alpha) + \frac{1}{k^2}(-1-2\alpha) \Big) \\
        & \quad\quad+ ( a \frac{k-1}{k-2}-1) \Phi_{11}^2 + (a \frac{k-1}{k-2} - \alpha) \Phi_{22}^2 \\
        &\quad\quad+ \Phi_{11}\varepsilon(1-\frac{2}{k} - \alpha \frac{1}{k}) + \alpha \Phi_{22}\varepsilon(1 - \frac{3}{k}) \\
        &\quad\quad+ \Phi_{11}\Phi_{22}(2 a \frac{1}{k-2}-\alpha)  \\
        & \quad\quad+ k \Phi_{11}(1-\frac{2}{k} - \alpha \frac{1}{k}) + \alpha k \Phi_{22}(1 - \frac{3}{k}).
    \end{split}
    \end{equation*}

  In what follows, we substitute $k=5$, $a = \frac{11}{10}$ and $\alpha = \frac{40}{43}$. We remark that these choices of parameters turns into positive definite the quadratic part on $(\varepsilon,\Phi_{11},\Phi_{22})$ of the above expression. Then we obtain
    \begin{equation} \label{desig6 na prova do Teo controle do bi-Ricci com peso da CMC estavel em ambiente de curvatura negativa}
    \begin{split}
          a  (Ric(\nu)+ |A|^2) + BRic_\alpha^M(e_1,e_2) &\ge \frac{1057}{215}\varepsilon +  \frac{1057}{2150}\varepsilon^2  +  \frac{7}{15} \Phi_{11}^2 + \frac{346}{645} \Phi_{22}^2  \\
        &\quad\quad+ \frac{89}{215}\Phi_{11}\varepsilon + \frac{16}{43} \Phi_{22}\varepsilon   - \frac{127}{645}\Phi_{11}\Phi_{22}\\
        & \quad\quad+ \frac{89}{43}\Phi_{11}  + \frac{80}{43}\Phi_{22} \,.
    \end{split}
    \end{equation}

    We introduce a new parameter $\beta>0$ to control the linear terms on entries of $\Phi$ through Young's inequality:
    \begin{equation} \label{desig7 na prova do Teo controle do bi-Ricci com peso da CMC estavel em ambiente de curvatura negativa}
        \frac{89}{43}\Phi_{11}  +\frac{80}{43} \Phi_{22}  \ge - \left( \beta \Phi_{11}^2 + (\frac{89}{43})^2 \frac{1}{4\beta} \right) - \left(\beta \Phi_{22}^2 + (\frac{80}{43})^2\frac{1}{4\beta}\right).
    \end{equation}

    Combining inequalities \eqref{desig6 na prova do Teo controle do bi-Ricci com peso da CMC estavel em ambiente de curvatura negativa} and \eqref{desig7 na prova do Teo controle do bi-Ricci com peso da CMC estavel em ambiente de curvatura negativa} yields
    \begin{equation} \label{desig8 na prova do Teo controle do bi-Ricci com peso da CMC estavel em ambiente de curvatura negativa}
    \begin{split}
          a  (Ric(\nu)+ |A|^2) + BRic_\alpha^M(e_1,e_2) &\ge \frac{1057 }{215}\varepsilon - (\frac{89}{43})^2 \frac{1}{4\beta} - (\frac{80}{43})^2\frac{1}{4\beta} \\
          &\quad\quad+ \frac{1057}{2150}\varepsilon^2  + (\frac{7}{15}-\beta) \Phi_{11}^2 + (\frac{346}{645} - \beta) \Phi_{22}^2 \\
        &\quad\quad+ \frac{89}{215}\Phi_{11}\varepsilon + \frac{16}{43}\Phi_{22}\varepsilon  - \frac{127}{645} \Phi_{11}\Phi_{22}.
    \end{split}
    \end{equation}

    We choose $\beta = \frac{1}{5}$ so that the quadratic form on $(\varepsilon,\Phi_{11},\Phi_{22})$ associated to the matrix
    \begin{equation*}
        Q= \begin{pmatrix}
             \frac{1057}{2150} &  \frac{1}{2}\frac{89}{215} & \frac{1}{2}\frac{16}{43} \\[3pt]
            \frac{1}{2} \frac{89}{215} & (\frac{7}{15}-\beta) &  - \frac{1}{2}\frac{127}{645} \\[3pt]
             \frac{1}{2}\frac{16}{43} & - \frac{1}{2}\frac{127}{645} & (\frac{346}{645} - \beta) 
        \end{pmatrix}
    \end{equation*}

    \noindent is positive definite. Eliminating the quadratic expression from \eqref{desig8 na prova do Teo controle do bi-Ricci com peso da CMC estavel em ambiente de curvatura negativa} and using that $\beta = \frac{1}{5}$ and $\varepsilon>2$ we conclude that
    \begin{equation} \label{desig9 na prova do Teo controle do bi-Ricci com peso da CMC estavel em ambiente de curvatura negativa}
          a  (Ric(\nu)+ |A|^2) + BRic_\alpha^M(e_1,e_2) \ge  \frac{5583}{36980}.
    \end{equation}

    Choosing $e_1,e_2$ so that $BRic_\alpha^M(e_1,e_2) = \Lambda_\alpha(p)$, and using the fact that $p \in M$ is arbitrary, we have verified the desired inequality \eqref{desig2 na prova do Teo controle do bi-Ricci com peso da CMC estavel em ambiente de curvatura negativa}.  
\end{proof}       
        
     With these tools, we complete the proof of Theorem \ref{quarto teorema principal k>-1} following the proof of Theorem \ref{quarto teorema principal k > 0}.

\qed

    \begin{obs}   \label{OBS sobre curvature critica otima em H6}

    We did not try to optimize our choice of lower bound for $|H|$ in the hypothesis of Proposition \ref{controle do bi-Ricci com peso da CMC estavel em ambiente de curvatura negativa}. Moreover, we chose $a = \frac{11}{10}$ and $\alpha = \frac{40}{43}$ in Propositions \ref{controle espectral do alfa-bi-Ricci em sec nao negativa} and \ref{controle do bi-Ricci com peso da CMC estavel em ambiente de curvatura negativa} to facilitate comparison with the work of L. Mazet \cite{Mazet}. Nevertheless, we believe that the flexibility in the parameters $(a,\alpha)$ described in Theorem \ref{mu-bubbles em bi-ricci com peso positivo no sentido espectral} is essential for improving and potentially optimizing the hypothesis on the lower bound on $|H|$ in Proposition \ref{controle do bi-Ricci com peso da CMC estavel em ambiente de curvatura negativa} within our framework.
    \end{obs}

    \subsubsection{Further results}
        Next, we turn our attention to the study of finite index CMC hypersurfaces immersed in more general ambient spaces. 

		\begin{teo} \label{CMC de indice finito em ambiente com curvatura limitada por baixo}
              Let $X$ be an orientable, complete six-dimensional Riemannian manifold with bounded geometry and sectional curvature bounded from below, $sec_X \ge -1$. Let $M$ be a complete CMC hypersurface with mean curvature $|H|>7$ immersed in $X$. 
              \begin{enumerate}
                  \item If $M$ is weakly stable and $H^1_c(M;\mathbb{R})$ is finite dimensional, then $M$ is compact.
                  \item If $M$ has finite index, $H^1_c(M;\mathbb{R})$ is finite dimensional, and $M$ has a finite number of ends, then $M$ is compact.
              \end{enumerate}
		\end{teo}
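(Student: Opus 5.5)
The plan mirrors the proof of Theorem \ref{teorema com hipotese topologia finita em curvatura nao negativa}, with Proposition \ref{controle do bi-Ricci com peso da CMC estavel em ambiente de curvatura negativa} replacing Proposition \ref{controle espectral do alfa-bi-Ricci em sec nao negativa} and Theorem \ref{unicidade de fim quando curvature é limitada inferiormente em X6} replacing Theorem 0.1 of \cite{Detang-XuCheng-Cheung}. I will prove item (2) first and then reduce item (1) to it.

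For item (2), suppose by contradiction that $M$ is complete, non-compact, of finite index, with $|H|>7$, $\dim H^1_c(M;\mathbb{R})<\infty$ and finitely many ends. Since $X$ has bounded geometry, Theorem \ref{estimativa de curvatura mais geral possivel} gives a bound on $|A|$ outside a compact set: fix $p$, use Proposition 1 of \cite{Fischer-Colbrie} to find $R$ with $M\backslash B_R(p)$ strongly stable, and apply the estimate on balls of radius $1$ there. Hence $|A|$ is bounded on all of $M$, and by the Gauss equation $Ric_M$ is bounded from below. Moreover, $M$ is orientable because $X$ is orientable and $H\neq 0$, and $M$ has infinite volume by Theorem \ref{Katia Frensel volume infinito}.

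Next I obtain $\lambda_1(M)>0$ from Theorem \ref{CMC com H>Hlambda tem lambda1>0}. This requires $\frac{1}{5}H^2+\inf Ric_X>0$. From $sec_X\ge -1$ we get $Ric_X\ge -5$, and $\frac{1}{5}\cdot 49-5>0$, so the hypothesis holds. On $M\backslash B_R(p)$, strong stability and the pointwise estimate in the proof of Proposition \ref{controle do bi-Ricci com peso da CMC estavel em ambiente de curvatura negativa} with $a=\frac{11}{10}$, $\alpha=\frac{40}{43}$ give
\[
\int_M a|\nabla\phi|^2+\Lambda_\alpha\phi^2\ge \delta\int_M\phi^2, \qquad \phi\in C^\infty_0(M\backslash B_R(p)).
\]
That estimate is pointwise in $A$ and $H$, so it applies to test functions supported in the stable region. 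Proposition \ref{proposicao que define as constantes do metodo do mazet} with $k=5$ shows that $a=\frac{11}{10}<m(\frac{40}{43},5)$, using the case $(4,\frac{11}{10},\frac{40}{43})$ cited from \cite{Mazet}. All hypotheses of Lemma \ref{Aplicacao à geometria Riemanniana v2} then hold, and it yields $\lambda_1(M)=0$, contradicting the previous paragraph.

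For item (1), by item (2) it suffices to show that a weakly stable $M$ has finitely many ends, since weak stability implies finite index. Write $|H|=5+\varepsilon$ with $\varepsilon>2\ge 10\sqrt{5/11}-5\approx 1.742$; then Theorem \ref{unicidade de fim quando curvature é limitada inferiormente em X6} gives that $M$ has exactly one end.

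The main obstacle is checking that the local spectral estimate outside $B_R(p)$ really follows from the proof of Proposition \ref{controle do bi-Ricci com peso da CMC estavel em ambiente de curvatura negativa}. That proposition is stated for globally strongly stable hypersurfaces, but its proof only uses the stability inequality for the given test function together with the pointwise bound \eqref{desig9 na prova do Teo controle do bi-Ricci com peso da CMC estavel em ambiente de curvatura negativa}. So restricting to $C^\infty_0(M\backslash B_R(p))$ is harmless, and one can take $\delta=\frac{5583}{36980}$.
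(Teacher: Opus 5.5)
Your proposal is correct and follows the paper's proof essentially step for step: infinite volume from Theorem \ref{Katia Frensel volume infinito}, $\lambda_1(M)>0$ from Theorem \ref{CMC com H>Hlambda tem lambda1>0}, the pointwise bound of Proposition \ref{controle do bi-Ricci com peso da CMC estavel em ambiente de curvatura negativa} applied on the stable region to verify Lemma \ref{Aplicacao à geometria Riemanniana v2}, and Theorem \ref{unicidade de fim quando curvature é limitada inferiormente em X6} to reduce (1) to (2). One slip: the parameter check for Lemma \ref{Aplicacao à geometria Riemanniana v2} must be done with $k=4$, because $\dim M=5=k+1$; there $a=\frac{11}{10}<m(\frac{40}{43},4)=\frac{17}{15}$, whereas with $k=5$, as you wrote it, the inequality is false ($m(\frac{40}{43},5)\approx 0.42$).
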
           

        \begin{proof}
            We first prove item (2). On the contrary, suppose that there exists a non-compact, complete, finite index $H$-CMC hypersurface with $H>7$ immersed in $X$. Since $X$ is orientable and $H > 7$, we obtain that $M$ is orientable. By Theorem \ref{Katia Frensel volume infinito}, $M$ has infinite volume. Moreover, we can apply Theorem \ref{CMC com H>Hlambda tem lambda1>0} to conclude that
           $\lambda_1(M) > 0$.

            By Theorem \ref{estimativa de curvatura mais geral possivel} and the Gauss equation, using $sec_X \ge -1$, we find that $M$ has Ricci curvature bounded from below. Finally, we use Proposition \ref{controle do bi-Ricci com peso da CMC estavel em ambiente de curvatura negativa} to verify the hypotheses of Lemma \ref{Aplicacao à geometria Riemanniana v2} and conclude that $\lambda_1(M)$ vanishes. This is a contradiction.  
            
            Now we prove item (1). In light of item (2), it is enough to estimate the number of ends of the hypersurface $M$. But Theorem \ref{unicidade de fim quando curvature é limitada inferiormente em X6} guarantees that $M$ has at most one end. 
        \end{proof}

    \section{Manifolds with uniformly positive curvature conditions} \label{section::Manifolds with uniformly positive curvature conditions}

    This section is devoted to the study of the compactness of complete, two-sided, finite index minimal hypersurfaces immersed in a six-dimensional Riemannian manifold $X$ which satisfies a uniformly positive curvature condition. The main goal of this section is to prove

    \begin{teo}  \label{corolario da extensao do teorema do Catino}
        Let $X$ be a complete six-dimensional Riemannian manifold. Suppose that there exists $\alpha \in (1,\frac{5}{4}]$ so that $BRic_\alpha^X$ is uniformly positive. If $M$ is a complete, two-sided, finite index minimal hypersurface immersed in $X$, then $M$ is compact.
    \end{teo}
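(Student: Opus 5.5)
The plan is to argue by contradiction, reducing to a weighted spectral Ricci lower bound on $M$ and then to the spectral Bonnet--Myers theorem of Antonelli--Xu, in the form the paper announces as Theorem \ref{adaptacao do K. Xu dimensional constraints}, the adaptation of K. Xu's sharp dimensional result. Suppose $M^5\subset X^6$ is complete, two-sided, minimal, of finite index, and non-compact. By Proposition 1 in \cite{Fischer-Colbrie}, there is a compact $K\subset M$ such that $M\setminus K$ is strongly stable. Fix a uniform bound $BRic_\alpha^X\ge \kappa>0$.

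The key pointwise step is to show that, on $M$, the quantity $\gamma\,(Ric_X(\nu)+|A|^2)+\lambda_{Ric}^M$ is bounded below by $c\kappa$ for a suitable weight $\gamma>0$. Take a unit $e\in T_pM$ realizing $\lambda^M_{Ric}(p)$. The Gauss equation with $H=0$ gives
\[
Ric_M(e)=Ric_X(e)-sec_X(e\wedge\nu)-|Ae|^2 .
\]
Choose $\gamma=\alpha^{-1}$, so that
\[
\gamma Ric_X(\nu)+Ric_X(e)-sec_X(e\wedge\nu)=\alpha^{-1}BRic_\alpha^X(\nu,e)\ge \alpha^{-1}\kappa .
\]
The second fundamental form contributes $\alpha^{-1}|A|^2-|Ae|^2$. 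For traceless $A$ in dimension $5$, the sharp estimate gives $|Ae|^2\le \tfrac{4}{5}|A|^2$. Hence the contribution is nonnegative exactly when $\alpha^{-1}\ge \tfrac45$, that is, when $\alpha\le\tfrac54$. This is where the upper endpoint of the range of $\alpha$ in the statement comes from. Combining with strong stability on $M\setminus K$ gives
\[
\int_M \alpha^{-1}|\nabla\phi|^2+\lambda^M_{Ric}\,\phi^2\ \ge\ \alpha^{-1}\kappa\int_M\phi^2, \qquad \phi\in C^\infty_0(M\setminus K).
\]

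Next I would apply the spectral Bonnet--Myers theorem (Theorem \ref{adaptacao do K. Xu dimensional constraints}) to the operator $-\gamma\Delta+\lambda_{Ric}$ with $\gamma=\alpha^{-1}$. In dimension $n=5$, the admissible range for diameter bounds is $\gamma<\frac{4}{n-1}=1$, which is equivalent to $\alpha>1$. This explains the lower endpoint of the range of $\alpha$. Since the estimate holds only outside $K$, I would use the adapted version that only requires the spectral condition away from a compact set. That version should conclude that every point of $M$ lies within a bounded distance of $K$, so $M$ is bounded and, being complete, compact. Concretely, I would take a positive solution $w$ of $-\gamma\Delta w+(\lambda_{Ric}-\alpha^{-1}\kappa)w= 0$ on $M\setminus K$, using \cite{fischer-colbrie--schoen}. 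I would then run the warped $\mu$-bubble/second-variation argument of Xu along long minimizing geodesic rays from $K$, obtaining a contradiction once a ray has length exceeding a constant depending only on $(\alpha,\kappa)$.

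The main obstacle is this localized spectral Bonnet--Myers step: controlling the boundary effects near $K$ while staying in the sharp range $\gamma<1$. I would follow Xu's one-dimensional reduction along a segment from $\partial K$, with the weight $w^{\gamma'}$, and check that boundary terms at $\partial K$ have a favorable sign or are absorbed by choosing the segment long. The algebraic step $|Ae|^2\le \frac{n-1}{n}|A|^2$ for traceless $A$ is routine.
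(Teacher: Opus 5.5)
Your proposal is correct and follows the paper's proof essentially step for step. The paper also sets $\gamma=1/\alpha$ and uses finite index to get strong stability off a ball. It then bounds $\gamma(Ric_X(\nu)+|A|^2)+\lambda^M_{Ric}$ below by $\gamma\, BRic^X_\alpha(\nu,e_1)$, using $\gamma|A|^2-|Ae_1|^2\ge 0$, which is exactly where $\alpha\le\frac54$ enters. Finally it contradicts Theorem \ref{adaptacao do K. Xu dimensional constraints} in dimension five, where the requirement $\gamma<1$ is precisely $\alpha>1$.

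The localization you flag as the main obstacle is already built into that theorem, which only asks for the spectral bound on test functions supported outside a ball. The paper proves it by placing a warped $\mu$-bubble in a band $Y\subset\subset M\setminus B_R(p)$ far from the ball, not by working along geodesic segments from $\partial K$. As a result, no boundary terms at $\partial K$ ever arise.
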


        Compare the statement of Theorem \ref{corolario da extensao do teorema do Catino} with the statement of Proposition 2.4 in \cite{Qintao}. In order to prove Theorem \ref{corolario da extensao do teorema do Catino}, we first adapt a sharp result by K. Xu (\cite{KXuDimensionalConstraints}, Theorem 1.10), which applies for Riemannian manifolds with uniformly positive Ricci curvature in spectral sense. When dealing with finite index CMC hypersurfaces, the spectral control over the Ricci curvature is naturally verified in the complement of a compact domain. This motivated the following adaptation of the aforementioned work.

    \begin{teo}[\textit{cf.} \cite{KXuDimensionalConstraints}, Theorem 1.10] \label{adaptacao do K. Xu dimensional constraints}
        Let $(M^n,g)$ be a complete and non-compact Riemannian manifold with dimension $3 \le n \le 7$. Suppose that $0 \le \gamma < \frac{4}{n-1}$ when $n>3$, or $0 \le \gamma \le 2$ when $n=3$. Then
        \begin{equation*}
            \sup_{p \in M, \,R>0} \,\sup_{\phi \in C^{\infty}_0(M\backslash B_R(p))}  \frac{\int_M \gamma |\nabla\phi|^2 + \lambda_{Ric}\phi^2 }{\int_M \phi^2 } \le 0.
        \end{equation*}
    \end{teo}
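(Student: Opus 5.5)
Argue by contradiction, following K. Xu's proof of Theorem 1.10 in \cite{KXuDimensionalConstraints}, and localize everything to the complement of a large ball. Suppose there exist $p \in M$, $R>0$ and $\delta>0$ such that
\begin{equation*}
\int_M \gamma |\nabla \phi|^2 + \lambda_{Ric}\phi^2 \ge \delta \int_M \phi^2 \quad \text{for all } \phi \in C^\infty_0(M \backslash B_R(p)).
\end{equation*}
As in the proof of Lemma \ref{Aplicacao à geometria Riemanniana v2}, use Theorem 1 of \cite{fischer-colbrie--schoen} on an exhaustion of $M\backslash \overline{B_R(p)}$ to produce a positive function $u$ on $M \backslash \overline{B_{R}(p)}$ with $-\gamma\Delta u + (\lambda_{Ric}-\delta/2) u \ge 0$. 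Approximate $\lambda_{Ric}$ by a smooth function if needed, losing $\delta/4$. This is the spectral Ricci lower bound $\lambda_{Ric} \ge \delta/2$ in the weighted sense, but only on an exterior region with boundary.

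The goal is a spectral Bonnet--Myers estimate in the style of \cite{Antonelli-Xu} on the exterior region. Pick a divergent point $q_j$ with $d(q_j, B_R(p)) \to \infty$. Xu's argument considers either a warped $\mu$-bubble or, more directly, the minimizing geodesic from $\partial B_R(p)$ to $q_j$. It then runs the second variation of length weighted by $u^{\gamma'}$ for a suitable exponent, as in the spectral Bonnet--Myers theorem. Since the constraint $\gamma < \frac{4}{n-1}$ (resp. $\gamma \le 2$ for $n=3$) is exactly the range where that theorem holds, I would follow Antonelli--Xu verbatim. Their argument yields that any minimizing geodesic segment lying entirely in the region where $u$ is defined has length bounded by $D=D(n,\gamma,\delta)$.

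The main obstacle is that the geodesic starts on $\partial B_R(p)$, where $u$ may degenerate. To handle it, I would use the free-boundary/one-sided version. Take $\gamma$ minimizing from the level set $\partial B_{R+1}(p)$ to $q_j$, which stays in $M\backslash B_R(p)$. The first variation at the initial point is then orthogonality, and test functions vanishing at the left endpoint are admissible. In the weighted second variation one uses test functions $\varphi$ with $\varphi(0)=0$, and the spectral Bonnet--Myers computation goes through with only an additive constant change in the bound on $D$. Concretely, the weighted length functional $\int_\gamma u^{\gamma'}$ is minimized among curves from $\partial B_{R+1}(p)$ to $q_j$ staying away from $B_R(p)$. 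Existence of this minimizer uses completeness and the positivity of $u$ on compacts of the exterior region, after shrinking the region slightly and using a cutoff near $\partial B_R(p)$ if necessary. Its second variation then gives the length bound $D$.

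Finally, since $M$ is complete and non-compact, $d(q_j,\partial B_{R+1}(p)) \to \infty$, which contradicts the uniform bound $D$. Hence no such $\delta>0$ exists, and the double supremum is $\le 0$. The case $n=3$, $\gamma=2$ is the endpoint of Xu's range and is treated identically using his sharp statement.
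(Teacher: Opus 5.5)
Your proposal is correct in substance, but it takes a different route from the paper. You also rightly read the inner supremum as the bottom of the spectrum, which is how the paper's proof and its applications use the statement.

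\textbf{The paper's route.} Following Xu, the paper argues in codimension one. Inside $M\setminus B_R(p)$ it takes a band $Y$ of width about $4\pi/B$ and solves $-\gamma\Delta w=\bar V w$. It then builds a warped $\mu$-bubble with weight $w^{\gamma}$ and shows that the bubble's stability inequality, tested with $v^{-1}$ for $v=w^{\gamma}$, cannot hold. For $n>3$, the constant in the choice of $B$ is positive exactly when $\gamma<4/(n-1)$.

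\textbf{Your route.} You run the one-dimensional version: a minimizer of $\int_\sigma u^{\gamma}\,ds$, i.e.\ a geodesic of $\tilde g=u^{2\gamma}g$. This does close up. Put $f=\gamma\ln u$, use $g$-arclength $s\in[0,\ell]$, and sum the second variation over $n-1$ $\tilde g$-parallel normal fields. Along the curve, $\widetilde{Ric}(X,X)=Ric(X,X)-(n-2)f''-\Delta f$, because the $|\nabla f|^2$ terms cancel via the Euler--Lagrange equation $\nabla_XX=(\nabla f)^\perp$. Now use $\gamma\Delta u\le(\lambda_{Ric}-\delta/2)u$ and $\lambda_{Ric}\le Ric(X,X)$, integrate the $f''$ term by parts, and set $\varphi=e^{f/2}\psi$. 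You obtain
\begin{equation*}
\int_0^\ell (n-1)(\psi')^2-(n-3)f'\psi\psi'-\left(\frac{1}{\gamma}-\frac{n-1}{4}\right)(f')^2\psi^2\,ds\;\ge\;\frac{\delta}{2}\int_0^\ell\psi^2\,ds
\end{equation*}
for every $\psi$ vanishing at both endpoints. For $n>3$, Young's inequality gives $\ell\le D(n,\gamma,\delta)$ exactly when $\gamma<4/(n-1)$. For $n=3$ the cross term vanishes and $\gamma\le 2$ suffices. So the endpoint case comes out of this same computation, not from appealing to Xu's sharp statement.

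\textbf{Comparison.} Your route needs no $\mu$-bubble existence or regularity theory, so the restriction $n\le 7$ is unnecessary. The paper's route stays inside the $\mu$-bubble machinery used throughout, and the barrier $h\to\pm\infty$ makes its variational problem compact for free.

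\textbf{What needs repair.} Compactness of your variational problem is the step your write-up gets wrong. Completeness of $g$ says nothing about completeness of $\tilde g$. Since $u$ may decay at infinity, minimizing sequences of curves from $\partial B_{R+1}(p)$ to $q_j$ can escape. A cutoff of $u$ near $\partial B_R(p)$ would also destroy the differential inequality for $u$.

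The fix is to work in a compact band:
\begin{itemize}
\item Choose $q$ with $d(p,q)=R+2+D$, and set $K=\overline{B_S(p)}\setminus B_{R+1}(p)$ with $S=R+2D+4$.
\item Minimize $\tilde g$-length among curves in $K$ from $q$ to $\partial K$. Since $u$ is bounded above and below on $K$, Arzel\`a--Ascoli gives a minimizer.
\item By minimality, the interior of this curve lies in the open band, where it is a smooth $\tilde g$-geodesic.
\item Test functions vanishing at both endpoints suffice, so you need no orthogonality condition at the possibly non-smooth $\partial B_{R+1}(p)$.
\item The curve's $g$-length is at least $d_g(q,\partial K)\ge D+1$, which contradicts the bound $\ell\le D$.
\end{itemize}
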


    \begin{proof}
        We argue by contradiction. Suppose that there exist $p \in M$ and $R>0$ such that $$\sup_{\phi \in C^{\infty}_0(M\backslash B_R(p))}\frac{\int_M \gamma |\nabla\phi|^2 + \lambda_{Ric}\phi^2 }{\int_M \phi^2 }   =: 3 \lambda>0.$$ 

        We approximate the continuous function $\lambda_{Ric}$ to get $V \in C^{\infty}(M)$ such that $||V - \lambda_{Ric}||_{C^0(M)} < \lambda$. Note that 
        \begin{equation*}
            \sup_{\phi \in C^{\infty}_0(M\backslash B_R(p))} \frac{\int_M \gamma |\nabla\phi|^2 + V\phi^2 }{\int_M \phi^2 }   \ge  2\lambda >0.
        \end{equation*}
        Let $\bar V := 2\lambda - V$ and note that $\bar V \ge \lambda - \lambda_{Ric}$. Using Theorem 1 in \cite{fischer-colbrie--schoen} we can find a smooth function $w>0$ such that $-\gamma \Delta w = \bar V w $ on $M \backslash B_R(p)$.

        We use Lemma \ref{standard mu-bubble prescribing function h} with 
        \begin{equation*}
            B <\min \{\left[ \frac{1}{n-1} - \frac{1}{4}(\frac{n-3}{n-1})^2 \frac{1}{\frac{1}{n-1}+\beta-1}\right], \xi \beta^{-1 }\}
        \end{equation*}
        
        \noindent where $\beta := \frac{1}{\gamma}$ and $\xi := \beta \lambda > 0$, to construct $Y \subset \subset M \backslash B_R(p)$, and the corresponding smooth prescribing function $h: Y \to \mathbb{R}$ satisfying $|\nabla h| \le B(1+h^2)$. Let $a := \gamma$. Associated to $Y$, $a$, $w$ and $h$, there is a $\mu$-bubble $\hat \Sigma$, as in Theorem \ref{definicao de mu-bubble}. We compute $\Delta w =  -\beta \bar V w  \le -\beta(\lambda-\lambda_{Ric})w = \beta \lambda_{Ric}w - \beta \lambda w$. Hence $\Delta w \le \beta \lambda_{Ric}w - \xi w$.
        
         We consider $v := w^a$ and compute 
         \begin{equation*}
             \Delta v \le \lambda_{Ric} v - \xi\beta^{-1}v + (1-\beta)v^{-1}|\nabla v|^2.
         \end{equation*}

        Plugging $\varphi = v^{-1}$ in the stability inequality, and using the above inequality, one can prove, as in \cite{KXuDimensionalConstraints}, that under the hypotheses about $\gamma$ of this theorem the stability inequality of $\Sigma$ produces a contradiction.
    \end{proof}

    \begin{proof}[Proof of Theorem \ref{corolario da extensao do teorema do Catino}]

Let $\gamma := \frac{1}{\alpha}$, so that $\frac{4}{5} \le \gamma < 1$. Suppose that $BRic_\alpha^X\ge c > 0$. 
     In order to reach a contradiction, suppose that $M$ is non-compact. The finite index hypothesis on $M$ guarantees that there exists $R>0$ such that
    \begin{equation*}
        \int_M |\nabla f|^2 \ge \int_M (Ric(\nu)+ |A|^2)f^2   
    \end{equation*}

    \noindent for every $f \in C^{\infty}_0(M \backslash B_R)$, thus
    \begin{equation}
        \int_M \gamma|\nabla f|^2+\lambda_{Ric}^Mf^2 \ge \int_M [\gamma(Ric(\nu)+ |A|^2)+\lambda_{Ric}^M]f^2.   \label{desigualdade que vem da hipotese de indice finito v1}
    \end{equation}

    Fix $p \in M$ and pick $e_1 \in T_p M$ so that $\lambda_{Ric}^M(p) = Ric_M(e_1)$. Using an orthonormal basis $e_1,\dots,e_5$, we compute at $p$:
    \begin{equation*}
            \gamma(Ric(\nu)+ |A|^2)+\lambda_{Ric}^M = \gamma Ric(\nu)+ \sum_{i=2}^5 sec_X(e_1 \land e_i) - |Ae_1|^2 + \gamma |A|^2.
    \end{equation*}

    Note that 
    \begin{equation*}
    \begin{split}
        \gamma Ric(\nu)+ \sum_{i=2}^5 sec_X(e_1 \land e_i) &= \gamma \Big( Ric(\nu) + \frac{1}{\gamma}\sum_{i=2}^5 sec_X(e_1 \land e_i) \Big) = \gamma  BRic_\alpha(\nu,e_1) \ge \gamma c > 0,
    \end{split}
    \end{equation*}

    \noindent because $ BRic_\alpha^X(\nu,e_1) \ge c > 0$. Moreover    
    \begin{equation*}
        \gamma|A|^2 - |Ae_1|^2 = \gamma \sum_{i,j} a_{ij}^2 - \sum_{i} a_{i1}^2 \ge \gamma \sum_i a_{ii}^2 - a_{11}^2 \ge \gamma a_{11}^2 +   \gamma \cdot \frac{1}{4}a_{11}^2 - a_{11}^2 \ge 0,
    \end{equation*}

    \noindent since $\frac{5}{4}\gamma \ge 1$. We have used that $\sum_{i=2}^5 a_{ii}^2 \ge \frac{1}{4} (\sum_{i=2}^5 a_{ii})^2 = \frac{1}{4}a_{11}^2$, which follows from the Cauchy-Schwarz inequality and the fact that $H=0$. 
    
    These computations were made at the point $p \in M$, which is arbitrary. Therefore, from \eqref{desigualdade que vem da hipotese de indice finito v1} we find
        \begin{equation*}
        \int_M \gamma|\nabla f|^2+\lambda_{Ric}^Mf^2 \ge c\gamma \int_M f^2
    \end{equation*}

        \noindent for every $f \in C^{\infty}_0(M \backslash B_R)$. This contradicts Theorem \ref{adaptacao do K. Xu dimensional constraints}.        
    \end{proof}

    \begin{obs}
        We now use Theorem \ref{quarto teorema principal k > 0} and Theorem \ref{corolario da extensao do teorema do Catino} to study finite index CMC hypersurfaces immersed in ambient spaces of the form $X:=\mathbb{S}^k \times \mathbb{R}^{6-k}$, where $ 0 \le k \le 6$, which are Riemannian product spaces between a round sphere and a Euclidean factor. Let $M$ be a complete two-sided finite index CMC hypersurface immersed in $X$. By Theorem \ref{quarto teorema principal k > 0}, $M$ is either minimal or compact. If $k = 5$ or $k=6$, then $X$ further satisfies the curvature hypothesis of Theorem \ref{corolario da extensao do teorema do Catino} and we conclude that $M$ is necessarily compact. On the other hand, if $t_0 \in \mathbb{R}$ and $k \le 4$, then $\mathbb{S}^{k} \times \mathbb{R}^{5-k} \times \{t_0\}$ is a complete non-compact two-sided strongly stable minimal hypersurface embedded in $\mathbb{S}^{k} \times \mathbb{R}^{6-k}$, and thus Theorem \ref{quarto teorema principal k > 0} is optimal in this class of examples.
    \end{obs}

\appendix

\section{Lower dimensions} \label{section::Lower dimensions}

In this section, we show that the methods developed in this paper provide new proofs of results due to X. Cheng \cite{XuCheng} and Q. Deng \cite{Qintao} on the compactness of complete finite index CMC hypersurfaces immersed in Riemannian manifolds $X^{n+1}$, where $n+1 \in \{4,5\}$. The reader is referred to the work of A. da Silveira \cite{AlexandreDaSilveira}, F. Lopez and A. Ros \cite{LopezRos}, D. Fischer-Colbrie and R. Schoen \cite{fischer-colbrie--schoen} and K. Frensel \cite{K.Frensel--long} for results of a similar nature when the ambient dimension is three. Recall the notation from \eqref{def de lambda-Ricci} and \ref{def de Lambda-bi-Ricci}; here we also use the notation $\lambda_{BRic}^X (p):= \Lambda_1^X(p)$.

		\begin{teo}[\textit{cf.} \cite{XuCheng}, Proposition 2.1] \label{teorema da Xu Cheng}
                Let $X^{n+1}$ be a Riemannian manifold of dimension $n+1 \in \{4,5\}$. Suppose that $\lambda_{BRic}^X \ge - \frac{(5-n)}{4}H^2 + \delta$ for some $\delta>0$ and $H \in \mathbb{R}$. Let $M$ be a complete, two-sided CMC hypersurface immersed in $X$ with mean curvature $H$. If $M$ has finite index, then $M$ is compact.
		\end{teo}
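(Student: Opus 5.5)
The plan is to argue by contradiction via Theorem \ref{adaptacao do K. Xu dimensional constraints}, in the same way Theorem \ref{corolario da extensao do teorema do Catino} was proved, now keeping track of the mean curvature. Suppose $M$ is non-compact, complete, two-sided, with finite index and mean curvature $H$. By Proposition 1 of \cite{Fischer-Colbrie}, there is $R>0$ such that $M\backslash B_R(p)$ is strongly stable. Hence for every $f\in C^\infty_0(M\backslash B_R(p))$ and any $\gamma>0$,
\begin{equation*}
\int_M \gamma|\nabla f|^2+\lambda^M_{Ric}f^2 \ge \int_M\big[\gamma(Ric(\nu)+|A|^2)+\lambda^M_{Ric}\big]f^2 .
\end{equation*}
I will take $\gamma=1$. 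This is admissible in Theorem \ref{adaptacao do K. Xu dimensional constraints} for both $n=3$ ($\gamma\le 2$) and $n=4$ ($\gamma<\frac43$). The proof then reduces to the pointwise bound $Ric(\nu)+|A|^2+\lambda^M_{Ric}\ge\delta$, which contradicts that theorem.

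For the pointwise estimate, fix $p$ and choose a unit vector $e_1$ with $\lambda^M_{Ric}(p)=Ric_M(e_1)$. By the Gauss equation,
\begin{equation*}
Ric(\nu)+\lambda^M_{Ric}=Ric_X(\nu)+Ric_X(e_1)-sec_X(e_1\wedge\nu)+HA_{11}-|Ae_1|^2,
\end{equation*}
and the ambient part is $BRic^X(\nu,e_1)\ge\lambda^X_{BRic}\ge-\frac{5-n}{4}H^2+\delta$. It therefore remains to show
\begin{equation*}
|A|^2+HA_{11}-|Ae_1|^2\ge\frac{5-n}{4}H^2 .
\end{equation*}
Here $|A|^2-|Ae_1|^2=\sum_{i,j\ge2}a_{ij}^2+\sum_{j\ge2}a_{1j}^2\ge\sum_{i\ge2}a_{ii}^2\ge\frac{(H-a_{11})^2}{n-1}$ by Cauchy--Schwarz. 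Writing $x=a_{11}$, we need
\begin{equation*}
q(x)=\frac{(H-x)^2}{n-1}+Hx\ge\frac{5-n}{4}H^2 .
\end{equation*}
Minimizing in $x$, the minimum is attained at $x=H-\frac{n-1}{2}H$, and the minimum value is
\begin{equation*}
H^2\Big(1-\frac{n-1}{4}\Big)=\frac{5-n}{4}H^2 .
\end{equation*}
So the inequality holds exactly, with no slack, and the $\delta$ from the hypothesis supplies the uniform positivity.

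Combining the two steps gives
\begin{equation*}
\int_M|\nabla f|^2+\lambda^M_{Ric}f^2\ge\delta\int_M f^2
\end{equation*}
for all $f$ supported outside $B_R(p)$. This contradicts Theorem \ref{adaptacao do K. Xu dimensional constraints} with $\gamma=1$, since the dimension $n\in\{3,4\}$ lies in the allowed range $3\le n\le7$. Hence $M$ is compact.

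The main point to check is the algebra: the worst case $a_{11}=\frac{3-n}{2}H$ must match the constant $\frac{5-n}{4}$ exactly. I must also confirm that the $\gamma=1$ endpoint is permitted when $n=4$, which it is because $1<\frac43$.
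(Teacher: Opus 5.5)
Your proof is correct and follows the paper's argument. You get strong stability outside a ball, then use the Gauss-equation identity $Ric_X(\nu)+Ric_M(e_1)=BRic^X(\nu,e_1)+HA_{11}-|Ae_1|^2$, then the algebraic bound $|A|^2+HA_{11}-|Ae_1|^2\ge\frac{5-n}{4}H^2$, and conclude by contradiction with Theorem \ref{adaptacao do K. Xu dimensional constraints} at $\gamma=1$; the only difference is that you prove the algebraic bound directly (Cauchy--Schwarz plus minimizing the quadratic in $a_{11}$), whereas the paper quotes it as Lemma 2.1 of \cite{XuCheng}.
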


\begin{proof} 
    Suppose, by contradiction, that there exists a complete, non-compact, two-sided CMC hypersurface $M$ immersed in $X$ with finite index and mean curvature $H$.

    The finite index hypothesis guarantees that there exists $R>0$ such that
    \begin{equation}
        \int_M |\nabla f|^2 \ge \int_M (Ric(\nu)+ |A|^2)f^2   \label{desigualdade que vem da hipotese de indice finito}
    \end{equation}

    \noindent for every $f \in C^{\infty}_0(M \backslash B_R)$. Taking traces on the Gauss formula, we prove that
\begin{equation} \label{primeiro traço na eq Gauss}
    Ric_X(\nu) = BRic_X(\nu) - Ric_M(z,z)  + H \langle Az,z \rangle - |Az|^2,
\end{equation} 

\noindent for any unit vector $z \in TM$. Moreover, by Lemma 2.1 in \cite{XuCheng}, 
\begin{equation} \label{estimativa da Xu Cheng}
    H \langle Az,z \rangle - |Az|^2 +|A|^2 \ge \frac{(5-n)}{4}H^2,
\end{equation}

\noindent for any unit vector $z \in TM$. For a fixed $p \in M$, we pick $z \in T_p M$ so that $Ric_M(z,z) = \lambda^M_{Ric}(p)$ and use \eqref{primeiro traço na eq Gauss} and \eqref{estimativa da Xu Cheng} to prove $\lambda_{Ric}^M + Ric(\nu)+ |A|^2  \ge \lambda_{BRic}^X + \frac{(5-n)}{4}H^2$. 

Therefore, we obtain from \eqref{desigualdade que vem da hipotese de indice finito} and the curvature hypothesis on $X$ that 
    \begin{equation*}   
        \int_M |\nabla f|^2 + \lambda_{Ric}^M f^2  \ge \delta \int_M f^2 ,
    \end{equation*}

    \noindent for every $f \in C^{\infty}_0(M \backslash B_R)$. This contradicts Theorem \ref{adaptacao do K. Xu dimensional constraints}.      
\end{proof}

Notice that the hypotheses of the theorem regarding the curvature of the ambient space and the mean curvature of the hypersurface are verified when $X$ has non-negative bi-Ricci curvature and $M$ is non-minimal. The theorem also applies when $X$ has uniformly positive bi-Ricci curvature and $M$ is minimal. When $X$ is the hyperbolic space of dimension $4$ or $5$, the claim of the above result has been improved and we will give a new proof of this improved version shortly.

One of the main difficulties in the proof of Theorem \ref{teorema da Xu Cheng} lies in the estimate of the Ricci curvature of the CMC hypersurface in terms of its mean curvature and other extrinsic curvature terms. When the ambient space has constant curvature, one can improve this estimate, as we prove shortly. To our knowledge, the following two results, which can be deduced from the work of Q. Deng \cite{Qintao}, contain the best known estimates for the critical mean curvature value defined in Remark \ref{existence da curvatura media critica remark}, when the ambient space is one of the hyperbolic spaces $\mathbb{H}^4$ or $\mathbb{H}^5$.

\begin{teo}[\textit{cf.} \cite{Qintao}] \label{primeiro teo do Qintao}
    Let $M$ be a complete CMC hypersurface immersed in $\mathbb{H}^4$ with mean curvature $|H| > 3+\varepsilon$, for $\varepsilon = \frac{1}{7}(8\sqrt{7}-21)$. If $M$ has finite index, then $M$ is compact.  
\end{teo}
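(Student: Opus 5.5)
The plan is to follow the argument of Theorem \ref{teorema da Xu Cheng}, replacing the general estimate \eqref{estimativa da Xu Cheng} by a sharper one that uses the constant curvature $-1$ of $\mathbb{H}^4$. We argue by contradiction. Suppose $M$ is complete, non-compact, with finite index and $|H| > 3+\varepsilon$, and orient it so that $H>0$. Finite index gives $R>0$ such that \eqref{desigualdade que vem da hipotese de indice finito} holds for every $f \in C^\infty_0(M\backslash B_R)$. To contradict Theorem \ref{adaptacao do K. Xu dimensional constraints} with $n=3$, we need some $\gamma \in [0,2]$ and some $c>0$ such that
\begin{equation*}
\gamma\big(Ric_X(\nu)+|A|^2\big)+\lambda^M_{Ric} \ge c
\end{equation*}
pointwise on $M$. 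Multiplying the stability inequality by $\gamma$ then gives $\int_M \gamma|\nabla f|^2+\lambda^M_{Ric}f^2 \ge c\int_M f^2$ on $M\backslash B_R$, which is the desired contradiction. Keeping $\gamma$ free, rather than fixing $\gamma=1$ as in Theorem \ref{teorema da Xu Cheng}, is the extra flexibility I expect to need to reach the threshold $3+\varepsilon$.

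The key step is the pointwise computation. In $\mathbb{H}^4$ we have $Ric_X(\nu)=-3$. By the Gauss equation, for a unit $z\in T_pM$,
\begin{equation*}
Ric_M(z,z) = -2 + H\langle Az,z\rangle - |Az|^2.
\end{equation*}
Diagonalize $A$ with eigenvalues $k_1,k_2,k_3$ and take $z=e_1$ to be the direction realizing $\lambda^M_{Ric}$. It then suffices to bound from below
\begin{equation*}
F(k)=\gamma\big(-3+k_1^2+k_2^2+k_3^2\big) - 2 + k_1(k_2+k_3),
\end{equation*}
subject to $k_1+k_2+k_3=H$. For fixed $k_1$, the sum $k_2^2+k_3^2$ is minimized when $k_2=k_3=(H-k_1)/2$. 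This reduces $F$ to a quadratic in $k_1$, namely
\begin{equation*}
q(k_1)=\gamma k_1^2+\tfrac{\gamma}{2}(H-k_1)^2 + k_1(H-k_1) - 3\gamma - 2.
\end{equation*}
This quadratic has leading coefficient $\tfrac{3\gamma}{2}-1$, so we need $\gamma>\tfrac23$. Its minimum over $k_1$ is of the form $c(\gamma)H^2 - 3\gamma - 2$ for an explicit rational function $c(\gamma)$.

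The remaining task, and the main obstacle, is to optimize over $\gamma\in(\tfrac23,2]$. We want the smallest $H$ with $c(\gamma)H^2>3\gamma+2$, and we have to check that this critical value equals $3+\tfrac17(8\sqrt7-21)=\tfrac{8}{\sqrt7}$, that is, $H^2=\tfrac{64}{7}$. If the naive reduction above does not reproduce exactly this constant, I would try two refinements. The first is to use the spectral form more cleverly, keeping part of $\gamma|A|^2$ through a refined Kato-type term as in Deng's work \cite{Qintao}. The second is to allow $\gamma$ in the whole admissible range $0\le\gamma\le 2$ of Theorem \ref{adaptacao do K. Xu dimensional constraints} for $n=3$. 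Once $|H|>8/\sqrt7$ gives $\min q>0$ uniformly, we take $c:=\min q$, and the contradiction with Theorem \ref{adaptacao do K. Xu dimensional constraints} shows that $M$ is compact.
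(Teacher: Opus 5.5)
This is essentially the paper's proof. The paper also contradicts Theorem \ref{adaptacao do K. Xu dimensional constraints} through a pointwise lower bound for $\gamma(Ric_X(\nu)+|A|^2)+\lambda^M_{Ric}$, using that $Ric_M=-2g+HA-A^2$ is diagonal in the principal frame (which justifies your choice $z=e_1$) and the same estimate $k_2^2+k_3^2\ge\frac12(H-k_1)^2$, except that it fixes $\gamma=2$ from the outset. The optimization you left open closes without any refinement: $\min_{k_1}q=\frac{2\gamma^2-1}{2(3\gamma-2)}H^2-3\gamma-2$, which for $\gamma\in(\frac23,2]$ is positive exactly when $2\gamma^2>1$ and $H^2>9+\frac{1}{2\gamma^2-1}$; this threshold decreases in $\gamma$, so the endpoint $\gamma=2$ allowed by Theorem \ref{adaptacao do K. Xu dimensional constraints} is optimal and gives precisely $H^2>\frac{64}{7}$, i.e. $|H|>\frac{8}{\sqrt7}=3+\varepsilon$.
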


\begin{teo}[\textit{cf.} \cite{Qintao}] \label{segundo teo do Qintao}
    Let $M$ be a complete CMC hypersurface immersed in $\mathbb{H}^5$ with mean curvature $|H| > 4+\varepsilon$, for $\varepsilon = \frac{1}{37}(10\sqrt{259}-148)$. If $M$ has finite index, then $M$ is compact.  
\end{teo}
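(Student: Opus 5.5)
We argue exactly as in the proof of Theorem \ref{teorema da Xu Cheng}, replacing Lemma 2.1 of \cite{XuCheng} by a sharper pointwise estimate that uses the constant curvature of $\mathbb{H}^5$. Suppose, by contradiction, that $M$ is complete, non-compact, has finite index and $|H|>4+\varepsilon$; after changing the orientation we may take $H>0$ (if $M$ is one-sided we pass to the two-sided double cover, which still has finite index). By Proposition 1 in \cite{Fischer-Colbrie} there is $R>0$ such that \eqref{desigualdade que vem da hipotese de indice finito} holds for every $f\in C^\infty_0(M\backslash B_R)$. In $\mathbb{H}^5$ we have $Ric(\nu)=-4$. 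By the Gauss equation, for a unit vector $z\in T_pM$ we have $Ric_M(z,z)=-3+H\langle Az,z\rangle-|Az|^2$. Adding these,
\begin{equation*}
\lambda^M_{Ric}+Ric(\nu)+|A|^2 = -7 + \min_{|z|=1}\big(H\langle Az,z\rangle - |Az|^2\big) + |A|^2 .
\end{equation*}
It therefore suffices to find $\gamma\le 1$ in the admissible range of Theorem \ref{adaptacao do K. Xu dimensional constraints} (for $n=4$ this range is $\gamma<\frac{4}{3}$) and $\delta>0$ such that, pointwise,
\begin{equation*}
\gamma\big(Ric(\nu)+|A|^2\big)+\lambda_{Ric}^M\ge\delta .
\end{equation*}
Given this bound, multiplying \eqref{desigualdade que vem da hipotese de indice finito} by $\gamma$ gives $\int_M\gamma|\nabla f|^2+\lambda^M_{Ric}f^2\ge\delta\int_M f^2$ for every $f\in C^\infty_0(M\backslash B_R)$, which contradicts Theorem \ref{adaptacao do K. Xu dimensional constraints}. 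Keeping $\gamma$ free, rather than fixing $\gamma=1$, is exactly the extra flexibility over Theorem \ref{teorema da Xu Cheng}.

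The key step is the algebraic one. We diagonalize $A$ with eigenvalues $a_1,\dots,a_4$ and take $z=e_1$ to be the minimizing direction, so that $\lambda^M_{Ric}=-3+a_1(H-a_1)$. We write $a_1=\frac{H}{4}+x$ and decompose the traceless part, using Cauchy--Schwarz as in \eqref{desig5 da prova do main-theorem com curvatura nao negativa}: since $\sum_{i\ge2}\Phi_{ii}=-x$, we get $|\Phi|^2\ge x^2+\frac{x^2}{3}=\frac{4}{3}x^2$. The quantity to be bounded below becomes
\begin{equation*}
-4\gamma-3+\frac{\gamma}{4}H^2+\frac{4\gamma}{3}x^2+\Big(\frac{H}{4}+x\Big)\Big(\frac{3H}{4}-x\Big),
\end{equation*}
which is a quadratic polynomial in $x$ with leading coefficient $\frac{4\gamma}{3}-1$. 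For $\gamma\in(\frac34,\frac43)$ this coefficient is positive, and minimizing in $x$ gives an explicit lower bound $c(\gamma)H^2-(4\gamma+3)$. We then substitute $H=4+t$ and choose $\gamma$ optimally, the optimum possibly lying above $1$ if that is still admissible in Theorem \ref{adaptacao do K. Xu dimensional constraints}. The threshold is expected to come out as the positive root of a quadratic in $t$, which should reproduce $\varepsilon=\frac{1}{37}(10\sqrt{259}-148)$; the factor $\sqrt{259}=\sqrt{7\cdot37}$ is consistent with optimizing in $\gamma$.

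The main obstacle is this optimization: identifying the exact $\gamma$, or equivalently the combination of the stability inequality and the Ricci control, that realizes Deng's constant. If the naive one-parameter family does not reproduce it, a second parameter must enter. The first alternative to try is to split $|A|^2$ differently between the $\gamma$-weighted stability term and the Ricci term, in the spirit of the Young-inequality parameter $\beta$ used in Proposition \ref{controle do bi-Ricci com peso da CMC estavel em ambiente de curvatura negativa}. The remaining steps are routine.
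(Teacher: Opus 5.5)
Your route is the paper's. The paper proves the $\mathbb{H}^4$ case (Theorem \ref{primeiro teo do Qintao}) by multiplying the stability inequality by the endpoint weight $\gamma=2$ of Theorem \ref{adaptacao do K. Xu dimensional constraints}, diagonalizing $Ric_M$ together with $A$, and minimizing a one-variable quadratic in the relevant eigenvalue of $\Phi$. It then states that the same strategy gives the $\mathbb{H}^5$ case.

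The step you leave open closes with your one-parameter family; no second parameter is needed. For $\gamma\in(\frac{\sqrt3}{2},\frac43)$, minimizing in $x$ gives
\[
\gamma\big(Ric(\nu)+|A|^2\big)+\lambda^M_{Ric}\ \ge\ \frac{4\gamma^2-3}{4(4\gamma-3)}\,H^2-(4\gamma+3).
\]
The right-hand side is a positive constant precisely when $H^2>16+\frac{12}{4\gamma^2-3}$. This threshold decreases in $\gamma$, so, as with $\gamma=2$ in $\mathbb{H}^4$, you push $\gamma$ to the end of the admissible range. At $\gamma=\frac43$ the threshold is $\frac{700}{37}$, i.e.\ $(4+t)^2=\frac{700}{37}$, whose positive root is $t=\frac{1}{37}(10\sqrt{259}-148)=\varepsilon$.

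Since $\gamma=\frac43$ itself is excluded when $n=4$, use the strict inequality $|H|>4+\varepsilon$ to choose some $\gamma<\frac43$ that still satisfies $H^2>16+\frac{12}{4\gamma^2-3}$.

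Finally, delete the requirement $\gamma\le 1$ in your second paragraph. With $\gamma=1$ you only get $|H|>2\sqrt7$, which Theorem \ref{teorema da Xu Cheng} already gives in $\mathbb{H}^5$.
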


The approximate values of the constants $\varepsilon$ above are $\varepsilon = \frac{1}{7}(8\sqrt{7}-21) \sim 0.024$ and $\varepsilon = \frac{1}{37}(10\sqrt{259}-148) \sim 0.350$, respectively. We give a proof for Theorem \ref{primeiro teo do Qintao}, but the same strategy works to prove Theorem \ref{segundo teo do Qintao}.

\begin{proof}[Proof of Theorem \ref{primeiro teo do Qintao}]
    Suppose, to reach a contradiction, that there exists a complete and non-compact CMC hypersurface $M$ with mean curvature $H > 3+\varepsilon$ immersed in $\mathbb{H}^4$ with finite index. By the Gauss equation, we have $Ric_M =  -(n-1)g + HA - A^2$. Hence $Ric_M \circ A = A \circ Ric_M$, so at every $x \in M$, the smallest eigenvalue of $Ric_x^M$ is attained by an eigenvector of the second fundamental form $A$ of $M$. 

       Therefore, $\lambda_{Ric} = -2 + \mu_1 H - \mu_1^2$, where $\mu_1$ is some eigenvalue of $A$. Let $p \in M$ and $R>0$ be such that $M \backslash B_R(p)$ is strongly stable. Then for every $f \in C^\infty_0(M\backslash B_R(p))$ we have
    \begin{equation*}
         \int_M 2|\nabla f|^2 + \lambda_{Ric}f^2 \ge \int_M \Big( 2 (-3+|A|^2) + \lambda_{Ric} \Big)f^2.
    \end{equation*}

    At each point, we compute, introducing the traceless second fundamental form $\Phi$,
    \begin{equation*}
    \begin{split}
          2 (-3+|A|^2) + \lambda_{Ric}  = 2(-3 + \frac{H^2}{3} + |\Phi|^2) + (-2 + \mu_1 H - \mu_1^2).
    \end{split}
    \end{equation*}

    Now we denote by $\nu_i := \mu_i - \frac{H}{3}$ the eigenvalues of $\Phi$ and use $H=3+\varepsilon$ to compute   \begin{equation*}
    \begin{split}
          2 (-3+|A|^2) + \lambda_{Ric}  &= 2(-3 + \frac{9+6\varepsilon + \varepsilon^2}{3} + |\Phi|^2) + (-2 + (\nu_1 +\frac{H}{3})(3+\varepsilon) - (\nu_1+\frac{H}{3})^2) \\
          &= 2|\Phi|^2 + C(\varepsilon) + (1+\frac{1}{3}\varepsilon)\nu_1 - \nu_1^2 \\
          &= C(\varepsilon) + \nu_1^2 + (1+\frac{1}{3}\varepsilon)\nu_1 +2(\nu_2^2+\nu_3^2) \\
          &\ge  C(\varepsilon) + 2\nu_1^2 + (1+\frac{1}{3}\varepsilon)\nu_1
    \end{split}
    \end{equation*}

    \noindent where we have used the inequality $2(\nu_2^2 + \nu_3^2) \ge (\nu_2+\nu_3)^2 = \nu_1^2$ that follows form the fact that $\Phi$ is traceless, and we have used the Cauchy-Schwarz inequality. Here $C(\varepsilon)=\frac{1}{3}(16\varepsilon+\frac{8}{3}\varepsilon^2)$. We compute
    \begin{equation*}
    \begin{split}
        2\nu_1^2 + (1+\frac{1}{3}\varepsilon)\nu_1 &= 2\Big(\nu_1^2 + \frac{1}{2}(1+\frac{1}{3}\varepsilon)\nu_1 \Big) \ge - \frac{1}{8}(1+\frac{1}{3}\varepsilon)^2
    \end{split}
    \end{equation*}

    \noindent and note that 
    \begin{equation*}
        C(\varepsilon) >  \frac{1}{8}(1+\frac{1}{3}\varepsilon)^2 \Leftrightarrow (48\varepsilon+8\varepsilon^2) >  \frac{1}{8}(9+6\varepsilon+\varepsilon^2) \Leftrightarrow \varepsilon> \frac{1}{7}(8\sqrt{7}-21)
    \end{equation*}

    Since this is our hypothesis, we arrive at a contradiction due to Theorem \ref{adaptacao do K. Xu dimensional constraints}.
\end{proof}

Finally, we show that the strategy developed in the proof of Theorem \ref{CMC de indice finito em ambiente com curvatura limitada por baixo} allows for an alternative proof of the following result of H. Hong \cite{HongH4}.

            \begin{teo}[\textit{cf.} \cite{HongH4}] \label{teo do Hong em H4}
                Let $M^3$ be a complete finite index CMC hypersurface immersed in the hyperbolic space $\mathbb{H}^4$ with mean curvature $|H|>3$. If $M$ has a finite number of ends and $H^1_c(M; \mathbb{R})$ is finite dimensional, then $M$ is compact.
            \end{teo}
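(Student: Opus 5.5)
We argue by contradiction, following the proof of item (2) of Theorem \ref{CMC de indice finito em ambiente com curvatura limitada por baixo}, and apply Lemma \ref{Aplicacao à geometria Riemanniana v2} with $k+1=4$, i.e. $k=3$. Suppose $M^3$ is complete, non-compact, of finite index, with $H>3$ after reversing the orientation if necessary. Since $\mathbb{H}^4$ is orientable and $H\neq 0$, $M$ is orientable. Since $\mathbb{H}^4$ has bounded geometry, $M$ has infinite volume by Theorem \ref{Katia Frensel volume infinito}. The condition $\frac{1}{3}H^2+\inf Ric_{\mathbb{H}^4} = \frac{H^2}{3}-3>0$ holds, so Theorem \ref{CMC com H>Hlambda tem lambda1>0} gives $\lambda_1(M)>0$. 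For a lower Ricci bound on $M$: by the finite index assumption, $M$ is strongly stable outside a compact set, and Theorem \ref{estimativa de curvatura mais geral possivel} with $K=1$ bounds $|A|$ there. Hence $|A|$ is bounded on all of $M$, and the Gauss equation gives $Ric_M\ge -C$.

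The remaining hypothesis is the spectral $\alpha$-bi-Ricci control \eqref{controle fora de uma bola} outside a ball $B_R(p)$ on which stability holds. For $k=3$ the admissible range is $\alpha\in(\frac12,\frac{2}{\sqrt2})$ and $0<a<m(\alpha,3)$, with $\alpha_*(3)=1$ and $m(1,3)=2$. I would take $\alpha=1$ and choose $a<2$ close to $2$. Plugging $\psi$ into the stability inequality shows it suffices to prove the pointwise bound
\[
a(Ric(\nu)+|A|^2)+BRic_1^M(e_1,e_2)\ge\delta>0
\]
for all orthonormal $e_1,e_2$. For this we use Lemma \ref{Proposição com a expressão do biRicci da subvariedade CMC} with constant curvature $-1$: the ambient terms contribute $-3a-4+1$ from $Ric(\nu)=-3$ and the sectional terms. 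We then substitute $H=3+\varepsilon$. The traceless inequalities \eqref{desig3 da prova do main-theorem com curvatura nao negativa}--\eqref{desig5 da prova do main-theorem com curvatura nao negativa} still apply, since $2a\ge 1+\alpha$ and now $k-2=1$. This reduces the expression to a quadratic polynomial in $(\varepsilon,\Phi_{11},\Phi_{22})$ plus linear terms coming from $H=3+\varepsilon$.

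The main obstacle is showing that this polynomial is uniformly positive for every $\varepsilon>0$, not just for large $\varepsilon$ as with the bound $H>7$ in Proposition \ref{controle do bi-Ricci com peso da CMC estavel em ambiente de curvatura negativa}. At $\varepsilon=0$ the constant term vanishes: indeed $\frac{a}{3}(a+2)\cdot 9 \cdot\frac{1}{a}$ type cancellation reflects the horosphere, which is borderline. So one needs the terms linear in $\Phi$ to be absorbed by the $\Phi$-quadratic part with room to spare, leaving a constant of order $c\,\varepsilon$. I would first try completing the square in $(\Phi_{11},\Phi_{22})$ exactly, rather than using Young's inequality with a fixed $\beta$. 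Then I would check that the resulting function of $\varepsilon$ has positive infimum on $(\varepsilon_0,\infty)$ for each $\varepsilon_0>0$, with $a,\alpha$ possibly tuned (a slightly smaller $\alpha$, or using $\lambda_{Ric}$-type exact eigenvector structure as in the proof of Theorem \ref{primeiro teo do Qintao}, since $Ric_M$ commutes with $A$ in space forms).

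If the bi-Ricci minimizing pair can be taken among eigenvectors of $A$, then $\Phi_{12}$ terms disappear and the estimate becomes an estimate on eigenvalues $\nu_1,\nu_2,\nu_3$ with $\sum\nu_i=0$. This is the step I expect to need careful algebra. Once it is established, Lemma \ref{Aplicacao à geometria Riemanniana v2} (which uses the finiteness of ends and of $\dim H^1_c(M;\mathbb{R})$) gives $\lambda_1(M)=0$, contradicting $\lambda_1(M)>0$.
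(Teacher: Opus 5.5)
\textbf{There is a genuine gap, and it is dimensional.} You apply Lemma \ref{Aplicacao à geometria Riemanniana v2} to $N=M$. Since $M$ is three-dimensional, the relevant value is $k+1=3$, i.e.\ $k=2$, not $k+1=4$; four is the dimension of the ambient $\mathbb{H}^4$.

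That lemma, like Theorem \ref{mu-bubbles em bi-ricci com peso positivo no sentido espectral} and Proposition \ref{proposicao que define as constantes do metodo do mazet}, is only stated for $4\le k+1\le 7$. Its $\mu$-bubbles must have dimension at least three so that the spectral Bishop--Gromov theorem of \cite{Antonelli-Xu} can bound their volume. In $M^3$ the $\mu$-bubbles are surfaces. So the range $\alpha\in(\frac12,\sqrt2)$, $a<m(\alpha,3)=2$ that you quote does not apply, and the conclusion of the lemma is not available.

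You have mixed two conventions. In Lemma \ref{Proposição com a expressão do biRicci da subvariedade CMC} and in \eqref{desig3 da prova do main-theorem com curvatura nao negativa}--\eqref{desig5 da prova do main-theorem com curvatura nao negativa}, $k=\dim M=3$, which is why your ambient constant $-3a-3$ is correct. In the $\mu$-bubble results, however, $k=\dim M-1$.

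The missing ingredient is an area bound for two-dimensional $\mu$-bubbles, which comes from Gauss--Bonnet. This is what the paper uses:
\begin{itemize}
\item Lemma 6.1 of \cite{Chodosh-Li-R4-mu-bubbles}, under $\lambda_1(-\Delta_M+\frac12 R_M)\ge\delta$ off a compact set, gives an exhaustion whose boundary components have bounded area.
\item The number of boundary components stabilizes by the topological hypotheses (Lemma 6.1 of \cite{Chodosh-Li-PrimeiraProva-R4}).
\item Buser's inequality \cite{Buser} then contradicts $\lambda_1(M)>0$.
\end{itemize}

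\textbf{Your ``main obstacle'' disappears once the dimension is right.} On a three-manifold, for every orthonormal frame $e_1,e_2,e_3$,
\[
BRic_1^M(e_1,e_2)=sec_M(e_1\wedge e_2)+sec_M(e_1\wedge e_3)+sec_M(e_2\wedge e_3)=\tfrac12 R_M .
\]
So your choice $\alpha=1$ is exactly the scalar-curvature condition. Moreover, in Lemma \ref{Proposição com a expressão do biRicci da subvariedade CMC} with $k=3$ and $\alpha=1$, the coefficients of $\Phi_{11}H$ and $\Phi_{22}H$ are $1-\frac23-\frac13=0$ and $1-\frac33=0$. Hence no linear terms in $\Phi$ appear when you write $H=3+\varepsilon$.

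Indeed, using $R_M=-6+H^2-|A|^2$ and $Ric(\nu)=-3$,
\[
a\big(Ric(\nu)+|A|^2\big)+\tfrac12 R_M=(a+1)\Big(\frac{H^2}{3}-3\Big)+\Big(a-\frac12\Big)|\Phi|^2 .
\]
This is uniformly positive for every $a\ge\frac12$ when $|H|>3$. With $a=1$ it is precisely the Schoen--Yau rearrangement the paper invokes.

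So the algebra you deferred is immediate. The step that is supposed to turn it into area bounds, however, fails as you stated it. It must be replaced by the three-dimensional (surface) $\mu$-bubble argument.
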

            
        \begin{proof}

            In order to reach a contradiction, suppose that there exists a non-compact $M^3$ satisfying the hypotheses of the theorem. We know that $M$ must have infinite volume (\textit{cf.} Theorem \ref{Katia Frensel volume infinito}). By Theorem \ref{estimativa de curvatura mais geral possivel na intro}, $M$ has bounded second fundamental form. 
            
            Now we use $\mu$-bubble techniques to further restrict the geometry of $M$. It follows from the Schoen-Yau rearrangement of the stability inequality that 
            \begin{equation*}
                \lambda_1(-\Delta_M + \frac{1}{2}R_M) \ge \delta > 0
            \end{equation*}

            \noindent for some $\delta>0$, because $|H|>3$. Here $R_M$ is the scalar curvature of $M$. 

             Using Lemma 6.1 in \cite{Chodosh-Li-R4-mu-bubbles} we construct an exhaustion of $M$, $\Omega_n$, by precompact smooth open sets such that every connected component $\Sigma$ of $\partial \Omega_n$ has controlled volume: $|\Sigma| \le V = V(\delta)$. We can assume that each connected component of the complement of $\Omega_n$ is unbounded. The topological assumptions on $M$ allow us to guarantee that the number of connected components of $\Omega_n$ eventually stabilizes to a constant number (\textit{cf.} Lemma 6.1 in \cite{Chodosh-Li-PrimeiraProva-R4}).
            
            This geometric property shows that the Cheeger constant of $M$ is zero, because it has infinite volume (see Theorem \ref{Katia Frensel volume infinito}). But this contradicts P. Buser's inequality (\cite{Buser}, Theorem 7.1), because by Theorem \ref{CMC com H>Hlambda tem lambda1>0} we know that $\lambda_1(M)>0$.

        \end{proof}

	\bibliographystyle{plain}
	\bibliography{refs}
\end{document}